\theoremstyle{plain}
\newtheorem{introthm}{Theorem}
\newtheorem{introcor}[introthm]{Corollary}
\newtheorem{thm}{Theorem}
\newtheorem{lem}{Lemma}[section]
\newtheorem{prop}[lem]{Proposition}
\newtheorem{cor}[lem]{Corollary}
\newtheorem{fact}[lem]{Fact}
\theoremstyle{definition}
\newtheorem{definition}[lem]{Definition}
\newtheorem{ex}[lem]{Example}
\theoremstyle{remark}
\newtheorem{rem}{Remark}[section]
\numberwithin{equation}{section}
\newcommand{\C}{\mathbb{C}}
\newcommand{\R}{\mathbb{R}}
\newcommand{\Q}{\mathbb{Q}}
\newcommand{\Z}{\mathbb{Z}}
\newcommand{\N}{\mathbb{N}}
\newcommand{\norm}[1]{\left\Vert#1\right\Vert}
\DeclareMathOperator{\ad}{ad}
\DeclareMathOperator{\SL}{SL}
\def\Ddots{\mathinner{\mkern1mu\raise\p@
\vbox{\kern7\p@\hbox{.}}\mkern2mu
\raise4\p@\hbox{.}\mkern2mu\raise7\p@\hbox{.}\mkern1mu}}
\newcommand{\F}{{\mathcal F}}
\newcommand{\Fbb}{{\mathbb F}}
\newcommand{\M}{{\mathcal M}}
\newcommand{\T}{{\rm T}}
\newcommand{\Xbb}{{\mathbb X}}
\renewcommand{\epsilon}{\vararepsilon}
\newcommand{\bdm}{\begin{displaymath}}
\newcommand{\edm}{\end{displaymath}}
\newcommand{\bq}{\begin{equation}}
\newcommand{\eq}{\end{equation}}
\newcommand{\bqn}{\begin{equation*}}
\newcommand{\eqn}{\end{equation*}}
\renewcommand{\L}{{\mathbb L}}
\newcommand{\G}{{\mathcal G}}
\newcommand{\GL}{\mathrm{GL}}
\newcommand{\g}{{\bf \mathfrak g}}
\newcommand{\aL}{{\bf \mathfrak a}}
\newcommand{\nL}{{\bf \mathfrak n}}
\renewcommand{\k}{{\bf \mathfrak k}}
\newcommand{\p}{{\bf \mathfrak p}}
\newcommand{\Hom}{\mathrm{Hom}}
\newcommand{\Ad}{\mathrm{Ad}}
\newcommand{\id}{\mathrm{id}\,}
\renewcommand{\det}{\mathrm{det}\,}
\newcommand{\gam}{\Gamma\backslash}
\definecolor{darkblue}{rgb}{0,0,0.4}
\newcommand{\set}[2]{\left\lbrace #1 \,\middle|\, #2 \right\rbrace}
\newcommand{\transverse}{\pitchfork}
\newcommand{\bbL}{\mathbb{L}}
\newcommand{\map}[4]{\left\lbrace \begin{array}{ccc} #1 & \to & #2 \\ #3 & \mapsto & #4 \end{array} \right.}
\renewcommand{\tilde}{\widetilde}
\newcommand{\cK}{\mathcal{K}}
\newcommand{\cF}{\mathcal{F}}
\newcommand{\cG}{\mathcal{G}}
\newcommand{\cM}{\mathcal{M}}
\newcommand{\fg}{\mathfrak{g}}
\newcommand{\fl}{\mathfrak{l}}
\newcommand{\fm}{\mathfrak{m}}
\newcommand{\fn}{\mathfrak{n}}
\newcommand{\fp}{\mathfrak{p}}
\newcommand{\fz}{\mathfrak{z}}
\newcommand{\fa}{\mathfrak{a}}
\newcommand{\Stab}{\mathrm{Stab}}
\newcommand{\Lag}{\mathrm{Lag}}
\newcommand{\Sp}{\mathrm{Sp}}
\newcommand{\hol}{\mathrm{hol}}
\newcommand{\bbf}{\mathbf{b}}
\newcommand{\Wbb}{\mathbb{W}}
\newcommand{\Fpf}{{\F^\pitchfork}}
\newcommand{\xbf}{\mathbf{x}}
\newcommand{\sbf}{\mathbf{s}}
\newcommand{\Bg}{\mathrm{B}_{\mathfrak{g}}}
\newcommand{\trtimes}{\,{\scriptscriptstyle\stackrel{\scriptscriptstyle\pitchfork}{\scriptstyle\times}}\,}
\newcommand{\Ftr}{\F^+\trtimes\F^-}
\author[B.~Delarue]{Benjamin Delarue}
\email{bdelarue@math.upb.de}
\address{Universit\"at Paderborn, Warburger Str.~100, 33098 Paderborn, Germany}
\author[D.~Monclair]{Daniel Monclair}
\email{daniel.monclair@universite-paris-saclay.fr}
\address{Université Paris-Saclay, CNRS, Laboratoire de mathématiques d’Orsay, 91405, Orsay, France}
\author[A.~Sanders]{Andrew Sanders}
\email{andrew.sanders.2@bc.edu}
\address{Boston College, Visiting Scholar, 02467, Chestnut Hill, Massachusetts, United States}
\title[Locally homogeneous Axiom A flows II]{Locally homogeneous Axiom A flows II:\\ geometric structures for Anosov subgroups}
\begin{document}

\begin{abstract}
Given a non-compact semisimple real Lie group $G$ and an Anosov subgroup $\Gamma$, we utilize the correspondence between $\R$-valued additive characters on Levi subgroups $L$ of $G$ and $\R$-affine homogeneous line bundles over $G/L$ to systematically construct families of non-empty domains of proper discontinuity for the $\Gamma$-action. If $\Gamma$ is torsion-free, the analytic dynamical systems on the quotients are Axiom A, and assemble into a single partially hyperbolic multiflow.  Each Axiom A system admits global analytic stable/unstable foliations with non-wandering set a single basic set on which the flow is conjugate to Sambarino's refraction flow, establishing that all refraction flows arise in this fashion.  Furthermore, the $\R$-valued additive character is regular if and only if the associated Axiom A system admits a compatible pseudo-Riemannian metric and contact structure, which we relate to the Poisson structure on the dual of the Lie algebra of $G$.
\end{abstract}

\maketitle

\section{Introduction}

The concept of a $\Theta$-Anosov subgroup $\Gamma< G$ of a semisimple real Lie group $G$ (i.e., the image of a $\Theta$-Anosov representation with respect to a  subset $\Theta\subset\Delta$ of a simple system) was introduced by Labourie \cite{labourie} and generalized by Guichard-Wienhard \cite{GW12}. It proved to be so rich in examples and interesting general properties that it has become its own field of research. As the name suggests, Labourie's original approach was intimately related to uniformly hyperbolic smooth dynamical systems (a.k.a.\ Anosov flows), but modern definitions \cite{KLP17,GGKW, BPS, KasselPotrie} no longer appeal to flows, instead casting the $\Theta$-Anosov property in terms of growth rates of singular or eigenvalues. 

If $G$ has real rank one, a subgroup $\Gamma< G$ is $\Theta$-Anosov if and only if it is convex cocompact \cite[Thm.\ 5.15]{GW12}, and if $\Gamma$ is torsion-free the rank one Riemannian locally symmetric space is a geometric structure for $\Gamma$. The unit tangent bundle of the latter carries the geodesic flow, featuring a similar uniformly hyperbolic behavior as an Anosov flow (Smale's Axiom A, see Definition \ref{def:axiomA}).

In higher rank, the construction of geometric structures associated to (torsion-free) $\Theta$-Anosov subgroups $\Gamma< G$ is a key challenge in the field \cite{FannyICM,AnnaICM} with many results (e.g.\ \cite{GW12,KLP18,GGKW,CS23,NR24}), yet a systematic construction which always produces non-empty locally homogeneous manifolds has so far remained elusive. In this article, we provide such a construction. The obtained manifolds carry analytic Axiom A flows or partially hyperbolic multiflows, respectively, reaffirming the original link between Anosov subgroups and smooth hyperbolic dynamics.  

In Part I \cite{VolI} of this series, we showed that every torsion-free projective Anosov subgroup $\Gamma<\mathrm{SL}(d, \R)$ is the monodromy group of a locally homogeneous real analytic contact Axiom A dynamical system $(\mathcal{M}, \phi^{t})$ with a unique basic hyperbolic set $\mathcal{K}\subset \mathcal{M}$.   With the modern toolbox of smooth hyperbolic flows now  applicable, we proved exponential mixing for all Gibbs measures (assuming $\Gamma<\mathrm{SL}(d, \R)$ is irreducible) and established meromorphic continuation of Ruelle zeta functions.  The analog of $(\mathcal{M}, \phi^{t})$ when $\Gamma< G$ is a torsion-free $\Theta$-Anosov subgroup is a family of Axiom A systems $(\mathcal{M}_{\mathbf{b}}, \phi_{\mathbf{b}}^{t})$ indexed by additive characters $\mathbf{b}: L_{\Theta}\rightarrow \R$ in an open convex cone in $\mathrm{Hom}(L_{\Theta}, \R)\simeq \R^{\lvert \Theta \rvert}$, where $L_{\Theta}< G$ is the standard Levi subgroup associated to $\Theta\subset \Delta$. 

It is well-known (see e.g.\ \cite[Sec.~9]{GW12}, \cite[Sec.~3]{GGKW}) that every $\Theta$-Anosov subgroup $\Gamma< G$ becomes projective Anosov  after composition with a suitable linear representation $G\rightarrow \mathrm{SL}(d, \R)$, the latter of which are organized by the integer lattice of fundamental weights.  If the differential $d_e\mathbf b$  at the identity lies at suitable points in the weight lattice, the Axiom A system $\mathcal{M}_{\mathbf{b}}$ we will construct also appears as an invariant submanifold in the corresponding locally homogeneous $(\mathrm{SL}(d, \R), \mathbb{L})$-manifold constructed in Part I 
\cite{VolI}.  But, even if $G=\mathbf{G}(\R)$ where $\mathbf{G}$ is a semisimple algebraic group defined over $\mathbb{\Q},$ a generic character $\mathbf{b}\in \mathrm{Hom}(L_{\Theta}, \R)$ is a real analytic, non-algebraic homomorphism and we cannot obtain $\mathcal{M}_{\mathbf{b}}$ from Part I by using linear representations.

Sambarino \cite{SAM24} used additive characters on $L_{\Theta}$ to construct families of so-called refraction flows associated to any $\Theta$-Anosov subgroup, and we will show that every refraction flow is bi-H\"{o}lder conjugate to the restriction of the Axiom A flow $\phi_{\mathbf{b}}^{t}$ to its unique basic hyperbolic set $\mathcal{K}_{\mathbf{b}}\subset \mathcal{M}_{\mathbf{b}}$, in particular arriving at the same open convex cone of admissible parameters $\mathbf b\in \mathrm{Hom}(L_{\Theta}, \R)$ as Sambarino. 

 Our starting point is the observation that the differential $d_e\mathbf b$ canonically extends to a linear form $\beta: \fg\rightarrow \R,$ and the coadjoint $G$-orbit of $\beta$ is generically isomorphic to $G/L_{\Theta}.$  Simultaneously, $\mathbf{b}: L_{\Theta}\rightarrow \R$ defines a homogeneous $\R$-affine line bundle $\mathbb{L}_{\mathbf{b}}$ over this coadjoint orbit, and it is in the total space of $\mathbb{L}_{\mathbf{b}}$ where we shall construct a non-empty domain of proper discontinuity for the $\Gamma$-action.  We now turn to the details of this construction and the statements of our main results.  

Throughout, $G$ denotes a non-compact real semisimple Lie group with Lie algebra $\g$ and $\Theta\subset \Delta$ a non-empty subset of simple restricted roots defining associated opposite proper parabolic subgroups $P_{\Theta}^{+}, P_{\Theta}^{-}< G$ with standard Levi intersection $L_{\Theta}= P_{\Theta}^{+}\cap P_{\Theta}^{-}$.

\subsection{Dynamical \texorpdfstring{$(G,X)$}{(G,X)}-structures}
Here we give a short description of the homogeneous spaces studied in this paper and the attached geometric and dynamical structures. Their construction can be thought of as reverse-engineering the unit tangent bundle $T^1\mathbb X$ of a rank-one symmetric space $\mathbb X$  from the projections $T^1\mathbb X\to\partial_\infty\mathbb X$  sending a unit tangent vector $v\in T^1_x\mathbb{X}$ to the future and past endpoints $v^\pm\in\partial_\infty\mathbb{X}$ of the geodesic initiated by $v$. 
 We replace the visual boundary $\partial_\infty\mathbb{X}$  with a pair of \emph{opposite flag manifolds} $\cF^\pm\simeq G/P^\pm_\Theta$. The image in $\partial_\infty\Xbb\times\partial_\infty\Xbb$ of the  projection $v\mapsto (v^+,v^-)$ will be replaced with the \emph{transverse flag space} $\F^+\trtimes \F^-\subset\F^+\times \F^-$, the unique open $G$-orbit in $\F^+\times \F^-\simeq G/P^+_\Theta\times G/P^+_\Theta$ which can be equivalently described as the homogeneous space $\F^+\trtimes \F^-\simeq G/L_\Theta$.

\subsubsection{Hyperbolic homogeneous flows}
By a homogeneous flow on a homogeneous space $X=G/H$ we mean a flow $\phi^t:X\to X$ defined for all $t\in \R$ that commutes with the $G$-action. 
We will study the case where the orbit space of $\phi^t$ is the homogeneous space $G/L_\Theta$, leading us  to consider subgroups $H_\bbf:=\ker\mathbf b$ for some non-zero additive character $\bbf\in \Hom(L_\Theta,\R)$. From the structure of Levi subgroups (see Section \ref{sec:centerofLevi}), we find a canonical choice of an element $Z_{\bbf}\in\fz(\fl_\Theta)$ in the center of the Lie algebra of $L_\Theta$ such that $d_e\mathbf b(Z_{\bbf})=1$. This allows us to equip the homogeneous space $\L_{\bbf}:=G/H_\bbf$  with the flow
\[ \phi^t_{\mathbf b}:\map{\L_{\mathbf b}}{\L_{\mathbf b}}{gH_\bbf}{g\exp(tZ_{\mathbf b})H_\bbf,}\]
which is a right action of the one-parameter subgroup $A_\bbf:=\exp(\R Z_\bbf)< Z(L_\Theta)$. 

From the embedding of $G/L_\Theta$ into $G/P^+_\Theta\times G/P^-_\Theta$, we obtain a decomposition
\begin{equation}
 \T\L_{\mathbf b}=E^-\oplus E^0\oplus E^+, \label{eq:decomTLbeta}
\end{equation}
where the rank one bundle $E^0$ is tangent to the flow and $E^\pm$ is sent isomorphically onto $T(G/P^\pm_\Theta)$ by the differential of the projection $\L_{\mathbf b}=G/H_\bbf\to G/L_\Theta$ (see Section \ref{sec:tangent bundle flow space}).
\begin{definition}
The pair $(\L_{\mathbf b},\phi_{\mathbf b}^t)$ will be referred to as a \emph{hyperbolic homogeneous flow}.
\end{definition}

\subsubsection{Multiflows}  Hyperbolic homogeneous flows with a given orbit space $G/L_\Theta$ form a family indexed by $\Hom(L_\Theta,\R)$. The Levi subgroup $L_\Theta$ splits as a direct product $L=A_\Theta M_\Theta$, where $A_\Theta< Z(L_\Theta)$ is isomorphic to the abelian Lie group $\R^{|\Theta|}$,  $M_\Theta$ is a reductive subgroup, and the restriction map $\Hom(L_\Theta,\R)\to\Hom(A_\Theta,\R)$ is an isomorphism. Equivalently, these subgroups can be described as factors in the  Langlands decompositions $P^\pm_\Theta=M_\Theta A_\Theta N^\pm_\Theta$. The homogeneous space $\mathbb W_\Theta:=G/M_\Theta$ comes with an action of $A_\Theta\simeq\R^{|\Theta|}$ on the right given by $(g M_\Theta)\cdot h:= gh M_\Theta$ for $h\in A_\Theta$. 
For $\mathbf b\in \Hom(L_\Theta,\R)\setminus\{0\}$, the normal subgroup $M_\Theta\lhd H_\bbf$ satisfies $H_\bbf/M_\Theta\simeq A_\Theta/A_\bbf\simeq \R^{|\Theta|-1}$, and defining the projection $p_{\mathbf b}:\mathbb W_\Theta=G/M_\Theta\to G/H_\bbf=\L_{\mathbf b},$ we find that
\[ p_{\mathbf b}(x\cdot h)=\phi_{\mathbf b}^{\mathbf b(h)}\big(p_{\mathbf b}(x)\big)\,\qquad \forall\, x\in \mathbb W_\Theta, h\in A_\Theta.\]
\begin{definition}
The right $A_\Theta$-action on $\mathbb W_\Theta=G/M_\Theta$  will be referred to as a \emph{multiflow}.
\end{definition}
Dynamically, the fact that $p_{\mathbf b}:\mathbb W_\Theta\to \L_{\mathbf b}$ is a principal  $H_\bbf/M_\Theta\simeq A_\Theta/A_\bbf\simeq \R^{|\Theta|-1}$-bundle means that $\L_{\mathbf b}$ is obtained from $\mathbb W_\Theta$ by  ``quotienting out all but one flow direction.'' 

Perhaps surprisingly, for a $\Theta$-Anosov subgroup $\Gamma< G,$ we will see that there exists a non-empty domain of proper discontinuity $\widetilde{\mathcal{N}}\subset \Wbb_{\Theta}$ such that the $\R^{\lvert \Theta\rvert -1}\simeq A_\Theta/A_{\mathbf{b}}$-action is proper on the $\Gamma$-quotient $\gam \widetilde{\mathcal{N}}$ and the quotient $\gam \widetilde{\mathcal{N}}/\R^{\lvert \Theta\rvert -1}$ is an Axiom A system.  

\subsubsection{\texorpdfstring{$(G,X)$}{(G,X)}-structures for Anosov subgroups} 

Given a $G$-homogeneous space $X=G/H$ and a manifold $\cM$, a $(G,X)$-structure on $\cM$ is a maximal atlas of $X$-valued charts  
on $\cM$ such that the transition maps are $G$-valued and locally constant. A  $(G,X)$-structure on $\cM$ can be equivalently defined by a holonomy homomorphism $\rho_\hol:\pi_1(\cM)\to G$ and a $\rho_\hol$-equivariant local diffeomorphism $\mathrm{Dev}:\widetilde{\cM}\to X$ called the \emph{developing map}, where $\widetilde{\cM}$ is any universal cover of $\M$ realizing $\pi_{1}(\cM)$ as the group of deck transformations. 

Starting with a discrete subgroup $\Gamma< G$, one way of producing a $(G,X)$-manifold is to find a non-empty open $\Gamma$-invariant subset $U\subset X$ on which the $\Gamma$-action is free and properly discontinuous. The quotient manifold $\cM_\Gamma:=\Gamma\backslash U$ carries a $(G,X)$-structure which we call \emph{uniformized} (this is referred to as ``Type $\mathrm{U}$'' in \cite{FannyICM}). 

Any $(G,X)$-manifold inherits all local structures on $X$ preserved by $G$. 
In the case of $X=\L_\bbf$ for some $\bbf\in\Hom(L_\Theta,\R)\setminus\{0\}$, this means that a $(G,\L_\bbf)$-manifold comes with a local flow, i.e. a vector field.  Similarly, a $(G,\Wbb_\Theta)$-structure defines a local $A_\Theta$-action, i.e.\ a family of $|\Theta|$ commuting vector fields.
\begin{definition} A $(G,X)$-structure with $X=\L_\bbf$ or $X=\Wbb_\Theta$ is called \emph{dynamically complete} if the induced vector fields are complete.
\end{definition}
In the uniformized case, this means that the open subset $U\subset\L_\bbf$ (resp.\ $U\subset \Wbb_\Theta$) is $\phi_\bbf^t$-invariant (resp.\ $A_\Theta$-invariant). The history of $(G,X)$-structures associated to Anosov subgroups is rich and well documented, see \cite{FannyICM,AnnaICM,CS23} and references therein.  To date and not including \cite{VolI}, dynamical $(G,X)$-structures for higher rank Anosov subgroups do exist (e.g.\ \cite{GW08}) but are rather rare among known examples.  We now explain that dynamical $(G,X)$-structures with strong hyperbolicity properties exist in abundance.

\subsection{Results} 
We now consider a $\Theta$-Anosov subgroup $\Gamma< G$. The dynamics of the $\Gamma$-action on the flow space $\L_\bbf$ will depend on the character $\bbf\in\Hom(L_\Theta,\R)$, and especially on its relation to the \emph{$\Theta$-limit cone $\mathcal L_\Theta(\Gamma)\subset\aL_\Theta^+$} (the projection of the \emph{Benoist limit cone} $\mathcal L(\Gamma)$ to the Lie algebra $\aL_\Theta$ of $A_\Theta$, see Section  \ref{sec:admissibility}). Consider the dual cone \[\mathcal L_\Theta(\Gamma)^*=\set{\beta\in\aL_\Theta^*}{\beta\vert_{\mathcal L_\Theta(\Gamma)}\geq 0},\] whose interior $\mathrm{Int}(\mathcal L_\Theta(\Gamma)^*)$ is non-empty because $\mathcal L_\Theta(\Gamma)$ is acute, and identify the differential $d_e\bbf\in\fl_\Theta^*$ with its restriction to $\aL_\Theta\subset\fl_\Theta$, where $\fl_\Theta$ is the Lie algebra of $L_\Theta$.

\subsubsection{Uniformized \texorpdfstring{$(G,X)$}{(G,X)}-structures}  Our first result establishes the existence of $(G,X)$-structures and hyperbolic homogeneous flows,   generalizing \cite[Thm.~A(1)]{VolI}.
\begin{introthm}\label{introthm:(G,X)-structure} Let $\Gamma< G$ be $\Theta$-Anosov, and let $\bbf\in\Hom(L_\Theta,\R)$. If $d_e\bbf\in\mathrm{Int}(\mathcal L_\Theta(\Gamma)^*)$,  then there is a non-empty $\Gamma\times\phi_\bbf^t$-invariant open set $\tilde \M_{\bbf}\subset \L_\bbf$ on which $\Gamma$ acts properly discontinuously, yielding a dynamically complete uniformized $(G,\L_\bbf)$-manifold $\mathcal M_\bbf:=\Gamma\backslash \widetilde{\mathcal M_\bbf}$ when $\Gamma$ is torsion-free.
\end{introthm}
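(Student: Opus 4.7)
The plan is to realize $\tilde\M_\bbf$ as an open subset of $\L_\bbf=G/H_\bbf$ via the natural $\R$-fiber bundle projection $\pi_\bbf:\L_\bbf\to\F^+\trtimes\F^-\simeq G/L_\Theta$, and to extract proper discontinuity from the interplay of two mechanisms: Anosov dynamics of $\Gamma$ on the flag space base, and unbounded translation on the $\R$-fibers controlled by the hypothesis on $d_e\bbf$.

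First I invoke the $\Theta$-Anosov property to fix $\Gamma$-equivariant continuous transverse limit maps $\xi^\pm\colon\partial_\infty\Gamma\to\F^\pm$, and set $\Lambda^{(2)}:=\{(\xi^+(x),\xi^-(y)):x\ne y\}\subset\F^+\trtimes\F^-$, which is closed and $\Gamma$-invariant. The candidate set is the preimage $\tilde\M_\bbf:=\pi_\bbf^{-1}(\Omega)$ for an open $\Gamma$-invariant $\Omega\subset\F^+\trtimes\F^-$: the natural choice is $\Omega=\F^+\trtimes\F^-$ (so that $\tilde\M_\bbf$ contains the fibers over $\Lambda^{(2)}$ to be used as the location of the basic hyperbolic set in subsequent results), although for this theorem $\Omega$ could as well be taken to be the complement of $\Lambda^{(2)}$. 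Non-emptiness, openness, and $\Gamma$- and $\phi_\bbf^t$-invariance (since the flow acts along fibers of $\pi_\bbf$) are then automatic, as is dynamical completeness.

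The heart of the proof is proper discontinuity. Suppose for contradiction that $\gamma_n\to\infty$ in $\Gamma$ keeps $x_n,\gamma_n x_n\in K$ compact in $\tilde\M_\bbf$. Passing to subsequences, $\pi_\bbf(x_n)\to a$ and $\pi_\bbf(\gamma_n x_n)\to b$. The Anosov property provides an attracting/repelling pair $(\xi^+_\infty,\xi^-_\infty)\in\Lambda^{(2)}$ with $\gamma_n f^+\to\xi^+_\infty$ whenever $f^+$ is transverse to $\xi^-_\infty$, and dually for $\gamma_n^{-1}$. In the \emph{wandering case}, where $a$ or $b$ lies in $(\F^+\trtimes\F^-)\setminus\Lambda^{(2)}$, proper discontinuity of $\Gamma$ on the complement of $\Lambda^{(2)}$, a standard consequence of the $\Theta$-Anosov property along the lines of Guichard--Wienhard and Kapovich--Leeb--Porti, already forces a contradiction. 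In the \emph{non-wandering case}, where $a,b\in\Lambda^{(2)}$, one ascends to the $\R$-fiber: the action of $\gamma_n$ along the fiber over $\pi_\bbf(x_n)$ is translation by a continuous cocycle $\sigma_\bbf(\gamma_n,\pi_\bbf(x_n))\in\R$ which, over compact neighbourhoods of $\Lambda^{(2)}$, differs from $d_e\bbf(\mu_\Theta(\gamma_n))$ by bounded error. Since $\gamma_n\to\infty$ forces $\mu_\Theta(\gamma_n)$ to diverge along directions accumulating in the limit cone $\mathcal L_\Theta(\Gamma)$, and $d_e\bbf$ is strictly positive on $\mathcal L_\Theta(\Gamma)\setminus\{0\}$ by the interior hypothesis, this cocycle is unbounded, contradicting that the fiber coordinate of $\gamma_n x_n$ stays in a compact set.

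Finally, when $\Gamma$ is torsion-free the proper discontinuous real analytic action of $\Gamma$ on $\tilde\M_\bbf$ is automatically free (any nontrivial isotropy would be finite, hence trivial), so $\M_\bbf:=\Gamma\backslash\tilde\M_\bbf$ is a real analytic manifold with the uniformized $(G,\L_\bbf)$-structure given by the inclusions $\tilde\M_\bbf\hookrightarrow\L_\bbf$ and $\Gamma\hookrightarrow G$ as developing map and holonomy respectively, inheriting the complete analytic flow $\phi_\bbf^t$. The main obstacle is the asymptotic cocycle estimate in the non-wandering case: one needs to pair Quint-/Sambarino-type formulas for boundary cocycles on $\Theta$-Anosov subgroups with uniform estimates over compact subsets of $\Lambda^{(2)}$, so that strict positivity of $d_e\bbf$ on the whole limit cone translates into uniform fiber divergence along any sequence $\gamma_n\to\infty$ in $\Gamma$.
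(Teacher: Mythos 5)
Your proposal gets the right mechanism for fiber divergence (positivity of $d_e\bbf$ on the limit cone forcing the translation cocycle to blow up), which matches the paper's Proposition \ref{prop:Anosovescape} combined with Lemma \ref{lem: properness criterion general case}. But the identification of the domain $\tilde\M_\bbf$ is wrong, and this is not a cosmetic issue — it is the main content of the theorem. Taking $\Omega=\F^+\trtimes\F^-$ (so $\tilde\M_\bbf=\L_\bbf$) fails: the $\Gamma$-action on all of $\L_\bbf$ is essentially never properly discontinuous in higher rank, as the paper points out right after stating Theorem \ref{introthm:(G,X)-structure}. Your alternative, the complement of $\Lambda^{(2)}$, is also not the right set (and would in any case exclude the fibers over the limit set, hence the basic set you want for the later theorems). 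The correct domain is
\[
\Omega_{\Gamma}=\set{(\mathbf x^+,\mathbf x^-)\in \Fpf}{\forall\, z\in \partial_\infty\Gamma:\ \mathbf x^+\transverse \xi^-(z) \text{ or } \xi^+(z)\transverse \mathbf x^-},
\]
which \emph{contains} $\Lambda^\pitchfork_\Gamma$ and is strictly smaller than $\Fpf$; one then sets $\tilde\M_\bbf=p^{-1}(\Omega_\Gamma)$.

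Both branches of your dichotomy break down without this. In the ``wandering case'' you assert that proper discontinuity of $\Gamma$ on $\Fpf\setminus\Lambda^{(2)}$ is a standard consequence of the Anosov property; it is not. The Guichard--Wienhard and Kapovich--Leeb--Porti domains live in flag manifolds $G/P_\Theta^\pm$ and require removing a thickened neighbourhood of the limit set, not just the limit set itself; in $\Fpf\simeq G/L_\Theta$ the $G$-action is already non-proper, and a point $(\mathbf x^+,\mathbf x^-)$ failing \emph{both} transversality conditions at the same boundary point $z$ escapes the convergence dynamics \ref{property:P4} entirely, so its orbit need not leave compact sets. In the ``non-wandering case'' your claim that the fiber cocycle differs from $d_e\bbf(\mu_\Theta(\gamma_n))$ by a bounded error ``over compact neighbourhoods of $\Lambda^{(2)}$'' is exactly the content of Lemma \ref{lem: properness criterion preparation}/Corollary \ref{cor: properness criterion multi-preparation}, and that estimate requires the accumulation point of the base sequence to be transverse to $\xi^+(\gamma_-)$ (or the dual condition); membership in $\Omega_\Gamma$ is precisely what guarantees that at least one of the two transversality options is available, after which the paper's proof runs the fiber-divergence argument on the $+$ side or the $-$ side accordingly. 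Without restricting to $\Omega_\Gamma$ neither option is guaranteed and the argument does not close.
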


It is to be noted that the action of $\Gamma$ on  $\L_\bbf$ is almost never proper, the only notable exception being the case of a group $G$ of real rank one (then $\L_\bbf\simeq T^1\Xbb$). The fact that the multiflow space $\Wbb_\Theta$ fibers over $\L_\bbf$ immediately yields the following consequence:

\begin{introcor}\label{introcor:multiflow domain}
Let $\Gamma< G$ be $\Theta$-Anosov. There is a non-empty $\Gamma\times A_\Theta$-invariant open set $\widetilde{\mathcal N}\subset \Wbb_\Theta$ on which $\Gamma$ acts properly discontinuously, yielding a dynamically complete uniformized $(G,\Wbb_\Theta)$-manifold $\mathcal N:=\Gamma\backslash \widetilde{\mathcal N}$ when $\Gamma$ is torsion-free. 

For each $\bbf\in\Hom(L_\Theta,\R)$ with $d_e\bbf\in\mathrm{Int}(\mathcal L_\Theta(\Gamma)^*)$, the projection $\mathcal{N}\rightarrow \mathcal{M}_{\mathbf{b}}$ induced by the canonical projection $\Wbb_\Theta\to \L_\bbf$ is a right principal $\R^{\lvert \Theta\rvert-1}\simeq A_{\Theta}/A_{\mathbf{b}}$-bundle.
\end{introcor}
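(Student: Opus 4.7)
The plan is to lift the domain from Theorem~\ref{introthm:(G,X)-structure} through the principal bundle projections
\[
\Wbb_\Theta=G/M_\Theta \xrightarrow{\,p_\bbf\,} G/H_\bbf=\L_\bbf \longrightarrow G/L_\Theta,
\]
whose first arrow has structure group $H_\bbf/M_\Theta\simeq A_\Theta/A_\bbf\simeq\R^{|\Theta|-1}$ and which is equivariant for both the left $\Gamma$- and right $A_\Theta$-actions. Concretely, I would fix one admissible character $\bbf_0\in\mathrm{Hom}(L_\Theta,\R)$ with $d_e\bbf_0\in\mathrm{Int}(\mathcal L_\Theta(\Gamma)^*)$ and define $\widetilde{\mathcal N}:=p_{\bbf_0}^{-1}(\widetilde{\mathcal M}_{\bbf_0})$. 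Openness and non-emptiness are inherited from $\widetilde{\mathcal M}_{\bbf_0}$. Since $p_{\bbf_0}$ is $\Gamma$-equivariant and satisfies $p_{\bbf_0}(x\cdot h)=\phi_{\bbf_0}^{\bbf_0(h)}(p_{\bbf_0}(x))$ for all $h\in A_\Theta$, while $\widetilde{\mathcal M}_{\bbf_0}$ is both $\Gamma$- and $\phi_{\bbf_0}^t$-invariant by Theorem~\ref{introthm:(G,X)-structure}, the preimage $\widetilde{\mathcal N}$ is automatically $\Gamma\times A_\Theta$-invariant.

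Proper discontinuity of $\Gamma$ on $\widetilde{\mathcal N}$ then follows by pushing forward: for any compact $K\subset\widetilde{\mathcal N}$, the image $p_{\bbf_0}(K)$ is a compact subset of $\widetilde{\mathcal M}_{\bbf_0}$, and by $\Gamma$-equivariance $\{\gamma\in\Gamma\mid\gamma K\cap K\neq\emptyset\}$ is contained in the finite set $\{\gamma\in\Gamma\mid\gamma\cdot p_{\bbf_0}(K)\cap p_{\bbf_0}(K)\neq\emptyset\}$ supplied by Theorem~\ref{introthm:(G,X)-structure}. When $\Gamma$ is torsion-free, the action is free and $\mathcal N=\Gamma\backslash\widetilde{\mathcal N}$ inherits a uniformized $(G,\Wbb_\Theta)$-structure. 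Dynamical completeness of the induced $A_\Theta$-action is automatic, as $\widetilde{\mathcal N}$ is $A_\Theta$-invariant inside the homogeneous space $\Wbb_\Theta$ on which right translation by $A_\Theta$ is globally defined.

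For the principal-bundle assertion, the essential point is to verify that $\widetilde{\mathcal N}$ does \emph{not} depend on the choice of $\bbf_0$: one needs $p_\bbf(\widetilde{\mathcal N})=\widetilde{\mathcal M}_\bbf$ for every admissible $\bbf$. The natural strategy is to show that the construction underlying Theorem~\ref{introthm:(G,X)-structure} actually produces $\widetilde{\mathcal M}_\bbf$ as the preimage of a single $\bbf$-independent $\Gamma$-invariant open domain $\Omega\subset G/L_\Theta$ under the projection $\L_\bbf\to G/L_\Theta$. With this identification in hand, $\widetilde{\mathcal N}$ coincides with $\pi^{-1}(\Omega)$ for the canonical projection $\pi\colon\Wbb_\Theta\to G/L_\Theta$, and the restriction of the principal $H_\bbf/M_\Theta$-bundle $p_\bbf$ to $\widetilde{\mathcal N}$ is a principal $H_\bbf/M_\Theta$-bundle over $\widetilde{\mathcal M}_\bbf$. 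Since the right structure-group action commutes with the left $\Gamma$-action, this descends to a right principal $\R^{|\Theta|-1}$-bundle $\mathcal N\to\mathcal M_\bbf$. The main obstacle is precisely this $\bbf$-independence step, which requires showing that the construction of the domain in Theorem~\ref{introthm:(G,X)-structure} is governed only by the limit set data of $\Gamma$ in the flag manifolds $\F^\pm$ and not by the specific character $\bbf$ in the admissible cone.
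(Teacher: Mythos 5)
Your proposal is correct and follows essentially the same route as the paper: the paper takes $\widetilde{\mathcal N}:=\hat p^{-1}(\Omega_\Gamma)$ for the projection $\hat p:\Wbb_\Theta\to G/L_\Theta$, pushes proper discontinuity forward through the $\Gamma$-equivariant projection onto one fixed admissible flow space (with parameter $s_\alpha=1$), and reads off the principal $\R^{|\Theta|-1}$-bundle structure from the equivariance formula, exactly as you do. The ``$\bbf$-independence'' you flag as the main obstacle is in fact automatic in the paper's construction: $\widetilde{\mathcal M}^{\mathbf s}_{\Gamma,\Theta}$ is \emph{defined} as $p^{-1}(\Omega_\Gamma)$, where $\Omega_\Gamma\subset\F^+\trtimes\F^-\simeq G/L_\Theta$ is the set of pairs $(\mathbf x^+,\mathbf x^-)$ such that for every $z\in\partial_\infty\Gamma$ either $\mathbf x^+\pitchfork\xi^-(z)$ or $\xi^+(z)\pitchfork\mathbf x^-$ — a set depending only on the boundary maps and not on the character — so your $\widetilde{\mathcal N}=p_{\bbf_0}^{-1}(\widetilde{\mathcal M}_{\bbf_0})$ coincides with $\hat p^{-1}(\Omega_\Gamma)$ and is independent of the choice of $\bbf_0$.
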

The quotient manifolds $\mathcal{M}_{\mathbf{b}}$ are usually non-compact, but we will see that in each case the closure of the set of periodic orbits is compact and equal to the non-wandering set of the flow.  A helpful example is that of quasi-Fuchsian surface groups $\Gamma< \mathrm{PSL}(2, \C),$ where the locally homogeneous manifolds are non-compact but have compact cores.

 In real rank one, or more generally when $|\Theta|=1$, the spaces $\L_\bbf$ and $\Wbb_\Theta$ are identical. In higher rank, the situation when $\Theta$ is the entire set $\Delta$ of simple restricted roots stands out because of the properness of the $G$-action on $\Wbb_\Delta$. In other situations, a $\Theta$-Anosov subgroup $\Gamma$ typically does not act properly on $\Wbb_\Theta$.  The recent preprint \cite{KT24} proving the sharpness conjecture gives a systematic analysis for cocompact actions of general discrete subgroups which includes some examples of Anosov subgroups.
 
The homogeneous space $\Wbb_\Theta= G/M_\Theta$ often appears in the study of Anosov subgroups in the form of a Hopf parametrization  $\Wbb_\Theta\simeq G/L_\Theta\times \fa_{\Theta}$ with a cocycle defining the $G$-action on the right hand side, similarly for $\L_\bbf\simeq G/L_{\Theta}\times \R.$ The novelty in Theorem \ref{introthm:(G,X)-structure} and Corollary \ref{introcor:multiflow domain} is the discovery of the  domains of proper discontinuity $\tilde \M_{\bbf}\subset \L_\bbf$, $\widetilde{\mathcal N}\subset \Wbb_\Theta.$

\subsubsection{Dynamics of quotient flows}
Next, we turn to the dynamics of the flows defined by $(G,\L_\bbf)$-structures obtained in Theorem \ref{introthm:(G,X)-structure}, establishing  Smale's Axiom A. For the definition of the latter and other dynamical terminology used in the following, see Section \ref{sec:background_dynamics}. 
\begin{introthm}\label{introthm:A}
Let $\Gamma< G$ be a torsion-free $\Theta$-Anosov subgroup, and let $\bbf\in\Hom(L_\Theta,\R)$ be such that $d_e\bbf\in\mathrm{Int}(\mathcal L_\Theta(\Gamma)^*)$. Consider the $(G,\L_\bbf)$-manifold  $\M_{\bbf}$ with its flow $\phi_\bbf^t$ obtained from Theorem \ref{introthm:(G,X)-structure}. 
\begin{enumerate}
\item The real analytic flow $\phi^t_\bbf:\M_{\bbf}\to \M_{\bbf}$ is an Axiom A flow. 
\item The non-wandering set of $\phi^t_\bbf$ consists of a unique basic set $\mathcal K_\bbf\subset \M_\bbf$, and the splitting of $T\M_\bbf|_{\mathcal K_\bbf}$ into stable/neutral/unstable subbundles provided by the Axiom A property is given by first descending the $G$-invariant splitting \eqref{eq:decomTLbeta} to the global real analytic $\phi_{\mathbf{b}}^{t}$-invariant splitting $
T\M_\bbf=E_{\Gamma, \mathbf{b}}^{-}\oplus E_{\Gamma, \mathbf{b}}^{0}\oplus E_{\Gamma, \mathbf{b}}^{+}$, 
and then restricting to the basic hyperbolic set $\mathcal{K}_{\mathbf{b}}.$

\item The real analytic $\phi_{\mathbf{b}}^{t}$-invariant $1$-form $\tau_{\mathbf{b}}$ which vanishes on $E_{\Gamma, \mathbf{b}}^{-}\oplus E_{\Gamma, \mathbf{b}}^{+}$ and is equal to $1$ on the generator of $\phi_{\mathbf{b}}^{t}$ is a contact $1$-form if and only if $\mathrm{B}_\g(d_{e}\mathbf{b},w_{\alpha})\neq 0$ for every fundamental weight $w_{\alpha}$ with $\alpha\in \Theta$, where $\mathrm{B}_\g$ is the Killing form.
\end{enumerate}
\end{introthm}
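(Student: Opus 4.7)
The plan is to reduce contactness to a Lie-algebraic non-degeneracy condition at a single point of $\L_\bbf$ and then analyse it via the restricted root-space decomposition and the Killing form. First I would observe that $\tau_\bbf$ is $G$-invariant on the ambient homogeneous space $\L_\bbf=G/H_\bbf$: it is characterised on $\L_\bbf$ by the $G$-equivariant conditions of vanishing on the $G$-invariant subbundles $E^\pm$ and taking value $1$ on the $G$-invariant generator $Z_\bbf$ of $\phi_\bbf^t$. Hence the contact condition $\tau_\bbf\wedge(d\tau_\bbf)^n\neq 0$ is $G$-invariant and may be checked at the base point $o:=eH_\bbf$.

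Under the identification $T_o\L_\bbf\simeq \fg/\mathfrak{h}_\bbf=\fn_\Theta^-\oplus\R Z_\bbf\oplus\fn_\Theta^+$, the form $\tau_\bbf|_o$ corresponds to the linear functional $\beta\in\fg^*$ which restricts to $d_e\bbf$ on $\fl_\Theta$ and vanishes on $\fn_\Theta^\pm$ (this is the unique extension vanishing on $\mathfrak{h}_\bbf$ and equal to $1$ on $Z_\bbf$). The Chevalley--Eilenberg formula for invariant forms on $G/H_\bbf$ then yields
\[ d\tau_\bbf|_o(X,Y)\;=\;-\beta([X,Y]). \]
Since the contact hyperplane at $o$ is $\ker\tau_\bbf|_o=\fn_\Theta^-\oplus\fn_\Theta^+$, the contact condition $\tau_\bbf\wedge(d\tau_\bbf)^n\neq 0$ reduces to non-degeneracy of the skew bilinear form $\omega_\beta(X,Y):=-\beta([X,Y])$ on $\fn_\Theta^-\oplus\fn_\Theta^+$.

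Now I invoke the Killing form. Because $\beta$ vanishes on $\fn_\Theta^\pm$ (killing all self-pairings, since $[\fn_\Theta^\pm,\fn_\Theta^\pm]\subset\fn_\Theta^\pm$) and on $\fm_\Theta$ (killing cross-pairings between distinct root spaces with the same $\fa_\Theta$-weight, whose brackets lie in $\fm_\Theta$), the form $\omega_\beta$ becomes block-diagonal along root pairs $\fg_{-\alpha}\times\fg_\alpha$ for $\alpha\in\Sigma^+\setminus\langle\Delta\setminus\Theta\rangle$. Writing $\beta=\mathrm{B}_\fg(B_\bbf,\cdot)$ with $B_\bbf\in\fa_\Theta$ the Killing dual of $d_e\bbf$, and using ad-invariance of $\mathrm{B}_\fg$ together with $[B_\bbf,X]=-\alpha(B_\bbf)X$ for $X\in\fg_{-\alpha}$, the block at $\alpha$ reads
\[ \omega_\beta(X,Y)\;=\;\alpha(B_\bbf)\,\mathrm{B}_\fg(X,Y)\qquad(X\in\fg_{-\alpha},\,Y\in\fg_\alpha). \]
Since $\mathrm{B}_\fg$ restricts to a perfect pairing $\fg_{-\alpha}\times\fg_\alpha\to\R$, the block is non-degenerate iff $\alpha(B_\bbf)\neq 0$; equivalently, $\omega_\beta$ is non-degenerate iff $\mathrm{ad}(B_\bbf)$ is invertible on $\fn_\Theta^-$.

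The final step --- which I expect to be the main technical obstacle --- is to match this root-theoretic eigenvalue condition with the stated Killing-pairing condition $\mathrm{B}_\fg(d_e\bbf,w_\alpha)=w_\alpha(B_\bbf)\neq 0$ indexed only by the simple roots $\alpha\in\Theta$. The fundamental weights $\{w_\alpha\}_{\alpha\in\Theta}$ span $\fa_\Theta^*$, so the values $w_\alpha(B_\bbf)$ record the coordinates of $B_\bbf$ in the Killing-dual basis of $\fa_\Theta$. To finish the proof I would exploit the hypothesis $d_e\bbf\in\mathrm{Int}(\mathcal L_\Theta(\Gamma)^*)$: for a $\Theta$-Anosov $\Gamma$ the limit cone $\mathcal L_\Theta(\Gamma)$ sits inside the open positive Weyl chamber of $\fa_\Theta$, so its dual cone constrains $B_\bbf$ to a region where the fundamental-weight conditions $w_\alpha(B_\bbf)\neq 0$ propagate --- through positive-combination identities for the $\fa_\Theta$-weights of $\fn_\Theta^+$ --- to non-vanishing on every $\fa_\Theta$-weight appearing in $\fn_\Theta^+$, thereby closing the equivalence with the non-degeneracy condition established in the previous step.
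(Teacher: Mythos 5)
You prove only part (3). Parts (1) and (2) --- proper discontinuity and cocompactness on the lifted basic set, the identification of the non-wandering set with the trapped set and the closure of periodic orbits, and the uniform contraction/dilation on $E^{\pm}$ over $\mathcal K_\bbf$ --- are where the paper spends most of its effort (Theorem \ref{thm: prop disc general case}, Theorem \ref{thm - trapped is non wandering is closure of periodic points is K general case}, Lemma \ref{lem:hyperbolicset}, the last of which goes through the refraction flow and Labourie's original dynamical definition of the Anosov property). None of that can be extracted from a pointwise Lie-algebraic computation, so as a proof of Theorem \ref{introthm:A} the proposal is incomplete from the start.

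For part (3) itself, your reduction is sound and close to the paper's: the identity $d\tau_{\bbf}|_o(X,Y)=-\beta([X,Y])$ is exactly what Proposition \ref{prop:derivative canonical 1-form} establishes via the Maurer--Cartan equation (the paper packages $-\beta([\cdot,\cdot])$ as the pullback of the Kostant--Kirillov--Souriau form under the infinitesimal period map), and your block-diagonalization over the pairs $\fg_{-\gamma}\times\fg_\gamma$, $\gamma\in\Sigma^+_\Theta$, correctly yields that $\tau_\bbf$ is contact iff $\gamma(B_\bbf)\neq 0$ for every $\gamma\in\Sigma^+_\Theta$, i.e.\ iff $\Stab_G(B_\bbf)=L_\Theta$ --- which is precisely the paper's ``regularity'' criterion (Definition \ref{def:regular period function} together with the first part of the proof of Proposition \ref{prop:regular flow space}).

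The genuine gap is the final bridging step, which you yourself flag as the main obstacle and for which your proposed mechanism does not work. Restricting to $\fa_\Theta$, a root $\gamma\in\Sigma^+_\Theta$ is a nonnegative-integer combination of the simple roots in $\Theta$, so $\gamma(B_\bbf)$ is a positive combination of the values $\alpha(B_\bbf)=\mathrm{B}_\g(d_e\bbf,\alpha)$, $\alpha\in\Theta$ --- \emph{not} of the values $w_\alpha(B_\bbf)=\mathrm{B}_\g(d_e\bbf,w_\alpha)$ appearing in the statement. If the simple-root values have mixed signs, a non-simple root can satisfy $\gamma(B_\bbf)=0$ even though all $\alpha(B_\bbf)$ and all $w_\alpha(B_\bbf)$ are nonzero (already in $\SL(3,\R)$ with $\Theta=\Delta$: take $B_\bbf$ proportional to $\mathrm{Diag}(1,-2,1)$, so that $(\alpha_1+\alpha_2)(B_\bbf)=0$). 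Admissibility does not rescue the propagation: $d_e\bbf\in\mathrm{Int}(\mathcal L_\Theta(\Gamma)^*)$ only forces positivity on the limit cone, which for a thin limit cone imposes essentially no sign constraint on the individual $\alpha(B_\bbf)$. So the equivalence you need to close the argument is not a formal consequence of what you have established, and the route you sketch for it fails. (For comparison, the paper's own Theorem \ref{thm:contact flow} states the contact criterion as $\mathrm{B}_\g(d_e\bbf,\alpha)\neq 0$ for all $\alpha\in\Theta$ --- pairings with the simple roots, via the coefficients $s_\alpha$ --- rather than with the fundamental weights; your computation shows that even that condition is only the restriction of the full criterion $\gamma(B_\bbf)\neq 0$, $\gamma\in\Sigma^+_\Theta$, to simple roots. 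You should state and prove the criterion you can actually verify, namely $\Stab_G(B_\bbf)=L_\Theta$, and then address explicitly how it relates to the condition in the statement.)
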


For each character $\bbf\in\Hom(L_\Theta,\R)$ as in Theorem \ref{introthm:A}, Sambarino \cite{SAM24} defined a model of the Gromov geodesic flow of $\Gamma$ given by the so-called \emph{refraction flow}  $\psi_\bbf^t$ (see Section \ref{sec:refractionflows}). Our locally homogeneous Axiom A flows are extensions of refraction flows:

\begin{introthm}\label{introthm:B}In the situation of Theorem \ref{introthm:A}, the restriction of the Axiom A flow $\phi^t_\bbf:\M_\bbf\to\M_\bbf$ to its basic set $\mathcal{K}_\bbf\subset \mathcal{M}_\bbf$ is bi-Hölder-conjugate to the refraction flow $\psi_\bbf^t$. 
\end{introthm}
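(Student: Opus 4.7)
The plan is to give an explicit description of $\mathcal{K}_\bbf$ as a quotient, show that this description \emph{is} Sambarino's construction, and then verify that the identification is bi-Hölder with respect to the hyperbolic-dynamics metric on $\mathcal{K}_\bbf$ on the one hand, and the natural metric on the refraction flow space on the other.

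First, I would identify the basic set $\mathcal K_\bbf$ explicitly. By Theorem \ref{introthm:A}, $\mathcal K_\bbf$ is the non-wandering set of $\phi^t_\bbf$, equivalently the closure of the periodic orbits. Lifting to $\widetilde{\M}_\bbf\subset \L_\bbf$ and using the canonical projection $\pi_\bbf:\L_\bbf=G/H_\bbf\to G/L_\Theta\hookrightarrow \F^+\trtimes\F^-$, a standard ping-pong/shadowing argument combined with the contraction/expansion properties of the $\Theta$-Anosov action on $\F^\pm$ shows that a point $x\in \widetilde{\M}_\bbf$ is non-wandering modulo $\Gamma$ if and only if $\pi_\bbf(x)=(\xi^+,\xi^-)$ with $\xi^\pm\in\Lambda_\Theta(\Gamma)\subset \F^\pm$. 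Hence $\mathcal K_\bbf=\Gamma\backslash \pi_\bbf^{-1}\bigl(\Lambda_\Theta^{(2)}(\Gamma)\bigr)$, where $\Lambda_\Theta^{(2)}(\Gamma)$ denotes the transverse pairs in the $\Theta$-limit set.

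Second, I would trivialize $\pi_\bbf$ over $\Lambda_\Theta^{(2)}(\Gamma)$ using the $A_\bbf$-action. Choosing any real-analytic section $s$ of $\pi_\bbf$ (locally and then glued, using the structure of $\L_\bbf\to G/L_\Theta$ as a principal $\R$-bundle), the map $(\xi^+,\xi^-,t)\mapsto \phi^t_\bbf(s(\xi^+,\xi^-))$ gives a $\phi^t_\bbf$-equivariant parametrization $\pi_\bbf^{-1}(\Lambda_\Theta^{(2)}(\Gamma))\simeq \Lambda_\Theta^{(2)}(\Gamma)\times\R$. In this parametrization, one computes the $\Gamma$-action to be of the form
\[
\gamma\cdot(\xi^+,\xi^-,t)=\bigl(\gamma\xi^+,\gamma\xi^-,\,t+c_\bbf(\gamma,\xi^+,\xi^-)\bigr),
\]
where $c_\bbf$ is (up to a coboundary coming from the choice of $s$) the post-composition with $\bbf$ of the $\Theta$-Busemann cocycle. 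Modulo the choice of section, this is \emph{by construction} the model on which Sambarino's refraction flow $\psi^t_\bbf$ is defined in \cite{SAM24}. Identifying $\Lambda_\Theta^{(2)}(\Gamma)$ with $\partial^{(2)}\Gamma$ via the (bi-Hölder) $\Theta$-boundary map of $\Gamma$, one obtains a bijective equivariant conjugacy $\Phi:\mathcal K_\bbf\to \Gamma\backslash(\partial^{(2)}\Gamma\times\R)$ intertwining $\phi^t_\bbf$ and $\psi^t_\bbf$.

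Third, I would verify bi-Hölder regularity. The section $s$ is real-analytic in $\L_\bbf$, so the parametrization $\Lambda_\Theta^{(2)}(\Gamma)\times \R\to \pi_\bbf^{-1}(\Lambda_\Theta^{(2)}(\Gamma))$ is Lipschitz, and the inverse is Lipschitz on compact subsets of $\mathcal K_\bbf$. The $\Gamma$-quotient preserves these regularities, using that $\Gamma$ acts properly discontinuously by Theorem \ref{introthm:(G,X)-structure}. The only non-Lipschitz ingredient in $\Phi$ is the identification of $\partial^{(2)}\Gamma$ with $\Lambda_\Theta^{(2)}(\Gamma)$, which by Labourie/Guichard-Wienhard is bi-Hölder between $\partial\Gamma$ (with any visual metric) and $\Lambda_\Theta(\Gamma)$ (with any smooth Riemannian distance on $\F^+$). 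This gives that $\Phi$ and $\Phi^{-1}$ are locally Hölder; compactness of $\mathcal K_\bbf$ (which follows from the basic-set property and the admissibility hypothesis, cf.\ Corollary \ref{introcor:multiflow domain}) upgrades this to a global bi-Hölder conjugacy.

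The main obstacle is the third step: a priori Sambarino's refraction flow carries the Hölder structure inherited from $\partial^{(2)}\Gamma\times\R$ equipped with a visual metric on $\partial\Gamma$, whereas $\mathcal K_\bbf$ inherits its Hölder structure from the ambient real-analytic manifold $\M_\bbf$ (equivalently from the Mather/adapted metric of the Axiom A flow). Matching the two Hölder structures reduces, via Step 2, to comparing a visual metric on $\Lambda_\Theta(\Gamma)$ (pulled back from $\partial\Gamma$) with the restriction to $\Lambda_\Theta(\Gamma)$ of a smooth Riemannian distance on $\F^+$; this is exactly the content of the Hölder regularity of the boundary map, which is known for $\Theta$-Anosov subgroups.
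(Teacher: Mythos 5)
Your proposal follows essentially the same route as the paper: the paper defines $\widetilde{\mathcal K}^{\mathbf s}_{\Gamma,\Theta}$ as the restriction of the bundle to $\Lambda^\pitchfork_\Gamma$, shows via a section $\sigma$ that $(\mathbf x,s)\mapsto\phi^s\circ\sigma\circ\xi^\pitchfork(\mathbf x)$ is a $\Gamma$-equivariant bi-Hölder embedding of $\partial_\infty\Gamma^{(2)}\times\R$ onto it (so the induced cocycle action is Sambarino's model), and separately proves that the quotient $\mathcal K$ coincides with the non-wandering set. The only cosmetic difference is in your first step: the paper identifies the non-wandering set not by a shadowing argument but by the convergence dynamics of the $\Gamma$-action on the flag manifolds together with the properness estimates already used for the domain of discontinuity.
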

Theorems \ref{introthm:A} and  \ref{introthm:B} are direct generalizations of \cite[Theorems A, B]{VolI}, which treated projective Anosov subgroups of $\SL(d,\R)$. More examples will be considered in  Section \ref{sec:examples}. One of the differences when moving to this general setting is the condition imposed on the parameter $\bbf$ for the flow to be contact. From the perspective of refraction flows, this condition is superfluous and our results imply that any refraction flow can be realized as the restriction of a contact Axiom A flow to its non-wandering set (see Remark \ref{rem:refraction flows are contact}).

\subsubsection{Ruelle zeta functions}
Let   $\lambda:G\to \aL^+$  denote the Jordan projection with respect to a chosen positive Weyl chamber (see Section \ref{sec:JordanG}). 
Given a $\Theta$-Anosov subgroup $\Gamma< G$ and $\mathbf{b}\in \mathrm{Hom}(L_{\Theta}, \R)$ as in Theorem \ref{introthm:A} with $d_e{\bbf}$ canonically extended to a linear form on $\g$, the non-trivial primitive periodic orbits of the flow $\phi_{\mathbf{b}}^{t}$ are in bijection with primitive hyperbolic conjugacy classes in $\Gamma$  and the period of $[\gamma]$ is equal to $d_e{\bbf}(\lambda(\gamma)).$  Since the flow is real analytic, results of Fried \cite{DF95} imply the meromorphic continuation of Ruelle zeta functions twisted by linear representations, which immediately yields:
\begin{introthm}\label{introthm:C}
Let $k\in \N$ and $\rho: \Gamma\rightarrow \mathrm{GL}(k, \C)$ a homomorphism.  In the situation of Theorem \ref{introthm:A}, the formal Euler product
\[
\zeta_{\bbf, \rho}(s)=\prod_{[\gamma]\in [\Gamma]_{\mathrm{prim}}} \mathrm{det}\left(\mathbf{1}_{k}-e^{-sd_e{\bbf}(\lambda(\gamma))}\rho([\gamma])\right)^{-1}
\]
converges for $\mathrm{Re}(s)\gg 1$ and admits a meromorphic continuation to $s\in\C.$ Here $[\Gamma]_{\mathrm{prim}}$ is the set of conjugacy classes of non-trivial primitive elements in $\Gamma$ and the appearance of $\rho([\gamma])$ is only through its conjugation invariant characteristic polynomial.
\end{introthm}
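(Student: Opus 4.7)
The plan is to reduce the statement to Fried's meromorphic continuation theorem \cite{DF95} for real-analytic Axiom A flows, by using Theorems~\ref{introthm:A} and \ref{introthm:B} to identify $\zeta_{\bbf,\rho}(s)$ with the Ruelle zeta function of $\phi_\bbf^t$ twisted by the flat $\C^k$-bundle on $\mathcal M_\bbf$ associated to $\rho$.

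First, I would verify the orbit-period correspondence. By Theorem~\ref{introthm:A}(2) the non-wandering set of $\phi_\bbf^t$ is the unique compact basic set $\mathcal K_\bbf$, so all periodic orbits lie in $\mathcal K_\bbf$; by Theorem~\ref{introthm:B} the restriction $\phi_\bbf^t|_{\mathcal K_\bbf}$ is bi-H\"older conjugate to Sambarino's refraction flow $\psi_\bbf^t$, whose primitive periodic orbits are in bijection with $[\Gamma]_{\mathrm{prim}}$ with period $d_e\bbf(\lambda(\gamma))$, and this bijection is preserved by bi-H\"older conjugacy. Because $\pi_1(\mathcal M_\bbf)\simeq\Gamma$, the representation $\rho$ defines a flat $\C^k$-bundle on $\mathcal M_\bbf$ whose holonomy around the periodic orbit labeled by $[\gamma]$ is conjugate to $\rho(\gamma)$, so the conjugation-invariant determinant $\det(\mathbf 1_k - e^{-sd_e\bbf(\lambda(\gamma))}\rho([\gamma]))$ is well-defined and agrees term-by-term with the twisted Ruelle zeta of $\phi_\bbf^t$.

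Next, convergence for $\Re(s)\gg 1$ follows by taking logarithms and reducing to the absolute convergence of $\sum_{[\gamma]}\sum_{n\ge 1} n^{-1}e^{-snd_e\bbf(\lambda(\gamma))}\Tr\rho(\gamma^n)$. The scalar Poincar\'e series $\sum_{[\gamma]} e^{-s d_e\bbf(\lambda(\gamma))}$ has finite abscissa of convergence by Sambarino's work (equal to the topological entropy of $\psi_\bbf^t$), and combining this with the exponential bound $\|\rho(\gamma^n)\|\leq e^{Cn|\gamma|_S}$ and the linear comparability of the word length $|\gamma|_S$ with $d_e\bbf(\lambda(\gamma))$ for $d_e\bbf\in\mathrm{Int}(\mathcal L_\Theta(\Gamma)^*)$ yields absolute convergence for $\Re(s)$ above an explicit threshold.

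Finally, since $(\mathcal M_\bbf,\phi_\bbf^t)$ is a real-analytic Axiom A flow by Theorem~\ref{introthm:A}(1) with compact non-wandering set $\mathcal K_\bbf$, Fried's theorem \cite{DF95} delivers the meromorphic continuation of $\zeta_{\bbf,\rho}(s)$ to $\C$. The principal technical obstacle is that Fried's statement is phrased for Anosov flows on closed analytic manifolds, whereas $\mathcal M_\bbf$ is typically non-compact; however, since the twisted zeta only sees the dynamics on the compact basic set $\mathcal K_\bbf$, one resolves this by constructing a real-analytic Markov coding of $\phi_\bbf^t|_{\mathcal K_\bbf}$ and invoking Fried's argument in the coded setting, exactly as carried out for projective Anosov subgroups in \cite[Theorem~C]{VolI}.
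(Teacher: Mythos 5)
Your proposal is correct and follows essentially the same route as the paper: identify the primitive periodic orbits of $\phi_\bbf^t$ with $[\Gamma]_{\mathrm{prim}}$, with periods $d_e\bbf(\lambda(\gamma))$ and holonomy $\rho([\gamma])$ (via Theorems \ref{introthm:A} and \ref{introthm:B}), check convergence of the Euler product in a right half-plane, and invoke Fried's meromorphic continuation for real-analytic hyperbolic flows. The only quibble is that the ``principal technical obstacle'' you flag is largely absent: Fried's theorem in \cite{DF95} is formulated for compact hyperbolic basic sets of real-analytic flows (not only for Anosov flows on closed manifolds), so given the real-analytic invariant splitting over $\mathcal K_\bbf$ from Theorem \ref{introthm:A}(2) it applies directly to $\mathcal K_\bbf\subset\mathcal M_\bbf$, which is exactly how the paper uses it.
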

Setting $k=1$ and $\rho=1$ recovers the usual Ruelle zeta function.  Alternatively, instead of \cite{DF95}, we can also directly apply results of Dyatlov-Guillarmou \cite{DG18} and Borns-Weil-Shen \cite{BWS21} giving the meromorphic continuation of Ruelle zeta functions with arbitrary $C^{\infty}$-weights.  In the setting $k=1$ and $\rho=1$, Theorem \ref{introthm:C} was proved for projective Anosov closed surface subgroups by Pollicott-Sharp \cite{PS24}, and shortly afterwards for all projective Anosov subgroups in \cite{VolI}, where it was deduced analogously from \cite[Theorems A, B]{VolI} using \cite{DG18,BWS21}.  Theorem \ref{introthm:C} gives the full unconditional result for all $\Theta$-Anosov subgroups $\Gamma< G$ and what we call admissible parameters $\bbf\in \Hom(L_\Theta,\R)$.  

When $\Gamma<G$ is Zariski-dense, Chow-Sarkar \cite{ChowSarkar2} recently elaborated the estimates necessary to prove a type of local non-integrability condition (uniformly in the admissible parameter $\bbf$) for Sambarino's refraction flows, leading to a proof that the usual Ruelle zeta functions $\zeta_{\bbf}$ have a zero/pole-free strip to the left of their simple pole at the topological entropy $h_{\mathrm{top}}(\psi_{\bbf}^{t})$, thus establishing exponential mixing for the measure of maximal entropy.  Strictly speaking, this result is only true when $\bbf$ satisfies the hypotheses of Theorem \ref{introthm:A}$.(3),$ though in general one may pass to a smaller subset of roots and obtain the same result (see \cite[Remark $1.3$]{ChowSarkar2}).  

Similar results were previously obtained for irreducible projective Anosov subgroups and arbitrary Gibbs measures in \cite{VolI}, where the local non-integrability condition is geometrically manifested by the existence of a contact structure on the ambient Axiom A system. In general, Theorem \ref{thm:contact flow} shows that under the failure of Theorem \ref{introthm:A}$.(3)$ and therefore the failure of the Axiom A system to be contact, there is a canonical contact reduction (e.g. quotient), which reveals that all refraction flows may be embedded in a contact Axiom A system, and provides a differential-geometric mechanism for the local non-integrability in consonance with the results of \cite{ChowSarkar2}.  Put simply, the success/failure of local non-integrability is exactly captured by whether the ambient Axiom A system is contact, and the fact that the contact property can be recovered via contact reduction (Theorem \ref{thm:contact flow}) is in harmony with \cite[Remark $1.3$]{ChowSarkar2}. Looking towards the future, exponential mixing for arbitrary Gibbs measure and with no Zariski-density/irreducibility hypothesis on $\Gamma<G$ remains an open problem; an analysis in the style of \cite{VolI} utilizing the work of Stoyanov \cite{St11} is one promising avenue of attack.

\subsection{Anosov subgroups vs.~Anosov flows}
Hyperbolic homogeneous flows play an important role in the proof of a famous result of Benoist-Foulon-Labourie \cite{BFL}: if an Anosov flow $\varphi^t:M\to M$ preserves a contact form on  a compact manifold $M$ and has smooth stable and unstable distributions, it is smoothly conjugate to a reparametrisation of the geodesic flow of a closed locally symmetric space of rank one. Their proof can be broken down into the following  steps: first proving that the flow $\varphi^t$ is induced from a $(G,\bbL_{\mathbf b})$-structure for some semi-simple Lie group $G$, then proving that this structure is complete (i.e. $ M$ is covered by $\bbL_{\mathbf b}$),  and finally that $G$ has rank one. In particular, the locally homogeneous flow $(\mathcal M_{\mathbf b},\phi^t_{\mathbf b})$ that we construct from an Anosov subgroup cannot be  an Anosov flow  if $G$ is simple and has higher rank.

\subsection{Further questions and outlook}

As noted above, the contact property of the Axiom A flows obtained in Theorem \ref{introthm:A} is a strong version of the non-integrability condition promoted by Dolgopyat for the study of rates of mixing. Combined with the real analyticity of all dynamical foliations, these tools should allow the construction of very refined anisotropic Banach spaces in which to study transfer operators.  In particular, we have in mind a mixture of the analytic/holomorphic approach of Rugh \cite{RU96} and Fried \cite{DF95}, approaches making key use of a contact structure (e.g.\ \cite{Li04}, \cite{ST25}), and microlocal approaches for non-compact systems as in \cite{DG18}.  One would ultimately like to extend these results to allow for non-smooth Hölder continuous potentials, at least those that appear as reparametrizations in the work of Sambarino (see \cite{SAM24}).

By applying the full technical machinery of microlocal analysis from \cite{DyGuSmale,DG18,Guillarmou-Mazzucchelli-Tzou}, Theorem \ref{introthm:C} can be accompanied by much more comprehensive, and in parts more technical, results on the existence of discrete spectra of Ruelle-Pollicott resonances with associated (co-)resonant states (c.f.\ \cite[Thm.~F]{VolI}). This will be the subject of future works. We are aware of forthcoming independent work by Guedes Bonthonneau-Lefeuvre-Weich \cite{BonthonneauLefeuvreWeich} establishing a spectral theory for the multiflows of Corollary \ref{introcor:multiflow domain}.

Regarding non-Anosov subgroups $\Gamma< G,$ the larger class of $\Theta$-transverse subgroups (see \cite{BCZZ24,CZZ24,KOW24,KOW25} and the survey \cite{CZZ25}) includes all relatively Anosov subgroups and all discrete subgroups in real rank one.  They are essentially defined as the largest class of subgroups for which there exists a pair of $\Gamma$-invariant transverse $\Theta$-limit sets $\Lambda_{\Gamma}^{\pm}\subset G/P_{\Theta}^{\pm}.$  This class of groups was initially identified by Kapovich-Leeb-Porti \cite{KLP17} where they were called $\tau_{mod}$-regular, antipodal subgroups. It would be interesting to know whether every $\Theta$-transverse subgroup $\Gamma< G$ acts properly discontinuously on a non-empty open subset $\widetilde{\mathcal{N}}\subset \Wbb_{\Theta}$ in the $\Theta$-multiflow space $\Wbb_{\Theta}$, in some hyperbolic flow space $\L_\bbf$, or even possibly in some intermediate quotients of rank $1<k<|\Theta|$.

The existence of a $(G,X)$-structure of a given type on a manifold can require specific topological properties, leaving us to wonder if the geometric structures constructed in this article  can impose restrictions on  which hyperbolic groups admit Anosov representations (for a given $G$ and $\Theta\subset\Delta$).  This is a notoriously difficult problem that we will not address in this paper. See \cite{CanaryTsouvalas,Tsouvalas,Dey,DeyGreenbergRiestenberg,KT25} for further information.

\subsection{Structure of the paper and proof strategy}

After a brief Background Section \ref{sec:background}, we turn in Section \ref{sec:Lietheory} to Lie theoretic concepts such as flag manifolds, transverse flag spaces, and the classification of parabolic and Levi subgroups. Crucial preparations for the main proofs are made in Section \ref{sec:linact}, where we consider linear actions on tangent spaces of flag manifolds.  Section \ref{sec:affpermultidens} is devoted to the general concept of affine line bundles and the language of period functions and cocycles. This is then applied in Section \ref{sec:densitiypairings} to density and multidensity bundles, the latter forming a graded analog of the usual notion of $s$-densities. While the flow spaces $\mathbb{L}_{\bbf}$ admit a very simple description in terms of coset spaces as above, our proofs are not algebraic in spirit and do not proceed in terms of this description.  Instead, we construct specific differential geometric models of the space $\mathbb{L}_{\bbf}$ in terms of multidensities,  which is the content of the dedicated Section \ref{sec:flowspaces}. Anosov subgroups enter the picture in Section \ref{sec:Anosovsubgroups}, where we prove our first main Theorem \ref{thm: prop disc general case} and make the connection to Sambarino's refraction flows in Section \ref{sec:refractionflows}. Together with Theorem  \ref{thm - trapped is non wandering is closure of periodic points is K general case} and Lemma \ref{lem:hyperbolicset} from the subsequent Section \ref{sec:dynamics} focusing on the dynamics on the $\Gamma$-quotients, this proves Theorems \ref{introthm:(G,X)-structure}, \ref{introthm:A}(1), \ref{introthm:A}(2) and \ref{introthm:B}. In Section \ref{sec:geometric_structures} we study in detail various geometric properties of our flow spaces $\mathbb{L}_{\bbf}$, with Theorem \ref{thm:contact flow} giving the proof of (a more precise statement than) Theorem \ref{introthm:A}(3). The passage to multiflow spaces is made in Section \ref{sec:towers}, where Corollary \ref{introcor:multiflow domain} is proved as Corollary \ref{cor:domain of proper discontinuity multiflow space}. We take special linear groups as a family of running examples throughout the paper, which is complemented by a final Examples Section \ref{sec:examples} describing our general constructions in concrete situations.

\subsection*{Acknowledgments}  This project has received funding from the Deutsche Forschungsgemeinschaft (DFG, German Research Foundation) through the Priority Program (SPP) 2026 ``Geometry at Infinity'' as well as Project-ID 491392403 – TRR 358. B.D.\ thanks Tobias Weich for countless stimulating discussions. D.M.\ is thankful for the hospitality of IHES where he stayed during part of this work. A.S. thanks Beatrice Pozzetti and Thi Dang for numerous conversations about Anosov subgroups and Weyl chamber flows which led to the consideration of homogeneous affine line bundles over transverse flag manifolds.

\section{Background}\label{sec:background}

This section represents a continuation of the corresponding section \cite[Section 2]{VolI} in Part I of this series. While we do (re-)introduce the required notation and preliminaries used in this paper to make it self-contained, we keep the repetitions minimal.

\subsection{Dynamics}\label{sec:background_dynamics}

Let $\M$ be a metrizable topological space and $\phi^t:\M\to \M$ a continuous flow defined for all $t\in \R$ which has no fixed points. 

\begin{definition} \label{def - non-wandering, periodic, trapped}~
\begin{itemize}
\item The \emph{non-wandering set} ${\mathcal{NW}}(\phi^t)$ of the flow  $\phi^t$ is the set of all  points $x\in \M$ for which there are  sequences $x_N\to x$ in $\M$ and $t_N\to +\infty$ in $\R$ such that $\phi^{t_N}(x_N)\to x$. 
\item The \emph{trapped set} ${\mathcal T}(\phi^t)$ of the flow  $\phi^t$  is the set of all points $x\in \M$ whose $\phi^t$-orbits are relatively compact in $\M$.
\item The set $\mathrm{Per}(\phi^t)$ of \emph{periodic points}  of the flow  $\phi^t$  consists of all points $x\in \M$ for which there exists $T>0$ with $\phi^T(x)=x$.
\end{itemize}
\end{definition}

\begin{definition}Let $\mathcal K\subset \M$ be a compact $\phi^t$-invariant set and $E$ a continuous vector bundle over $\mathcal K$  equipped with a continuous flow $\phi_E^t:E\to E$ lifting $\phi^t$ over $\mathcal K$. Then $\phi_E^t$ is \emph{uniformly contracting} (resp.\ \emph{expanding}) on $E$ if for some (hence any) continuous bundle norm $\norm{\cdot}$ on $E$ there are constants $C,c>0$ such that for all $p\in \mathcal K$ and all $v\in E_p$ one has
\bqn
\norm{\phi^t_E(v)}_{\phi^t(p)}\leq Ce^{-c|t|}\norm{v}_p
\eqn
for all $t\geq 0$ (resp.\ $t\leq 0$).
\end{definition}
For the remainder of Section \ref{sec:background_dynamics}, suppose now that $\M$ is a smooth manifold and $\phi^t$ is a smooth flow with generating vector field $\mathcal X:\M\to T\M$. It is nowhere vanishing since we assume that $\phi^t$ has no fixed points.

\begin{definition} \label{def:hyperbolicset}A compact $\phi^t$-invariant set $\mathcal K\subset \M$ is called \emph{hyperbolic} for the flow $\phi^t$ if the restriction of the tangent bundle $T\M$ to $\mathcal K$ admits a Whitney sum decomposition
\bq
T\M|_{\mathcal K}=E^0\oplus E^{\mathrm{s}}\oplus E^{\mathrm{u}}\label{eq:bundlesplittingstableunstableneutral}
\eq
where $E^0_p=\R \mathcal X(p)$ for all $p\in \mathcal K$ and $E^{\mathrm{s}}$, $E^{\mathrm{u}}$ are $d\phi^t$-invariant continuous subbundles such that $d\phi^t$ is uniformly contracting (resp.\ expanding) on $E^{\mathrm{s}}$ (resp.\ $E^{\mathrm{u}}$).
\end{definition}

\begin{definition}[{c.f.~\cite[§II.5 (5.1)]{smale67}}]\label{def:axiomA}The flow $\phi^t$ is an \emph{Axiom A} flow if the non-wandering set ${\mathcal{NW}}(\phi^t)$ is compact and hyperbolic and coincides with the closure  in $\M$ of the set of periodic points $\mathrm{Per}(\phi^t)$.
\end{definition}
Note that the manifold $\mathcal M$ itself is not assumed to be compact. A hyperbolic set $\mathcal K\subset \M$ for the flow $\phi^t$  is \emph{basic} if it is locally maximal for $\phi^t$ (i.e., there is a neighborhood $\mathcal U\subset \M$ of $\mathcal K$ such that $
\mathcal K=\bigcap_{t\in \R}\phi^t(\mathcal U)$), the flow $\phi^t|_{\mathcal K}$ is topologically transitive (i.e., $\mathcal K$ contains a dense $\phi^t$-orbit), and $\mathcal K$ is the closure in $\M$ of the set of periodic points of $\phi^t|_{\mathcal K}$.

\subsection{Hyperbolic groups}\label{sec:hyperbolicgroups}

Here we repeat only the required notation from \cite[Section 2.2]{VolI} and otherwise we formulate only additional preliminary results that are not contained in that reference. For an overview and  comprehensive background information, we refer the reader to \cite{Gr,Gdh90,kapovichbenakli}. Let $\Gamma$ be a finitely generated group, and fix a  a choice of generators to define the word length function $|\cdot|:\Gamma\to [0,\infty)$. The \emph{stable length} of $\gamma\in \Gamma$ is defined by $|\gamma|_\infty:=\lim_{n\to \infty}\frac{|\gamma^n|}{n}$.  Suppose that $\Gamma$ is hyperbolic (synonyms: \emph{word-hyperbolic}, \emph{Gromov-hyperbolic})  and let $\partial_\infty \Gamma$ be the Gromov boundary of $\Gamma$. Every infinite order element $\gamma\in \Gamma$ has two distinct fixed points $\gamma^+:=\lim_{n\to +\infty}\gamma^n\in\partial_\infty\Gamma$ and $\gamma^-:=\lim_{n\to -\infty}\gamma^n\in\partial_\infty\Gamma$.

The first part of the following lemma generalizes \cite[Lemma 2.10]{VolI} to sequences that do not consist exclusively of infinite order elements.
\begin{lem} \label{lem diverging sequence hyperbolic group}
Consider a sequence $(\gamma_k)$ in $\Gamma$ such that the limits $\gamma_+=\lim_{k\to \infty}\gamma_k\in\partial_\infty\Gamma$ and $\gamma_-=\lim_{k\to \infty}\gamma_k^{-1}\in\partial_\infty\Gamma$ exist and are distinct. Then the following holds:

\begin{enumerate}
\item $\gamma_k$ has infinite order when $k$ is large enough. Furthermore, $\gamma_k^+\to \gamma_+$ and $\gamma_k^-\to\gamma_-$.

\item $|\gamma_k|_\infty\to \infty$.
\end{enumerate} 
\end{lem}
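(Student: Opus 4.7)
The plan is to reduce both statements to the infinite-order case, which is essentially \cite[Lemma 2.10]{VolI}; accordingly, the one new step is to show that $\gamma_k$ has infinite order for $k$ large enough. The convergence of the attracting/repelling fixed points in (1) and the divergence of the stable lengths in (2) will then fall out of the same uniform quasi-geodesic estimate that establishes infinite order.

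The first observation is that $\gamma_+ \neq \gamma_-$ forces the sequence of Gromov products $(\gamma_k, \gamma_k^{-1})_e$ to be bounded by some constant $B$. Indeed, if a subsequence satisfied $(\gamma_k, \gamma_k^{-1})_e \to \infty$, then the standard ultra-metric inequality
\[ (\gamma_k^{-1}, \gamma_+)_e \geq \min\bigl((\gamma_k^{-1}, \gamma_k)_e,\, (\gamma_k, \gamma_+)_e\bigr) - \delta, \]
where $\delta$ is a hyperbolicity constant of $\Gamma$, combined with the hypothesis $\gamma_k \to \gamma_+$, would yield $\gamma_k^{-1} \to \gamma_+$ along the subsequence, contradicting $\gamma_k^{-1} \to \gamma_- \neq \gamma_+$.

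The heart of the argument is then a local-to-global principle for quasi-geodesics in a hyperbolic group. Concatenating geodesic segments $[\gamma_k^i, \gamma_k^{i+1}]$ gives a piecewise-geodesic path in which the overlap at each vertex equals exactly $(\gamma_k, \gamma_k^{-1})_e \leq B$. Since $|\gamma_k| \to \infty$, for $k$ large the common segment length $|\gamma_k|$ crosses the threshold (depending only on $B$ and $\delta$) above which such a concatenation is a global $(\lambda, C)$-quasi-geodesic with $\lambda, C$ uniform in $k$. Three consequences follow at once: the sequence $\{\gamma_k^m\}_{m \in \Z}$ is injective, so $\gamma_k$ has infinite order; the endpoint $\gamma_k^+$ of the quasi-geodesic is close to $\gamma_k$ in the sense that $(\gamma_k^+, \gamma_k)_e \geq |\gamma_k| - O(1)$, whence combined with $(\gamma_k, \gamma_+)_e \to \infty$ the ultra-metric inequality gives $\gamma_k^+ \to \gamma_+$ (and symmetrically $\gamma_k^- \to \gamma_-$); and the quasi-geodesic bound $|\gamma_k^m| \geq \lambda^{-1} m|\gamma_k| - C$ implies $|\gamma_k|_\infty \geq \lambda^{-1} |\gamma_k| \to \infty$.

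The main obstacle is ensuring that the constants in the local-to-global principle are truly uniform across the sequence, despite the fact that some $\gamma_k$ may be torsion elements of a priori varying nature. The point is that the threshold for the principle depends only on $\delta$ and the overlap bound $B$, both of which are uniform, so once $|\gamma_k|$ is sufficiently large the conclusion is automatic. This uniformity is the only real departure from the argument of \cite[Lemma 2.10]{VolI}.
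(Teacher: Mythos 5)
Your proposal is correct, but it takes a genuinely different route from the paper. For part (1) the paper argues purely with the topological dynamics of the convergence action on $\partial_\infty\Gamma$: it picks disjoint neighbourhoods $U_\pm$ of $\gamma_\pm$ with $U_+\cup U_-\neq\partial_\infty\Gamma$, uses the convergence property to get $\gamma_k\cdot(\partial_\infty\Gamma\setminus U_-)\subset U_+$ for large $k$, and derives a contradiction with finite order (a torsion element would force $\partial_\infty\Gamma\setminus U_-\subset U_+$), with $\gamma_k^\pm\to\gamma_\pm$ following by shrinking $U_\pm$. For part (2) the paper then cites two black boxes: $|\gamma|\leq\ell(\gamma)+C_1$ for uniformly boundary-hyperbolic elements (\cite[Prop.~2.39]{GMT}) and $\ell(\gamma)\leq|\gamma|_\infty+C_2$ (\cite[Ch.~10, Prop.~6.4]{CDP}), together with $|\gamma_k|\to\infty$. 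You instead work metrically in the Cayley graph: bounded Gromov products $(\gamma_k,\gamma_k^{-1})_e\leq B$ (correctly deduced from $\gamma_+\neq\gamma_-$), plus the local-to-global principle for broken geodesics with segment length $|\gamma_k|\to\infty$ and vertex overlap $\leq B$, yield uniform quasi-geodesics $m\mapsto\gamma_k^m$, from which infinite order, $\gamma_k^\pm\to\gamma_\pm$, and $|\gamma_k|_\infty\geq\lambda^{-1}|\gamma_k|-o(|\gamma_k|)\to\infty$ all follow at once. Your approach is more self-contained and quantitative (it produces an explicit linear lower bound on $|\gamma_k|_\infty$ in terms of $|\gamma_k|$, and treats the degenerate cases of finite or two-point boundary uniformly, which the paper handles by a side remark), at the cost of invoking the broken-geodesic/local-to-global machinery and the Morse lemma with uniform constants; the paper's version is shorter because it delegates exactly those estimates to the cited references. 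The only points you should make explicit are that $|\gamma_k|\to\infty$ follows from $\gamma_k\to\gamma_+\in\partial_\infty\Gamma$, and that injectivity of $m\mapsto\gamma_k^m$ requires $|\gamma_k|>\lambda C$, which again holds only for $k$ large.
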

\begin{proof}
(1) Let $U_+,U_-\subset\partial_\infty\Gamma$ be open neighbourhoods of $\gamma_+,\gamma_-$ respectively with disjoint closures, such that $U_-\cup U_+\neq \partial_\infty\Gamma$ (this is only possible if $\partial_\infty\Gamma$ is infinite, but in other cases $\Gamma$ is either finite or virtually isomorphic to $\Z$, the result is straightforward in these cases). From the convergence property of the action on, $\partial_\infty\Gamma$, we have that 
\[ \gamma_k\cdot\big( \partial_\infty\Gamma\setminus U_-\big)\subset U_+ \quad\textrm{ and }\quad  \gamma_k^{-1}\cdot\big( \partial_\infty\Gamma\setminus U_+\big)\subset U_-\]
when $k$ is large enough. Such elements $\gamma_k$ must have infinite order: otherwise, any element $x\notin  U_-$ would satisfy $\gamma_k^{r}\cdot x\in U_+$ where $r$ is the order of $\gamma_k$, thus $x\in U_+$. The fact that
\[ \gamma_k\cdot U_+\subset U_+ \quad\textrm{ and }\quad  \gamma_k^{-1}\cdot U_-\subset U_-\]
also implies that $\gamma_k^+\in U_+$ and $\gamma_k^-\in U_-$. By choosing $U_+$ and $U_-$ arbitrarily small we find that $\gamma_k^+\to \gamma_+$ and $\gamma_k^-\to \gamma_-$.

(2) We know from (1) that $\gamma_k$ has infinite order for large $k$, and that $\gamma_k^+\to \gamma_+$ and $\gamma_k^-\to\gamma_-$. Let $\eta=d_\infty(\gamma_+,\gamma_-)/2$ where $d_\infty$ is a distance on $\partial_\infty\Gamma$ inducing its cone topology. Then $\gamma_k$ is $\eta$-hyperbolic for $k$ large enough (i.e.\ $d_\infty(\gamma_k^+,\gamma_k^-)\geq \eta$), so by \cite[Proposition 2.39]{GMT}, there is $C_1>0$ such that $\vert\gamma_k\vert\leq \ell(\gamma_k)+C_1$ where $\ell(\gamma)$ denotes the translation length of $\gamma$. But according to \cite[Ch.~10, Prop.~6.4]{CDP}, there is a constant $C_2>0$ such that $\ell(\gamma)\leq \vert\gamma\vert_\infty+C_2$ for all $\gamma\in\Gamma$, hence $\vert\gamma_k\vert_\infty\geq \vert\gamma_k\vert - C_1-C_2\to +\infty$.
\end{proof}

\subsection{Semisimple Lie groups and reductive Lie algebras}

Let $G$ be a finite dimensional real Lie group with Lie algebra $\mathfrak{g}.$  Then $G$ admits a canonical real analytic structure such that multiplication, inversion, and the exponential map $\exp: \mathfrak{g}\rightarrow G $ are real analytic maps.  The Killing form $\mathrm{B}_{\mathfrak{g}}: \mathfrak{g}\times \mathfrak{g}\rightarrow \R$ is defined by the trace pairing
\[
\mathrm{B}_{\mathfrak{g}}(X, Y)=\mathrm{tr}(\mathrm{ad}_{X}\circ \mathrm{ad}_{Y}).
\]
The Lie algebra $\mathfrak{g}$ is semisimple if and only if $\mathrm{B}_{\mathfrak{g}}$ is non-degenerate.  In this article, we use the definition that the real Lie group $G$ is \emph{semisimple} if $\mathfrak{g}$ is semisimple and $G$ has finitely many connected components and finite center.

Recall that the real Lie algebra $\mathfrak{g}$ is reductive if it decomposes as a direct sum
\[
\mathfrak{g}=\mathfrak{z}(\mathfrak{g})\oplus \mathfrak{g}_{\mathrm{ss}}
\]
where $\mathfrak{g}_{\mathrm{ss}}$ is semisimple.  In this article, the only Lie groups with non-semisimple reductive Lie algebra that appear are Levi subgroups $L<G$ where $G$ is semisimple.  While there are various definitions of a real reductive Lie group in the literature (see e.g.\ \cite[VII.2]{knapp}), for the aforementioned reason we will not need a general theory of reductive Lie groups.

\section{Parabolic and Levi subgroups, flag manifolds, transverse flag spaces}\label{sec:Lietheory}

Let $G$ be a non-compact real semisimple Lie group. In this paper we will use two common approaches to characterize parabolic and Levi subgroups of $G$: First, via hyperbolic elements and stabilizers (Section \ref{sec:firstapproach}). Second, via restricted roots and Weyl chambers  (Section \ref{sec:classification}). In that section we also make the connection between the two approaches.

\subsection{Parabolic and Levi subgroups as stabilizers}\label{sec:firstapproach}

The Killing form $\mathrm{B}_{\mathfrak{g}}$ on $\mathfrak{g}$  is a non-degenerate indefinite symmetric bilinear form giving a canonical duality $\mathfrak{g}\simeq \mathfrak{g}^{\ast}$ intertwining the adjoint and coadjoint $G$-actions. Via the latter actions, $G$ acts on the Grassmannians $\cG_d(\fg)$, $\cG_d(\fg^\ast)$ for all $0\leq d\leq \dim \g$. An element $X\in \fg$ is called \emph{hyperbolic} if $\ad_X:\fg\to\fg$ is diagonalizable over $\R$. We write $\fg_{\mathrm{hyp}}\subset\fg$ for the subset of hyperbolic elements. For $X\in \fg_{\mathrm{hyp}}$, we consider
  \begin{align}\begin{split}
 \fl_X&:=\ker(\ad_X),\\
 \fp_X&:= \bigoplus_{\lambda\geq 0}\ker(\ad_X-\lambda\id),\\
 L_X&:=\mathrm{Stab}_G(X)=\set{g\in G}{\Ad(g)X=X},\\
 P_X&:=\mathrm{Stab}_G(\p_X)= \set{g\in G}{\Ad(g)\fp_X=\fp_X}.
\label{eq:PLdef}\end{split}
 \end{align}
The following is well-known (see e.g.\ \cite[Ch.~7]{voganorbit2},\cite[VII.7]{knapp}, \cite[Ch.~1.2]{warner72}):
\begin{fact}\label{fact:lXpXLXPXFX}~
\begin{enumerate}
\item $\fl_X,\fp_X\subset\fg$ are Lie subalgebras.
\item $L_X,P_X\subset G$ are closed subgroups whose Lie algebras are $\fl_X$ and $\fp_X$, respectively, and $L_X$ is reductive.
\item $L_X=P_X\cap P_{-X}$. \label{fact:LXPXPminusX}
\end{enumerate}
\end{fact}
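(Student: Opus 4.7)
The plan is to exploit the $\R$-grading on $\g$ induced by $X$. Since $X\in\g_\hyp$, the operator $\ad_X$ is $\R$-diagonalizable, so $\g=\bigoplus_{\lambda\in\R}\g_\lambda$ with $\g_\lambda:=\ker(\ad_X-\lambda\id)$, and the Jacobi identity gives $[\g_\lambda,\g_\mu]\subset\g_{\lambda+\mu}$. Item (1) then follows at once: $\fl_X=\g_0$ and $\fp_X=\bigoplus_{\lambda\geq 0}\g_\lambda$ are sums indexed by subsets of $\R$ closed under addition, hence are Lie subalgebras.

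For item (2), both $L_X$ and $P_X$ are closed as stabilizers of smooth $G$-actions (on $\g$ and on $\cG_d(\g)$ respectively). Computing the tangent spaces at the identity yields the infinitesimal stabilizers: for $L_X$ one gets $\{Y\in\g : [Y,X]=0\}=\fl_X$, and for $P_X$ one gets the Lie-algebraic normalizer $\{Y\in\g : [Y,\fp_X]\subset\fp_X\}$. I would verify the latter equals $\fp_X$ itself by decomposing $Y=\sum_\mu Y_\mu$ along the grading: $[Y,X]=-\sum_\mu\mu Y_\mu\in\fp_X$ forces $Y_\mu=0$ whenever $\mu<0$. Reductivity of $L_X$ follows from $X$ being semisimple (its $\ad$ is diagonalizable), together with the classical structure theorem that centralizers of semisimple elements in real semisimple Lie groups are reductive, as contained in the cited references \cite{knapp,voganorbit2,warner72}.

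For item (3), the Lie-algebra identity $\fp_X\cap\fp_{-X}=\g_0=\fl_X$ is immediate from $\fp_{-X}=\bigoplus_{\lambda\leq 0}\g_\lambda$. The inclusion $L_X\subset P_X\cap P_{-X}$ is also straightforward: $\Ad(g)X=X$ implies $\Ad(g)$ commutes with $\ad_X$, hence preserves every $\g_\lambda$ and so both $\fp_{\pm X}$. The main step, and the hard part of the Fact, is the reverse inclusion. I plan to invoke the Langlands decomposition $P_{\pm X}=L_X\ltimes N_\pm$ with $N_\pm:=\exp(\fn^\pm)$ and $\fn^\pm:=\bigoplus_{\pm\lambda>0}\g_\lambda$: given $g\in P_X\cap P_{-X}$, write $g=l\,n_+$ with $l\in L_X$ and $n_+\in N_+$; then $n_+=l^{-1}g\in P_{-X}\cap N_+$, which is trivial thanks to the injectivity of the open Bruhat map $N_-\times L_X\times N_+\hookrightarrow G$, forcing $n_+=e$ and $g=l\in L_X$. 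The main obstacle is therefore the global structure input—the Langlands decomposition and the Bruhat-cell injectivity for opposite parabolics in real semisimple Lie groups—rather than any new computation; both are standard and available in the references listed with the statement of the Fact.
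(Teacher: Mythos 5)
Your argument is correct. Note, however, that the paper offers no proof of this statement at all: it is labelled a ``Fact'' and attributed to the standard references (\cite{voganorbit2}, \cite{knapp}, \cite{warner72}), so there is nothing internal to compare against. What you have written is essentially the textbook proof those references contain: the $\ad_X$-eigenspace grading $[\g_\lambda,\g_\mu]\subset\g_{\lambda+\mu}$ for (1), identification of the infinitesimal stabilizers (with the normalizer of $\fp_X$ collapsing to $\fp_X$ itself via the component argument on $[Y,X]$) for (2), and the Langlands decomposition together with injectivity of $N_-\times L_X\times N_+\to G$ for the only non-trivial inclusion $P_X\cap P_{-X}\subset L_X$ in (3). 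The one point worth flagging is that your proof of (3) imports the semidirect product structure $P_{\pm X}=L_X\ltimes N_\pm$ and the Bruhat-cell injectivity as black boxes; since those are established in the references independently of the identity $L_X=P_X\cap P_{-X}$ (and the paper itself quotes them later in Section 3.1.2), there is no circularity, but a fully self-contained treatment would have to prove them too.
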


\begin{definition}

A \emph{Levi subgroup} of $G$ is a subgroup $L< G$ for which there is $X\in\fg_{\mathrm{hyp}}$ with $L=L_X$.

A \emph{parabolic subgroup} of $G$ is a subgroup $P< G$ for which there is $X\in\fg_{\mathrm{hyp}}$ with $P=P_X$.
 
A \emph{parabolic subalgebra} of $\g$ is a subalgebra $\p\subset \g$ for which there is $X\in\fg_{\mathrm{hyp}}$ with $\p=\p_X$.
\end{definition}

\begin{ex} \label{example1}
In the case of $\fg=\mathfrak{sl}(d,\mathrm{k})$, $\mathrm{k}=\R$ or $\C$, an element $X\in \mathfrak{sl}(d,\R)$ is hyperbolic if and only if it is diagonalizable with real eigenvalues. For some integer $r\in\N$, consider vectors $\mathbf{d}=(d_{1},...,d_{r})\in\N^r$ and $\mathbf x=(x_1,\dots,x_r)\in\R^r$ such that
\[ d_1+\cdots+d_r=d,\qquad d_1x_1+\cdots+d_rx_r=0,\qquad \textrm{and } x_1>\cdots>x_r,\] and the diagonal matrix  
\[ X=\begin{pmatrix}x_1 \mathbf{1}_{d_1} &&\\ &\ddots & \\ && x_r \mathbf{1}_{d_r}  \end{pmatrix}\in\mathfrak{sl}(d,\mathrm{k})\,.\]
Then $L_X$  consists of block-wise diagonal matrices, and $P_X$ (resp. $P_{-X}$) consists of upper (resp. lower) block-wise triangular matrices: 
\[ L_X=\left\lbrace\begin{pmatrix}\ast &&0\\ &\ddots & \\ 0&& \ast  \end{pmatrix}\right\rbrace\,,\quad P_X=\left\lbrace\begin{pmatrix}\ast &&\ast\\ &\ddots & \\ 0&& \ast  \end{pmatrix}\right\rbrace\,,\quad P_{-X}=\left\lbrace\begin{pmatrix}\ast &&0\\ &\ddots & \\ \ast&& \ast  \end{pmatrix}\right\rbrace\,.\]
\end{ex}

\subsubsection{The split part of the center of a Levi subgroup} \label{sec:centerofLevi}
Consider a Levi subgroup $L<G$, and denote by $\fl$ its Lie algebra. By definition there is $X\in \g_{\mathrm{hyp}}$ such that $L=L_X$, but this element is far from unique. Two such elements must however commute, and a good understanding of the center of $L$ will be important later. Its identity component $Z_\circ(L)$  is a connected abelian Lie group, so it splits as a product $Z_\circ(L)=R\times T$ where $R$ is isomorphic to some $\R^n$ and $T$ is a compact torus. While $R$ is not unique, the maximal compact torus $T$ is, as it can be defined as the set of elements $g\in Z_\circ(L)$ such that the closure of $\set{g^n}{n\in\Z}$ is compact. The Killing form of $\g$ and the Lie algebra $\mathfrak t$ of the maximal compact torus $T<Z_\circ(L)$ allow us to define a canonical $\R$-split torus $Z_{\mathrm{split}}(L)$ in the center of $L$.

\begin{definition} \label{def split part center of Levi}
Let $L<G$ be a Levi subgroup. The split part $Z_{\mathrm{split}}(L)$ of its center is the group $Z_{\mathrm{split}}(L)=\exp\left( \mathfrak z_{\mathrm{split}}(\fl)\right)$ where $\fz_{\mathrm{split}}(\fl)\subset \fz(\fl)$ is the orthogonal (with respect to the restriction of the Killing form $\mathrm{B}_\g$ of $\g$) of the Lie algebra $\mathfrak t\subset\fz(\fl)$ of the unique maximal compact torus $T<Z_\circ(L)$.
\end{definition}

Note that for any $X\in\g_{\mathrm{hyp}}$, we have that $X\in \fz_{\mathrm{split}}(\fl_X)$.

\begin{ex} \label{example split part of center}
In the setting of Example \ref{example1}, we find
\[ Z_\mathrm{split}(L_X)=\set{\begin{pmatrix}\lambda_1\mathbf{1}_{d_1} &&0\\ &\ddots & \\ 0&& \lambda_r\mathbf{1}_{d_r}  \end{pmatrix}}{\lambda_1,\dots,\lambda_r\in\R_{>0}\,,~ \lambda_1^{d_1}\cdots \lambda_r^{d_r}=1}\,. \]
\end{ex}

\subsubsection{The structure of parabolic subgroups} \label{sec:structureparabolicsubgroups}

 For $X\in \mathfrak{g}_{\mathrm{hyp}}$, consider the   the Lie subalgebra
 \begin{equation}
 \fn_X:=\bigoplus_{\lambda>0}\ker(\ad_X-\lambda\mathrm{id})\subset \p_X\, , \label{eq:defnX}
\end{equation}  
so that we have further decompositions of  Lie algebras
  \begin{align}\begin{split}
 \fp_X&=\fl_X\oplus\fn_X,\\
 \g&=\fn_{-X}\oplus \fl_X\oplus \fn_X\, .
\label{eq:decompg}\end{split}
 \end{align}
 Then the subgroup
\bq
N_X:=\exp(\fn_X)=\set{g\in G}{\lim_{t\to +\infty} \exp(-tX)g\exp(tX) =e}\label{eq:Ncharact}
\eq
is a simply connected closed Ad-unipotent subgroup of $G$ normalizing $L_X$ and satisfying $N_X\cap L_X=\{e\}$ \cite[Prop.~7.1]{voganorbit2}. 
While the Levi subgroup $L_{X}<P_{X}$ is not uniquely defined as a maximal reductive subgroup of $P_{X},$ the unipotent radical is the unique maximal connected Ad-unipotent normal
subgroup $N_{X}\lhd P_{X},$ and there is a short exact sequence
\[
1\rightarrow N_{X}\rightarrow P_{X}\rightarrow P_{X}/N_{X}\rightarrow 1.
\]
The above short exact sequence is split by $L_{X}\simeq P_{X}/N_{X}$ acting on $N_{X}$ via conjugation and presenting
\bq
P_X=N_X\rtimes L_{X}=\mathrm{Stab}_G(\fn_X)=\set{g\in G}{\Ad(g)\fn_X=\fn_X},\label{eq:Pdef2}
\eq
see \cite[Prop.~7.1]{voganorbit2}, \cite[VII.7,~Prop.~7.82 \& 7.83]{knapp}. Note that we have  $$L_{-X}=L_X,$$ while $$N_{-X}\cap N_X=\{e\}.$$

Given any parabolic subgroup $P$ with Lie algebra $\p$, by definition there is $X \in \mathfrak{g}_{\mathrm{hyp}}$ such that $P=P_X$ and $\p=\p_X$, but as for Levi subgroups this $X$ is far from unique. While the Levi subgroup $L_X$ and its Lie algebra $\fl_X$ will be different for different choices of $X$, the subgroup $N_X$ and (most importantly for us) the subalgebra $\fn_X\subset \p_X$ do not depend on the choice of $X$, as $\fn_X$  is the nilpotent radical of $\p_X$.

So denoting by $\fn_\p$ the nilpotent radical of $\p$, we  arrive at the equalities
\[
P=\mathrm{Stab}_G(\p)=\mathrm{Stab}_G(\nL_\p)
\]
which will be the two main characterizations of parabolic subgroups used throughout this paper. Regarding a parabolic Lie algebra $\p$, we will use the tautological property that $\p$ is the Lie algebra of its own stabilizer.

\begin{ex}
In the setting of Example \ref{example1}, we find that $N_X$ consists of block-wise upper triangular matrices with identity diagonal blocks:
\[  N_{X}=\left\lbrace\begin{pmatrix}\mathbf{1}_{d_1} &&\ast\\ &\ddots & \\ 0&& \mathbf{1}_{d_r}  \end{pmatrix}\right\rbrace~.\]
\end{ex}

\subsection{Flag manifolds and transverse flag spaces} 

We now turn to the homogeneous spaces associated to the groups defined in the previous subsection.
\begin{definition} \label{def: flag manifold}
A \emph{flag manifold} is a $G$-homogeneous space $\cF $ whose point stabilizers are parabolic subgroups.
\end{definition}
The geometric approach to flag manifolds usually consists in interpreting them as orbits in the visual boundary of the Riemannian symmetric space of $G$ (see e.g. \cite[2.17]{eberlein}), but we will use a different description by embedding them in Grassmannian manifolds of the Lie algebra $\g$.  For $X\in\g_{\mathrm{hyp}}$, we consider
\[  \cF_X:= G\cdot \fp_X\subset \cG_{\dim\fp_X}(\fg).\]

By the definition of $P_{X}$, there is an isomorphism $\cF_X\simeq G/P_X$ as analytic $G$-manifolds. Since flag manifolds are compact \cite[VII, Prop. 7.83 (f)]{knapp},  $\cF_X\subset \cG_{\dim\fp_X}(\fg)$ is closed.

\begin{definition} \label{def:standard flag manifold}
A \emph{standard flag manifold} is a subset $\cF\subset \cG_d(\fg)$ (for some $0\leq d\leq \dim\fg$) for which there is $X\in \fg_{\mathrm{hyp}}$ with $\cF=\cF_X$.
\end{definition}
Note that any flag manifold $\cF$ is $G$-equivariantly diffeomorphic to the standard flag manifold $\set{\mathfrak{stab}(\mathbf x)}{\mathbf x\in\F}$ where $\mathfrak{stab}(\mathbf x)$ denotes the Lie algebra of the stabilizer of $\mathbf x$. 

While flag manifolds will play a central role in our arguments, the most important objects will be the open $G$-orbits formed by the transverse pairs in products of opposite flag manifolds.

\begin{definition}Consider two flag manifolds $\cF^+$ and $\cF^-$.  A pair $(\mathbf x^+,\mathbf x^-)\in \cF^+\times\cF^-$ is  \emph{transverse} if there is $X\in \fg_{\mathrm{hyp}}$ such that $\mathrm{Stab}_G(\mathbf x^+)=P_X$ and $\mathrm{Stab}_G(\mathbf x^-)=P_{-X}$. We write $\mathbf x^+\pitchfork\mathbf x^-$ if $(\mathbf x^+,\mathbf x^-)$ is transverse and denote the set of all transverse pairs in $\cF^+\times\cF^-$ by $\Ftr$ (or $\Fpf$ for short, when the factors $\F^+$, $\F^-$ are clear from the context).

The flag  manifolds $\cF^+,\cF^-$ are  \emph{opposite} if $\Ftr\neq\emptyset$. In this case, we call $\Ftr$ a \emph{transverse flag space}.
\end{definition}
In view of \eqref{eq:decompg} one then has for every pair $(\mathbf x^+,\mathbf x^-)\in \F^+\times \F^-$, denoting by $\p^+,\p^-$ the Lie algebras of their stabilizers and $\fn_{\p^+}, \fn_{\p^-}$ their nilpotent radicals,
\bq
\mathbf x^+\transverse \mathbf x^- \iff \fp^+\oplus \fn_{\fp^-}=\fn_{\fp^+}\oplus\fp^-=\g, \label{eq:transverseg}
\eq
which justifies the terminology ``transverse'' and shows that the set $\F^+\trtimes\F^-$, which by definition is a single $G$-orbit, is open and dense in $\F^+\times \F^-$. 

Every transverse flag space $\F^+\trtimes \F^-$ comes with a pair of projections
\bq
\begin{tikzcd}
& \F^+\trtimes \F^- \arrow[dl, swap, "p_{\F^+}"] \arrow[dr, "p_{\F^-}"]
\\
\F^+ & & \F^-
\end{tikzcd}\label{eq:Fpmprojections}
\eq
given by the restriction to  $\F^+\trtimes \F^-$  of the canonical projections $\F^+\times \F^-\to \F^\pm$.

\begin{ex} \label{example flag manifolds}
Consider a  vector $\mathbf{d}=(d_{1},...,d_{r})\in\N^r$ with $d_1+\cdots+d_r=d$. The space $\cF_{\mathbf d}$ of \emph{flags of type} $\mathbf d$  is defined as 
\[ \cF_{\mathbf d}(\mathrm{k}^d)=\set{ V_\bullet\in \prod_{i=0}^{r} \G_{d_1+\cdots+d_i}(\mathrm{k}^d) }{ \{0\}=V_0\subset V_1\subset \cdots \subset V_{r-1}\subset V_r=\mathrm{k}^d }\,.\] 
Just as in Example \ref{example1}, consider a real diagonal matrix $ X=\mathrm{Diag}\left(x_1 \mathbf{1}_{d_1},\dots,x_r \mathbf{1}_{d_r}\right)\in\mathfrak{sl}(d,\mathrm{k})$, with $x_1>\cdots>x_r$. The map
\[ \map{\F_{\mathbf d}(\mathrm{k}^d)}{\F_X}{V_\bullet}{\set{Y\in\mathfrak{sl}(d,\mathrm{k})}{Y\cdot V_i\subset V_i~\forall i}}\]
is an $\SL(d,\mathrm{k})$-equivariant diffeomorphism. The same formula gives an $\SL(d,\mathrm{k})$-equivariant diffeomorphism between $\F_{\iota(\mathbf d)}(\mathrm{k}^d)$ and $\F_{-X}$, where $\iota(\mathbf d)=(d_r,\dots,d_1)$, and the transverse flag space 
\[ \F_{\mathbf d}(\mathrm{k}^d)\trtimes\F_{\iota(\mathbf d)}(\mathrm{k}^d)=\set{\big(V_\bullet,W_\bullet\big)}{V_i\oplus W_{r-i}=\mathrm{k}^d~\forall i}\]
can be identified with the space of \emph{gradings of type} $\mathbf d$
\[ \F_{\mathbf d}^\pitchfork(\mathrm{k}^d) := \set{ E_\bullet\in \prod_{i=1}^r\cG_{d_i}(\mathrm{k}^d)}{ E_1\oplus\cdots\oplus E_{r}=\mathrm{k}^d}\] 
through the map
\[\map{\F_{\mathbf d}(\mathrm{k}^d)\trtimes\F_{\iota(\mathbf d)}(\mathrm{k}^d)}{ \F_{\mathbf d}^\pitchfork(\mathrm{k}^d) }{\big(V_\bullet,W_\bullet\big)}{(V_i\cap W_{r+1-i})_{1\leq i\leq r}}\]
whose inverse maps $E_\bullet\in \F_{\mathbf d}^\pitchfork(\mathrm{k}^d)$ to the pair $(V_\bullet,W_\bullet)$ where $V_i=E_1\oplus\cdots\oplus E_i$ and $W_i=E_r\oplus\cdots\oplus E_{r+1-i}$.

\end{ex}

\subsection{Pseudo-Riemannian structure of transverse flag spaces} \label{sec:pseudoriemannianstructure} 
The tangent space of a transverse flag space $\Fpf=\F^+\trtimes \F^-$ at a point $(\mathbf x^+,\mathbf x^-)$ has a nice description: denoting by $\p^+$ and $\p^-$ the Lie algebras of their respective stabilizers and $\fl=\p^+\cap\p^-$, the projection of $\fn_{\fp^+}\oplus\fn_{\fp^-}\subset\g$ onto $\g/\fl$ is an isomorphism, and the latter can be identified with $T_{(\mathbf x^+,\mathbf x^-)}\Fpf$ via the derivative of the $G$-action. 

This description allows for the definition of a $G$-invariant pseudo-Riemannian metric induced by the  Killing form $\mathrm{B}_\g$ because the subspace $\fn_{\fp^+}\oplus\fn_{\fp^-}\subset\g$ is non-degenerate. More details concerning this pseudo-Riemannian metric and other geometric structures on transverse flag spaces will be discussed in Section \ref{sec:geometric_structures}.

\subsection{Standard parabolic subgroups and flag manifolds}\label{sec:classification}

Let $\theta:\g\to \g$ be a Cartan involution defining a Cartan decomposition $\g=\k\oplus \k^\perp$ into its $+1$ and $-1$ eigenspaces, which are orthogonal with respect to the Killing form $\mathrm{B}_\g$, and let $K<G$ be the maximal compact subgroup with Lie algebra $\k$. It is the fixed point set of the global Cartan involution, which by abuse of notation we again denote by $\theta:G\to G$. The Cartan  decomposition $\g=\k\oplus \k^\perp$ is $\Ad(K)$-invariant.

Choose a maximal abelian subalgebra $\aL$ of $\k^\perp$. Then the Killing form $\mathrm{B}_\g$ restricts to an inner product on $\aL$ and we get the restricted root decomposition $$\fg=\fm\oplus\fa\oplus\bigoplus_{\alpha\in \Sigma}\fg_\alpha,$$ where $\fm=\mathfrak{z}_\k(\aL)$.  Now, we choose a simple system
 $$
\Delta\subset\Sigma,
$$
which defines a closed Weyl chamber $\fa^+=\{Y\in \aL^\ast\,|\,\alpha(Y)\geq 0 \;\forall\;\alpha\in \Delta\}$. The latter is a fundamental domain for the action of the Weyl group $W=N_K(\aL)/Z_K(\aL)$. We also obtain the Iwasawa decomposition $$G=KAN,$$
 where $A=\exp(\aL)$,  $N=\exp(\nL)$ with $\nL:=\bigoplus_{\alpha\in \Sigma^+}\fg_\alpha$ and  $\Sigma^+\subset \Sigma$ consists of all roots that are positive linear combinations of roots in $\Delta$.  Let $w_0\in W$ be the longest   Weyl group element and $\iota_{\mathrm{o}}:\aL\to \aL$, $X\mapsto -w_0(X)$, the opposition involution. The latter has the key property that it maps $\fa^+$ into itself, and it also acts on $\Delta$. For more details on all this, we refer the reader to \cite[Ch.~VII]{knapp}, \cite[Ch.~2]{warner72} and \cite[Ch.~1 \& 2]{wallach}.

\begin{fact}[{\cite[Thm.\ 6.2]{voganorbit2}}]\label{fact:fund}
The inclusions $\aL\hookrightarrow\k^\perp\hookrightarrow \g$ induce bijections between the $W$-orbits in $\aL$, the $\Ad(K)$-orbits in $\k^\perp$, and the hyperbolic $\Ad(G)$-orbits in $\g$. In particular, every hyperbolic adjoint $G$-orbit intersects $\aL^+$ in exactly one element, and one has $\Ad(G)\aL^+=\g_{\mathrm{hyp}}$, $\Ad(K) \aL^+=\k^\perp$.
\end{fact}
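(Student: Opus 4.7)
The plan is to establish the two bijections separately, namely between (i) $W$-orbits in $\aL$ and $\Ad(K)$-orbits in $\k^\perp$, and (ii) $\Ad(K)$-orbits in $\k^\perp$ and hyperbolic $\Ad(G)$-orbits in $\g$. The ``in particular'' clauses then follow because $\aL^+$ is a strict fundamental domain for the $W$-action on $\aL$, which is the standard Chevalley theorem for the restricted root system.

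For bijection (i), I would first note that the positive definite inner product $B_\theta(Y,Z) := -\mathrm{B}_\g(Y,\theta Z)$ makes $\ad X$ self-adjoint for every $X\in\k^\perp$ (since $\theta[X,\cdot]=-[X,\theta\cdot]$), so elements of $\k^\perp$ are diagonalizable over $\R$ and hence hyperbolic. The surjectivity $\Ad(K)\aL=\k^\perp$ is the classical conjugacy of maximal abelian subspaces of $\k^\perp$ under $\Ad(K)$: for $X\in\k^\perp$, pick a maximal abelian subspace $\aL'\ni X$ of $\k^\perp$, and conjugate $\aL'$ to $\aL$ by some $k\in K$. To show that $X,Y\in \aL$ lying in the same $\Ad(K)$-orbit also lie in the same $W$-orbit, one applies the same conjugacy statement inside the centralizer $\k_X:=\fz_\k(X)$: if $\Ad(k)X=Y$, then both $\aL$ and $\Ad(k^{-1})\aL$ are maximal abelian subspaces of $\k^\perp$ containing $X$, hence are conjugate by an element of $Z_K(X)$, and composing with $k$ produces an element of $N_K(\aL)/Z_K(\aL)=W$ sending $X$ to $Y$.

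For bijection (ii), the two non-trivial facts are surjectivity $\Ad(G)\k^\perp=\g_{\mathrm{hyp}}$ and the statement that $\Ad(K)$-orbits in $\k^\perp$ do not merge under $\Ad(G)$. For surjectivity, the approach I would take is a minimum-norm argument à la Kempf--Ness/Richardson. Given $X\in\g_{\mathrm{hyp}}$, one can show that the orbit $\Ad(G)X$ is closed in $\g$ (using that $X$ lies in the center of the reductive Levi $\fl_X=\ker\ad X$ and that $L_X$ has finite index in its normalizer, so the orbit map factors nicely through $G/L_X\hookrightarrow \cG_d(\g)$ which has closed orbits). Then the $K$-invariant proper function $g\mapsto \|\Ad(g)X\|_\theta^2$ on $G/K$ attains a minimum at some point, and differentiating the minimality condition in the direction of $Y\in\k^\perp$ gives $B_\theta([\theta Z,Z],Y)=0$ for $Z:=\Ad(g_0)X$, so that $[\theta Z,Z]=0$. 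Combined with hyperbolicity of $Z$, a short linear algebra argument (decomposing $Z=Z_\k+Z_{\k^\perp}$ and using the restricted root decomposition of $\fz_\g(Z_{\k^\perp})$) forces $Z_\k=0$, hence $Z\in\k^\perp$. For the non-merging statement, if $X,Y\in\k^\perp$ satisfy $Y=\Ad(g)X$, one writes $g=k_1\exp(H)k_2$ via the Cartan decomposition $G=K(\exp\aL^+)K$ and checks that the $\exp(H)$-factor must act trivially on the appropriately conjugated $X$, reducing to $Y\in \Ad(K)X$.

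The main obstacle is the surjectivity step $\Ad(G)\k^\perp=\g_{\mathrm{hyp}}$: showing closedness of the hyperbolic orbit and carrying out the minimization carefully require invoking either the structure theory of Levi subgroups developed in Section~\ref{sec:firstapproach} (to know $L_X$ is reductive with finitely many components) or a reduction to the Kostant--Rallis theory of $\theta$-stable Cartan subspaces. In practice the cleanest write-up is simply to cite \cite[Thm.~6.2]{voganorbit2} as the paper does, which packages all of the above into one statement about hyperbolic $G$-orbits.
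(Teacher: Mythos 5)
The paper does not prove this statement at all: it is stated as a \emph{Fact} and attributed entirely to \cite[Thm.~6.2]{voganorbit2}, so there is no internal argument to compare yours against. Your reconstruction is, in outline, the standard proof that Vogan's theorem packages: (i) the $W$-orbit versus $\Ad(K)$-orbit correspondence via self-adjointness of $\ad X$ for $X\in\k^\perp$ and conjugacy of maximal abelian subspaces of $\k^\perp$, applied once globally and once inside $\fz_\g(X)$; (ii) surjectivity $\Ad(G)\k^\perp=\g_{\mathrm{hyp}}$ by a minimum-norm argument, and non-merging via the $K(\exp\aL^+)K$ decomposition together with $\theta$-invariance (which gives $\Ad(a^2)X'=X'$ and hence $\Ad(a)X'=X'$ since $\ad H$ is real-diagonalizable). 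The ``in particular'' clauses do follow from $\aL^+$ being a strict fundamental domain for $W$. This is all sound and is essentially what the cited reference does.

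One sub-justification in your sketch is wrong as stated: you claim the hyperbolic orbit $\Ad(G)X$ is closed because ``the orbit map factors through $G/L_X\hookrightarrow\cG_d(\fg)$ which has closed orbits.'' Grassmannians are compact, but $G$-orbits in them need not be closed --- indeed the orbit $G/L_X\simeq\F^+\trtimes\F^-$ of the pair $(\fp_X,\fp_{-X})$ is precisely the \emph{open dense} orbit used throughout this paper, and it is closed only in trivial cases. Closedness of semisimple (in particular hyperbolic) adjoint orbits is true, but needs a different argument (e.g.\ the real form of the Borel--Harish-Chandra closed-orbit theorem, or a direct argument using the Jordan decomposition of a limit point), and it is genuinely needed for the Kempf--Ness minimization to locate the minimizer on the orbit rather than on its boundary. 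With that step repaired, or simply with the citation the paper itself uses, your plan is complete.
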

Fact \ref{fact:fund} allows us to use the simple system $\Delta$ for classifying parabolic and Levi subgroups as well as the associated flag manifolds and transverse flag spaces. The so-called \emph{standard parabolic subgroups} associated to $\Delta$ can be described in many equivalent ways (see e.g.\ \cite[VII.7]{knapp}, \cite[Ch.~2]{wallach}). For us, the following rather quick description suffices (c.f.\ \cite[VII, Prop.~7.82, 7.83]{knapp}, \cite[Sec.~3.2]{GW12}): Let ${\Theta}\subset \Delta$ be a subset and define 
\bq
\aL_{\Theta}:= \bigcap_{\alpha \in \Delta\setminus {\Theta}}\ker \alpha\subset \aL,\qquad 
 P_{\Theta}:=Z_K(\aL_{\Theta})AN.\label{eq:aln3}
\eq
$P_\Theta$ is the \emph{standard parabolic subgroup} associated to $\Theta$. Its \emph{opposite} is $\overline P_\Theta:=\theta (P_\Theta)$, which is conjugate to $P_{\iota_o(\Theta)}$. The \emph{standard Levi subgroup} $L_\Theta:=P_\Theta\cap \overline P_\Theta$ and its split component are given by  $L_\Theta=Z_G(\aL_\Theta)$,  
\bq
Z_\mathrm{split}(L_\Theta)=\exp(\aL_{\Theta})\subset A,\label{eq:splitcompLeviTheta}
\eq
see \cite[VII, Prop.~7.82]{knapp}. Every parabolic subgroup of $G$ containing the minimal parabolic  subgroup $P_\Delta=Z_K(\aL)AN$ is of the form $P_\Theta$ for a unique subset ${\Theta}\subset \Delta$ \cite[VII, Prop.~7.76]{knapp}.  

Let us now make the connection to Section \ref{sec:firstapproach}: Given $X\in \fa^+$, consider the set
\bq
\Theta_X:=\set{\alpha\in\Delta}{\alpha(X)\neq 0}=\set{\alpha\in\Delta}{\alpha(X)> 0}\subset \Delta.\label{eq:ThetaX}
\eq
Then one finds that $P_X=P_{\Theta_X}$ and $L_X=L_{\Theta_X}$. As for the Lie algebras, one gets
  \begin{align}\begin{split}
 \fp_X&=\fm\oplus\fa\oplus\bigoplus_{\substack{\alpha\in\Sigma\\\alpha(X)\geq 0}}\fg_\alpha,\\
 \fl_X&=\fm\oplus\fa\oplus\bigoplus_{\substack{\alpha\in\Sigma\\\alpha(X)= 0}}\fg_\alpha,\\
 \fn_X&=\bigoplus_{\substack{\alpha\in\Sigma\\\alpha(X)> 0}}\fg_\alpha.
\label{eq:lpXalgebraic}\end{split}
 \end{align}
 It follows that $P_X$ depends only indirectly on $X$ via the set $\Theta_X$. More precisely, it follows from the one-to-one correspondence betweeen standard parabolic subgroups and subsets of $\Delta$  that for all $X,X'\in \fa^+$ we have
\bq
\Theta_X\subset \Theta_{X'} \;\iff\;  P_{X'}< P_{X}.\label{eq:LPincl}
\eq

\begin{definition}\label{def:standardthings}
For  $\Theta\subset\Delta$, the \emph{standard flag manifold  associated to $\Theta$} is $\cF_\Theta:=G\cdot\fp_\Theta$.
\end{definition}

Note that $\F_\Theta$ is in fact a standard flag manifold according to Definition \ref{def:standard flag manifold}, since $\F_\Theta=\F_X$ for any $X\in \aL^+$ such that $\Theta_X=\Theta$. From \eqref{eq:lpXalgebraic} and Fact \ref{fact:fund}, we get the following:
\begin{prop}\label{prop:parabolic subgroup conjugate to standard} Every parabolic subgroup is conjugate to a standard parabolic subgroup $P_\Theta$ for a unique subset $\Theta\subset\Delta$,  and every  flag manifold is $G$-equivariantly diffeomorphic to a unique standard flag manifold.
\end{prop}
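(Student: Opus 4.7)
The plan is to combine Fact \ref{fact:fund} with the self-normalizing property of parabolic subgroups and the injectivity of $\Theta\mapsto P_\Theta$ recalled from \cite[VII, Prop.~7.76]{knapp}.

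\emph{Existence.} By definition of a parabolic subgroup, $P=P_X$ for some $X\in\g_\hyp$. Fact \ref{fact:fund} supplies $g\in G$ with $Y:=\Ad(g)X\in\aL^+$. The identity $\Ad(g)\ker(\ad_X-\lambda\id)=\ker(\ad_Y-\lambda\id)$ yields $\Ad(g)\fp_X=\fp_Y$, whence $gPg^{-1}=\mathrm{Stab}_G(\fp_Y)=P_Y$. Since $Y\in\aL^+$, formula \eqref{eq:lpXalgebraic} together with the discussion following \eqref{eq:ThetaX} identifies $P_Y$ with the standard parabolic $P_{\Theta_Y}$, where $\Theta_Y=\{\alpha\in\Delta\,|\,\alpha(Y)>0\}$.

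\emph{Uniqueness.} Suppose $gP_\Theta g^{-1}=P_{\Theta'}$ for some $g\in G$. Since $P_\Delta\subset P_\Theta\cap P_{\Theta'}$, both $P_\Delta$ and $gP_\Delta g^{-1}$ are minimal parabolic subgroups of $G$ sitting inside $P_{\Theta'}$; by the $P_{\Theta'}$-conjugacy of its minimal parabolics, some $h\in P_{\Theta'}$ satisfies $(hg)P_\Delta(hg)^{-1}=P_\Delta$. Self-normalization of $P_\Delta$ in $G$ forces $hg\in P_\Delta\subset P_\Theta$, and self-normalization of $P_\Theta$ then gives $(hg)P_\Theta(hg)^{-1}=P_\Theta$, i.e., $gP_\Theta g^{-1}=h^{-1}P_\Theta h$. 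Combining with $gP_\Theta g^{-1}=P_{\Theta'}$ and exploiting $h\in P_{\Theta'}$ together with self-normalization of $P_{\Theta'}$ yields $P_\Theta=P_{\Theta'}$, whence $\Theta=\Theta'$ by \cite[VII, Prop.~7.76]{knapp}.

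For the flag manifold statement, Definition \ref{def: flag manifold} gives a $G$-equivariant diffeomorphism $\F\simeq G/P$ where $P$ is any point stabilizer, which is parabolic; conjugating $P$ into a standard $P_\Theta$ by the first part produces the $G$-equivariant diffeomorphism $\F\simeq G/P_\Theta=\F_\Theta$ via $hP\mapsto hg^{-1}P_\Theta$, which is well-defined thanks to $gPg^{-1}=P_\Theta$. Uniqueness of $\Theta$ is inherited from the parabolic case, since a $G$-equivariant diffeomorphism $G/P_\Theta\to G/P_{\Theta'}$ forces $P_\Theta$ and $P_{\Theta'}$ to be $G$-conjugate. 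The principal obstacle is the uniqueness step: existence is immediate from Fact \ref{fact:fund} and the flag-manifold statement is then essentially formal, but forcing $\Theta=\Theta'$ from a mere conjugacy $gP_\Theta g^{-1}=P_{\Theta'}$ requires the minimal-parabolic shuffling argument above.
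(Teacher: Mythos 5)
Your proof is correct and follows the same route the paper intends: existence via Fact \ref{fact:fund} and \eqref{eq:lpXalgebraic}, uniqueness via the bijection of \cite[VII, Prop.~7.76]{knapp}. The paper states the proposition without proof, leaving implicit the standard fact that two conjugate standard parabolics coincide; your minimal-parabolic shuffling argument (conjugacy of minimal parabolics inside $P_{\Theta'}$ plus self-normalization) supplies exactly the missing detail correctly.
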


Choosing a different root space decomposition provides standard parabolic subgroups that are conjugate to each other. However, the standard flag manifolds actually equal, and not just isomorphic to each other.

\begin{prop} \label{prop:flagTheta}
 For every standard flag manifold $\cF$, there is a unique subset $\Theta\subset\Delta$ such that $\cF=\cF_\Theta$.
\end{prop}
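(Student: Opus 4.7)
My plan is to deduce the proposition directly from Fact \ref{fact:fund} together with Proposition \ref{prop:parabolic subgroup conjugate to standard}, by leveraging the observation that the set $\cF_X = G \cdot \fp_X$ depends only on the $\Ad(G)$-orbit of $\fp_X$.

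\textbf{Existence.} Given a standard flag manifold $\cF$, by Definition \ref{def:standard flag manifold} there exists $X \in \fg_{\mathrm{hyp}}$ with $\cF = \cF_X$. By Fact \ref{fact:fund}, the $\Ad(G)$-orbit of $X$ meets $\aL^+$ in a unique element $X_0$, so there is $g \in G$ with $X = \Ad(g)X_0$. Since $\ad_X = \Ad(g) \circ \ad_{X_0} \circ \Ad(g^{-1})$, we have $\fp_X = \Ad(g)\fp_{X_0}$, hence
\[ \cF_X = G \cdot \fp_X = G \cdot \Ad(g)\fp_{X_0} = G \cdot \fp_{X_0} = \cF_{X_0}, \]
as subsets of $\cG_{\dim \fp_X}(\fg)$. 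Setting $\Theta := \Theta_{X_0}\subset \Delta$ as in \eqref{eq:ThetaX}, it remains to verify the equality $\fp_{X_0} = \fp_\Theta$, i.e.\ that $\fp_{X_0}$ is the Lie algebra of the standard parabolic subgroup $P_\Theta = Z_K(\aL_\Theta)AN$ defined in \eqref{eq:aln3}. This follows from \eqref{eq:lpXalgebraic}: the Lie algebra of $Z_K(\aL_\Theta)$ is $\fm \oplus \bigoplus_{\alpha \in \Sigma,\, \alpha|_{\aL_\Theta} = 0} \fg_\alpha$, and the condition $\alpha|_{\aL_\Theta} = 0$ combined with $\alpha \in \Sigma^+$ or $\alpha \in -\Sigma^+$ precisely carves out the root spaces appearing in the expression for $\fp_{X_0}$ given in \eqref{eq:lpXalgebraic}, using that $\Theta = \Theta_{X_0}$. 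With $\fp_{X_0} = \fp_\Theta$ established, we obtain $\cF = \cF_{X_0} = \cF_\Theta$.

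\textbf{Uniqueness.} Suppose $\cF_\Theta = \cF_{\Theta'}$ as subsets of the ambient Grassmannian for some $\Theta, \Theta' \subset \Delta$. Then $\fp_\Theta \in \cF_\Theta = \cF_{\Theta'} = G \cdot \fp_{\Theta'}$, so there is $g \in G$ with $\fp_\Theta = \Ad(g)\fp_{\Theta'}$. Since $P_\Theta = \mathrm{Stab}_G(\fp_\Theta)$ (and likewise for $\Theta'$), this forces $P_\Theta = g P_{\Theta'} g^{-1}$, so the two standard parabolic subgroups are $G$-conjugate. Proposition \ref{prop:parabolic subgroup conjugate to standard} asserts that any parabolic subgroup is conjugate to a standard one for a \emph{unique} subset of $\Delta$, and hence $\Theta = \Theta'$.

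\textbf{Main obstacle.} There is essentially no obstacle: the statement reduces to bookkeeping, with the substantive content already contained in Fact \ref{fact:fund} (every hyperbolic element meets $\aL^+$ uniquely) and Proposition \ref{prop:parabolic subgroup conjugate to standard} (uniqueness up to conjugacy). The only mildly subtle step is the matching $\fp_{X_0} = \fp_\Theta$ for $X_0 \in \aL^+$ with $\Theta = \Theta_{X_0}$, which requires one to notice that $X_0$ lies in the relative interior of the face $\aL_\Theta \cap \aL^+$, so the sign of $\alpha(X_0)$ for $\alpha \in \Sigma$ is controlled exactly by whether $\alpha|_{\aL_\Theta}$ vanishes and whether $\alpha \in \Sigma^+$.
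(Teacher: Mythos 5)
Your proof is correct and follows essentially the same route as the paper: existence via Fact \ref{fact:fund} (transporting $X$ into $\aL^+$) combined with the algebraic description \eqref{eq:lpXalgebraic} to identify $\cF_X$ with $\cF_{\Theta_{X_0}}$, and uniqueness via Proposition \ref{prop:parabolic subgroup conjugate to standard}. You have merely filled in the bookkeeping that the paper leaves implicit.
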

\begin{proof}
By Fact \ref{fact:fund} every $X\in\fg_{\mathrm{hyp}}$ is transported by $\Ad(G)$ to a unique element of $X'\in\fa^+$, and $\F_X=\F_{X'}=\F_{\Theta_{X'}}$ by \eqref{eq:lpXalgebraic}. The uniqueness follows from Proposition \ref{prop:parabolic subgroup conjugate to standard}.
\end{proof}

\begin{ex} \label{example standard parabolics}
In the study of $\SL(d,\mathrm{k})$, $\mathrm{k}=\R$ or $\C$, the usual description (see \cite[p.\ 509]{knapp}) leads to choosing for $\aL\subset \mathfrak{sl}(d,\mathrm{k})$ the space of real diagonal traceless matrices, and choosing as a closed Weyl chamber
 \[  \aL^+=\set{\mathrm{Diag}(x_1,\ldots,x_d)\in\aL}{x_1\geq \cdots\geq x_d }\,,\]
corresponding to simple restricted roots $\Delta=\{\alpha_1,\dots,\alpha_{d-1}\}$ where 
\[ \alpha_j\big(\mathrm{Diag}(x_1,\ldots,x_d)\big):=x_j-x_{j+1}\,.\]
For a matrix $X=\mathrm{Diag}\left(x_1 \mathbf{1}_{d_1},\dots,x_r \mathbf{1}_{d_r}\right)\in\aL^+$ as in Example \ref{example1}, we find \[\Theta_X=\{\alpha_{d_1},\alpha_{d_1+d_2},\dots,\alpha_{d_1+\cdots+d_{r-1}}\}\,.\]
The corresponding standard parabolic subgroups are the block-wise upper triangular groups described in Example \ref{example1}, and the opposition involution acts on $\Delta$ as $\iota_o(\alpha_j)=\alpha_{d-j}$.
\end{ex}

\subsection{The hierarchy of flag manifolds}\label{sec:Delta} The correspondence between standard flag manifolds and subsets of simple restricted roots endows the set of standard flag manifolds with a partial ordering, corresponding to inclusion of subsets of $\Delta$. We shall see in this section how to describe this partial ordering without fixing a restricted root decomposition. 

\begin{lem} \label{lem uniqueness of equivariant maps between dominated flag manifolds} Let $\F,\F'$ be two flag manifolds. If there exists a $G$-equivariant map $\F\to\F'$, then it is unique and it is a fiber bundle projection.
\end{lem}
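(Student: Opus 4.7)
By Propositions~\ref{prop:parabolic subgroup conjugate to standard} and \ref{prop:flagTheta}, there exist unique subsets $\Theta,\Theta'\subset\Delta$ such that we may identify $\F=G/P_\Theta$ and $\F'=G/P_{\Theta'}$ as $G$-homogeneous spaces. Since both are transitive $G$-spaces, any $G$-equivariant map $f\colon G/P_\Theta\to G/P_{\Theta'}$ is determined by the image of the base point, $f(eP_\Theta)=gP_{\Theta'}$. The equivariance condition $f(hP_\Theta)=h\cdot f(eP_\Theta)$ forces $P_\Theta\subset gP_{\Theta'}g^{-1}$. The whole proof thus reduces to showing that the set $\{gP_{\Theta'}\in G/P_{\Theta'}\,:\, P_\Theta\subset gP_{\Theta'}g^{-1}\}$ has at most one point.

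The key step is as follows. Set $Q:=gP_{\Theta'}g^{-1}$. Then $Q$ is a parabolic subgroup containing $P_\Theta$ and a fortiori the minimal standard parabolic $P_\Delta\subset P_\Theta$ (recall that in the conventions of \eqref{eq:aln3}, $\Theta\subset\Theta''$ implies $P_{\Theta''}\subset P_\Theta$). By the classification of standard parabolic subgroups (every parabolic containing $P_\Delta$ is of the form $P_{\Theta''}$ for a unique $\Theta''\subset\Delta$), we have $Q=P_{\Theta''}$ for some $\Theta''$. Since $Q$ is conjugate to $P_{\Theta'}$, the uniqueness in Proposition~\ref{prop:parabolic subgroup conjugate to standard} forces $\Theta''=\Theta'$, hence $Q=P_{\Theta'}$. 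Invoking the classical self-normalization property $N_G(P_{\Theta'})=P_{\Theta'}$ of parabolic subgroups of a semisimple Lie group, we conclude $g\in P_{\Theta'}$, so $gP_{\Theta'}=eP_{\Theta'}$. This pins down $f(eP_\Theta)=eP_{\Theta'}$ and, by equivariance, $f$ itself. As a byproduct we obtain $P_\Theta\subset P_{\Theta'}$ (equivalently $\Theta'\subset\Theta$), and $f$ is the canonical quotient projection $gP_\Theta\mapsto gP_{\Theta'}$.

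The fiber bundle assertion then follows from a standard associated-bundle argument: $G\to G/P_{\Theta'}$ is a principal right $P_{\Theta'}$-bundle since $P_{\Theta'}$ is closed, and under the natural identification $G\times_{P_{\Theta'}}(P_{\Theta'}/P_\Theta)\simeq G/P_\Theta$ the projection $f$ becomes the associated fiber bundle with typical fiber the smooth homogeneous space $P_{\Theta'}/P_\Theta$, giving local triviality via local sections of $G\to G/P_{\Theta'}$.

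The main technical input is the uniqueness step, which rests on two classical facts about parabolic subgroups of semisimple Lie groups that are not reproved in the paper but are standard (see e.g.\ \cite[VII.7]{knapp}): the self-normalization $N_G(P)=P$ for any parabolic $P$, and the bijective correspondence between subsets $\Theta\subset\Delta$ and conjugacy classes of parabolic subgroups, with each standard representative uniquely characterized by containing $P_\Delta$. Once these are granted, the argument is purely formal and does not require any further dynamical or geometric input.
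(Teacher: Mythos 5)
Your argument is correct and follows essentially the same route as the paper's proof: both reduce to the facts that a parabolic containing a standard parabolic is itself standard (\cite[VII, Prop.~7.76]{knapp}), that conjugate standard parabolics coincide, and that parabolics are self-normalizing, which together pin down the image of a base point and identify the map with the canonical projection $G/P_\Theta\to G/P_{\Theta'}$. The only difference is cosmetic (you show a single map is forced, the paper compares two maps at a base point), and your extra detail on the associated-bundle structure of the fibration is a harmless elaboration of what the paper leaves implicit.
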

\begin{proof}
Consider two $G$-equivariant maps $\varphi,\psi:\F\to\F'$. Fix a restricted root space decomposition, as well as an element $\mathbf x_0\in\F$ whose stabilizer $\mathrm{Stab}_G(\mathbf x_0)$ is a standard parabolic subgroup $P_\Theta$ as defined in section \ref{sec:classification}. Then the stabilizers of $\varphi(\mathbf x_0)$ and $\psi(\mathbf x_0)$ are both parabolic subgroups of $G$ containing $P_\Theta$, so according to \cite[VII, Prop.~7.76]{knapp}, they are also standard parabolic subgroups. But they are conjugate to each other, so $\varphi(\mathbf x_0)$ and $\psi(\mathbf x_0)$ have the same stabilizer $P_{\Theta'}$ for some $\Theta'\subset \Theta$, thus $\varphi(\mathbf x_0)=\psi(\mathbf x_0)$ (because parabolic subgroups are their own normalizers), and the equality $\varphi=\psi$ follows from the $G$-equivariance. In the coset description $\varphi$ corresponds to the canonical projection $G/P_{\Theta}\to G/P_{\Theta'}$.
\end{proof}

In the case of isomorphic (i.e.\ $G$-equivariantly diffeomorphic) flag manifolds, Lemma \ref{lem uniqueness of equivariant maps between dominated flag manifolds} has a strong  implication: the classification up to isomorphisms of Proposition \ref{prop:parabolic subgroup conjugate to standard} is actually a classification up to \emph{unique}  isomorphism. Practically, this means that choosing between working with abstract flag manifolds or standard flag manifolds is merely a matter of taste. Lemma \ref{lem uniqueness of equivariant maps between dominated flag manifolds} is also a motivation for the following partial ordering of flag manifolds:

\begin{definition}\label{def:domination flag manifolds}\,
 Let $\F,\F'$ be two flag manifolds. We say that $\F$ \emph{dominates} $\F'$, and denote it by $\F'\prec\F$, if there is a $G$-equivariant map $\Pi^{\mathcal F}_{\mathcal F'}:\F\to \F'$.
\end{definition}

The hierarchy defined by this partial ordering corresponds to inclusion for subsets of simple restricted roots.

\begin{prop} \label{prop domination standard flag manifolds}
Fix a restricted root space decomposition of $\g$. For any two subsets $\Theta, \Theta'\subset \Delta$ we have $\cF_\Theta\prec \cF_{\Theta'}$ iff $\Theta\subset \Theta'$.
\end{prop}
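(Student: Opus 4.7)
The proposition essentially asserts that the classification of standard parabolic subgroups by subsets of $\Delta$ is order-reversing with respect to inclusion, combined with the dictionary between subgroup inclusions and $G$-equivariant fibrations of homogeneous spaces. My plan is to treat the two implications separately, using the machinery already assembled in the excerpt.

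For the ``if'' direction, suppose $\Theta\subset\Theta'$. Pick any $X,X'\in\aL^+$ with $\Theta_X=\Theta$ and $\Theta_{X'}=\Theta'$ (such elements exist: take, e.g., sums of fundamental coweights supported on the respective subsets). Then by \eqref{eq:LPincl} we have $P_{X'}< P_X$, i.e.\ $P_{\Theta'}\subset P_\Theta$. The canonical projection $G/P_{\Theta'}\to G/P_\Theta$ is then a well-defined $G$-equivariant map, which under the identifications $\cF_\Theta\simeq G/P_\Theta$ and $\cF_{\Theta'}\simeq G/P_{\Theta'}$ yields a $G$-equivariant map $\cF_{\Theta'}\to\cF_\Theta$, proving $\cF_\Theta\prec\cF_{\Theta'}$.

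For the ``only if'' direction, suppose $\cF_\Theta\prec \cF_{\Theta'}$, so there is a $G$-equivariant map $\varphi:\cF_{\Theta'}\to\cF_\Theta$. Let $\mathbf{x}_0\in\cF_{\Theta'}$ be the point with $\mathrm{Stab}_G(\mathbf{x}_0)=P_{\Theta'}$. Set $\mathbf{y}_0:=\varphi(\mathbf{x}_0)\in\cF_\Theta$ and $Q:=\mathrm{Stab}_G(\mathbf{y}_0)$. By $G$-equivariance, $P_{\Theta'}\subset Q$, and since $P_{\Theta'}\supset P_\Delta$ (because $\Theta'\subset\Delta$), we have $Q\supset P_\Delta$. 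By \cite[VII, Prop.~7.76]{knapp}, every parabolic subgroup containing $P_\Delta$ is itself a standard parabolic, so $Q=P_{\Theta''}$ for a unique $\Theta''\subset\Delta$. But $Q$, being a point stabilizer in $\cF_\Theta\simeq G/P_\Theta$, is conjugate to $P_\Theta$, and by Proposition \ref{prop:parabolic subgroup conjugate to standard} each conjugacy class of parabolics contains a \emph{unique} standard representative. Hence $\Theta''=\Theta$, i.e.\ $Q=P_\Theta$, and therefore $P_{\Theta'}\subset P_\Theta$. Reversing \eqref{eq:LPincl} this gives $\Theta\subset\Theta'$, as desired.

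The only subtle point is the direction-reversal between the root subset inclusion $\Theta\subset\Theta'$ and the corresponding parabolic inclusion $P_{\Theta'}\subset P_\Theta$, but this is already packaged in \eqref{eq:LPincl}; apart from that, the argument just assembles Proposition \ref{prop:parabolic subgroup conjugate to standard} with the standard fact that standard parabolics are self-normalizing and pairwise non-conjugate.
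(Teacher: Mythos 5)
Your proof is correct and follows the same route the paper intends: the paper's proof is the one-liner ``It follows from \eqref{eq:LPincl}'', and the details you supply (existence of $X,X'\in\aL^+$ with $\Theta_X=\Theta$, $\Theta_{X'}=\Theta'$, the dictionary between equivariant maps and parabolic inclusions, and the uniqueness of the standard representative via \cite[VII, Prop.~7.76]{knapp} and Proposition \ref{prop:parabolic subgroup conjugate to standard}) are exactly the ones already rehearsed in the proof of Lemma \ref{lem uniqueness of equivariant maps between dominated flag manifolds}. No gaps.
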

\begin{proof}
It follows from \eqref{eq:LPincl}.
\end{proof}

So far there is an important  operation on simple restricted roots that we have not described in terms of simple flag manifolds: the opposition involution. It happens to coincide with the notion of opposite flag manifolds.
\begin{prop}\label{prop:uniqueopp}
Every standard flag manifold has a unique opposite standard flag manifold. Given a restricted root space decomposition and a subset $\Theta\subset \Delta$, the standard flag manifold opposite to  $\cF_\Theta$ is  $\cF_{\iota_\mathrm{o}(\Theta)}$. 
\end{prop}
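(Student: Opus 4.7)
My plan is to split the proof into existence and uniqueness, using the fact (Proposition \ref{prop:flagTheta}) that each standard flag manifold corresponds to a unique subset of $\Delta$, and the correspondence \eqref{eq:ThetaX} between hyperbolic elements of $\aL^+$ and such subsets.

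For existence, I would pick an element $X\in\aL^+$ with $\Theta_X=\Theta$, so that by \eqref{eq:lpXalgebraic} and Definition \ref{def:standardthings} we have $\fp_X\in\F_\Theta=\F_X$. By \eqref{eq:PLdef} the stabilizers $\mathrm{Stab}_G(\fp_X)=P_X$ and $\mathrm{Stab}_G(\fp_{-X})=P_{-X}$ exhibit $(\fp_X,\fp_{-X})$ as a transverse pair, so $\F_X$ and $\F_{-X}$ are opposite. It then suffices to identify the standard flag manifold $\F_{-X}$: since the longest Weyl element $w_0$ maps $\aL^+$ bijectively onto $-\aL^+$, the element $\iota_\mathrm{o}(X)=-w_0(X)\in\aL^+$ lies in the same $\Ad(G)$-orbit as $-X$, hence $\F_{-X}=\F_{\iota_\mathrm{o}(X)}$ by Fact \ref{fact:fund}. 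Since $w_0$ sends positive to negative roots and thus $\iota_\mathrm{o}$ acts on $\Delta$ via $\Theta_{\iota_\mathrm{o}(X)}=\iota_\mathrm{o}(\Theta_X)=\iota_\mathrm{o}(\Theta)$, we conclude $\F_{-X}=\F_{\iota_\mathrm{o}(\Theta)}$.

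For uniqueness, assume that $\F_\Theta$ and $\F_{\Theta'}$ are opposite for some $\Theta'\subset\Delta$. Unwinding the definition, there is a transverse pair $(\mathbf{x}^+,\mathbf{x}^-)\in\F_\Theta\times\F_{\Theta'}$ and some $Y\in\g_{\mathrm{hyp}}$ with $\mathrm{Stab}_G(\mathbf{x}^\pm)=P_{\pm Y}$. A key intermediate observation is that $\mathbf{x}^\pm=\fp_{\pm Y}$: indeed, since $\F_{\pm Y}$ consists of parabolic subalgebras and a parabolic subalgebra equals the Lie algebra of its own stabilizer (see the remarks after \eqref{eq:Pdef2}), the subalgebras $\mathbf{x}^\pm$ and $\fp_{\pm Y}$ both coincide with $\mathrm{Lie}(P_{\pm Y})$. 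Hence $\F_\Theta=\F_Y$ and $\F_{\Theta'}=\F_{-Y}$.

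By Fact \ref{fact:fund}, $Y$ is $\Ad(G)$-conjugate to a unique $X\in\aL^+$, so $\F_Y=\F_X$ and therefore $\Theta=\Theta_X$ by Proposition \ref{prop:flagTheta}. The same Weyl-conjugation argument as in the existence step applied to $-Y$ (which is $\Ad(G)$-conjugate to $\iota_\mathrm{o}(X)\in\aL^+$) yields $\F_{-Y}=\F_{\iota_\mathrm{o}(X)}=\F_{\iota_\mathrm{o}(\Theta)}$, and a final appeal to Proposition \ref{prop:flagTheta} forces $\Theta'=\iota_\mathrm{o}(\Theta)$. The main subtlety that I anticipate is precisely the identification $\mathbf{x}^\pm=\fp_{\pm Y}$ in the uniqueness step, which hinges on the self-normalizing property of parabolic subalgebras; once this is in hand, both existence and uniqueness reduce cleanly to the Weyl chamber normalization provided by Fact \ref{fact:fund}.
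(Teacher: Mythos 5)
Your proof is correct and follows essentially the same route as the paper's: both reduce to Fact \ref{fact:fund}, Proposition \ref{prop:flagTheta}, and the identity $\Theta_{\iota_\mathrm{o}(X)}=\iota_\mathrm{o}(\Theta_X)$ coming from conjugating $-X$ into $\aL^+$ via the longest Weyl element. Your explicit identification $\mathbf{x}^\pm=\fp_{\pm Y}$ in the uniqueness step is a point the paper's proof leaves implicit, but it is the same argument.
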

\begin{proof}
Let $\F$ be a standard flag manifold. Proposition \ref{prop:flagTheta} tells us that for a fixed choice of restricted root space decomposition with closed Weyl chamber $\aL^+$ there is $X\in \aL^+$ such that $\F=\F_X=\F_{\Theta_X}$ in the notation of Definition \ref{def:standardthings}, and the set ${\Theta_X}$ determines $\F$ uniquely. Now $\F_{-X}$ is opposite to $\F$, and by Fact \ref{fact:fund} the adjoint $G$-orbit of $-X$ intersects $\fa^+$ in a unique element $X'$, so that $\F_{-X}=\F_{\Theta_{X'}}$ is uniquely determined by $\Theta_{X'}$. Concretely $X'$ is given by $X'=\iota_\mathrm{o}(X)$ and we get $\Theta_{X'}=\iota_\mathrm{o}(\Theta_{X})$. In particular, $\Theta_{X'}$ is uniquely determined by $\Theta_X$ . If  $Y\in \aL^+$ is another element such that $\F=\F_Y$, then by Proposition \ref{prop:flagTheta} we have $\Theta_Y=\Theta_X$ and thus $\F_{-Y}=\F_{\iota_\mathrm{o}(\Theta_{X})}=\F_{-X}$. Thus, $\F_{\iota_\mathrm{o}(\Theta_{X})}$ is the unique opposite of $\F_{\Theta_X}$ and the proof is finished.
\end{proof}
 Given a standard flag manifold $\F$, we write $\overline{\F}$ for its opposite standard flag manifold. The notions of ``opposition'' for standard flag manifolds and parabolic subgroups are compatible in the sense that the opposite of $\F_\Theta=G\cdot \p_\Theta$ is $\overline \F_\Theta=\F_{\iota_o(\Theta)}=G\cdot \overline \p_\Theta$, where $\overline\p_\Theta=\theta(\p_\Theta)$ is the image under the Cartan involution of the Lie algebra of  $P_\Theta$.

\begin{cor} \label{cor domination and opposition}
Consider two transverse flag spaces $\F_1^\pitchfork=\F^+_1\trtimes\F_1^-$ and $\F_2^\pitchfork=\F^+_2\trtimes\F_2^-$. Then $\F_1^+\prec \F_2^+$ if and only if $\F_1^-\prec \F_2^-$. Furthermore, if these dominations occur, then
\[\Big(\Pi^{\F^+_2}_{\F_1^+}(\mathbf x^+),\Pi^{\F_2^-}_{\F_1^-}(\mathbf x^-)\Big)\in\F_1^\pitchfork\qquad \forall (\mathbf x^+,\mathbf x^-)\in \F_2^\pitchfork \,. \]
\end{cor}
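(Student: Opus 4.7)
The plan is to reduce everything to the classification of standard flag manifolds by subsets of $\Delta$. After fixing a restricted root space decomposition of $\g$, Proposition \ref{prop:parabolic subgroup conjugate to standard} supplies unique $G$-equivariant identifications $\F_i^+ \simeq \F_{\Theta_i^+}$ for some $\Theta_i^+ \subset \Delta$, and by Proposition \ref{prop:uniqueopp} these automatically identify $\F_i^-$ with $\F_{\iota_\mathrm{o}(\Theta_i^+)}$ for $i=1,2$. Proposition \ref{prop domination standard flag manifolds} then translates the two dominations $\F_1^+\prec \F_2^+$ and $\F_1^- \prec \F_2^-$ into the set inclusions $\Theta_1^+ \subset \Theta_2^+$ and $\iota_\mathrm{o}(\Theta_1^+) \subset \iota_\mathrm{o}(\Theta_2^+)$, which are equivalent since $\iota_\mathrm{o}$ is a bijection of $\Delta$. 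This yields the first assertion.

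For the transversality statement, I would let $(\mathbf x^+, \mathbf x^-) \in \F_2^\pitchfork$ be an arbitrary transverse pair with a witness $X \in \fg_{\mathrm{hyp}}$, so that $\mathrm{Stab}_G(\mathbf x^\pm) = P_{\pm X}$. By Fact \ref{fact:fund} there is $g \in G$ with $X' := \Ad(g^{-1}) X \in \aL^+$; $G$-equivariance of the projections reduces the problem to the base-point pair $(\fp_{X'}, \fp_{-X'})$, conjugating back by $g$ at the end. Since $\Theta_{X'} = \Theta_2^+$, I pick any $Y \in \aL^+$ with $\Theta_Y = \Theta_1^+ \subset \Theta_{X'}$; such $Y$ exists in the relative interior of the face of $\aL^+$ cut out by the complementary simple roots. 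Then \eqref{eq:LPincl} gives $P_{X'} \subset P_Y$, and applying the Cartan involution $\theta$ produces $P_{-X'} = \theta(P_{X'}) \subset \theta(P_Y) = P_{-Y}$. These two containments make the assignments $h \cdot \fp_{\pm X'} \mapsto h \cdot \fp_{\pm Y}$ well-defined $G$-equivariant maps $\F_2^\pm \to \F_1^\pm$, which must coincide with $\Pi^{\F_2^\pm}_{\F_1^\pm}$ by the uniqueness part of Lemma \ref{lem uniqueness of equivariant maps between dominated flag manifolds}. Consequently the projected pair is $(\fp_Y, \fp_{-Y})$, which is transverse with witness $Y$.

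The only substantive ingredient beyond bookkeeping is the observation that both projections are encoded by a single formula with a common witness $Y$; the transfer from $P_{X'} \subset P_Y$ on the positive side to $P_{-X'} \subset P_{-Y}$ on the opposite side via the Cartan involution is the one small Lie-theoretic step needed. Once it is in hand, the transversality of the projected pair is automatic from the definition, and I do not anticipate any further obstacle.
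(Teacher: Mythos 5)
Your argument is correct. The first assertion is handled exactly as in the paper (reduce to standard flag manifolds via Propositions \ref{prop:parabolic subgroup conjugate to standard}, \ref{prop:uniqueopp}, \ref{prop domination standard flag manifolds} and use that $\iota_\mathrm{o}$ is a bijection of $\Delta$). For the transversality statement your route differs in mechanism from the paper's: the paper projects first and then identifies the stabilizers of $\pi^+(\mathbf x^+)$ and $\pi^-(\mathbf x^-)$ as standard parabolics $P_{\Theta_1}$ and $\overline P_{\Theta_1'}$ containing $P_{\Theta_2}$ and $\overline P_{\Theta_2}$ (via \cite[VII, Prop.~7.76]{knapp}), and then forces $\Theta_1=\Theta_1'$ from the fact that $\F_1^+$ and $\F_1^-$ are opposite; you instead build the two projections explicitly as $h\cdot\fp_{\pm X'}\mapsto h\cdot\fp_{\pm Y}$ from a single hyperbolic element $Y$ in the appropriate face of $\aL^+$, transferring the inclusion $P_{X'}<P_Y$ to $P_{-X'}<P_{-Y}$ by the Cartan involution, and then invoke the uniqueness Lemma \ref{lem uniqueness of equivariant maps between dominated flag manifolds} to identify these with $\Pi^{\F_2^\pm}_{\F_1^\pm}$. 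Your version has the small advantage of producing a common witness $Y$ (respectively $\Ad(g)Y$ after conjugating back) for the transversality of the projected pair, making the conclusion immediate from the definition rather than inferred from the classification of opposite standard flag manifolds; the paper's version avoids choosing $Y$ and the Cartan-involution step by reading everything off the stabilizers. All the individual steps you use ($P_{-X'}=\theta(P_{X'})$ for $X'\in\aL$, the existence of $Y$ with $\Theta_Y=\Theta_1^+$, well-definedness of the coset maps from $P_{X'}<P_Y$) are sound.
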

\begin{proof}
The equivalence follows from Proposition \ref{prop:flagTheta} and Proposition \ref{prop domination standard flag manifolds}. For the second statement, we may fix a restricted root space decomposition and assume that $\mathrm{Stab}_G(\mathbf x^+)=P_{\Theta_2}$ and $\mathrm{Stab}_G(\mathbf x^-)=\overline P_{\Theta_2}$ for some subset $\Theta_2\subset\Delta$. Simplify the notations to
\[ \pi^+:=\Pi^{\F^+_2}_{\F_1^+}~,\qquad \pi^-:=\Pi^{\F_2^-}_{\F_1^-}\,. \]
 Now $\mathrm{Stab}_G(\pi^+(\mathbf x^+))$ is a parabolic subgroup of $G$ containing $P_{\Theta_2}$, so by \cite[VII, Prop.~7.76]{knapp} there is a subset $\Theta_1\subset\Delta$ such that $\mathrm{Stab}_G(\pi^+(\mathbf x^+))=P_{\Theta_1}$. Similarly there is a subset $\Theta'_1\subset\Delta$ such that $\mathrm{Stab}_G(\pi^-(\mathbf x^-))=\overline P_{\Theta'_1}$. But the fact that the flag manifolds $\F_1^+=\F_{\Theta_1}$ and $\F_1^-=\F_{\iota_o(\Theta'_1)}$ are opposite implies that $\Theta'_1=\Theta_1$, therefore $(\pi^+(\mathbf x^+),\pi^-(\mathbf x^-))\in\F_1^\pitchfork$.
\end{proof}

\subsection{Simple flag manifolds} \label{sec:simpleflagmanifolds}
 The partial ordering introduced in Definition \ref{def:domination flag manifolds} has a  minimal element $\{\fg\}\in\cG_{\dim\fg}(\fg)$ given by a one point space (associated to the empty set $\emptyset\subset\Delta$ in a restricted root space decomposition). We shall call a  flag manifold  \emph{non-trivial} if it does not consist of a single point. 
\begin{definition} A  \emph{simple flag manifold} is a non-trivial standard flag manifold that does not dominate any other non-trivial flag manifold. 
\end{definition}

Once a restricted root space decomposition has been fixed, Proposition \ref{prop domination standard flag manifolds} implies that the simple flag manifolds are exactly the $\F_{\alpha}:=\F_{\{\alpha\}}$ for $\alpha\in\Delta$, thus providing a bijection
\begin{align}\label{eq:bijalphas}\begin{split}
 \Delta&\simeq\{\text{simple flag manifolds}\}\\
 \alpha&\mapsto \F_\alpha.\end{split}
\end{align}

The  opposition involution $\iota_{\mathrm o}$ on $\Delta$ can be transported to the set of simple flag manifolds by the bijection \eqref{eq:bijalphas}, but thanks to Proposition \ref{prop:uniqueopp} it has an intrinsic meaning: it consists of the map sending a flag manifold to its unique opposite simple flag manifold.

In many occurences, all that will be used from a root space decomposition is the possibility of indexing flag manifolds by subsets of $\Delta$ and the fact that indexing opposite flag manifolds corresponds to the opposition involution. In this case, \eqref{eq:bijalphas} allows us to forget about root space decompositions and just consider $\Delta$ as any finite set indexing simple flag manifolds (e.g. the set of simple flag manifolds itself), and define the opposition involution by the relation $\F_{\iota_o(\alpha)}=\overline\F_\alpha$ for $\alpha\in \Delta$. Then any flag manifold $\F$ corresponds to a subset  $\Theta_\F\subset \Delta$ which is the set of simple flag manifolds dominated by $\F$:
\bq
\begin{split}
\Theta_{\F}&:=\set{\alpha\in \Delta}{\F_\alpha\prec \F}.\label{eq:nFpf}
\end{split}
\eq
Concretely, this means that we can either start our discussion with a subset $\Theta\subset \Delta$ (in a fixed restricted root space decomposition) and consider the associated standard flag manifold $\F_\Theta$, or start with a flag manifold $\F$ and consider the set $\Theta_\F$ from \eqref{eq:nFpf}. By using the notations $\F$ for the flag manifold and $\Theta\subset \Delta$ for the subset, both point of views lead to the same subsequent discussion.

A good example of a property that can be expressed with either point of view is the definition of a $G$-equivariant fiber bundle projection 
\bq
\Pi^\F_\alpha:\F\to\F_\alpha \label{eq:FFalphaprojection}
\eq 
for each $\alpha\in \Theta$. From the point of view of a fixed restricted root decomposition, it comes from the inclusion $P_\Theta<P_{\{\alpha\}}$ and the identifications $\F\simeq G/P_\Theta$ and $\F_\alpha\simeq G/P_{\{\alpha\}}$. From the intrinsic point of view on simple flag manifolds, the existence of a $G$-equivariant map $\F\to\F_\alpha$ for $\alpha\in \Theta$ is the definition of $\Theta$, and Lemma \ref{lem uniqueness of equivariant maps between dominated flag manifolds} guarantees its uniqueness. It also has a simple meaning: $\Pi_\alpha^\F(\mathbf x)$ is the unique element of  $\F_\alpha\subset\mathcal G_{d_\alpha}(\g)$ containing the Lie algebra of the stabilizer of $\mathbf x\in\F$.

In the following sections, a restricted root space decomposition will only be introduced when further algebraic considerations are necessary.

\begin{ex} \label{example simple flag manifolds}
Carrying on with the notations of Example \ref{example standard parabolics}, given a dimension vector $\mathbf d=(d_1,\dots,d_r)\in\N^r$ with $d_1+\cdots+d_r=d$, the space $\F_{\mathbf d}(\mathrm{k}^d)$ of flags of type $\mathbf d$ introduced in Example \ref{example flag manifolds} is $\SL(d,\mathrm{k})$-equivariantly diffeomorphic to the standard flag manifold corresponding to the subset
\[\Theta_{\mathbf d}:=\{\alpha_{d_1},\alpha_{d_1+d_2},\dots,\alpha_{d_1+\cdots+d_{r-1}}\}\,.\]
Simple flag manifolds correspond to Grassmannians $\G_n(\mathrm{k}^d)=\cF_{\{n,d-n\}}(\mathrm{k}^d)$, and the equivariant projections $\Pi^{\mathbf d}_j:\F_{\mathbf d}(\mathrm{k}^d)\to \G_{d_1+\cdots+d_j}(\mathrm{k}^d)$ are defined by
\[ \Pi^{\mathbf d}_j\big( V_\bullet\big) = V_j\,.\]
\end{ex}

\subsection{Flag manifolds and the Jordan decomposition}\label{sec:defjordanclassfcn}
 The \emph{Jordan decomposition}  tells us that every group element $g\in G$ can be uniquely written as a product $g=g_eg_hg_u$, where $g_e$, $g_h$, and $g_u$ are elliptic, hyperbolic, and unipotent, respectively and commute with each other (see  \cite[Prop.~2.1]{kostant}). Here $g_e$ being elliptic means that $\Ad(g_e)$ is diagonalizable over $\C$ with eigenvalues of absolute value $1$, $g_h$ being hyperbolic means that $g_h=\exp(X_h)$ with $X_h\in \g$ hyperbolic (i.e., $\ad_X$ is diagonalizable over $\R$, as in Section \ref{sec:firstapproach}) and $g_u$ being unipotent means that $g_u=\exp(X_n)$ with $X_n\in \g$ nilpotent.

\subsubsection{Jordan projections in \texorpdfstring{$G$}{G}}\label{sec:JordanG}

Let us fix a restricted root space decomposition as in Section \ref{sec:classification}, in particular we fix an Iwasawa decomposition $G=KAN$ which, in contrast to the Jordan decomposition, involves making choices. Then a group element being elliptic (resp.\ hyperbolic, unipotent) means that it is conjugate to an element in $K$ (resp.\ $A$, $N$). 

\begin{definition}Let $g\in G$. The \emph{Jordan projection}\footnote{The Jordan projection is sometimes also called \emph{Lyapunov projection}.} of $g$ with respect to the  Weyl chamber $\fa^+$ is the element $\lambda(g)\in \fa^+$ such that $g_h$ is conjugate to $\exp(\lambda(g))$.
\end{definition}
Fact \ref{fact:fund} tells us that the Jordan projection is well-defined and has the property
\bq
\lambda(g^{-1})=\iota_o(\lambda(g))\qquad\forall\; g\in G.\label{eq:Jordanginverse}
\eq

\subsubsection{Fundamental weights}
Fixing a  restricted root space decomposition provides a  vector $X_{\alpha}\in\g_{\mathrm{hyp}}$ such that $\F_\alpha=\F_{X_{\alpha}}$ for each  $\alpha\in \Delta$. Indeed, consider the fundamental weight $w_\alpha\in\fa^*$ defined by 
\bq
\Bg(w_\alpha,\alpha)=\frac{1}{2}\Bg(\alpha,\alpha),\qquad \Bg(w_\alpha,\alpha')=0\quad \forall\;\alpha'\in \Delta\setminus \{\alpha\}.\label{eq:def_fundamentalweights}
\eq
Let $X_{\alpha}\in \aL$ be the $\mathrm{B}_\g$-dual of $w_\alpha$ (i.e.\ the co-root of $\alpha$). Then $X_{\alpha}\in \aL^+\cap \aL_\Theta$ and using the notation of \eqref{eq:ThetaX} we find $\Theta_{X_{\alpha}}=\{\alpha\}$, therefore
\begin{equation}
\F_\alpha=\F_{X_{\alpha}}~. \label{eq:Falpha fundamental weight}
\end{equation}
Given a subset $\Theta\subset\Delta$, the vectors $X_{\alpha}\in \aL^+$ for $\alpha\in\Theta$ form a basis of $\aL_\Theta=\cap_{\alpha\in\Delta\setminus\Theta}\ker\alpha$, and from the definition of the fundamental weights $w_\alpha$ one easily infers that their restricttions to $\aL_\Theta$ form a basis of the dual space $\aL_\Theta^*$:

\begin{lem}\label{lem:walphaaTheta}For all $\alpha\in \Theta$ and $X\in \aL$  one has $w_\alpha(X)=w_\alpha(p_\Theta(X))$, where $p_\Theta:\aL\to\aL_\Theta$ is the orthogonal projection with respect to the Killing form $\mathrm{B}_\g$.\qed
\end{lem}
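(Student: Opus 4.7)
The plan is to show that the fundamental weight $w_\alpha$, viewed as a linear form on $\aL$, vanishes identically on the $\mathrm{B}_\g$-orthogonal complement of $\aL_\Theta$ in $\aL$. Once this is established, the lemma follows immediately by writing $X=p_\Theta(X)+(X-p_\Theta(X))$ and applying linearity of $w_\alpha$, since $X-p_\Theta(X)$ lies in the orthogonal complement by definition of $p_\Theta$.

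The key step is to identify this orthogonal complement explicitly. For each $\alpha'\in \Delta$, let $H_{\alpha'}\in \aL$ denote the $\mathrm{B}_\g$-dual of $\alpha'\in \aL^*$, so that $\alpha'(Y)=\mathrm{B}_\g(H_{\alpha'},Y)$ for all $Y\in\aL$. Since $\aL_\Theta=\bigcap_{\alpha'\in\Delta\setminus\Theta}\ker\alpha'$ is cut out by the linear forms $\alpha'$ for $\alpha'\in\Delta\setminus\Theta$, its $\mathrm{B}_\g$-orthogonal complement in $\aL$ is exactly
\[
\aL_\Theta^{\perp}=\Span\set{H_{\alpha'}}{\alpha'\in \Delta\setminus\Theta}.
\]

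It therefore suffices to verify that $w_\alpha(H_{\alpha'})=0$ for every $\alpha'\in\Delta\setminus\Theta$. But by the defining relation \eqref{eq:def_fundamentalweights} and the compatibility of $\mathrm{B}_\g$ on $\aL$ and $\aL^*$, one has
\[
w_\alpha(H_{\alpha'})=\mathrm{B}_\g(w_\alpha,\alpha').
\]
Since $\alpha\in\Theta$, the set $\Delta\setminus\Theta$ is contained in $\Delta\setminus\{\alpha\}$, and so \eqref{eq:def_fundamentalweights} gives $\mathrm{B}_\g(w_\alpha,\alpha')=0$ for every $\alpha'\in\Delta\setminus\Theta$. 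This yields $w_\alpha|_{\aL_\Theta^\perp}=0$ and the conclusion follows.

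There is no real obstacle here; the only point requiring care is the identification of $\aL_\Theta^\perp$, which is a clean consequence of the general fact that the orthogonal complement of a kernel-intersection is the span of the dual vectors of the defining linear forms.
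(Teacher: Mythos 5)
Your argument is correct and is essentially the computation the paper leaves implicit (the lemma is stated with the proof omitted, the preceding sentence noting only that the co-roots $X_\alpha$ for $\alpha\in\Theta$ form a basis of $\aL_\Theta$). Showing that $w_\alpha$ kills $\aL_\Theta^\perp=\Span\set{H_{\alpha'}}{\alpha'\in\Delta\setminus\Theta}$ is just the dual phrasing of the paper's observation that the Killing dual $X_\alpha$ of $w_\alpha$ lies in $\aL_\Theta$, so the two routes coincide.
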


\subsubsection{Class functions on \texorpdfstring{$G$}{G}}

A restricted root space decomposition is often used in order to define certain class functions (i.e.\ functions on $G$ that are invariant under conjugation) by evaluating some element of $\aL^\ast$ on the Jordan projection $\lambda(g)\in\aL$ of an element $g\in G$. We will see that they can be defined using a flag manifold as the starting point. The first example consists in applying a simple restricted root.

\begin{definition}\label{def:spectralradiusfcn}
Given $\alpha\in \Delta$,  the $\alpha$-proximality class function is
\[ \mathcal{P}_{\alpha}:\map{G}{\R_{\geq 0}}{g}{\alpha(\lambda(g)).}\]
\end{definition}

\begin{lem}\label{lem:rootclassfunctionindependence} The class function $\mathcal P_{\alpha}$ only depends on the simple flag manifold $\F_\alpha$, i.e.\ it is independent of  the restricted root space decomposition used to define it.
\end{lem}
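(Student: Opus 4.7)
The plan is to show that any two restricted root space decompositions of $\g$ are $G$-conjugate in a way that transports Jordan projections and simple restricted roots compatibly with the intrinsic identification of simple flag manifolds, so that the value $\alpha(\lambda(g))$ is preserved.

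As a preliminary observation, $\lambda(g)$ depends only on the hyperbolic part $g_h$ of $g$, which is intrinsic. Writing $g_h=\exp(X_h)$ with $X_h\in\g$ hyperbolic, Fact~\ref{fact:fund} characterizes $\lambda(g)$ as the unique intersection point of the $\Ad(G)$-orbit of $X_h$ with $\aL^+$. Given two choices $R=(\theta,\aL,\aL^+,\Delta)$ and $R'=(\theta',\aL',(\aL')^+,\Delta')$, the standard conjugacy theorems (conjugacy of maximal compact subgroups, of maximal abelian subalgebras of $\k^\perp$ inside a fixed $K$, and transitivity of the Weyl group on chambers) yield an element $g_0\in G$ with $\Ad(g_0)\aL^+=(\aL')^+$. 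Pullback by $\Ad(g_0^{-1})$ then induces a bijection $\Delta\to\Delta'$, $\alpha\mapsto \alpha^{(g_0)}:=\alpha\circ\Ad(g_0^{-1})|_{\aL'}$.

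I would next verify that this bijection is compatible with the intrinsic assignment $\alpha\mapsto \F_\alpha$. Using \eqref{eq:Falpha fundamental weight} and the fact that $\F_X=G\cdot \p_X$ depends only on the $\Ad(G)$-orbit of $X$, the co-root $X_\alpha\in \aL^+$ is sent to $\Ad(g_0)X_\alpha\in(\aL')^+$, which is precisely the co-root of $\alpha^{(g_0)}$, yielding $\F_{\alpha^{(g_0)}}=\F_{\Ad(g_0)X_\alpha}=\F_{X_\alpha}=\F_\alpha$. The uniqueness part of Proposition~\ref{prop:flagTheta} then forces $\alpha'=\alpha^{(g_0)}$ whenever $\alpha'\in\Delta'$ satisfies $\F_{\alpha'}=\F_\alpha$.

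To conclude, applying Fact~\ref{fact:fund} once more gives $\lambda_{R'}(g)=\Ad(g_0)\lambda_R(g)$, so that
\[ \alpha'(\lambda_{R'}(g))=\alpha^{(g_0)}(\Ad(g_0)\lambda_R(g))=\alpha(\lambda_R(g))\,.\]
The only mildly technical point is constructing the conjugating element $g_0$ so that it simultaneously matches Cartan involutions, maximal split abelian subalgebras, and positive Weyl chambers; this is a routine combination of classical theorems rather than a genuine obstacle.
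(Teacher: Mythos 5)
Your proof is correct and takes essentially the same route as the paper's: produce $g_0\in G$ conjugating one Weyl chamber onto the other, and observe that Jordan projections and restricted roots transform by $\Ad(g_0)$ and $\Ad(g_0^{-1})$ respectively, so that $\alpha(\lambda(g))$ is unchanged. Your extra verification that the bijection $\alpha\mapsto\alpha^{(g_0)}$ is compatible with the intrinsic labeling of simple flag manifolds (via co-roots and Proposition~\ref{prop:flagTheta}) makes explicit a point the paper leaves implicit, but the method is the same.
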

\begin{proof}
As a consequence of Fact \ref{fact:fund}, any two possible choices of Weyl chambers $\aL_1^+,\mathfrak{a}_2^+\subset \g$ are related by $\mathfrak{a}_2^+=\Ad(g_0)\aL_1^+$ for some $g_0\in G$. Now, the corresponding Jordan projections $\lambda_i(g)$ and restricted roots $\alpha_i\in \aL_i^*$  defined using  $\aL^+_i$, $i=1,2$, differ by
\[
\lambda_2(g)=\Ad(g_0)\lambda_1(g)~;\qquad \alpha_2=\alpha_1\circ \Ad(g_0^{-1})~;
\]
so that $\alpha_1(\lambda_1(g))=\alpha_2(\lambda_2(g))$ as claimed.
\end{proof}
This ultimately means that $\mathcal P_{\alpha}$ has a geometric meaning, this will be made more precise in Lemma \ref{lem:rootclassfunction is spectral radius of derivative}.

Next, we consider an arbitrary  flag manifold $\F$ and the associated subset $\Theta\subset\Delta$. Let $Y_\Theta\in\fa_\Theta$ be the unique vector such that $\alpha(Y_\Theta)=1$ for all $\alpha\in\Theta$ and 
 define
 \[\Sigma^+_\Theta:=\set{\alpha\in\Sigma}{\alpha(Y_\Theta)>0 }, \]
 so that $$\fn_{\fp_\Theta}=\bigoplus_{\alpha\in\Sigma^+_\Theta}\g_\alpha.$$
 
 \begin{definition}\label{def:JacbobF} For a flag manifold $\F$ associated to a subset $\Theta\subset\Delta$, the \emph{Jacobian class function of $\F$} is
 \bq
\mathcal J_\F:\map{G}{\R_{\geq 0}}{g}{2\varrho_{\Theta}\big(p_\Theta(\lambda(g))\big),}\label{eq:lambdaF}
\eq
where $p_\Theta:\fa\to\fa_\Theta$ is the Killing-orthogonal projection and
\bq
\varrho_{\Theta}:=\frac{1}{2}\sum_{\alpha\in \Sigma^+_\Theta}\dim \fg_\alpha \alpha\quad \in \aL^\ast.\label{eq:wF}
\eq
\end{definition}
Here again the geometric terminology will be justified later, in Lemma \ref{lem: determinant proximal element on a  flag manifold}.

\begin{lem}\label{lem:indep} The class function $\mathcal J_{\F}$ only depends on the  flag manifold $\F$, i.e.\ it is independent of the restricted root space decomposition used to define it.
\end{lem}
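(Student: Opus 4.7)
The approach is to mimic the proof of Lemma \ref{lem:rootclassfunctionindependence}: pick two different restricted root space decompositions, relate them by inner conjugation using an element $g_0\in G$, and then verify that every ingredient appearing in the formula for $\mathcal J_\F(g)$ transforms compatibly so that the resulting scalar is unchanged.

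First, consider two choices of maximal abelian subalgebras $\aL_i\subset\k^\perp$ with associated Weyl chambers $\aL_i^+$ and simple restricted roots $\Delta_i$ for $i=1,2$. By Fact \ref{fact:fund}, there exists $g_0\in G$ such that $\aL_2=\Ad(g_0)\aL_1$ and $\aL_2^+=\Ad(g_0)\aL_1^+$, and consequently $\Delta_2=\Delta_1\circ\Ad(g_0^{-1})$ as functionals. The Jordan projections satisfy $\lambda_2(g)=\Ad(g_0)\lambda_1(g)$, because both are conjugate to the hyperbolic factor $g_h$ and lie in the respective chambers. By Proposition \ref{prop:flagTheta}, the flag manifold $\F$ is written as $\F=\F_{\Theta_i}$ for a unique $\Theta_i\subset\Delta_i$, and these subsets must correspond via $\Theta_2=\Theta_1\circ\Ad(g_0^{-1})$. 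Indeed, the parabolic subalgebra $\p_{\Theta_1}$ is transported by $\Ad(g_0)$ to a standard parabolic subalgebra for the second decomposition, whose defining subset is forced to be $\Theta_2$ by the uniqueness part of Proposition \ref{prop:flagTheta}.

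It now remains to check that the remaining geometric data transform correctly under $\Ad(g_0)$. Since the Killing form $\mathrm{B}_\g$ is $\Ad(G)$-invariant, the Killing-orthogonal projection satisfies
\begin{equation*}
p_{\Theta,2}\circ \Ad(g_0)=\Ad(g_0)\circ p_{\Theta,1}\colon \aL_1\longrightarrow \aL_{\Theta,2}.
\end{equation*}
The positive root system $\Sigma^+_{\Theta,2}$ is the image of $\Sigma^+_{\Theta,1}$ under $\alpha\mapsto \alpha\circ\Ad(g_0^{-1})$, and because $\Ad(g_0)$ intertwines the two restricted root space decompositions, the root space dimensions are preserved; hence $\varrho_{\Theta,2}=\varrho_{\Theta,1}\circ\Ad(g_0^{-1})$. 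Combining the three identities,
\begin{equation*}
2\varrho_{\Theta,2}\bigl(p_{\Theta,2}(\lambda_2(g))\bigr)
=2\varrho_{\Theta,1}\bigl(\Ad(g_0^{-1})\,\Ad(g_0)\,p_{\Theta,1}\lambda_1(g)\bigr)
=2\varrho_{\Theta,1}\bigl(p_{\Theta,1}(\lambda_1(g))\bigr),
\end{equation*}
so $\mathcal J_\F(g)$ is independent of the chosen decomposition.

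The only step requiring genuine care is the matching $\Theta_2=\Theta_1\circ\Ad(g_0^{-1})$, since the associations $\F\leftrightarrow\Theta_i$ are a priori only meaningful after fixing a decomposition; this is where Proposition \ref{prop:flagTheta} enters essentially. Once this compatibility is in place, the rest of the verification is a mechanical consequence of $\Ad(G)$-invariance of the Killing form and of the root space decomposition.
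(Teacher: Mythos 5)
Your proof is correct and takes exactly the route the paper intends: the paper's own proof of this lemma simply says that the argument for Lemma \ref{lem:rootclassfunctionindependence} (conjugating the two decompositions by some $\Ad(g_0)$ and checking that $\lambda$, $p_\Theta$, $\Sigma^+_\Theta$ and $\varrho_\Theta$ all transform compatibly) applies verbatim. You have merely written out the details that the paper leaves implicit, including the correct identification of $\Theta_2$ with $\Theta_1$ via Proposition \ref{prop:flagTheta}.
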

\begin{proof}The same proof as for Lemma \ref{lem:rootclassfunctionindependence} applies here.
\end{proof}

The two class functions introduced above satisfy a simple symmetry when passing to opposite flag manifolds.

\begin{lem}\label{lem:Jordanfcnopp} The proximality class functions of $\alpha\in\Delta$ and its opposite $\iota_o(\alpha)$ are related by
\[
\mathcal P_{\alpha}(g)=\mathcal P_{\iota_o(\alpha)}(g^{-1})\qquad \forall\; g\in G.
\]
The Jacobian class functions of a pair of opposite flag manifolds $\F^+,\F^-$ are related by
\[
\mathcal J_{\F^+}(g)=\mathcal J_{\F^-}(g^{-1})\qquad \forall\; g\in G.
\]
\end{lem}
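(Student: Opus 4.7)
The plan is to reduce both identities to the single relation $\lambda(g^{-1})=\iota_o(\lambda(g))$ from \eqref{eq:Jordanginverse}, together with the fact that the opposition involution $\iota_o$ is a self-adjoint symmetry of every relevant object. First I extend $\iota_o$ to $\aL^\ast$ by declaring $(\iota_o\alpha)(X):=\alpha(\iota_o X)$, so that $(\iota_o\alpha)(\iota_o X)=\alpha(X)$ holds for all $\alpha\in\aL^\ast$ and $X\in \aL$; this is consistent with the action of $\iota_o$ on $\Delta$ introduced in Section~\ref{sec:classification}.

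The first identity is then immediate from Definition~\ref{def:spectralradiusfcn} and \eqref{eq:Jordanginverse}:
$$\mathcal P_{\iota_o(\alpha)}(g^{-1})=\iota_o(\alpha)(\lambda(g^{-1}))=\iota_o(\alpha)(\iota_o(\lambda(g)))=\alpha(\lambda(g))=\mathcal P_\alpha(g).$$

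For the Jacobian identity, Proposition~\ref{prop:uniqueopp} lets me write $\F^-=\F_{\iota_o(\Theta)}$ with $\Theta\subset\Delta$ the subset associated to $\F^+$. After applying \eqref{eq:Jordanginverse}, the task reduces to proving the two intertwining identities $p_{\iota_o(\Theta)}\circ\iota_o=\iota_o\circ p_\Theta$ and $\varrho_{\iota_o(\Theta)}\circ\iota_o=\varrho_\Theta$ on $\aL$. The first holds because $\iota_o$ is a $\Bg$-isometry (equal to $-w_0$ with $w_0\in W$) that exchanges $\aL_\Theta=\bigcap_{\alpha\notin\Theta}\ker\alpha$ with $\aL_{\iota_o(\Theta)}$. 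For the second, I would verify in sequence: (a) $\iota_o(Y_\Theta)=Y_{\iota_o(\Theta)}$, which follows from uniqueness and the computation $(\iota_o\alpha)(\iota_o Y_\Theta)=\alpha(Y_\Theta)=1$ for $\alpha\in\Theta$; (b) consequently $\iota_o(\Sigma^+_\Theta)=\Sigma^+_{\iota_o(\Theta)}$, since $\iota_o$ permutes $\Sigma$ (being $-1$ composed with a Weyl element) and preserves the sign of the pairing with $Y_\Theta$; (c) $\dim \g_\alpha=\dim\g_{\iota_o(\alpha)}$, by $\Ad$-equivariance under a $K$-representative of $w_0$ combined with $\dim\g_{-\beta}=\dim\g_\beta$. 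A change of summation variable in \eqref{eq:wF} then gives $\varrho_{\iota_o(\Theta)}(\iota_o Y)=\varrho_\Theta(Y)$, and combining this with the first intertwining identity concludes the proof.

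The main obstacle is purely bookkeeping: tracking the simultaneous action of $\iota_o$ on $\aL$, on $\aL^\ast$, on subsets of $\Delta$, on restricted roots, and on root spaces, with consistent sign and positivity conventions. No genuinely new ingredient beyond the material in Sections~\ref{sec:classification}--\ref{sec:defjordanclassfcn} is needed; conceptually, the lemma is the observation that every quantity entering the definitions of $\mathcal P_\alpha$ and $\mathcal J_\F$ is covariant under $\iota_o$.
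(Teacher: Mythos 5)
Your proof is correct and follows essentially the same route as the paper, which reduces both identities to the single observation $\iota_o(\alpha)(\lambda(g))=\alpha(\iota_o(\lambda(g)))=\alpha(\lambda(g^{-1}))$. The only difference is one of detail: the paper leaves the $\iota_o$-covariance of $\varrho_\Theta$ and of the projection $p_\Theta$ implicit, whereas you verify it explicitly; your bookkeeping steps (a)--(c) are all accurate.
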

\begin{proof}
Both follow from the simple observation that
\[ \iota_o(\alpha)\big( \lambda(g)\big) = \alpha\big(\iota_o(\lambda(g))\big)=\alpha\big(\lambda(g^{-1})\big),\]
where we used \eqref{eq:Jordanginverse} in the last step.
\end{proof}

For a simple flag manifold $\F_\alpha$ the Jacobian  and proximality class functions $\mathcal J_{\F_\alpha}$ and $\mathcal P_\alpha$ are in general not multiples of each other because  the element $\varrho_{\{\alpha\}}$ turns out to be a multiple of the  fundamental  weight $w_\alpha$ rather than the root $\alpha$:
\begin{lem}\label{lem:wmalpha}
For each $\alpha\in\Delta$ there is a positive integer $m_\alpha\in \N=\{1,2,\ldots\}$ such that
\bq
\varrho_{\{\alpha\}}=\frac{m_\alpha}{2} w_\alpha.\label{eq:defmalpha}
\eq
\end{lem}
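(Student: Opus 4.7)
The plan is to first show that $\varrho_{\{\alpha\}}$ lies on the line $\R w_\alpha\subset\aL^*$, then identify the coefficient and verify that twice it is a positive integer.

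For the first step, the key observation is that the subgroup $W'\subset W$ generated by the simple reflections $s_\beta$ with $\beta\in\Delta\setminus\{\alpha\}$ preserves the set $\Sigma^+_{\{\alpha\}}$: indeed, each $s_\beta$ permutes the positive roots, and since $s_\beta(\gamma)=\gamma-\frac{2\mathrm{B}_\g(\gamma,\beta)}{\mathrm{B}_\g(\beta,\beta)}\beta$ changes only the $\beta$-coefficient in the simple-root expansion, the $\alpha$-coefficient is preserved. Combined with the $W$-invariance of the multiplicities $\dim\fg_\gamma$, this implies that $\varrho_{\{\alpha\}}$ is $W'$-invariant. Invariance under each $s_\beta$ forces $\mathrm{B}_\g(\varrho_{\{\alpha\}},\beta)=0$ for every $\beta\in\Delta\setminus\{\alpha\}$. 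By the defining conditions of $w_\alpha$ in \eqref{eq:def_fundamentalweights}, the $\mathrm{B}_\g$-orthogonal complement in $\aL^*$ of $\mathrm{span}(\Delta\setminus\{\alpha\})$ is precisely the line $\R w_\alpha$, so $\varrho_{\{\alpha\}}=\frac{m_\alpha}{2}w_\alpha$ for a unique $m_\alpha\in\R$.

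For the integrality of $m_\alpha$, I evaluate at the simple coroot $\alpha^\vee\in\aL$, characterized by $\lambda(\alpha^\vee)=\frac{2\mathrm{B}_\g(\lambda,\alpha)}{\mathrm{B}_\g(\alpha,\alpha)}$ for all $\lambda\in\aL^*$. The normalization of $w_\alpha$ in \eqref{eq:def_fundamentalweights} gives $w_\alpha(\alpha^\vee)=1$, so
\[
m_\alpha \;=\; 2\varrho_{\{\alpha\}}(\alpha^\vee) \;=\; \sum_{\beta\in\Sigma^+_{\{\alpha\}}}(\dim\fg_\beta)\,\beta(\alpha^\vee).
\]
The Cartan integers $\beta(\alpha^\vee)$ are integers by the general theory of (possibly non-reduced) restricted root systems (see e.g.\ \cite[VI.4]{knapp}), so $m_\alpha\in\Z$.

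For positivity, I evaluate at $Y_{\{\alpha\}}\in\aL_{\{\alpha\}}$. For any $\beta\in\Sigma^+_{\{\alpha\}}$, expanding $\beta=\sum_{\gamma\in\Delta}n_\gamma(\beta)\gamma$ gives $\beta(Y_{\{\alpha\}})=n_\alpha(\beta)\geq 1$, so $\varrho_{\{\alpha\}}(Y_{\{\alpha\}})>0$. On the other hand, the fundamental weight $w_\alpha$ expands as a linear combination of the simple roots with strictly positive $\alpha$-coefficient (otherwise $w_\alpha\in\mathrm{span}(\Delta\setminus\{\alpha\})$, contradicting $\mathrm{B}_\g(w_\alpha,\alpha)=\tfrac12\mathrm{B}_\g(\alpha,\alpha)>0$), yielding $w_\alpha(Y_{\{\alpha\}})>0$. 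Taking the ratio, $m_\alpha=\frac{2\varrho_{\{\alpha\}}(Y_{\{\alpha\}})}{w_\alpha(Y_{\{\alpha\}})}>0$, so $m_\alpha\in\N$.

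The only delicate point is the integrality of the Cartan numbers in the restricted-root setting, where the root system may fail to be reduced (e.g.\ type $BC_n$); this is standard but worth flagging. An alternative route would be to interpret $2\varrho_{\{\alpha\}}$ as the character of the determinant of the adjoint action on $\fn_{\fp_{\{\alpha\}}}$, a character of $L_{\{\alpha\}}$, and use that such characters lie in the integer weight lattice; but evaluating at the coroot is more direct.
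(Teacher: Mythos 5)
Your proof is correct in its overall structure and follows the same strategy as the paper's: establish invariance of $\varrho_{\{\alpha\}}$ under the reflections attached to $\Delta\setminus\{\alpha\}$, deduce proportionality to $w_\alpha$, then get integrality from the restricted weight lattice and positivity from a positive pairing. Your execution is in places more direct. You obtain the invariance from the elementary observation that $s_\beta$ preserves the $\alpha$-coefficient (the paper instead rewrites $\Sigma^+_{\{\alpha\}}=\{\beta:\mathrm{B}_\g(\beta,w_\alpha)>0\}$ and uses that $r_{\alpha'}$ is orthogonal and fixes $w_\alpha$); you pass from invariance to proportionality by a one-line orthogonal-complement argument rather than the paper's manipulation of coefficients in the fundamental-weight basis; and you get integrality by evaluating at the coroot $\alpha^\vee$ rather than expanding each root in the weights --- these are equivalent, but yours is cleaner. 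The caveat about non-reduced restricted root systems is well taken and harmless: integrality of the Cartan numbers is an axiom of (possibly non-reduced) abstract root systems, which the restricted roots form. (One phrase is not literally true: $s_\beta$ does not permute the positive roots, since it negates $\beta$; but the coefficient argument you actually invoke is what is needed.)

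The one genuine flaw is in the positivity step. You assert that the $\alpha$-coefficient $c_\alpha$ of $w_\alpha$ in the simple-root basis is strictly positive, ``otherwise $w_\alpha\in\mathrm{span}(\Delta\setminus\{\alpha\})$, contradicting $\mathrm{B}_\g(w_\alpha,\alpha)>0$.'' This only addresses the case $c_\alpha=0$, not $c_\alpha<0$; and even for $c_\alpha=0$ the stated contradiction is not immediate, since an element of $\mathrm{span}(\Delta\setminus\{\alpha\})$ can pair non-trivially with $\alpha$ --- you would additionally need $\mathrm{B}_\g(w_\alpha,\Delta\setminus\{\alpha\})=0$ together with positive-definiteness of $\mathrm{B}_\g$ on $\aL^*$ to force $w_\alpha=0$. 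The fact you need, $w_\alpha(Y_{\{\alpha\}})>0$, is true and has a one-line proof: $\aL_{\{\alpha\}}$ is one-dimensional and contains both $Y_{\{\alpha\}}$ and the Killing dual $X_\alpha$ of $w_\alpha$; since $\alpha(X_\alpha)=\tfrac12\mathrm{B}_\g(\alpha,\alpha)$ and $\alpha(Y_{\{\alpha\}})=1$, one gets $Y_{\{\alpha\}}=\tfrac{2}{\mathrm{B}_\g(\alpha,\alpha)}X_\alpha$ and hence $w_\alpha(Y_{\{\alpha\}})=\tfrac{2}{\mathrm{B}_\g(\alpha,\alpha)}\mathrm{B}_\g(w_\alpha,w_\alpha)>0$. (Equivalently, pair $2\varrho_{\{\alpha\}}$ with $w_\alpha$ as the paper does: each summand $\dim\fg_\beta\,\mathrm{B}_\g(\beta,w_\alpha)=\dim\fg_\beta\,\beta(X_\alpha)$ is positive for $\beta\in\Sigma^+_{\{\alpha\}}$.) With this repair the proof is complete.
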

\begin{proof}First we rewrite the index set $\Sigma^+_{\{\alpha\}}$ in the sum defining $\varrho_{\{\alpha\}}$. Note that the $\mathrm{B}_\g$-dual $X_{\alpha}$ of $w_\alpha$ lies in $\aL^+$. For all $\beta\in \Sigma$ we have $\beta(X_{\alpha})=\mathrm{B}_\g(\beta,w_\alpha)$ hence
\bq
2\varrho_{\{\alpha\}}=\sum_{\substack{\beta\in \Sigma:\\ \mathrm{B}_\g(\beta,w_\alpha)> 0}}\dim \fg_\beta \beta.\label{eq:wXalpha}
\eq
Now, given any two simple roots $\alpha',\alpha''\in \Delta$, the root reflection $r_{\alpha'}:\aL^\ast\to\aL^\ast$ is an orthogonal transformation with respect to $\mathrm{B}_\g$  acting on the weight $w_{\alpha''}$ according to
\bq
r_{\alpha'}(w_{\alpha''})=w_{\alpha''}-\delta_{\alpha',\alpha''}{\alpha''},\label{eq:salphawalpha}
\eq
where $\delta_{\alpha',\alpha''}=1$ if $\alpha'=\alpha''$ and $\delta_{\alpha',\alpha''}=0$ otherwise. In particular, when $\alpha'\neq \alpha$, every $\beta\in \Sigma$ satisfies
\[
\mathrm{B}_\g(\beta,w_\alpha)>0\iff \mathrm{B}_\g(r_{\alpha'}(\beta),w_\alpha)>0.
\]
Thus \eqref{eq:wXalpha} shows that $\varrho_{\F_\alpha}$ is invariant under all root reflections besides that of $\alpha$:
\bq
r_{\alpha'}(\varrho_{\{\alpha\}})=\varrho_{\{\alpha\}}\qquad \forall\; \alpha'\in \Delta\setminus \{\alpha\}.\label{eq:salphawXalpha}
\eq
This implies that $2\varrho_{\{\alpha\}}$ is an integer multiple of $w_\alpha$. Indeed, since the fundamental weights form a basis of $\aL^\ast$ in which each root has integer coefficients, and $2\varrho_{\{\alpha\}}$ is by definition a linear combination of roots weighted by integers, we can write 
$$
2\varrho_{\{\alpha\}}=\sum_{\alpha'\in \Delta}m_{\alpha'} w_{\alpha'}
$$
with unique coefficients $m_{\alpha'}\in \Z$. If $\alpha'\in \Delta\setminus \{\alpha\}$, then  \eqref{eq:salphawXalpha} and \eqref{eq:salphawalpha} imply 
\[
m_{\alpha'} = m_{\alpha'}(1-C_{\alpha',\alpha'}),
\]
where $C_{\alpha',\alpha'}$ is the coefficient in front of $w_{\alpha'}$ when writing $\alpha'$ as a linear combination of the fundamental weights. Recalling the definition of $w_{\alpha'}$ one finds that $C_{\alpha',\alpha'}=2$, so $m_{\alpha'}=0$. 

It remains to show that $m_{\alpha}>0$. To this end, we apply $\mathrm{B}_\g$ to  $2\varrho_{\{\alpha\}}$ and $w_\alpha$:
\bqn
\mathrm{B}_\g(2\varrho_{\{\alpha\}},w_\alpha)= m_{\alpha}\underbrace{\mathrm{B}_\g(w_\alpha,w_\alpha)}_{>0} =\sum_{\substack{\beta\in \Sigma:\\ \mathrm{B}_\g(\beta,w_\alpha)> 0}}\underbrace{\mathrm{B}_\g(\beta,w_\alpha)}_{>0}\underbrace{\dim \fg_\beta}_{>0}.
\eqn
The sum's index set is non-empty, it contains $\alpha$. Thus $m_\alpha>0$ and the proof is finished.
\end{proof}
For general flag manifolds we have the following fact about Jacobian class functions:
\begin{lem} \label{lem:decomposition Jacobian class function}
Let $\F$ be a flag manifold associated to a subset $\Theta\subset\Delta$. There is a collection of positive rational numbers $\mathbf r=(r_\alpha)_{\alpha\in\Theta}\in\Q_{>0}^\Theta$ such that $\mathcal J_{\F}=\sum_{\alpha\in\Theta}r_\alpha \mathcal J_{\F_{\alpha}}$.
\end{lem}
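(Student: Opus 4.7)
My plan is to prove the identity $2\varrho_\Theta=\sum_{\alpha\in\Theta}c_\alpha w_\alpha$ in $\aL^*$ for positive integers $c_\alpha$ ($\alpha\in\Theta$). Once this is established, Lemma~\ref{lem:walphaaTheta} applied to $\Theta$ gives $w_\alpha(p_\Theta(\lambda(g)))=w_\alpha(\lambda(g))$ for $\alpha\in\Theta$, hence $\mathcal J_\F(g)=\sum_{\alpha\in\Theta}c_\alpha w_\alpha(\lambda(g))$, while Lemma~\ref{lem:wmalpha} together with Lemma~\ref{lem:walphaaTheta} applied to $\{\alpha\}$ yields $\mathcal J_{\F_\alpha}(g)=m_\alpha w_\alpha(\lambda(g))$. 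Setting $r_\alpha:=c_\alpha/m_\alpha\in\Q_{>0}$ then gives the desired positive rational coefficients.

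To prove the identity, let $H_\beta\in\aL$ denote the coroot of $\beta\in\Delta$ (so that $w_\gamma(H_\beta)=\delta_{\beta\gamma}$), and set $c_\beta:=(2\varrho_\Theta)(H_\beta)\in\Z$, so that $2\varrho_\Theta=\sum_{\beta\in\Delta}c_\beta w_\beta$. For the vanishing $c_\beta=0$ when $\beta\in\Delta\setminus\Theta$, I imitate the Weyl-reflection step from the proof of Lemma~\ref{lem:wmalpha}: since $\beta(Y_\Theta)=0$, the reflection $s_\beta$ through $\ker\beta$ preserves $\gamma(Y_\Theta)$ for every root $\gamma$ and hence permutes $\Sigma_\Theta^+$; combined with the $s_\beta$-invariance of root multiplicities this gives $s_\beta(2\varrho_\Theta)=2\varrho_\Theta$, so $c_\beta=(2\varrho_\Theta)(H_\beta)=0$.

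For the positivity $c_\alpha>0$ when $\alpha\in\Theta$, I use the split $2\varrho=2\varrho_\Theta+2\varrho_{M_\Theta}$, where $2\varrho_{M_\Theta}:=\sum_{\beta\in\Sigma^+\setminus\Sigma_\Theta^+}\dim\g_\beta\cdot\beta$ is a non-negative integer combination of simple roots from $\Delta\setminus\Theta$ (because $\Sigma^+\setminus\Sigma_\Theta^+$ consists exactly of positive roots supported in $\Delta\setminus\Theta$). Strict dominance of $\varrho$ gives $(2\varrho)(H_\alpha)>0$ (itself a one-line consequence of the Weyl-reflection argument applied to the $s_\alpha$-stable set $\Sigma^+\setminus\{\alpha,2\alpha\}$, yielding $\varrho(H_\alpha)=\dim\g_\alpha+2\dim\g_{2\alpha}\geq 1$), while each simple root $\gamma\in\Delta\setminus\Theta$ contributes $\gamma(H_\alpha)=\frac{2\Bg(\gamma,\alpha)}{\Bg(\alpha,\alpha)}\le 0$ because distinct simple roots have non-positive Cartan integer. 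Therefore $(2\varrho_{M_\Theta})(H_\alpha)\le 0$, and so $c_\alpha=(2\varrho)(H_\alpha)-(2\varrho_{M_\Theta})(H_\alpha)>0$. The conceptual heart of the proof is the decomposition $2\varrho_\Theta=2\varrho-2\varrho_{M_\Theta}$; once one has this split, the positivity follows from elementary root-system combinatorics and no representation-theoretic input is required.
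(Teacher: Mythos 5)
Your proof is correct, and it is a genuinely more explicit route than the one the paper takes, most notably on the positivity of the coefficients. The paper obtains the existence of real coefficients $r_\alpha$ purely by linear algebra (observing that $\varrho_\Theta\circ p_\Theta$ vanishes on $\bigcap_{\alpha\in\Theta}\ker w_\alpha$, so it must be a combination of the $w_\alpha\vert_{\aL_\Theta}$), gets rationality from the fact that roots and weights are rational combinations of one another together with \eqref{eq:defmalpha}, and then dispatches positivity in a single sentence ("positivity comes from the fact that $X_{w_\beta}\in\aL^+\setminus\{0\}$") without isolating the individual coefficients. You instead compute the coefficients directly as $c_\beta=(2\varrho_\Theta)(H_\beta)$ in the weight basis: the vanishing for $\beta\in\Delta\setminus\Theta$ via the $s_\beta$-invariance of $\Sigma_\Theta^+$ is the same reflection technique the paper uses inside the proof of Lemma \ref{lem:wmalpha} (c.f.\ \eqref{eq:salphawXalpha}), but the positivity via the split $2\varrho=2\varrho_\Theta+2\varrho_{M_\Theta}$, the identity $\varrho(H_\alpha)=\dim\g_\alpha+2\dim\g_{2\alpha}\geq 1$, and the non-positivity of Cartan integers between distinct simple roots is an argument the paper does not give. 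What your approach buys is a complete and checkable justification of the step the paper leaves essentially unargued, plus the slightly stronger conclusion that the $c_\alpha$ are positive \emph{integers} (so $r_\alpha=c_\alpha/m_\alpha$ is manifestly a positive rational); the cost is a longer computation where the paper is content with a soft linear-algebra existence statement. All the individual steps check out, including in the non-reduced (type $BC$) case, since you only use the root-system axioms ($\gamma(H_\beta)\in\Z$, $s_\alpha$ permuting $\Sigma^+\setminus\{\alpha,2\alpha\}$) and the $W$-invariance of the multiplicities $\dim\g_\gamma$.
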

\begin{proof}
The linear form $\varrho_\Theta\circ p_\Theta\in \fa^\ast$ vanishes on $\ker p_\Theta=\aL_\Theta^\perp=\cap_{\alpha\in\Theta}\ker w_\alpha$, so from Lemma \ref{lem:wmalpha} and simple linear algebra we get the existence of a tuple of real numbers $\mathbf r=(r_\alpha)_{\alpha\in\Theta}\in\R^\Theta$ such that $\mathcal J_{\F}=\sum_{\alpha\in\Theta}r_\alpha \mathcal J_{\F_{\alpha}}$. In fact these numbers must be rational because weights and roots are rational linear combinations of each other and the ratio obtained in \eqref{eq:defmalpha} is rational. Their  positivity comes from the fact that $X_{w_\beta}\in\aL^+\setminus\{0\}$ for any simple restricted root $\beta$.
\end{proof}

\begin{ex} \label{example fundamental weights}
Still using the notations of Example \ref{example standard parabolics} for $\SL(d,\mathrm{k})$, the fundamental weight $w_{\alpha_j}$ corresponding to the simple restricted root $\alpha_j$ is given by
\[ w_{\alpha_j}\Big(\mathrm{Diag}(x_1,\dots,x_d)\Big)=x_1+\cdots+x_j\,,\]
and we find that $m_{\alpha_j}=d$ for $\mathrm{k}=\R$ and $m_{\alpha_j}=2d$ for $\mathrm{k}=\C$.
\end{ex}

\subsubsection{Jordan projections within a  Levi subgroup} \label{sec Jordan decomposition within Levi}
The Jordan decomposition also holds for reductive groups \cite[Prop.~2.10]{voganorbit}, \cite[Ch.~9]{Milne2017}. Considering a  Levi subgroup $L<G$, this means that the Jordan decomposition $g=g_eg_hg_u$ in $G$ of $g\in L$ produces $g_e,g_h,g_u\in L$. 

Since we still consider a fixed restricted root space decomposition, let us consider the standard Levi subgroup $L_\Theta$ associated to a set $\Theta\subset\Delta$ of simple restricted roots, and investigate the difference between the Jordan projection $\lambda_{L_\Theta}$ within the reductive group $L_\Theta$ and the restriction to $L_\Theta$ of the Jordan projection $\lambda:G\to\aL^+$ of $G$. The first is that they do not have the same target space. Indeed, $\aL\subset\fl_\Theta$ is still a maximal abelian subspace of $\fl_\Theta\cap\mathfrak k^\perp$, but $\aL^+$ is not a fundamental domain of the corresponding Weyl group $W_\Theta$. Instead we consider
\begin{equation} \aL^+_{L_\Theta}:=\aL_\Theta\oplus\aL_{\Delta\setminus\Theta}^+=\set{X\in\aL}{\forall\alpha\in\Delta\setminus\Theta~\alpha(X)\geq0}\subset\aL~, \label{eq:aLTheta} 
\end{equation}
 where $\aL_\Theta=\cap_{\alpha\in\Delta\setminus\Theta}\ker\alpha$ as before, and $\aL_{\Delta\setminus\Theta}^+:=\aL_{\Delta\setminus\Theta}\cap\aL^+$. The Jordan projection $$\lambda_{L_\Theta}:L_\Theta\to \aL_{L_\Theta}^+$$ is  the map sending $g\in L_\Theta$ to the unique $X\in \aL_{L_\Theta}^+$ such that $\exp(X)$ is conjugate within $L_\Theta$ to the hyperbolic component $g_h\in L_\Theta$ of $g$. Note that $\aL^+_{L_\Theta}$ contains the full linear subspace $\aL_\Theta$. Since this space lies in the center of $\fl_\Theta$, we cannot conjugate its elements into $\aL_\Theta^+=\aL_\Theta\cap\aL^+$ within $L_\Theta$. In particular, for $X\in \aL_{L_\Theta}^+$, the  vectors $X=\lambda_{L_\Theta}(\exp(X))$ and $\lambda(\exp(X))\in \aL^+$  are usually  distinct, and we have the equivalence
\begin{equation}
\lambda(\exp(X))=X \iff X\in \aL^+ ~. \label{eq: equality case between Jordan projections}
\end{equation}

The Jordan projection $\lambda_{L_\Theta}:L_\Theta\to \aL_{L_\Theta}^+$ can be composed with the Killing-orthogonal projection $p_\Theta:\fa\to\fa_\Theta$, thus defining a projection into the split part $\fa_\Theta=\fz_{\mathrm{split}}(\fl_\Theta)$ of the center of the Lie algebra $\fl_\Theta$ of $L_\Theta$.
\begin{definition}  The \emph{split central projection} of $L_\Theta$ is the  map
\begin{equation}
 \lambda_{L_\Theta}^Z=p_\Theta\circ\lambda_{L_\Theta}:L_\Theta\to \aL_\Theta=\fz_{\mathrm{split}}(\fl_\Theta)\,. \label{eq:centralprojectionstandardlevi}
 \end{equation}
\end{definition}
 A noticeable difference with Jordan projections considered so far is that $\lambda_{L_\Theta}^Z$ is a group homomorphism onto the additive group $\aL_\Theta$. Indeed, the Langlands decomposition \cite[VII.7, Prop.~7.82]{knapp} tells us there is a reductive subgroup $M_\Theta<L_\Theta$ with compact center such that $L_\Theta$ decomposes into the direct product
\begin{equation}
L_\Theta= Z_\mathrm{split}(L_\Theta) M_\Theta\,, \label{eq:direct product levi}
\end{equation}
and $\exp\circ \lambda_{L_\Theta}^Z$ is the projection onto the first factor in this direct product. The translation of Definition \ref{def:JacbobF} in the framing of $L_\Theta$ therefore gives not just a class function, but an additive character of $L_\Theta$.

\begin{definition}Let $L_\Theta<G$ be a standard Levi subgroup. The \emph{Jacobian character of $L_\Theta$} is the function
\[ \mathbf J_{L_\Theta}:\map{L_\Theta}{\R}{g}{2\varrho_\Theta\big(\lambda^Z_{L_\Theta}(g)\big).}\]
\end{definition}

Since any element $X\in \g_{\mathrm{hyp}}$ is contained in a Weyl chamber $\aL^+$ for an appropriate root space decomposition, any Levi subgroup $L<G$ can be considered as a standard Levi subgroup, thus defining a split central projection $\lambda^Z_L\in\Hom(L,\fz_{\mathrm{split}}(\fl))$ and a  Jacobian character $\mathbf J_L\in\Hom(L,\R)$. 

\begin{lem} The split central projection $\lambda^Z_L\in\Hom(L,\fz_{\mathrm{split}}(\fl))$ and the Jacobian character $\mathbf J_L\in\Hom(L,\R)$ of a Levi subgroup $L<G$ does not depend on the restricted root space decomposition used to define it.
\end{lem}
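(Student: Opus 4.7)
The plan is to give intrinsic (decomposition-independent) characterizations of $\lambda_L^Z(g)$ and $\mathbf{J}_L(g)$. A first observation, immediate from the root space decomposition, is that
\[
2\varrho_\Theta(Z) = \sum_{\alpha \in \Sigma^+_\Theta}(\dim\fg_\alpha)\,\alpha(Z) = \mathrm{tr}\bigl(\mathrm{ad}_Z|_{\fn_{\fp_\Theta}}\bigr) \qquad\forall\, Z \in \fa.
\]
The nilpotent radical $\fn_{\fp_\Theta} = \fn_X$ is intrinsic to the parabolic $\fp_\Theta = \fp_X$ attached to any $X \in \fg_{\mathrm{hyp}}$ with $L = L_X$ via \eqref{eq:lpXalgebraic}, so the trace formula makes the Jacobian character intrinsic as soon as $\lambda_L^Z$ is.

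For the split central projection, let $g = g_e g_h g_u$ be the Jordan decomposition in $G$, so that $g_h \in L$, and write $g_h = \exp(Y)$ for the unique hyperbolic $Y \in \fg$. Then $Y \in \fl$: for any $X \in \fg_{\mathrm{hyp}}$ with $L = L_X$, the relation $\Ad(\exp Y)X = X$ together with diagonalizability of $\mathrm{ad}_Y$ forces $[Y,X] = 0$. Because $\mathrm{B}_\fg$ restricts to a non-degenerate form on $\fl$ and $\fz_{\mathrm{split}}(\fl)$ is intrinsic by Definition \ref{def split part center of Levi}, the orthogonal decomposition $\fl = \fz_{\mathrm{split}}(\fl) \oplus \fz_{\mathrm{split}}(\fl)^{\perp_\fl}$ is intrinsic, and we may extract the intrinsic projection $Y_Z \in \fz_{\mathrm{split}}(\fl)$ of $Y$.

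It remains to verify the core identity $\lambda_L^Z(g) = Y_Z$ in any given decomposition with $L = L_\Theta$. By \eqref{eq:splitcompLeviTheta} one has $\fa_\Theta = \fz_{\mathrm{split}}(\fl)$; using the description $\fm_\Theta = \fm \oplus (\fa \cap \fa_\Theta^\perp) \oplus \bigoplus_{\alpha|_{\fa_\Theta} = 0} \fg_\alpha$ and checking term-by-term that each summand is $\mathrm{B}_\fg$-orthogonal to $\fa_\Theta$, the dimension count in $\fl_\Theta = \fa_\Theta \oplus \fm_\Theta$ forces $\fm_\Theta = \fz_{\mathrm{split}}(\fl)^{\perp_\fl}$. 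Decomposing $g = am$ via \eqref{eq:direct product levi}, centrality and hyperbolicity of $a$ give $g_h = am_h$, so commuting exponentials yield $Y = \log a + \log m_h$ with $\log a \in \fz_{\mathrm{split}}(\fl)$ and $\log m_h \in \fm_\Theta$, whence $Y_Z = \log a$. Inside $M_\Theta$, $m_h$ is conjugate to some $\exp(\tilde Y)$ with $\tilde Y \in \fa \cap \fm_\Theta = \fa_\Theta^{\perp_\fa}$ in a positive Weyl chamber; consequently $g_h$ is $L_\Theta$-conjugate to $\exp(\log a + \tilde Y) \in \exp(\fa^+_{L_\Theta})$, identifying $\lambda_{L_\Theta}(g) = \log a + \tilde Y$. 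Applying the Killing-orthogonal projection $p_\Theta$ onto $\fa_\Theta$ annihilates $\tilde Y$ and fixes $\log a$, yielding $\lambda_L^Z(g) = \log a = Y_Z$ as claimed. The main technical step is the orthogonality identification $\fm_\Theta = \fz_{\mathrm{split}}(\fl)^{\perp_\fl}$; everything else is a matter of unraveling the definitions.
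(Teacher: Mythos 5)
Your proof is correct in substance and takes a genuinely different route from the paper's. The paper disposes of the lemma in one line by invoking the argument of Lemma \ref{lem:rootclassfunctionindependence}: any two admissible choices are carried into one another by some $\Ad(g_0)$, and all ingredients transform equivariantly. You instead exhibit a decomposition-free formula: $\lambda_L^Z(g)$ is the $\mathrm{B}_{\fg}$-orthogonal projection onto $\fz_{\mathrm{split}}(\fl)$ of the logarithm of the hyperbolic Jordan factor $g_h$, which makes the independence manifest rather than checked. The supporting steps are sound: $Y=\log g_h$ does lie in $\fl$, the identification $\fm_\Theta=\fz_{\mathrm{split}}(\fl)^{\perp}\cap\fl$ is the standard description of the Lie algebra of $M_\Theta$ in the Langlands decomposition, and the factorization $g_h=am_h$ through \eqref{eq:direct product levi} together with $p_\Theta$ annihilating $\fa\cap\fm_\Theta$ gives $\lambda_L^Z(g)=\log a$. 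What your approach buys is an explicit intrinsic formula (potentially useful elsewhere) and a more careful treatment of the fact that $\lambda_{L_\Theta}$ is the Jordan projection \emph{within} $L$ rather than the restriction of $\lambda$ — a point the paper's ``repeat the proof'' glosses over. What it costs is length.

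One caveat, which your own trace formula surfaces: $\mathrm{tr}(\ad_Z|_{\fn_{\fp}})$ depends on which of the two opposite parabolics $\fp_{\pm X}$ containing $\fl$ is taken, and these yield Jacobian characters differing by a sign. Already for the diagonal Levi in $\SL(2,\R)$, the two Weyl chambers present $L$ as a standard Levi with $\fn$ equal to the strictly upper, respectively strictly lower, triangular matrices, and give $\mathbf J_L(\mathrm{diag}(e^t,e^{-t}))=\pm 2t$. So your phrase ``intrinsic to the parabolic attached to any $X$ with $L=L_X$'' does not yield independence from the decomposition: the nilradical is intrinsic to the parabolic, but the parabolic is not intrinsic to $L$. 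This is really an imprecision in the statement itself (in every application the ambiguity is resolved because $L$ arises as the stabilizer of an \emph{ordered} transverse pair $(\mathbf x^+,\mathbf x^-)$, which distinguishes $\fn_{\fp^+}$), and your argument in fact proves the correct assertion — that $\mathbf J_L$ depends only on $L$ together with a choice of one of its two opposite parabolics — but you should state that dependence explicitly rather than fold it into the word ``intrinsic''. The claim for $\lambda_L^Z$ is unaffected and your proof of it is complete.
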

\begin{proof}
Repeat the proof of Lemma \ref{lem:rootclassfunctionindependence} once again.
\end{proof}

\subsection{Linear actions on tangent spaces of flag manifolds}\label{sec:linact}

Consider a pair of opposite flag spaces $\F^+,\F^-$ and the transverse flag space $\Fpf=\F^+\trtimes \F^-$, and let $\mathbf{x}=(\mathbf x^+,\mathbf x^-)\in \Fpf$. Write $\p^\pm$ for the Lie algebras of the stabilizers of $\mathbf x^\pm$, and $\fn_{\p^\pm}$ for their nilpotent radicals. Recall from \eqref{eq:transverseg} that we have the two direct sum  decompositions
\bq
\fp^+\oplus \fn_{\fp^-}=\fn_{\fp^+}\oplus\fp^-=\g.\label{eq:decomp3905}
\eq
Every element $X\in \fg$ induces fundamental vector fields on $\F^\pm$ and $\Fpf$, which we shall all denote by $\overline X$, via the formula $\overline{X}(x)= \left.\frac{d\,}{dt}\right\vert_{t=0} \exp(tX)\cdot x$. The kernel of the evaluation map
\bq\begin{split}
 \g&\to T_{\mathbf x^\pm}\cF^\pm\\
 X&\mapsto\overline{X}(\mathbf x^\pm)\label{eq:evalmap}\end{split}
 \eq
is the Lie algebra of the stabilizer of $\mathbf x^\pm$, i.e.\ $\fp^\pm$. Therefore, the  decomposition \eqref{eq:decomp3905} provides us with linear isomorphisms
\bq
T_{\mathbf x^+}\F^+ \simeq\fn_{\fp^-},\qquad 
T_{\mathbf x^-}\F^- \simeq\fn_{\fp^+},\qquad 
T_{\mathbf{x}}\Fpf \simeq\fn_{\fp^-}\oplus \fn_{\fp^+}.\label{eq:TfFpfident}
\eq
\begin{rem}\label{rem:signswap}Note that these isomorphisms swap the $+$ and $-$ signs in the upper index, which will be important to keep track of when we apply them.
\end{rem}
Moreover, consider the parabolic subgroups $P_{\mathbf x^\pm}:=\mathrm{Stab}_G(\mathbf x^\pm)=\mathrm{Stab}_G(\nL_{\fp^\pm})$.  Then for every $g\in P_{\mathbf x^+}$ the fact that $g$ fixes $\nL_{\fp^+}$ allows us to identify both $T_{\mathbf x^-}\F^-$ and $T_{g\cdot\mathbf x^-}\F^-$ with $\nL_{\fp^+}$ via the evaluation map \eqref{eq:evalmap}, and analogously if $g\in P_{\mathbf x^-}$. In both cases, the derivative $dg:T_{\mathbf x^\mp}\F^\mp\to T_{g\cdot\mathbf x^\mp}\F^\mp$ of the $g$-action then becomes a linear map $\nL_{\fp^\pm}\to \nL_{\fp^\pm}$ given by the restricted adjoint action
 \bq\begin{split}
\fn_{\fp^\pm}&\to \fn_{\fp^\pm}\\ \label{eq:tangentTFpm}
X&\mapsto \Ad(g)X.
\end{split}
\eq

This allows us to give a geometric interpretation of the Jacobian characters of Levi subgroups:

\begin{lem} \label{lem:LeviJacobianClassFunctionIsDeterminant}
Let $\F^+,\F^-$ be a pair of opposite flag manifolds. Consider $\mathbf x=(\mathbf x^+,\mathbf x^-)\in\F^+\trtimes\F^-$, let $L=\mathrm{Stab}_G(\mathbf x)$ denote its stabilizer and $\fn$ the nilpotent radical of the Lie algebra of the stabilizer of $\mathbf x^+\in \F^+$. Then for any $g\in L$, we have
\[e^{\mathbf J_L(g)}= \vert \det( d_{\mathbf x^-}g) \vert = |\det( \Ad(g)\vert_{\fn})|,\] 
where $d_{\mathbf x^-}g\in \GL\left(T_{\mathbf x^-}\F^-\right)$ denotes the differential of the action of $g$ at its fixed point $\mathbf x^-\in\F^-$.
\end{lem}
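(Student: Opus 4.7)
The plan is to reduce both equalities to trace computations via the Jordan decomposition in the Levi subgroup $L$. Note first that $L=\mathrm{Stab}_G(\mathbf x^+)\cap\mathrm{Stab}_G(\mathbf x^-)=P_X\cap P_{-X}=L_X$ by Fact~\ref{fact:lXpXLXPXFX}\eqref{fact:LXPXPminusX}, so $L$ is indeed a Levi subgroup, and after fixing a restricted root space decomposition we may assume $L=L_\Theta$ with $\fn=\fn_{\fp_\Theta}=\bigoplus_{\alpha\in\Sigma_\Theta^+}\fg_\alpha$.

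The second equality $|\det(d_{\mathbf x^-}g)|=|\det(\Ad(g)|_\fn)|$ is immediate from the identifications \eqref{eq:TfFpfident} and \eqref{eq:tangentTFpm}: since $g\in L< P_{\mathbf x^+}$, the evaluation map identifies both $T_{\mathbf x^-}\F^-$ and $T_{g\cdot\mathbf x^-}\F^-=T_{\mathbf x^-}\F^-$ with $\fn_{\fp^+}=\fn$, and under these identifications $d_{\mathbf x^-}g$ becomes $\Ad(g)|_\fn$.

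For the first equality, I would write the Jordan decomposition $g=g_eg_hg_u$ inside $L_\Theta$, which exists by Section~\ref{sec Jordan decomposition within Levi}. Since $L_\Theta$ normalizes $\fn$, the three factors give a multiplicative decomposition $\det(\Ad(g)|_\fn)=\det(\Ad(g_e)|_\fn)\det(\Ad(g_h)|_\fn)\det(\Ad(g_u)|_\fn)$. The unipotent factor contributes $1$ since $\Ad(g_u)$ is unipotent on $\fg$ hence on $\fn$, and the elliptic factor contributes a factor of absolute value $1$ since the eigenvalues of $\Ad(g_e)$ all have modulus one. It remains to analyze the hyperbolic piece: by definition of $\lambda_{L_\Theta}$, the element $g_h$ is $L_\Theta$-conjugate to $\exp(\lambda_{L_\Theta}(g))$, and this conjugation preserves $\fn$, so
\[
\bigl|\det(\Ad(g_h)|_\fn)\bigr|=\det\bigl(\Ad(\exp\lambda_{L_\Theta}(g))|_\fn\bigr)=\exp\bigl(\mathrm{tr}(\ad_{\lambda_{L_\Theta}(g)}|_\fn)\bigr).
\]
Since $\ad$ of any $X\in\aL$ acts on $\fg_\alpha$ by $\alpha(X)$, the trace evaluates to $\sum_{\alpha\in\Sigma_\Theta^+}\dim\fg_\alpha\,\alpha(\lambda_{L_\Theta}(g))=2\varrho_\Theta(\lambda_{L_\Theta}(g))$ by the definition \eqref{eq:wF} of $\varrho_\Theta$.

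The last step, and the only place where real care is needed, is to replace $\lambda_{L_\Theta}(g)$ by $\lambda^Z_{L_\Theta}(g)=p_\Theta(\lambda_{L_\Theta}(g))$ inside $2\varrho_\Theta$; this is the main obstacle to clear. For this I would show that $\varrho_\Theta$ vanishes on $\aL_{\Delta\setminus\Theta}=\aL_\Theta^\perp\cap\aL$. The conceptual reason is that $\aL_{\Delta\setminus\Theta}$ is contained in the derived subalgebra $[\fl_\Theta,\fl_\Theta]$, which is semisimple, and every finite-dimensional representation of a semisimple Lie algebra is traceless; applying this to the $\Ad$-action of $\fl_\Theta$ on its invariant subspace $\fn$ yields $\mathrm{tr}(\ad_X|_\fn)=0$ for all $X\in\aL_{\Delta\setminus\Theta}$, i.e.\ $2\varrho_\Theta(X)=0$. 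Combining all of the above,
\[
\bigl|\det(\Ad(g)|_\fn)\bigr|=\exp\bigl(2\varrho_\Theta(\lambda_{L_\Theta}(g))\bigr)=\exp\bigl(2\varrho_\Theta(\lambda^Z_{L_\Theta}(g))\bigr)=e^{\mathbf J_L(g)},
\]
completing the proof.
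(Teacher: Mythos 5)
Your argument follows the paper's proof essentially step for step: the second equality via the identification of $T_{\mathbf x^-}\F^-$ with $\fn$, then the Jordan decomposition $g=g_eg_hg_u$ inside $L_\Theta$ to discard the elliptic and unipotent factors, then reduction to $g=\exp(X)$ by conjugation and the computation of $\det(\Ad(\exp X)\vert_\fn)$ in terms of $\varrho_\Theta$. You are in fact more explicit than the paper at the very last step, where one must know that $\varrho_\Theta(X)=\varrho_\Theta(p_\Theta(X))$ for $X\in\aL$; the paper simply asserts $\det(\Ad(\exp X)\vert_\fn)=e^{2\varrho_\Theta(p_\Theta(X))}$ without comment. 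However, your justification of that step contains one false identification: $\ker(p_\Theta\vert_\aL)=\aL_\Theta^\perp\cap\aL$ is the span of the $\mathrm{B}_\g$-duals of the roots $\alpha'\in\Delta\setminus\Theta$, which is \emph{not} the subspace $\aL_{\Delta\setminus\Theta}=\bigcap_{\alpha\in\Theta}\ker\alpha$ of \eqref{eq:aln3} (already for $\mathfrak{sl}(3,\R)$ with $\Theta=\{\alpha_1\}$ the two differ), and $\aL_{\Delta\setminus\Theta}$ is in general \emph{not} contained in $[\fl_\Theta,\fl_\Theta]$. Fortunately, the space you actually need is $\aL_\Theta^\perp\cap\aL$, and your semisimplicity (or simply ``trace of a commutator'') argument does apply to it: one checks that $\aL_\Theta^\perp\cap\aL=\aL\cap[\fl_\Theta,\fl_\Theta]$, since the center and derived algebra of the reductive algebra $\fl_\Theta$ are $\mathrm{B}_\g$-orthogonal and both spaces have dimension $|\Delta\setminus\Theta|$; hence $\mathrm{tr}(\ad_X\vert_\fn)=0$ on $\ker(p_\Theta\vert_\aL)$ and $\varrho_\Theta(\lambda_{L_\Theta}(g))=\varrho_\Theta\big(p_\Theta(\lambda_{L_\Theta}(g))\big)$ as required. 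With that one correction your proof is complete and coincides with the paper's.
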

\begin{proof}
The second equality comes from the discussion above. Considering an element $Y\in\g_{\mathrm{hyp}}$ such that $\mathrm{Stab}_G(\mathbf x^+)=P_Y$ and $\mathrm{Stab}_G(\mathbf x^-)=P_{-Y}$, and a restricted root space decomposition such that $Y\in\aL^+$,   we may write $L=L_\Theta$ for some subset $\Theta\subset\Delta$. Now consider the Jordan decomposition $g=g_eg_hg_u$ in the reductive Lie group $L_\Theta$. Since the elements $g_e,g_h,g_u\in L_\Theta< P_\Theta$ fix $\nL$, we immediately have
\[ |\det( \Ad(g_e)\vert_{\fn})|= \det( \Ad(g_u)\vert_{\fn}) = 1~,\]
thus
\bq
 |\det( \Ad(g)\vert_{\fn})|=|\det( \Ad(g_h)\vert_{\fn})|.\label{eq:gghreduction}
\eq
Since we also have $\mathbf J_L(g)=\mathbf J_L(g_h)$ by definition of the Jordan projection $\lambda_{L_\Theta}$, we may now assume that $g$ is hyperbolic, i.e. $g=g_h$. Furthermore conjugating in $L_\Theta$, we may assume that $g=\exp(X)$ for some $X\in \aL^+_{L_\Theta}$, where $\aL^+_{L_\Theta}$ was defined in \eqref{eq:aLTheta}.

 It now comes from the very definition of $\varrho_\Theta=\frac{1}{2}\sum_{\alpha\in \Sigma^+_\Theta}\dim \fg_\alpha \alpha$  that the determinant of the action of $\Ad(\exp(X))=\exp(\ad_X)$ on $\fn=\bigoplus_{\alpha\in \Sigma_\Theta^+}\fg_\alpha$ is 
\[ \det\left( \Ad(\exp(X))\vert_{\fn}\right)=e^{2\varrho_\Theta\circ p_\Theta(X)}~, \]
and $2\varrho_{\Theta}\circ p_\Theta(X)$ is the definition of $\mathbf J_{L_\Theta}(\exp(X))$.
\end{proof}

In order to give a similar geometric interpretation of the class functions $\mathcal P_\alpha$ and $\mathcal J_\F$ defined on the whole group $G$, we must first guarantee the existence of fixed points in flag manifolds, then select a preferred fixed point as there can be several. This is done through the notion of proximality.

\begin{definition}\label{def:proximal}
Let $\F$  be a flag manifold, $\mathbf x\in \F$, and $g\in G$. We say that $\mathbf x$ is an \emph{attracting fixed point} of $g$ if  $g\in\mathrm{Stab}_G(\mathbf x)$ and the linear action of $g$ on $T_{\mathbf x}\F$ given by the derivative of the $G$-action has spectral radius $<1$.

An element $g\in G$ is called \emph{$\F$-proximal} if $g$ has an attracting fixed point in $\F$.
\end{definition}
Numerous useful properties of $\F$-proximal elements are proved in \cite[Lemma 2.26, Prop.~3.3~(c)]{GGKW}. For instance, it is shown there that every $\F$-proximal element $g\in G$ has a \emph{unique} attracting fixed point in $\F$, and that
\begin{align}\begin{split}
g \text{ is  $\F$-proximal} &\iff g^{-1} \text{ is  $\overline \F$-proximal}\\
&\iff \mathcal P_\alpha(g)>0~\forall \alpha\in \Theta\, . \label{eq:FproxFbarprox}\end{split}
\end{align}
\begin{rem} Consider a pair of opposite flag manifolds $\F^+,\F^-$, and an $\cF^+$-proximal element $g\in G$. Denoting by $g^+\in \cF^+$ its attracting fixed point  and by $g^-\in\cF^-$ its \emph{repelling fixed point}, i.e.  the attracting fixed point of $g^{-1}$, we have that $g^+\pitchfork g^-$.
\end{rem}

With this terminology, we can provide a geometric interpretation of the Jacobian class functions of flag manifolds:

\begin{lem} \label{lem: determinant proximal element on a  flag manifold}Consider a pair of opposite flag manifolds $\F^+,\F^-$ and the transverse flag space $\Fpf=\F^+\trtimes \F^-$. Let $g\in G$ be $\F^+$-proximal with attracting/repelling fixed points $\mathbf x=(\mathbf x^+,\mathbf x^-)\in \Fpf$, and denote by $\fn$ the nilpotent radical of the Lie algebra of the stabilizer of $\mathbf x^+\in \F^+$.  Then
\bq
e^{\mathcal J_{\F^+}(g)}=\big|\det\big( \left.\Ad(g)\right\vert_{\fn} \big)\big|. \label{eq:238950328}
 \eq
\end{lem}
\begin{proof}
Let $L=\mathrm{Stab}_G(\mathbf x)$, so that according to Lemma \ref{lem:LeviJacobianClassFunctionIsDeterminant} we only need to show that $\mathcal J_{\F^+}(g)=\mathbf J_{L}(g)$. Repeating the process used in the proof of Lemma \ref{lem:LeviJacobianClassFunctionIsDeterminant}, we can consider a fixed restricted root space decomposition such that $L=L_\Theta$ is a standard Levi subgroup and reduce the study to the case of $g=\exp(X)$ with $X\in \aL_{L_\Theta^+}$.

Assuming that $\mathbf x^-$ is the repelling fixed point of $g$ means that  all the eigenvalues of the action of $\exp(X)$ on the tangent space $T_{\mathbf x^-}\cF^-$  are greater than one (in absolute value). But the fact that $\exp(X)\in L_\Theta$ means that this derivative is conjugate to the action of $\Ad(\exp(X))=\exp(\ad_X)$ on $\fn$. The eigenvalues of the action of $\ad_X$ on $\fn=\bigoplus_{\alpha\in \Sigma_\Theta^+}\fg_\alpha$ are therefore positive, hence $\alpha(X)>0$ for any $X\in \Theta$. But we already had that $X\in \aL_{L_\Theta}^+$, i.e. that $\alpha(X)\geq 0$ for any $\alpha\in \Delta\setminus\Theta$, thus $X\in\aL^+$ and $\lambda(\exp(X))=X=\lambda_{L_\Theta}(\exp(X))$ by \eqref{eq: equality case between Jordan projections}. It follows that $\mathcal J_{\F^+}(g)=\mathbf J_{L}(g)$.
\end{proof}

\begin{rem} Note that from Lemma \ref{lem:LeviJacobianClassFunctionIsDeterminant} we also obtain  $e^{\mathcal J_{\F^+}(g)}=\big|\det\big(d_{\mathbf x^-}g\big)\big|$. This geometric interpretation of the Jacobian class function does not involve the attracting fixed point, but the proof relies on its existence and transversality to the repelling fixed point. 
\end{rem}

For later use, we record the following elementary naturality property of proximality.
\begin{lem}\label{lem:proxdomination}
Let $\F^+,\F^-$ be a pair of opposite flag manifolds corresponding to subsets $\Theta\subset\Delta$ and $\iota_o(\Theta)$ respectively. Denote the transverse flag spaces by $\Fpf=\F^+\trtimes \F^-$ and $\F_\alpha^\pitchfork=\F_\alpha\trtimes \F_{\iota_o(\alpha)}$ for $\alpha\in\Theta$. Then any $\F^+$-proximal element  $g\in G$  with attracting/repelling fixed points  $(\mathbf x^+,\mathbf x^-)\in \Fpf$ is also $\F_\alpha$-proximal with attracting/repelling fixed points $(\mathbf x^+_\alpha,\mathbf x^-_\alpha)\in \F^\pitchfork_\alpha$ for all $\alpha\in\Theta$, where $\mathbf x^+_\alpha=\Pi^{\F^+}_\alpha(\mathbf x^+)$ and $\mathbf x^-_\alpha=\Pi^{\F^-}_{\iota_o(\alpha)}(\mathbf x^-)$ are the images under the unique projections introduced in \eqref{eq:FFalphaprojection}.
\end{lem}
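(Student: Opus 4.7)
The plan is to verify the three ingredients of $\F_\alpha^\pitchfork$-proximality for $g$ with the candidate fixed points $(\mathbf x_\alpha^+,\mathbf x_\alpha^-)$: both are fixed by $g$ and $g^{-1}$ respectively, the pair is transverse, and $\mathbf x_\alpha^+$ is attracting for the $g$-action on $\F_\alpha$.

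First, since $\Pi^{\F^+}_\alpha:\F^+\to\F_\alpha$ and $\Pi^{\F^-}_{\iota_o(\alpha)}:\F^-\to\F_{\iota_o(\alpha)}$ are $G$-equivariant (Lemma \ref{lem uniqueness of equivariant maps between dominated flag manifolds}), the relations $g\cdot\mathbf x^+=\mathbf x^+$ and $g^{-1}\cdot\mathbf x^-=\mathbf x^-$ immediately give $g\cdot \mathbf x_\alpha^+=\mathbf x_\alpha^+$ and $g^{-1}\cdot \mathbf x_\alpha^-=\mathbf x_\alpha^-$. Transversality of the pair $(\mathbf x_\alpha^+,\mathbf x_\alpha^-)\in\F_\alpha\times\F_{\iota_o(\alpha)}$ is exactly the content of the second statement of Corollary \ref{cor domination and opposition} applied to the dominations $\F_\alpha\prec\F^+$ and $\F_{\iota_o(\alpha)}\prec\F^-$ (which are equivalent by that same corollary).

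For the attracting property, the projection $\Pi^{\F^+}_\alpha$ is a fiber bundle (Lemma \ref{lem uniqueness of equivariant maps between dominated flag manifolds}), so its differential at $\mathbf x^+$ is a surjective linear map $d\Pi^{\F^+}_\alpha:T_{\mathbf x^+}\F^+\to T_{\mathbf x_\alpha^+}\F_\alpha$. By $G$-equivariance this map intertwines $d_{\mathbf x^+}g$ with $d_{\mathbf x_\alpha^+}g$, and its kernel (the tangent to the fiber) is $d_{\mathbf x^+}g$-invariant. The linear map $d_{\mathbf x_\alpha^+}g$ is therefore a quotient of $d_{\mathbf x^+}g$, so its spectral radius is bounded above by that of $d_{\mathbf x^+}g$, which is strictly less than $1$ because $\mathbf x^+$ is attracting for $g$ in $\F^+$. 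Hence $\mathbf x_\alpha^+$ is an attracting fixed point of $g$ in $\F_\alpha$, showing that $g$ is $\F_\alpha$-proximal; since $g^{-1}$ is $\F^-$-proximal with attracting fixed point $\mathbf x^-$, the same argument applied to $g^{-1}$ shows that $\mathbf x_\alpha^-$ is the attracting fixed point of $g^{-1}$ in $\F_{\iota_o(\alpha)}$. Combined with the transversality verified above, this is exactly the statement that $g$ is $\F_\alpha^\pitchfork$-proximal with attracting/repelling fixed points $(\mathbf x_\alpha^+,\mathbf x_\alpha^-)$.

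There is essentially no obstacle here; the only point requiring a small argument is the passage of the spectral radius bound through the quotient, which is handled by the elementary observation that the spectrum of a linear endomorphism on a quotient is contained in the spectrum on the total space. Alternatively, one could invoke \eqref{eq:FproxFbarprox} to conclude $\F_\alpha$-proximality directly from $\mathcal P_\alpha(g)>0$, and then identify the unique attracting fixed point of $g$ in $\F_\alpha$ with $\mathbf x_\alpha^+$ by $G$-equivariance and uniqueness.
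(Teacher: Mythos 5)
Your proof is correct and follows essentially the same route as the paper: transversality of $(\mathbf x_\alpha^+,\mathbf x_\alpha^-)$ via Corollary \ref{cor domination and opposition}, and the attracting/repelling property via the ``simple linear algebra'' that the paper leaves implicit, which you correctly identify as the spectral radius bound for the induced map on the quotient by the ($d_{\mathbf x^+}g$-invariant) tangent space to the fiber of $\Pi^{\F^+}_\alpha$. The only caveat is that your alternative ending via \eqref{eq:FproxFbarprox} would still need the spectral argument to identify $\mathbf x_\alpha^+$ as the \emph{attracting} fixed point rather than merely a fixed point, but your main argument already supplies this.
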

\begin{proof}
The fact that  $(\mathbf x^+_\alpha,\mathbf x^-_\alpha)\in \F^\pitchfork_\alpha$ comes from Corollary \ref{cor domination and opposition}, and the contraction/dilation of the derivatives at these fixed points follows from the same property on $\F^\pm$ by simple linear algebra.
\end{proof}

Let us conclude this section with the geometric interpretation of the proximality class functions.

\begin{lem} \label{lem:rootclassfunction is spectral radius of derivative}
Let $\cF_\alpha$ be a simple flag manifold and $\F^\pitchfork_\alpha=\F_\alpha\trtimes\F_{\iota_o(\alpha)}$ the corresponding transverse flag space. For any $\cF_\alpha$-proximal element $g\in G$, we have
\[ \mathcal P_\alpha(g)= -\lambda_1\big(d_{\mathbf x^-}g^{-1}\big), \]
where $d_{\mathbf x^-}g^{-1}\in \GL\left(T_{\mathbf x^-}\F_{\iota_o(\alpha)}\right)$ denotes the differential of the action of $g^{-1}$ at the repelling fixed point $\mathbf x^-\in\F_{\iota_o(\alpha)}$ of $g$ and $\lambda_1(d_{\mathbf x^-}g^{-1})$ is the  logarithm of its spectral radius.
\end{lem}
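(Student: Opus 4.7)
The plan is to reduce the computation to the case where $g$ is the exponential of an element of the closed Weyl chamber, following the same pattern as in the proof of Lemma \ref{lem: determinant proximal element on a  flag manifold}, and then to identify the spectral radius of $d_{\mathbf x^-}g^{-1}$ with a minimum over the roots making up the nilpotent radical.

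First I would observe that both sides of the claimed identity are invariant under conjugation of $g$ by an element $h \in G$: the left-hand side $\mathcal P_\alpha(g) = \alpha(\lambda(g))$ is a class function by Lemma \ref{lem:rootclassfunctionindependence}, and if $(g,\mathbf x^\pm)$ is replaced by $(hgh^{-1}, h\cdot \mathbf x^\pm)$, the differential $d_{h\cdot\mathbf x^-}(hgh^{-1})^{-1}$ is conjugate to $d_{\mathbf x^-}g^{-1}$ via $d_{\mathbf x^-}h$, so the spectral radius is unchanged. Next, I would reduce to the hyperbolic part via the Jordan decomposition $g = g_e g_h g_u$. Both $g_e, g_u$ commute with $g_h$ and preserve the fixed-point pair $(\mathbf x^+, \mathbf x^-)$ (which is uniquely characterized as the attracting/repelling pair of $g$); under the identification $T_{\mathbf x^-}\cF_{\iota_o(\alpha)} \simeq \fn_{\fp^+}$ explained before \eqref{eq:tangentTFpm}, the operators $\Ad(g_e^{-1})$, $\Ad(g_h^{-1})$, $\Ad(g_u^{-1})$ commute and can be simultaneously upper-triangularized; since $\Ad(g_e)$ has eigenvalues of modulus one and $\Ad(g_u)$ is unipotent, the spectral radius of $\Ad(g^{-1})|_\fn$ equals that of $\Ad(g_h^{-1})|_\fn$. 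So both sides of the claim are determined by $g_h$.

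Combining this with Fact \ref{fact:fund} and conjugation-invariance, I may assume $g = \exp(X)$ with $X\in \aL^+$, and by the same analysis as in the proof of Lemma \ref{lem: determinant proximal element on a  flag manifold}, the $\cF^\pitchfork_\alpha$-proximality forces $\alpha(X) > 0$ and $\mathrm{Stab}_G(\mathbf x^+) = P_{\{\alpha\}}$. Then $\lambda(g) = X$, so $\mathcal P_\alpha(g) = \alpha(X)$.

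It remains to compute the spectral radius. Under the identification $T_{\mathbf x^-}\F_{\iota_o(\alpha)} \simeq \fn_{\fp^+} = \bigoplus_{\beta \in \Sigma^+_{\{\alpha\}}} \fg_\beta$, the differential $d_{\mathbf x^-}g^{-1}$ becomes $\Ad(\exp(-X))|_{\fn_{\fp^+}} = \exp(-\ad_X)|_{\fn_{\fp^+}}$, which is diagonalizable with eigenvalues $e^{-\beta(X)}$ for $\beta \in \Sigma^+_{\{\alpha\}}$. Hence $\lambda_1(d_{\mathbf x^-}g^{-1}) = -\min_{\beta \in \Sigma^+_{\{\alpha\}}} \beta(X)$. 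Every $\beta \in \Sigma^+_{\{\alpha\}}$ decomposes as $\beta = c_\alpha \alpha + \sum_{\alpha' \in \Delta \setminus\{\alpha\}} c_{\alpha'} \alpha'$ with non-negative integer coefficients and $c_\alpha \geq 1$ (by the very definition of $\Sigma^+_{\{\alpha\}}$ via $Y_{\{\alpha\}}$), so $\beta(X) \geq \alpha(X)$ because $\alpha'(X) \geq 0$ for $\alpha' \in \Delta$ and $\alpha(X) > 0$, with equality achieved at $\beta = \alpha$. Therefore $\lambda_1(d_{\mathbf x^-}g^{-1}) = -\alpha(X) = -\mathcal P_\alpha(g)$, as claimed.

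The technically trickiest step is the reduction to the hyperbolic part $g_h$, since one must argue carefully that $g_e, g_u$ fix both $\mathbf x^+$ and $\mathbf x^-$ and contribute only eigenvalues of absolute value $1$ to the spectrum of $\Ad(g^{-1})|_{\fn_{\fp^+}}$. Once that is in place, the remaining root-theoretic computation is straightforward.
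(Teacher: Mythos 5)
Your proposal is correct and follows essentially the same route as the paper: reduce via conjugation invariance and the Jordan decomposition to $g=\exp(X)$ with $X\in\aL^+$ (the paper delegates this reduction to the proofs of Lemmas \ref{lem:LeviJacobianClassFunctionIsDeterminant} and \ref{lem: determinant proximal element on a  flag manifold}, which you simply spell out), and then compute the extremal eigenvalue of $\exp(\ad_X)$ on $\fn_{\fp^+}=\bigoplus_{\beta\in\Sigma^+_{\{\alpha\}}}\g_\beta$ using the fact that every $\beta\in\Sigma^+_{\{\alpha\}}$ has $\alpha$-coefficient at least $1$ and non-negative coefficients on the other simple roots. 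The root-theoretic endgame is identical to the paper's.
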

\begin{rem} This means that if we rank the logarithms of moduli of eigenvalues
\[ \lambda_1(d_{\mathbf x^-}g)\geq\cdots\geq \lambda_n(d_{\mathbf x^-}g)~,\]
then $\mathcal P_\alpha(g)= \lambda_n(d_{\mathbf x^-}g)$, i.e.\ $\mathcal P_\alpha(g)$ measures the dilation of $g$ near its repelling fixed point.
\end{rem}
\begin{proof}
Just as in the proof of Lemma \ref{lem: determinant proximal element on a  flag manifold}, $\cF_\alpha$-proximality means that we can work in the context of a fixed restricted root space decomposition and assume that $g=\exp(X)$ for some $X\in\aL^+$, in which case $\mathcal P_\alpha(g)=\alpha(X)$ and the derivative $d_{\mathbf x^-}g$ is conjugate to the action of $\Ad(\exp(X))=\exp(\ad_X)$ on the nilpotent radical $\fn=\bigoplus_{\beta\in\Sigma_{\{\alpha\}}^+}\g_\beta$, so we find
\[\lambda_n(d_{\mathbf x^-}g)=\min\set{\beta(X)}{\beta\in \Sigma_{\{\alpha\}}^+}\,.\]
Now every $\beta\in \Sigma_{\{\alpha\}}^+\subset\Sigma^+$ can be expressed as $\beta=\alpha+\beta'$ where $\beta'$ is a combination of elements of $\Delta$ with non-negative coefficients, thus $\lambda_n(d_{\mathbf x^-}g)=\alpha(X)$.
\end{proof}

\section{Affine line bundles and period functions}\label{sec:affpermultidens}

Given a transverse flag space $\Fpf=\F^+\trtimes\F^-$, our goal is to find homogeneous $G$-manifolds on which we can define a smooth flow in such a way that we can interpret the two transverse flags forming a point $\mathbf x=(\mathbf x^+,\mathbf x^-)\in \Fpf$ as the backward and forward time limits of a unique flow line, thus mimicking the geodesic flow of a rank one symmetric space. Such a flow over $\Fpf$ should have trivial dynamics, so that all the dynamical properties of a quotient flow by a discrete subgroup $\Gamma<G$ actually describe the dynamics in the subgroup $\Gamma$ itself. This leads us to the study of homogeneous affine line bundles.

\subsection{\texorpdfstring{$G$}{G}-equivariant affine line bundles}

We will apply the following general definition with $\Fbb$ either a flag manifold or a transverse flag space:

\begin{definition}\label{def:affinelinebundles}Let $\Fbb$  be a homogeneous $G$-manifold.  \begin{itemize}\item A \emph{$G$-equivariant affine line bundle} $\L\to \Fbb$ is a principal $\R$-bundle over $\Fbb$ such that:
\begin{itemize}
\item $\L$ is a $G$-manifold.
\item The projection $\L\to \Fbb$ is $G$-equivariant.
\item The $G$-action and the $\R$-action on $\L$ commute.
\end{itemize}
\item A \emph{morphism  $\L\to \L'$ of $G$-equivariant affine line bundles}  $\L,\L'$ over $\Fbb$   is a $G$-equivariant map $\L\to \L'$ that forms a morphism of principal $\R$-bundles over $\Fbb$. We write $\L\simeq \L'$ for an isomorphism of $G$-equivariant affine line bundles.

\item A $G$-equivariant affine line bundle $\L\to \Fbb$ is \emph{trivial} if $\L\simeq \Fbb\times \R$, where on the right $G$ acts by the product of the $G$-action on $\Fbb$ and the trivial action on $\R$. 

\item The \emph{time-reversal} $\overline\L\to \Fbb$ of a $G$-equivariant affine line bundle $\L\to \Fbb$ is obtained from $\L$ by flipping the sign in the $\R$-action, i.e., composing with the map $\R\owns t\mapsto -t\in \R$.

\item A $G$-equivariant affine line bundle $\L$ over $\Fbb$ is \emph{homogeneous} if the $G$-manifold $\L$ is homogeneous, i.e., the $G$-action on $\L$ is transitive. In this case,  $\L\to \Fbb$ is called a \emph{homogeneous affine line bundle} over $\Fbb$.
\end{itemize}
\end{definition}
Note that a trivial $G$-equivariant affine line bundle is never homogeneous. Conversely, any non-trivial $G$-equivariant affine line bundle $\L\to\Fbb$ is homogeneous.

\subsection{Affine line bundles and oriented vector line bundles}\label{sec:constraff} Principal $\R$-bundles correspond to oriented vector line bundles\footnote{We use the term \emph{vector line bundle} for what is usually just called \emph{line bundle}, i.e.\ a vector bundle of rank one, in order to draw a clear distinction from an \emph{affine line bundle}.}, since the group $\R$ is isomorphic to the identity component $\GL^+(1,\R)$. Note that an orientation on a vector line bundle $\mathcal L\to \Fbb$ is just a smooth choice of a connected component $\mathcal L_{>0}$ of the complement of the zero section in each fiber. Then $\mathcal L_{>0}\to\Fbb$ can be given the structure of a principal $\R$-bundle by defining the action of $t\in\R$ on $v\in \mathcal L_{>0}$ to be $v\cdot t =e^tv$.

 Starting with a principal $\R$-bundle $\L\to\Fbb$, the usual construction of an associated bundle $$\mathcal L=(\L\times\R) / \R\to\Fbb,$$ where the action of $t\in\R$ on $(\hat{\mathbf x},s)\in\L\times\R$ is $(\hat{\mathbf x},s)\cdot t=(\hat{\mathbf x}\cdot t,e^{-t}s)$, has a natural orientation defined as $$\mathcal L_{>0}:=(\L\times\R_{>0})/\R.$$  

In the case of $G$-equivariant bundles over a homogeneous $G$-manifold $\Fbb$, both directions of this correspondence are compatible with $G$-actions. Note that the time reversal $\overline\L$ corresponds to switching the orientation of the vector line bundle $\mathcal L$ from $\mathcal L_{>0}$ to $\mathcal L_{<0}$.

While affine line bundles are more fitted to our dynamical intentions, since the $\R$-action can be interpreted as a flow, vector line bundles are more practical when it comes to algebraic constructions such as tensor products. Most of our focus will be on  the affine point of view (including the current section), but the constructions in Section \ref{sec:densitiygeneral} will be achieved using oriented vector line bundles.

\subsection{Homogeneous affine line bundles and additive characters}\label{sec:homalg}

Consider the case of a $G$-homogeneous space $\Fbb$, and fix a base point 
$\mathbf x\in \Fbb$ so that we can identify $\Fbb=G/H$ where $H<G$ is the stabilizer of $\mathbf x$. The following basic correspondence is well-known.

\begin{lem} \label{lem correspondence homogeneous affine line bundles and additive characters}
There is a bijective correspondence between  Lie group homomorphisms $b:H\to \R$ (considered up to precomposition by an isomorphism) and (equivalence classes of) $G$-equivariant affine line bundles over $G/H$.\qed
\end{lem}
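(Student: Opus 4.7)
The plan is to set up two explicit constructions and verify they are mutually inverse (up to the ambiguity built into the statement). Concretely, I would proceed as follows.

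First, from a Lie group homomorphism $b\colon H\to\R$ I would build the associated principal bundle
\[
\L_b := (G\times\R)/H, \qquad (g,s)\cdot h := (gh,\,s-b(h)).
\]
The left $G$-action $g'\cdot[g,s]:=[g'g,s]$ is well-defined and commutes with the $\R$-action $[g,s]\cdot t:=[g,s+t]$, giving $\L_b$ the structure of a $G$-equivariant affine line bundle over $G/H$ in the sense of Definition \ref{def:affinelinebundles}. Homogeneity as a $G$-space holds iff $b\not\equiv 0$; in the degenerate case $b\equiv 0$ one recovers the trivial bundle.

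Second, given a $G$-equivariant affine line bundle $\pi\colon\L\to G/H$, I would pick any lift $\hat{\mathbf x}\in\pi^{-1}(eH)$ of the base point. Since the $H$-action on $G/H$ fixes $eH$, the point $h\cdot\hat{\mathbf x}$ lies in the same fiber, so there is a unique $b(h)\in\R$ with $h\cdot\hat{\mathbf x}=\hat{\mathbf x}\cdot b(h)$. Commutativity of the $G$- and $\R$-actions then gives
\[
\hat{\mathbf x}\cdot b(hh') \;=\; (hh')\cdot\hat{\mathbf x} \;=\; h\cdot(\hat{\mathbf x}\cdot b(h')) \;=\; (h\cdot\hat{\mathbf x})\cdot b(h') \;=\; \hat{\mathbf x}\cdot(b(h)+b(h')),
\]
so $b\colon H\to\R$ is a group homomorphism; smoothness follows from smoothness of the $G$-action together with the local triviality of the principal $\R$-bundle structure on $\L$.

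Next I would check that the two procedures are mutually inverse. Starting from $b$ and building $\L_b$, the lift $\hat{\mathbf x}:=[e,0]$ trivially returns $b$. Conversely, given $\L$ together with a choice of lift $\hat{\mathbf x}$, the map $[g,s]\mapsto (g\cdot\hat{\mathbf x})\cdot s$ descends to a $G\times\R$-equivariant diffeomorphism $\L_b\xrightarrow{\sim}\L$ (injectivity uses that the stabilizer of $\hat{\mathbf x}$ under the combined $G\times\R$-action is exactly the graph of $-b$ in $H\times\R$).

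Finally, the ambiguity statement: a different lift $\hat{\mathbf x}':=\hat{\mathbf x}\cdot t$ produces the same $b$ (commutativity of the two actions absorbs $t$), whereas a different choice of base point $\mathbf x'=g_0\cdot\mathbf x$ replaces $H$ by the conjugate $g_0Hg_0^{-1}$ and the character by $b\circ\mathrm{Inn}(g_0^{-1})$; this accounts for the ``precomposition by an isomorphism'' in the statement. No step here is a genuine obstacle — the only subtlety is bookkeeping of left versus right actions in the associated bundle construction — so I would keep the write-up to roughly the length of the sketch above.
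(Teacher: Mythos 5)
Your proof is correct and is exactly the argument the paper has in mind: the lemma is stated without proof as well-known, and the character you extract from a lift of the base point is precisely the period function $\ell_\L(\mathbf x,-)$ of Definition \ref{def:periods} evaluated at the base point, while your associated-bundle construction $(G\times\R)/H$ is the standard inverse. The only (inessential) imprecision is the aside that $\L_b$ is homogeneous iff $b\not\equiv 0$: the $G$-orbit of $[e,0]$ is $\{[g,s]\,:\,s\in b(H)\}$, so homogeneity requires $b(H)=\R$, which for disconnected $H$ is a priori slightly stronger than $b\neq 0$.
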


A classification of homogeneous affine line bundles over a given homogeneous space $G/H$ therefore reduces to a classification of Lie group homomorphisms $H\to \R$. In the case of a transverse flag space,  the isotropy group is a Levi subgroup $L<G$. Recall from \eqref{eq:direct product levi} that $L$ splits into a direct product $L= Z_\mathrm{split}(L)M_L$ where $M_L$ is a reductive group with compact center, so  the  map
\begin{equation}
\map{\fz_{\mathrm{split}}(\fl)^*}{\Hom(L,\R)}{\beta}{ \beta\circ \lambda^Z_L}  \label{eq:isomorphism additive characters dual split center}
\end{equation}
is an isomorphism, where $\lambda^Z_L:L\to \fz_{\mathrm{split}}(\fl)$ is the split central projection.

For a standard Levi subgroup $L_\Theta$, a natural basis of the dual space of $\fz_{\mathrm{split}}(\fl_\Theta)=\aL_\Theta$ is given by the fundamental weights $w_\alpha$, $\alpha\in\Theta$.

\begin{cor} \label{cor correspondence homogeneous affine line bundles and combinations of fundamental weights}
Consider a restricted root space decomposition, a subset $\Theta\subset\Delta$  and the corresponding standard Levi subgroup $L_\Theta$. There is a bijective correspondence between isomorphism classes of $G$-equivariant affine line bundles over the transverse flag space $\F_\Theta\trtimes\F_{\iota_o(\Theta)}\simeq G/L_\Theta$  and  linear combinations of the fundamental weights $w_\alpha$, $\alpha\in\Theta$.
\end{cor}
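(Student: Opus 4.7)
The plan is to chain together three bijections that are already available in the text. First, apply Lemma \ref{lem correspondence homogeneous affine line bundles and additive characters} to the homogeneous space $G/L_\Theta \simeq \F_\Theta \trtimes \F_{\iota_o(\Theta)}$: this identifies isomorphism classes of $G$-equivariant affine line bundles over the transverse flag space with elements of $\Hom(L_\Theta,\R)$. Second, invoke the isomorphism \eqref{eq:isomorphism additive characters dual split center} applied to $L=L_\Theta$, which identifies $\Hom(L_\Theta,\R)$ with the dual space $\fz_{\mathrm{split}}(\fl_\Theta)^*=\aL_\Theta^*$ via $\beta\mapsto\beta\circ\lambda^Z_{L_\Theta}$. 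Third, use the fact, recorded just before Lemma \ref{lem:walphaaTheta}, that the restrictions to $\aL_\Theta$ of the fundamental weights $\{w_\alpha\}_{\alpha\in\Theta}$ form a basis of $\aL_\Theta^*$. Composing the three bijections yields the claimed correspondence: a linear combination $\sum_{\alpha\in\Theta} c_\alpha w_\alpha \in \aL_\Theta^*$ corresponds to the additive character $\big(\sum_{\alpha\in\Theta} c_\alpha w_\alpha\big)\circ \lambda^Z_{L_\Theta}\in\Hom(L_\Theta,\R)$, which via Lemma \ref{lem correspondence homogeneous affine line bundles and additive characters} determines a unique $G$-equivariant affine line bundle over $G/L_\Theta$ up to isomorphism.

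The only step requiring any verification is that all three maps are bijections. For the first, this is exactly Lemma \ref{lem correspondence homogeneous affine line bundles and additive characters} (the fact that the Lemma stated identifies characters up to precomposition by an automorphism does not affect the counting here, since we chose a fixed base point realizing $G/L_\Theta$). For the third, it is the standard duality statement between the basis $\{X_\alpha\}_{\alpha\in\Theta}$ of $\aL_\Theta$ and the fundamental weights. The main point, and the only mildly technical one, is the isomorphism in step two. Its surjectivity is immediate from the Langlands decomposition $L_\Theta=Z_{\mathrm{split}}(L_\Theta)M_\Theta$: any $\beta\in\aL_\Theta^*$ yields a character by extending it trivially along the $M_\Theta$ factor. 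For injectivity we must check that every $\R$-valued additive character of $L_\Theta$ is trivial on the factor $M_\Theta$, which follows from the fact that $M_\Theta$ is reductive with compact center, so $M_\Theta=Z(M_\Theta)[M_\Theta,M_\Theta]$ with $Z(M_\Theta)$ compact and $[M_\Theta,M_\Theta]$ semisimple; both factors admit no non-trivial homomorphism to the additive group $\R$.

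I expect no significant obstacle: the result is a direct consequence of combining Lemma \ref{lem correspondence homogeneous affine line bundles and additive characters}, the isomorphism \eqref{eq:isomorphism additive characters dual split center}, and the elementary linear-algebraic observation about fundamental weights restated in Lemma \ref{lem:walphaaTheta}. The only place where a short argument is needed is the verification of injectivity in step two, as outlined above.
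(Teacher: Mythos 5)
Your proof is correct and follows exactly the route the paper intends: the corollary is stated as an immediate consequence of Lemma \ref{lem correspondence homogeneous affine line bundles and additive characters}, the isomorphism \eqref{eq:isomorphism additive characters dual split center}, and the fact that the restrictions of the $w_\alpha$, $\alpha\in\Theta$, form a basis of $\aL_\Theta^*$. Your added verification that every additive character of $L_\Theta$ kills $M_\Theta$ (compact center plus semisimple derived group) is precisely the justification the paper leaves implicit in the sentence preceding \eqref{eq:isomorphism additive characters dual split center}.
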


\subsection{The period function of a \texorpdfstring{$G$}{G}-equivariant affine line bundle} \label{sec:period function}We now give a second description of $G$-equivariant affine line bundles over a homogeneous space $\Fbb$, where in contrast to Section \ref{sec:homalg} we do not need to fix a base point. To this end, we consider a real additive character
\bqn
\ell_\L(\mathbf x,-)\in\Hom(\Stab_G(\xbf),\R)
\eqn
for each $\xbf\in\Fbb$, defined through the action
\[ g\cdot\hat \xbf=\hat \xbf\cdot \ell_\L(\xbf,g)\,,\quad \forall \hat\xbf\in \L_\xbf\, \forall g\in\Stab_G(\xbf)\,.\]
If we consider for some $h\in G$ the point $h\cdot\mathbf x\in \Fbb$, then we get $\mathrm{Stab}_G(h\cdot\mathbf x)=h\mathrm{Stab}_{G}(\mathbf x)h^{-1}$ and for $g\in \mathrm{Stab}_{G}(\mathbf x)$ we find
\bqn
\ell_\L(h\cdot\mathbf x,h g h^{-1})=\ell_\L(\mathbf x,g)
\eqn
because the $G$- and $\R$-actions on $\L$ commute. We equip the \emph{stabilizer bundle}
 $$
 \mathrm{Stab}_\Fbb:=\{(\mathbf x,g)\in \Fbb\times G\,|\, g\in \mathrm{Stab}_{G}(\mathbf x)\}
 $$
with the $G$-action $h\cdot (\mathbf x,g):=(h\cdot\mathbf x,hgh^{-1})$. The projection onto the first factor makes $\mathrm{Stab}_{\Fbb}$ a $G$-equivariant fiber bundle over $\Fbb$. \begin{definition}\label{def:periods}
The $G$-invariant function 
\[ 
\ell_\L:\map{\mathrm{Stab}_{\Fbb}}{\R}{(\mathbf x,g)}{\ell_\L(\mathbf x,g)} 
\]
is called the \emph{period function} of the $G$-equivariant affine line bundle $\L$.
\end{definition}
Period functions classify $G$-equivariant affine line bundles up to isomorphism:
\begin{lem}\label{lem:isolambda}For any two $G$-equivariant affine line bundles  $\L$, $\L'$  over a $G$-homogeneous space $\Fbb$, the following statements are equivalent:
\begin{enumerate}
\item $\L\simeq \L'$.
\item $\ell_{\L}=\ell_{\L'}$.
\item There is a point $\mathbf x\in \Fbb$ such that $\ell_{\L}(\mathbf x,-)=\ell_{\L'}(\mathbf x,-):\mathrm{Stab}_{G}(\mathbf x)\to \R$.
\end{enumerate}
\end{lem}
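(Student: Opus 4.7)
The plan is to prove $(1)\Rightarrow(2)\Rightarrow(3)\Rightarrow(1)$, with the only substantial step being the last implication, which will be handled by invoking the correspondence of Lemma \ref{lem correspondence homogeneous affine line bundles and additive characters}.

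For $(1)\Rightarrow(2)$, I would fix an isomorphism $\Phi:\L\to\L'$ of $G$-equivariant affine line bundles, pick any $\mathbf x\in\Fbb$, $g\in\Stab_G(\mathbf x)$, and any $\hat{\mathbf x}\in \L_{\mathbf x}$. Using $G$-equivariance of $\Phi$ and the definition of $\ell_\L$, one computes $g\cdot\Phi(\hat{\mathbf x})=\Phi(g\cdot\hat{\mathbf x})=\Phi(\hat{\mathbf x}\cdot \ell_\L(\mathbf x,g))=\Phi(\hat{\mathbf x})\cdot \ell_\L(\mathbf x,g)$ (using $\mathbb R$-equivariance in the last step), while by the definition of $\ell_{\L'}$ applied to the point $\Phi(\hat{\mathbf x})\in\L'_{\mathbf x}$ we also have $g\cdot\Phi(\hat{\mathbf x})=\Phi(\hat{\mathbf x})\cdot\ell_{\L'}(\mathbf x,g)$. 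Freeness of the $\R$-action then yields $\ell_\L(\mathbf x,g)=\ell_{\L'}(\mathbf x,g)$.

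The implication $(2)\Rightarrow(3)$ is immediate (any point works). For $(3)\Rightarrow(1)$, which is the main content, I would fix a base point $\mathbf x\in\Fbb$ at which the period functions agree and identify $\Fbb=G/H$ for $H=\Stab_G(\mathbf x)$ as in Section \ref{sec:homalg}. The key observation is that, under the correspondence of Lemma \ref{lem correspondence homogeneous affine line bundles and additive characters}, the additive character $H\to\R$ attached to a $G$-equivariant affine line bundle $\L\to\Fbb$ is precisely the restriction of its period function $\ell_\L(\mathbf x,-):H\to\R$. (This is essentially a re-reading of definitions: after choosing $\hat{\mathbf x}\in\L_{\mathbf x}$, the map $g\mapsto g\cdot\hat{\mathbf x}$ identifies $H$-orbits with $\R$-orbits via exactly the character $\ell_\L(\mathbf x,-)$, and changing the lift $\hat{\mathbf x}$ only precomposes this character with a translation of $\R$, which is an automorphism.) Hence if $\ell_\L(\mathbf x,-)=\ell_{\L'}(\mathbf x,-)$, the characters associated to $\L$ and $\L'$ by the Lemma coincide, forcing $\L\simeq \L'$.

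The only mildly subtle point, which I would make explicit before invoking the correspondence, is that the period function at a fixed $\mathbf x$ does not depend on the choice of lift $\hat{\mathbf x}\in\L_\mathbf x$: if $\hat{\mathbf x}'=\hat{\mathbf x}\cdot s$ for some $s\in\R$, then commutativity of the $G$- and $\R$-actions together with the abelian structure of $\R$ gives $g\cdot\hat{\mathbf x}'=\hat{\mathbf x}'\cdot\ell_\L(\mathbf x,g)$, so $\ell_\L(\mathbf x,g)$ is intrinsically defined. No genuine obstacle is expected; the whole statement is a compatibility check between two parametrizations of the same object, and once the intrinsic nature of $\ell_\L(\mathbf x,-)$ is established the equivalence reduces directly to Lemma \ref{lem correspondence homogeneous affine line bundles and additive characters}.
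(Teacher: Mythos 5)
Your proposal is correct and follows essentially the same route as the paper, which likewise dispatches $(1)\Rightarrow(2)\Rightarrow(3)$ as straightforward and reduces $(3)\Leftrightarrow(1)$ to Lemma \ref{lem correspondence homogeneous affine line bundles and additive characters}. The extra care you take in checking that $\ell_\L(\mathbf x,-)$ is independent of the lift $\hat{\mathbf x}$ is a worthwhile (and correct) elaboration of what the paper leaves implicit.
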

\begin{proof}
The equivalence between \emph{(1)} and \emph{(3)} is Lemma \ref{lem correspondence homogeneous affine line bundles and additive characters}, and it is straightforward to see that \emph{(1)}$\Rightarrow$\emph{(2)}$\Rightarrow$\emph{(3)}.
\end{proof}

A base-point free version of Lemma \ref{lem correspondence homogeneous affine line bundles and additive characters} therefore goes through the following definition:
\begin{definition}\label{def:space of period functions}
The space $\mathcal P(\Fbb)$ of \emph{period functions} of  $\Fbb$ is the space of  $G$-invariant functions $\Stab_\Fbb\to\R$ that restrict to a Lie homomorphism on each fiber.
\end{definition}
Note that $\mathcal P(\Fbb)$ is a finite dimensional real vector space.

\begin{cor}\label{cor:isocor} Let $\Fbb$ be a homogeneous $G$-manifold. The map $\L\mapsto \ell_\L$ establishes a bijective correspondence between isomorphism classes of $G$-equivariant affine line bundles over $\Fbb$ and $\mathcal P(\Fbb)$.\qed
\end{cor}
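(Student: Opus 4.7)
The plan is to deduce the corollary by packaging together Lemma \ref{lem correspondence homogeneous affine line bundles and additive characters} and Lemma \ref{lem:isolambda}, rather than by any fresh construction. First one checks that $[\L]\mapsto \ell_\L$ is well-defined: the discussion preceding Definition \ref{def:periods} already shows that $\ell_\L$ is $G$-invariant and restricts to a Lie group homomorphism on each fiber of $\Stab_\Fbb$, so $\ell_\L \in \mathcal P(\Fbb)$; and it depends only on the isomorphism class of $\L$ because a bundle isomorphism intertwines the two $\R$-actions. Injectivity on isomorphism classes is then immediate from the implication (2)$\Rightarrow$(1) of Lemma \ref{lem:isolambda}.

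For surjectivity, the idea is to reduce to Lemma \ref{lem correspondence homogeneous affine line bundles and additive characters} by choosing a base point. Fix any $\mathbf{x}_0 \in \Fbb$ and write $H := \Stab_G(\mathbf{x}_0)$, so that $\Fbb \simeq G/H$. Given $\ell \in \mathcal P(\Fbb)$, the restriction $\ell(\mathbf{x}_0,\cdot) \colon H \to \R$ is by definition a Lie group homomorphism, and Lemma \ref{lem correspondence homogeneous affine line bundles and additive characters} produces a $G$-equivariant affine line bundle $\L$ over $\Fbb$ whose $\R$-action of $H$ on the fiber above $\mathbf{x}_0$ is precisely $\ell(\mathbf{x}_0,\cdot)$. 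By construction $\ell_\L(\mathbf{x}_0,\cdot) = \ell(\mathbf{x}_0,\cdot)$, and since both $\ell_\L$ and $\ell$ are $G$-invariant functions on $\Stab_\Fbb$ and $G$ acts transitively on $\Fbb$, their agreement over $\mathbf{x}_0$ propagates to all of $\Stab_\Fbb$. Hence $\ell_\L = \ell$.

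There is no real obstacle: the corollary is essentially a base-point-free restatement of Lemma \ref{lem correspondence homogeneous affine line bundles and additive characters}, and the only sliver of content is the observation that evaluation $\mathcal P(\Fbb) \to \Hom(H,\R)$, $\ell \mapsto \ell(\mathbf{x}_0,\cdot)$, is itself a bijection. Injectivity of this evaluation map is $G$-transitivity of $\Fbb$; surjectivity follows from the formula $\ell(g\cdot\mathbf{x}_0,\, g h g^{-1}) := h_0(h)$ for a given $h_0 \in \Hom(H,\R)$, whose independence of the choice of $g$ uses only that $\R$ is abelian, so that $h_0(khk^{-1}) = h_0(h)$ for $h,k \in H$.
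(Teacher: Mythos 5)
Your proposal is correct and matches the paper's intended argument: the corollary is stated with no proof precisely because it is the combination of Lemma \ref{lem correspondence homogeneous affine line bundles and additive characters} (surjectivity after fixing a base point) and Lemma \ref{lem:isolambda} (injectivity on isomorphism classes), which is exactly what you do. Your final observation that the evaluation $\mathcal P(\Fbb)\to\Hom(H,\R)$ is bijective, with well-definedness of the extension resting on $h_0(khk^{-1})=h_0(h)$ since $\R$ is abelian, is the correct (and only) sliver of content needed.
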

In particular, the vector space structure on the set of isomorphism classes of $G$-equi\-variant affine line bundles over $\Fbb$ given by identifying it with additive characters in Lemma \ref{lem correspondence homogeneous affine line bundles and additive characters} does not depend on the choice of a base point. 

Consider opposite flag manifolds $\F^+,\F^-$ and the transverse flag space $\Fpf=\F^+\trtimes\F^-$. The Jacobian characters of Levi subgroups can be combined into the period function
\[ \map{\mathrm{Stab}_\Fpf}{\R}{(\mathbf x,g)}{\mathbf J_{\mathrm{Stab}_G(\mathbf x)}(g).}\]
If we now consider the corresponding subsets $\Theta,\iota_o(\Theta)\subset\Delta$, then for each $\alpha\in \Theta$ there is a unique $G$-equivariant projection $\pi_\alpha:\Fpf\to \F^\pitchfork_\alpha=\F_\alpha\trtimes\F_{\iota_o(\alpha)}$ given by Corollary \ref{cor domination and opposition}. We can use it to define 
\[ \mathbf J_{\Fpf}^{\alpha}:\map{\mathrm{Stab}_\Fpf}{\R}{(\mathbf x,g)}{\mathbf J_{\Stab_G(\pi_\alpha(\xbf))}(g).}\]

\begin{prop} \label{prop basis of Jacobian class functions}
Consider opposite flag manifolds $\F^+,\F^-$ and their corresponding subsets  $\Theta,\iota_o(\Theta)\subset\Delta$, and let $\Fpf=\F^+\trtimes \F^-$. The functions $(\mathbf J_{\Fpf}^{\alpha})_{\alpha\in\Theta}$ form a basis of $\mathcal P(\Fpf)$.
\end{prop}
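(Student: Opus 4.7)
My strategy is to reduce the claim to finite-dimensional linear algebra using the correspondences already established. Fix a restricted root space decomposition in which the subsets $\Theta,\iota_o(\Theta)\subset\Delta$ correspond to $\F^+,\F^-$, and take as base point $\mathbf x_0=(\p_\Theta,\overline\p_\Theta)\in\Fpf$, whose stabilizer is the standard Levi $L_\Theta$. Combining Corollary \ref{cor:isocor}, Lemma \ref{lem correspondence homogeneous affine line bundles and additive characters} and the isomorphism \eqref{eq:isomorphism additive characters dual split center}, evaluation at $\mathbf x_0$ yields a linear isomorphism
\[
\Phi:\mathcal P(\Fpf)\xrightarrow{\sim}\Hom(L_\Theta,\R)\xrightarrow{\sim}\aL_\Theta^*,
\]
sending $\ell\in\mathcal P(\Fpf)$ to the unique $\beta_\ell\in\aL_\Theta^*$ with $\ell(\mathbf x_0,\cdot)=\beta_\ell\circ\lambda^Z_{L_\Theta}$. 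Since the restrictions $w_\alpha|_{\aL_\Theta}$ for $\alpha\in\Theta$ form a basis of $\aL_\Theta^*$ (as noted in the discussion preceding Lemma \ref{lem:walphaaTheta}), we have $\dim\mathcal P(\Fpf)=|\Theta|$, so it suffices to show the $\Phi(\mathbf J_{\Fpf}^\alpha)$ are linearly independent.

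I will prove the explicit identity
\[
\Phi(\mathbf J_{\Fpf}^\alpha)=m_\alpha\,w_\alpha|_{\aL_\Theta},
\]
where $m_\alpha>0$ is the positive integer of Lemma \ref{lem:wmalpha}; the basis property of the $w_\alpha|_{\aL_\Theta}$ together with $m_\alpha>0$ then yields linear independence. With $\mathbf x_0$ as above, the image $\pi_\alpha(\mathbf x_0)\in\F_\alpha^\pitchfork$ has stabilizer $L_{\{\alpha\}}$, the unique Levi of type $\{\alpha\}$ containing $A$ and hence containing $L_\Theta$. So for $g\in L_\Theta$,
\[
\mathbf J_{\Fpf}^\alpha(\mathbf x_0,g)=\mathbf J_{L_{\{\alpha\}}}(g)=2\varrho_{\{\alpha\}}\bigl(\lambda^Z_{L_{\{\alpha\}}}(g)\bigr)=m_\alpha\,w_\alpha\bigl(\lambda^Z_{L_{\{\alpha\}}}(g)\bigr)
\]
by Lemma \ref{lem:wmalpha}. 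The residual task is therefore to show the identity $w_\alpha(\lambda^Z_{L_{\{\alpha\}}}(g))=w_\alpha(\lambda^Z_{L_\Theta}(g))$ for all $g\in L_\Theta$, which will fold everything into the space of $L_\Theta$-characters.

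The main obstacle is this last comparison of Jordan projections in the nested pair $L_\Theta<L_{\{\alpha\}}$. By the uniqueness of the Jordan decomposition, the hyperbolic part of $g$ is the same whether computed in $L_\Theta$, in $L_{\{\alpha\}}$, or in $G$; hence $\lambda_{L_{\{\alpha\}}}(g)$ and $\lambda_{L_\Theta}(g)$ are related by an element of the smaller Weyl group $W_{L_{\{\alpha\}}}$. This group fixes $\aL_{\{\alpha\}}\subset\fz(\fl_{\{\alpha\}})$ pointwise and preserves the Killing form, hence commutes with the orthogonal projection $p_{\{\alpha\}}:\aL\to\aL_{\{\alpha\}}$, giving $\lambda^Z_{L_{\{\alpha\}}}(g)=p_{\{\alpha\}}(\lambda_{L_\Theta}(g))$. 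Applying Lemma \ref{lem:walphaaTheta} in the special case $\Theta=\{\alpha\}$ yields $w_\alpha=w_\alpha\circ p_{\{\alpha\}}$, while $\aL_{\{\alpha\}}\subset\aL_\Theta$ forces $p_{\{\alpha\}}=p_{\{\alpha\}}\circ p_\Theta$. Chaining these with Lemma \ref{lem:walphaaTheta} for $\Theta$ itself gives
\[
w_\alpha\bigl(\lambda^Z_{L_{\{\alpha\}}}(g)\bigr)=w_\alpha\bigl(p_\Theta(\lambda_{L_\Theta}(g))\bigr)=w_\alpha\bigl(\lambda^Z_{L_\Theta}(g)\bigr),
\]
completing the proof. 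The only genuinely delicate point is bookkeeping with the nested Weyl groups and the compatibility of their orthogonal projections; once this is in place, the basis conclusion is immediate.
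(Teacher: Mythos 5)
Your proof is correct and follows essentially the same route as the paper, which simply cites Lemma \ref{lem:wmalpha} and Corollary \ref{cor correspondence homogeneous affine line bundles and combinations of fundamental weights}: evaluation at a base point identifies $\mathcal P(\Fpf)$ with $\aL_\Theta^*$ and sends $\mathbf J_{\Fpf}^\alpha$ to $m_\alpha w_\alpha|_{\aL_\Theta}$. Your careful verification that $w_\alpha\circ\lambda^Z_{L_{\{\alpha\}}}$ and $w_\alpha\circ\lambda^Z_{L_\Theta}$ agree on $L_\Theta$ (via the Weyl group of $L_{\{\alpha\}}$ fixing $\aL_{\{\alpha\}}$ pointwise and Lemma \ref{lem:walphaaTheta}) fills in a compatibility the paper leaves implicit, and it is correct.
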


\begin{proof}
It follows from Lemma \ref{lem:wmalpha} ($\varrho_{\{\alpha\}}$ and $w_\alpha$ are proportional) and Corollary \ref{cor correspondence homogeneous affine line bundles and combinations of fundamental weights}.
\end{proof}

\subsection{Sections and cocycles} An principal $\R$-bundle over any manifold is trivializable. However, a homogeneous affine line bundle cannot be equivariantly trivializable because the trivial bundle is not homogeneous. The failure of equivariance of a given trivialization (i.e.\ a section) is encoded in a cocycle.

\begin{definition} \label{def cocycle from a section}
Let $\sigma$ be a section of a $G$-equivariant  affine line bundle $\L\to\Fbb$. The cocycle associated to $\sigma$ is the function $\mathcal C_\sigma:\Fbb\times G\to \R$ defined by the relation
\[ g\cdot \sigma(\mathbf x)=\sigma(g\cdot\mathbf x)\cdot \mathcal C_\sigma(\mathbf x,g)\quad \forall (\mathbf x,g)\in\Fbb\times G~.\]
\end{definition}

Note that if $g\cdot \mathbf x=\mathbf x$, the value of $\mathcal C_\sigma(\mathbf x,\nu)$ does not depend on $\sigma$ since $\mathcal C_\sigma(\mathbf x,\nu)=\ell_\L(\mathbf x,g)$ in this case. So one can see $\mathcal C_\sigma$ as an extension of the period function $\ell_\L$ to $\Fbb\times G$ that  satisfies the cocycle relation
\[ \mathcal C_\sigma(\mathbf x, g_2g_1)=\mathcal C_\sigma(g_1\cdot\mathbf x,g_2)+\mathcal C_\sigma(\mathbf x,g_1)\quad \forall (\mathbf x, g_1,g_2)\in \Fbb\times G\times G~. \]

Although the cocycle depends on the choice of a section, its asymptotic behavior when $g\to\infty$ in $G$ will not depend on this choice provided that $\Fbb$ is compact (which is the case for flag manifolds, but not for transverse flag spaces).

\begin{lem} \label{lem bound difference cocycle over compact space}
Consider two sections $\sigma,\sigma'$ of a $G$-equivariant affine line bundle $\L\to\Fbb$. If the space $\Fbb$ is compact, then the difference between the two cocycles
\[ \mathcal C_\sigma-\mathcal{C}_{\sigma'}:\Fbb\times G\to \R\] is uniformly bounded.
\end{lem}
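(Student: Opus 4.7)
The plan is to compare the two sections directly on each fiber. Since each fiber of $\L\to\Fbb$ is a principal $\R$-space, there exists a unique continuous function $f:\Fbb\to\R$ such that
\[
\sigma'(\mathbf x)=\sigma(\mathbf x)\cdot f(\mathbf x)\qquad\forall\,\mathbf x\in\Fbb.
\]
First I would verify that $f$ is indeed continuous (this is immediate from the local triviality of the principal $\R$-bundle) and that, since $\Fbb$ is compact, $f$ is bounded: $|f(\mathbf x)|\le M$ for some constant $M>0$.

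Next, I would compute $g\cdot\sigma'(\mathbf x)$ in two ways and compare. On the one hand, using commutativity of the $G$- and $\R$-actions:
\[
g\cdot\sigma'(\mathbf x)=g\cdot\bigl(\sigma(\mathbf x)\cdot f(\mathbf x)\bigr)=\bigl(g\cdot\sigma(\mathbf x)\bigr)\cdot f(\mathbf x)=\sigma(g\cdot\mathbf x)\cdot\bigl(\mathcal{C}_\sigma(\mathbf x,g)+f(\mathbf x)\bigr).
\]
On the other hand, by Definition \ref{def cocycle from a section} applied to $\sigma'$ and then rewriting $\sigma'(g\cdot\mathbf x)=\sigma(g\cdot\mathbf x)\cdot f(g\cdot\mathbf x)$,
\[
g\cdot\sigma'(\mathbf x)=\sigma'(g\cdot\mathbf x)\cdot\mathcal{C}_{\sigma'}(\mathbf x,g)=\sigma(g\cdot\mathbf x)\cdot\bigl(f(g\cdot\mathbf x)+\mathcal{C}_{\sigma'}(\mathbf x,g)\bigr).
\]
Equating the two expressions and using that the $\R$-action on the fiber $\L_{g\cdot\mathbf x}$ is free yields the coboundary identity
\[
\mathcal{C}_\sigma(\mathbf x,g)-\mathcal{C}_{\sigma'}(\mathbf x,g)=f(g\cdot\mathbf x)-f(\mathbf x).
\]

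Finally, combining this with the bound $\|f\|_\infty\le M$ gives $|\mathcal{C}_\sigma-\mathcal{C}_{\sigma'}|\le 2M$ on all of $\Fbb\times G$, which is the claim. There is essentially no obstacle here: the only ingredient beyond the definitions is the compactness of $\Fbb$, invoked exactly to guarantee that the coboundary $f$ is bounded. (Without compactness the statement fails in general, which is precisely why the non-compactness of transverse flag spaces will force a more delicate analysis later.)
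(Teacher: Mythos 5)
Your proof is correct and follows essentially the same route as the paper: write $\sigma'=\sigma\cdot f$ for a continuous function $f$ on $\Fbb$, derive the coboundary identity $\mathcal C_\sigma-\mathcal C_{\sigma'}=f\circ g-f$, and use compactness of $\Fbb$ to bound $f$. (Your sign in the coboundary identity is the correct one; the paper states it with the opposite sign, which is an inconsequential typo.)
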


\begin{proof}
Consider the function $\tau:\Fbb\to\R$ such that $\sigma'(\mathbf x)=\sigma(\mathbf x)\cdot \tau(\mathbf x)$ for all $\mathbf x\in\Fbb$. The difference between the two cocycles is $\mathcal C_\sigma(\mathbf x, g)-\mathcal{C}_{\sigma'}(\mathbf x, g)=\tau(\mathbf x)-\tau(g\cdot \mathbf x)$.
\end{proof}

\section{Densities and multidensities}\label{sec:densitiypairings}

We will apply the construction from Section \ref{sec:constraff} to density and multidensity bundles, which are natural families of oriented vector line bundles on any smooth manifold. We begin by quickly recalling their general definitions and relevant properties.

\subsection{General definitions and properties}\label{sec:densitiygeneral}

Let $E$ be a finite dimensional real vector space of dimension $d>0.$  Let $\mathcal{B}_{E}$ be the set of ordered bases of $E$, which is the unique open $\mathrm{GL}(E)$-orbit in the product of $d$ copies of $E$ . 

\begin{definition}
For $s\in \R,$ an \emph{$s$-density} on $E$ is a map $\nu: \mathcal{B}_{E}\rightarrow \R$ such that for every $g\in \mathrm{GL}(E):$
$$
\nu(g\cdot e_{1},\ldots ,g\cdot e_{d})=\lvert \mathrm{det}(g)\rvert^{s} \nu(e_{1}, \ldots ,e_{d})
$$
for all $\mathbf{e}=(e_{1},\ldots,e_{d})\in \mathcal{B}_{E}.$  

We write $\mathscr D^s(E)$ for the real vector space of $s$-densities on $E$, and $\mathscr D^s_{>0}(E)$ for the subset of \emph{positive} $\R_{>0}$-valued $s$-densities. 
\end{definition}

 Note that $\mathscr D^s(E)$ is a $1$-dimensional space (because $\mathcal B_E$ is a $\GL(E)$-torsor), oriented by $\mathscr D^s_{>0}(E)$.

\begin{ex} 
 Any volume form $\omega\in \Lambda^{d}E^{*}$ produces an $s$-density defined by $\lvert \omega \rvert^{s}$, which is a positively oriented basis of $\mathscr D^s(E)$.
\end{ex}
\begin{ex}\label{example pseudo-riemannian density} A non-degenerate symmetric bilinear form $\varphi:E\times E\to \R$ produces a positive $s$-density $\mathrm{vol}_\varphi^s\in\mathscr D^s_{>0}(E)$ defined by $\mathrm{vol}_\varphi^s(\mathbf e)=\vert \det [\varphi]_{\mathbf e}\vert^{s/2}$, where the $(d\times d)$-Gram matrix $[\varphi]_{\mathbf e}$ has coefficients $\varphi(e_i,e_j)$ for $\mathbf e=(e_1,\dots,e_d)$.
 
\end{ex}

Next, consider a grading $E_{\bullet}$ of $E$, that is a  collection $E_\bullet=(E_i)_{i\in I}\in \prod_{i\in I} \mathcal G_{d_i}(E)$ of vector subspaces of dimensions $d_i>0$ such that
$$
E=\bigoplus_{i\in I} E_{i}~.
$$
  Denote by $\mathrm{GL}(E_{\bullet})<\mathrm{GL}(E)$ the subgroup of grading-preserving transformations (i.e. preserving each subspace $E_i$). We now adapt the notion of density to account for the grading.
\begin{definition}\label{def:multidensities}
For a tuple $\mathbf{s}=(s_i)_{i\in I}\in \R^I$,  which we will occasionally call a \emph{weight vector}, an \emph{$\mathbf{s}$-multidensity} on $E$ with respect to the grading $E_{\bullet}$ is a map
$$
\nu: \prod_{i\in I}\mathcal{B}_{E_i}\rightarrow \R
$$
such that for every $g\in \mathrm{GL}(E_{\bullet}):$
$$
\nu\big(g\cdot (\mathbf{e}_{i})_{i\in I}\big)=\prod_{i\in I} \lvert\mathrm{det}(g_{i})\rvert^{s_{i}}\nu\big((\mathbf e_{i})_{i\in I}\big)
$$
for every ordered basis $\mathbf{e}_{i}\in \mathcal{B}_{E_{i}}$ and every $i\in I.$  Above, the linear map $g_{i}\in \mathrm{GL}(E_{i})$ is the restriction of $g\in \mathrm{GL}(E_{\bullet})$ to $E_{i}$, and $g\cdot (\mathbf{e}_{i})_{i\in I}=(g_i\cdot \mathbf{e}_{i})_{i\in I}$. We write $\mathscr{D}^{\mathbf{s}}(E_{\bullet})$ for the  real vector space of $\mathbf{s}$-multidensities on $E_{\bullet}$, and $\mathscr{D}^{\mathbf{s}}_{>0}(E_{\bullet})$ for the subset of positive $\mathbf s$-multidensities.
\end{definition}
Again appealing to the free and transitive action of $\mathrm{GL}(E_\bullet)$ on $\prod_{i\in I}\mathcal{B}_{E_i}$, the space $\mathscr{D}^{\mathbf{s}}(E_{\bullet})$  is $1$-dimensional and oriented. 

\begin{ex} \label{example product of densities gives a multidensity}
 A collection $(\nu_i)_{i\in I}$ of $s_i$-densities on each $E_i$ yields an $\mathbf s$-multidensity $\prod_{i\in I}\nu_i\in\mathscr D^{\mathbf s}(E_\bullet)$ defined by 
\begin{equation}
\left(\prod_{i\in I}\nu_i\right)\Big((\mathbf e_{i})_{i\in I}\Big) := \prod_{i\in I}\nu_i(\mathbf e_i)~. \label{eq: product of density gives a multi-density}
\end{equation}
defining an isomorphism $\bigotimes_{i\in I} \mathscr{D}^{s_{i}}(E_{i})\rightarrow \mathscr{D}^{\mathbf{s}}(E_{\bullet})$ preserving positivity.  
\end{ex}
\begin{ex} \label{example pseudo-Riemannian and volume multidensities}
 This product operation can be combined with any concrete construction of $s_i$-densities on each $E_i$, such as starting with a collection of volume forms $\omega_{i}\in \Lambda^{d_{i}}E_{i}$ for each $i\in I$ or a symmetric bilinear form $\varphi:E\times E\to\R$ whose restriction to each subspace $E_i$ is non-degenerate.
\end{ex}

Given a vector bundle $E\rightarrow M$ over a smooth manifold $M$, by a grading of $E\to M$ we mean a collection $E_\bullet=(E_i)_{i\in I}$ of vector subbundles of $E$ such that $E=\bigoplus_{i\in I}E_i$.  Given such a grading, we call the pair $(E,E_\bullet)$ a graded vector bundle (often written just $E_\bullet$). The fiberwise application sending a graded vector space to the space of $\mathbf{s}$-multidensities defines the real vector line bundle $\mathscr{D}^{\mathbf{s}}(E_{\bullet})\rightarrow M$ whose sections are multidensities in the graded fibers of $E$.   

Next, consider  $\mathbb{F}$ a homogeneous $G$-manifold, and $(E,E_{\bullet})$  a $G$-equivariant graded vector bundle over $\Fbb$ (i.e., $E$ and each of its subbundles $E_i$ in the grading are $G$-equivariant). Then $\mathscr{D}^{\mathbf{s}}(E_{\bullet})$ is a $G$-equivariant real vector line bundle over $\mathbb{F}.$  A section $\nu$ of $\mathscr{D}^{\mathbf{s}}(E_{\bullet})$ is called an \emph{$\mathbf{s}$-multidensity on $E_\bullet$ over $\mathbb{F}$.}  If $E=T\mathbb{F}$ and $\mathbf{s}=s$ is just a scalar parameter (identified with a $1$-tuple), it is convention to refer to sections of $\mathscr{D}^{s}(T\mathbb{F})$ as simply the \emph{$s$-densities on $\mathbb{F}$.}

\subsection{(Multi-)density bundles on  flag manifolds}\label{sec:multitrans} 
 
We now apply the constructions from Sections  \ref{sec:densitiygeneral} and \ref{sec:constraff} to concrete (graded) vector bundles over flag manifolds. 

\subsubsection{Density bundles}

Let  $\Fbb$ be a homogeneous $G$-manifold and $s\in \R$. We write 
\bq
\mathscr{D}^s \Fbb:=\mathscr{D}^s T\Fbb\label{eq:defDs}
\eq
for the oriented line bundle of $s$-densities on $\Fbb$. The total space of this bundle carries a $G$-action obtained from the $G$-action on $T\Fbb$ through multilinear extension, making $\mathscr{D}^s \Fbb$ a $G$-equivariant oriented vector line bundle. Section \ref{sec:constraff} provides the $G$-equivariant affine line bundle 
$$
\mathscr{D}^s_{>0}{\Fbb}:=\mathscr{D}^s_{>0} T\Fbb
$$
of positive $s$-densities on $\Fbb$.  When $\Fbb$ carries a $G$-invariant pseudo-Riemannian metric (such is the case of a transverse flag space $\Fbb=\F^\pitchfork$, as  seen in Section \ref{sec:pseudoriemannianstructure}), the $G$-equivariant affine line bundle $\mathscr{D}^s_{>0}{\Fbb}$ has a $G$-equivariant section, so it is $G$-equivariantly trivializable. 

When $\Fbb=\F$ is a non-trivial flag manifold and $s\neq 0$, then $\mathscr{D}^s_{>0}{\F}$ is a homogeneous affine line bundle. Given a pair of opposite flag manifolds $\F^+,\F^-$, the period functions of the density bundles $\mathscr{D}^s_{>0}\F^\pm$  are related to Jacobian characters of Levi subgroups:

\begin{cor}\label{cor:periodfunctionJordandecomp} 
Consider a pair of opposite flag manifolds $\F^+,\F^-$, suppose that $g\in G$ has a pair of transverse fixed points $(\mathbf x^+,\mathbf x^-)\in \F^+\trtimes\F^-$ and write $L=\mathrm{Stab}_G(\mathbf x)$ for the corresponding Levi subgroup.  Then one has
\begin{align*}
\ell_{\mathscr{D}^s_{>0}\F^+}(\mathbf x^+,g)&=s \, \mathbf J_L(g^{-1}),\\
\ell_{\mathscr{D}^s_{>0}\F^-}(\mathbf x^-,g)&=s \, \mathbf J_{L}(g).
\end{align*}
\end{cor}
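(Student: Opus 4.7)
By Definition \ref{def:periods}, $\ell_{\mathscr D^s_{>0}\F^\pm}(\mathbf x^\pm,g)$ is the unique $\ell\in\R$ such that $g\cdot v=v\cdot\ell$ for every positive $s$-density $v$ at the fixed point $\mathbf x^\pm$, with the $\R$-action given by $v\cdot t=e^t v$ (Section \ref{sec:constraff}). The task therefore reduces to computing the scalar by which $g\in\Stab_G(\mathbf x^\pm)$ acts on the one-dimensional fiber of positive $s$-densities over its own fixed point.

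The first step is to establish, for any flag manifold $\F$ and any $g\in\Stab_G(\mathbf x)$, the general formula
\[
\ell_{\mathscr D^s_{>0}\F}(\mathbf x,g)=s\,\log\bigl|\det(d_{\mathbf x}g)\bigr|,
\]
where $d_\mathbf x g\in\GL(T_\mathbf x\F)$ is the differential of the $G$-action at the fixed point. This follows by applying the defining transformation rule of an $s$-density (Definition \ref{def:multidensities} specialized to a $1$-tuple) to the change of basis $d_\mathbf x g$, together with the identification of $\mathscr D^s_{>0}\F$ as the positive part of the oriented vector line bundle $\mathscr D^sT\F$. The main bookkeeping task is to set up the $G$-action on $\mathscr D^sT\F$ consistently with the principal $\R$-bundle convention of Section \ref{sec:constraff}; once this is fixed, the formula is immediate.

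Finally, I would invoke Lemma \ref{lem:LeviJacobianClassFunctionIsDeterminant}. For $(\F,\mathbf x)=(\F^-,\mathbf x^-)$, the Lemma gives $\log|\det(d_{\mathbf x^-}g)|=\mathbf J_L(g)$ directly, yielding the second identity. For $(\F,\mathbf x)=(\F^+,\mathbf x^+)$, I would apply the same Lemma to the swapped pair $(\mathbf x^-,\mathbf x^+)\in\F^-\trtimes\F^+$: this preserves the Levi stabilizer $L$ but reverses the choice of parabolic containing $L$ that enters the definition of the Jacobian character, so the latter changes sign, producing $\log|\det(d_{\mathbf x^+}g)|=-\mathbf J_L(g)=\mathbf J_L(g^{-1})$ and hence the first identity. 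Equivalently, the sign flip under this swap is a manifestation of the modularity identity $|\det(d_{\mathbf x^+}g)|\cdot|\det(d_{\mathbf x^-}g)|=1$, itself a consequence of $|\det(\Ad(g)\vert_{\fg})|=1$ (semisimplicity of $G$) and $|\det(\Ad(g)\vert_{\fl})|=1$ (reductivity of $L$) applied to the decomposition $\fg=\fn_{\fp^-}\oplus\fl\oplus\fn_{\fp^+}$ from \eqref{eq:decomp3905}.
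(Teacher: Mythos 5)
Your proposal is correct and follows the same route as the paper: the paper's proof is literally ``apply Lemma~\ref{lem:LeviJacobianClassFunctionIsDeterminant}'', with the appearance of $g^{-1}$ in the first identity attributed to the sign swap of Remark~\ref{rem:signswap} — exactly the swapped-pair/modularity bookkeeping you spell out. Your explicit justification of the sign flip via $|\det(\Ad(g)\vert_{\fg})|=|\det(\Ad(g)\vert_{\fl})|=1$ applied to $\g=\fn_{\fp^-}\oplus\fl\oplus\fn_{\fp^+}$ is a correct elaboration of a step the paper leaves implicit.
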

\begin{proof}Apply Lemma \ref{lem:LeviJacobianClassFunctionIsDeterminant}. 
\end{proof}
Here the presence of $g^{-1}$ in the first equation reflects the sign change pointed out in Remark \ref{rem:signswap}. From Lemma \ref{lem: determinant proximal element on a  flag manifold}, we also get a way of computing the cocycle associated to a section of $\mathscr D^s_{>0}\F$ (Definition \ref{def cocycle from a section}).

\begin{cor}\label{cor:cocycle density bundle} Consider a pair of opposite flag manifolds $\F^+,\F^-$, let $\sigma$ be a section of $\mathscr D^s_{>0}\F^-$ for some $s\in\R$,   $g\in G$  an $\F^+$-proximal element with attracting fixed point ${\mathbf x}^+\in\F^+$ and fix a basis $(X_1,\dots,X_n)$ of the nilpotent radical $\fn$ of the Lie algebra of its stabilizer. Then for any $\mathbf x\in\F^-$ transverse to ${\mathbf x}^+$, one has 
\[\mathcal C_\sigma(\mathbf x,g)+s\mathcal J_{\F^+}(g)=\log \frac{\sigma_{\mathbf x}\left(\overline{X_1}(\mathbf x),\dots,\overline{X_n}(\mathbf x)\right)}{\sigma_{g\cdot \mathbf x}\left(\overline{X_1}(g\cdot \mathbf x),\dots,\overline{X_n}(g\cdot \mathbf x)\right)}\,.\]
\end{cor}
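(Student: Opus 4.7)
My plan is to unwind the defining cocycle identity from Definition \ref{def cocycle from a section} using the explicit $G$-action on $s$-densities, and then use Lemma \ref{lem: determinant proximal element on a  flag manifold} to identify the resulting Jacobian determinant.

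First, I will pass from the affine-bundle language to the oriented vector-line-bundle language of Section \ref{sec:constraff}. Viewing $\sigma$ as a section of $\mathscr D^s\F^-$ with the orientation $\mathscr D^s_{>0}\F^-$, the principal $\R$-action $v\cdot t=e^t v$ translates the cocycle identity $g\cdot\sigma(\mathbf x)=\sigma(g\cdot\mathbf x)\cdot\mathcal C_\sigma(\mathbf x,g)$ into
\[ (g\cdot\sigma)_{g\cdot\mathbf x}=e^{\mathcal C_\sigma(\mathbf x,g)}\sigma_{g\cdot\mathbf x}\in \mathscr D^s(T_{g\cdot\mathbf x}\F^-).\]
Recalling that the $G$-action on $s$-densities is obtained by functoriality from the $G$-action on the tangent bundle, this reads
\[ \sigma_{\mathbf x}\bigl((d_{g\cdot\mathbf x}g^{-1})w_1,\dots,(d_{g\cdot\mathbf x}g^{-1})w_n\bigr)=e^{\mathcal C_\sigma(\mathbf x,g)}\sigma_{g\cdot\mathbf x}(w_1,\dots,w_n)\]
for any basis $(w_1,\dots,w_n)$ of $T_{g\cdot\mathbf x}\F^-$. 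I will specialize to $w_i=\overline{X_i}(g\cdot\mathbf x)$.

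The second step is a direct computation pushing $(d_{g\cdot\mathbf x}g^{-1})$ through the fundamental vector fields:
\[ (d_{g\cdot\mathbf x}g^{-1})\overline{X_i}(g\cdot\mathbf x)=\left.\tfrac{d}{dt}\right|_{0}g^{-1}\exp(tX_i)g\cdot\mathbf x=\overline{\Ad(g^{-1})X_i}(\mathbf x).\]
Since $\mathbf x^+$ is the attracting fixed point of $g$, we have $g\in \mathrm{Stab}_G(\mathbf x^+)=\mathrm{Stab}_G(\fn)$, so $\Ad(g^{-1})$ preserves $\fn$ and $(\Ad(g^{-1})X_i)_i$ is again a basis of $\fn$. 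Because $\mathbf x\transverse\mathbf x^+$, the evaluation map $\fn\to T_{\mathbf x}\F^-$, $X\mapsto\overline{X}(\mathbf x)$, is an isomorphism (cf.\ the discussion around \eqref{eq:TfFpfident}), so $(\overline{X_i}(\mathbf x))_i$ is a basis of $T_{\mathbf x}\F^-$ and the change-of-basis matrix from it to $(\overline{\Ad(g^{-1})X_i}(\mathbf x))_i$ is precisely the matrix $[\Ad(g^{-1})|_\fn]$ in the basis $(X_i)_i$. By the defining transformation rule of an $s$-density this yields
\[ \sigma_{\mathbf x}\bigl(\overline{\Ad(g^{-1})X_1}(\mathbf x),\dots,\overline{\Ad(g^{-1})X_n}(\mathbf x)\bigr)=\bigl|\!\det\Ad(g^{-1})|_\fn\bigr|^{s}\,\sigma_{\mathbf x}\bigl(\overline{X_1}(\mathbf x),\dots,\overline{X_n}(\mathbf x)\bigr).\]

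Finally, I will invoke Lemma \ref{lem: determinant proximal element on a  flag manifold} to rewrite $|\det\Ad(g)|_\fn|=e^{\mathcal J_{\F^+}(g)}$, so the factor above equals $e^{-s\mathcal J_{\F^+}(g)}$. Combining with the first displayed identity, taking logarithms and rearranging gives exactly the stated formula. The only real care required is bookkeeping: matching the additive cocycle convention with the multiplicative $\R_{>0}$-action on densities (explaining the exponential and its sign), and the $g\leftrightarrow g^{-1}$ swap inherited from the pullback definition of the $G$-action on densities (which is exactly what cancels against the inversion in $|\det\Ad(g^{-1})|_\fn|^s$ to produce the $+s\mathcal J_{\F^+}(g)$ with the correct sign). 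There is no deep obstacle here; the statement is essentially a tautology once the conventions are lined up.
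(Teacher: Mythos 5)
Your proposal is correct and follows essentially the same route as the paper's proof: both rest on the identity $d_{\mathbf x}g\cdot\overline X(\mathbf x)=\overline{\Ad(g)X}(g\cdot\mathbf x)$ (equivalently your pullback version with $g^{-1}$), the transformation rule of $s$-densities under the change of basis $\Ad(g^{\pm1})|_{\fn}$, and Lemma \ref{lem: determinant proximal element on a  flag manifold} to identify $|\det(\Ad(g)|_{\fn})|$ with $e^{\mathcal J_{\F^+}(g)}$. The only cosmetic difference is that you evaluate everything at $\mathbf x$ via $dg^{-1}$ while the paper splits the computation into two ratios at $g\cdot\mathbf x$ that telescope; the sign bookkeeping works out identically.
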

\begin{proof}
The formula $d_\mathbf{x}g \cdot\overline X(\mathbf x)= \overline{\Ad(g)X}(g\cdot\mathbf x)$ and the basis $(d_{\mathbf x}g\cdot\overline{X_1}(\mathbf x),\dots,d_{\mathbf x}g\cdot\overline{X_n}(\mathbf x))$  of $T_{g\cdot\mathbf x}\F^-$  allow us  to express
\[ \mathcal C_\sigma(g,\mathbf x)= \log \frac{\sigma_{\mathbf x}\left(\overline{X_1}(\mathbf x),\dots,\overline{X_n}(\mathbf x)\right)}{\sigma_{g\cdot \mathbf x}\left(d_{\mathbf x}g\cdot\overline{X_1}(\mathbf x),\dots,d_{\mathbf x}g\cdot\overline{X_n}(\mathbf x)\right)}\,,\]
while Lemma \ref{lem: determinant proximal element on a  flag manifold} and the definition of a density yield
\begin{align*} s\mathcal J_{\F^+}(g) &=s\log \left\vert \det\big( \Ad(g)\vert_{\fn}\big)\right\vert\\
&=\log \frac{\sigma_{g\cdot \mathbf x}\left(\overline{\Ad(g)X_1}(g\cdot\mathbf x),\dots,\overline{\Ad(g)X_n}(g\cdot\mathbf x)\right)}{\sigma_{g\cdot \mathbf x}\left(\overline{X_1}(g\cdot \mathbf x),\dots,\overline{X_n}(g\cdot \mathbf x)\right)}\\
 &= \log \frac{\sigma_{g\cdot \mathbf x}\left(d_{\mathbf x}g\cdot\overline{X_1}(\mathbf x),\dots,d_{\mathbf x}g\cdot\overline{X_n}(\mathbf x)\right)}{\sigma_{g\cdot \mathbf x}\left(\overline{X_1}(g\cdot \mathbf x),\dots,\overline{X_n}(g\cdot \mathbf x)\right)}  \,.
\end{align*}
\end{proof}

\subsubsection{Multidensity bundles}

The graded vector bundles required to introduce multidensity bundles on  flag manifolds will be defined using the hierarchy introduced in Section \ref{sec:simpleflagmanifolds}. Consider a flag manifold $\F$ and the associated subset $\Theta\subset\Delta$.

Recall from \eqref{eq:FFalphaprojection} and Lemma \ref{lem uniqueness of equivariant maps between dominated flag manifolds}  that for each $\alpha\in\Theta$ there is a unique $G$-equivariant projection $\Pi_\alpha^\F:\F\to\F_\alpha$. We can use these projections to define the vector bundle
\[ E^\F:=\bigoplus_{\alpha\in\Theta}\big(\Pi^\F_\alpha\big)^*\big(T\F_\alpha\big) \]
over $\F$ with the obvious grading $E^\F_\bullet=\big(E^\F_{\alpha}\big)_{\alpha\in\Theta}$ defined over $\mathbf x\in \F$ by
\[ \big(E^\F_{\alpha}\big)_{\mathbf x}:= \set{ (v_{\alpha'})_{\alpha'\in\Theta}\in \bigoplus_{\alpha'\in\Theta}T_{\Pi^\F_{\alpha'}(\mathbf x)}\F_{\alpha'}}{ v_{\alpha'}=0~\forall \alpha'\neq \alpha} \simeq T_{\Pi^\F_\alpha(\mathbf x)}\F_\alpha.\]
The following diagram depicts the maps involved in the construction of the grading $E^\F_\bullet$. 
\begin{center}
\begin{tikzcd}
T\F_{\alpha_1} \arrow[d]  &  \cdots &  T\F_{\alpha_k} \arrow[d] \\
\F_{\alpha_1}  & \cdots &  \F_{\alpha_k}  \\
 & \arrow[lu, "\Pi_{\alpha_1}^\F"]\F\arrow[ru, "\Pi^\F_{\alpha_k}"']  &  
\end{tikzcd}
\end{center}

\begin{definition} \label{def multi density bundle over flag manifold}
Let $\F$ be a flag manifold corresponding to a subset $\Theta\subset\Delta$, and $\mathbf{s}=(s_\alpha)_{\alpha\in \Theta}\in \R^{\Theta}$. As in Section \ref{sec:densitiygeneral}, we define the $\mathbf{s}$-multidensity bundle over $\F$ to be
\[\mathscr D^{\mathbf s}\F_\bullet:=\mathscr D^{\mathbf s}E^\F_\bullet\]
and  corresponding positive $\mathbf{s}$-multidensity bundle 
\[\mathscr D^{\mathbf s}_{>0}\F_\bullet:=\mathscr D^{\mathbf s}_{>0}E^\F_\bullet~.\]
\end{definition}

The positive $\mathbf{s}$-multidensity bundle $\mathscr D^{\mathbf s}_{>0}\F_\bullet$ is a $G$-equivariant affine line bundle over $\F$.

\begin{lem}\label{lem:lambdasbf} Let $\F^+$ and $\F^-$ be opposite flag manifolds corresponding respectively to $\Theta\subset\Delta$ and $\iota_o(\Theta)$, and denote by $\pi_\alpha:\F^+\trtimes\F^-\to\F_\alpha\trtimes\F_{\iota_o(\alpha)}$ the $G$-equivariant projections. Let $g\in G$ possess  a pair of transverse fixed points $\mathbf x=(\mathbf x^+,\mathbf x^-)\in \F^+\trtimes\F^-$ and write $L_\alpha:=\mathrm{Stab}_G(\pi_\alpha(\xbf))$. Then, for all $\mathbf s\in \R^{\Theta}$, we have
\begin{align*}
\ell_{\mathscr{D}^{\mathbf{s}}_{>0}{\F^+_\bullet}}(\mathbf x^+,g)&=\sum_{\alpha \in \Theta}s_\alpha\mathbf J_{L_\alpha}(g^{-1}),\\
\ell_{\mathscr{D}^{\mathbf{s}}_{>0}{\F^-_\bullet}}(\mathbf x^-,g)&=\sum_{\alpha \in \Theta}s_\alpha\mathbf J_{L_\alpha}(g).
\end{align*}
\end{lem}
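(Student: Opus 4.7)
The plan is to decompose the multidensity bundles into tensor products of pulled-back single-density bundles on simple flag manifolds, and then leverage Corollary \ref{cor:periodfunctionJordandecomp} factor by factor.

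First, by the construction of the grading $E^{\F^+}_\bullet$, at the fixed point $\mathbf{x}^+$ the fiber is $\bigoplus_{\alpha\in\Theta} T_{\Pi^{\F^+}_\alpha(\mathbf{x}^+)}\F_\alpha$. Applying Example \ref{example product of densities gives a multidensity} globally and $G$-equivariantly, there is a canonical isomorphism of $G$-equivariant oriented line bundles
\[
\mathscr{D}^{\mathbf{s}}_{>0}\F^+_\bullet \;\simeq\; \bigotimes_{\alpha\in\Theta}\bigl(\Pi^{\F^+}_\alpha\bigr)^{\!*}\mathscr{D}^{s_\alpha}_{>0}\F_\alpha,
\]
and similarly for $\F^-$ with $\Pi^{\F^-}_{\iota_o(\alpha)}$. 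Because the $\R$-action on a tensor product of oriented line bundles is the product of the individual $\R$-actions, period functions are additive under tensor products; because $\Pi^{\F^+}_\alpha$ is $G$-equivariant and $g\cdot \mathbf{x}^+=\mathbf{x}^+$ forces $g\cdot\Pi^{\F^+}_\alpha(\mathbf{x}^+)=\Pi^{\F^+}_\alpha(\mathbf{x}^+)$, the period function of a pullback at $\mathbf{x}^+$ equals the period function of the original bundle at $\Pi^{\F^+}_\alpha(\mathbf{x}^+)$. Hence
\[
\ell_{\mathscr{D}^{\mathbf{s}}_{>0}\F^+_\bullet}(\mathbf{x}^+,g)=\sum_{\alpha\in\Theta}\ell_{\mathscr{D}^{s_\alpha}_{>0}\F_\alpha}\bigl(\Pi^{\F^+}_\alpha(\mathbf{x}^+),g\bigr),
\]
and analogously for $\F^-$.

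Next, I need to feed each summand into Corollary \ref{cor:periodfunctionJordandecomp}, which requires the pair $\pi_\alpha(\mathbf{x})=(\Pi^{\F^+}_\alpha(\mathbf{x}^+),\Pi^{\F^-}_{\iota_o(\alpha)}(\mathbf{x}^-))$ to be transverse in $\F_\alpha\trtimes\F_{\iota_o(\alpha)}$. This transversality is exactly what Corollary \ref{cor domination and opposition} gives, applied to the dominations $\F_\alpha\prec\F^+$ and $\F_{\iota_o(\alpha)}\prec\F^-$: indeed, $\pi_\alpha$ is the $G$-equivariant projection between these transverse flag spaces, and $L_\alpha=\mathrm{Stab}_G(\pi_\alpha(\mathbf{x}))$ is the corresponding Levi subgroup by definition.

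Finally, Corollary \ref{cor:periodfunctionJordandecomp} applied to $(\F_\alpha,\F_{\iota_o(\alpha)})$ at the fixed points $\Pi^{\F^+}_\alpha(\mathbf{x}^+)$ and $\Pi^{\F^-}_{\iota_o(\alpha)}(\mathbf{x}^-)$ of $g$ yields
\[
\ell_{\mathscr{D}^{s_\alpha}_{>0}\F_\alpha}\bigl(\Pi^{\F^+}_\alpha(\mathbf{x}^+),g\bigr)=s_\alpha\,\mathbf{J}_{L_\alpha}(g^{-1}),\qquad
\ell_{\mathscr{D}^{s_\alpha}_{>0}\F_{\iota_o(\alpha)}}\bigl(\Pi^{\F^-}_{\iota_o(\alpha)}(\mathbf{x}^-),g\bigr)=s_\alpha\,\mathbf{J}_{L_\alpha}(g).
\]
Summing over $\alpha\in\Theta$ gives the two formulas in the statement. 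There is no serious obstacle here: once the bundle-theoretic decomposition and the transversality of $\pi_\alpha(\mathbf{x})$ are in place, the result is essentially bookkeeping, with the only mildly delicate point being to keep track that pullback of a density bundle along an equivariant projection has, at fixed points, the same period function as the original bundle.
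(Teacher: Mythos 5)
Your proof is correct and follows essentially the same route as the paper: split the period function of the multidensity bundle into the sum over $\alpha\in\Theta$ of the period functions of the pulled-back simple-flag density bundles, verify transversality of the projected fixed points, and apply Corollary \ref{cor:periodfunctionJordandecomp} to each summand. You supply slightly more justification for the additive decomposition (via the tensor-product isomorphism of Example \ref{example product of densities gives a multidensity}) and cite Corollary \ref{cor domination and opposition} for transversality, which is in fact the more apt reference here since the hypothesis is only a transverse fixed pair rather than proximality.
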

\begin{proof}
Write $\mathbf x^+_\alpha=\Pi^{\F^+}_\alpha(\mathbf x^+)$ and $\mathbf x^-_\alpha=\Pi^{\F^-}_\alpha(\mathbf x^-)$, so that we have
 \begin{align*}
\ell_{\mathscr{D}^{\mathbf{s}}_{>0}{\F^+_\bullet}}(\mathbf x^+,g)&=\sum_{\alpha \in \Theta}\ell_{\mathscr{D}^{s_\alpha}_{>0}{\F^+_\alpha}}(\mathbf x^+_\alpha,g),\\
\ell_{\mathscr{D}^{\mathbf{s}}_{>0}{\F^-_\bullet}}(\mathbf x^-,g)&=\sum_{\alpha \in \Theta}\ell_{\mathscr{D}^{s_\alpha}_{>0}{\F^+_\alpha}}(\mathbf x^+_\alpha,g).
\end{align*}
 By Lemma \ref{lem:proxdomination}, each pair $(\mathbf x^+_\alpha,\mathbf x^-_\alpha )$ is transverse, so the claim follows from Corollary \ref{cor:periodfunctionJordandecomp}. Note that the change of signs present in Corollary \ref{cor:periodfunctionJordandecomp}, which results from Remark \ref{rem:signswap}, is reflected here in the appearance of $g^{-1}$ in the $+$ case and $g$ in the $-$ case.
\end{proof}

\begin{rem}It is important to note that, while the ordinary density bundles were defined in \eqref{eq:defDs} using the tangent bundle as the underlying vector bundle, the multidensity bundle $\mathscr D^{\mathbf s}\F_\bullet$ is  not constructed from the tangent bundle of $\F$ (unless $\F$ is a simple flag manifold). 
\end{rem}

\section{Flow spaces}\label{sec:flowspaces}

With the preparations from Sections \ref{sec:affpermultidens} and \ref{sec:densitiypairings} at hand, it is now time to introduce the desired flow spaces. The idea is to create a multidensity analog of the hyperbola branch $\{(x^+,x^-)\in \R_{>0}^2 \,|\, x^+\cdot  x^-=1\}$. To this end, we need to pullback both  $\mathbf{s}$-multidensity bundles $\mathscr{D}^\mathbf{s}\F^+_\bullet$ and $\mathscr{D}^\mathbf{s}\F^-_\bullet$ to the transverse flag space $\F^+\trtimes\F^-$ where a pairing can be defined thanks to the pseudo-Riemannian structure.

\subsection{Pairings of multidensities on a transverse flag space}\label{sec pairing multidensities}
Consider a pair of opposite flag manifolds $\F^+,\F^-$ with  corresponding subsets $\Theta\subset\Delta$ and $\iota_o(\Theta)$ respectively, and a weight vector $\mathbf s\in\R^\Theta$. By using the projections
\[
\begin{tikzcd}
& \F^+\trtimes \F^- \arrow[dl, swap, "p_{\F^+}"] \arrow[dr, "p_{\F^-}"]
\\
\F^+ & & \F^-
\end{tikzcd}
\]
we can consider the real rank-two vector bundle
\begin{equation} \mathscr D^{\mathbf s}\cF^+_\bullet \boxplus \mathscr D^{\mathbf s}\cF^-_\bullet := p_{\F^+}^*\big(\mathscr D^{\mathbf s}\cF^+_\bullet\big) \oplus p_{\F^-}^*\big(\mathscr D^{\mathbf s}\cF^-_\bullet\big) \label{eq boxplus bundle}
\end{equation}
over $\F^+\trtimes \F^- $. Recall from the definitions of multidensities and the bundles $\mathscr D^{\mathbf s}\F_\bullet$ (Sections \ref{sec:densitiygeneral} and \ref{sec:multitrans} respectively) that an element $(\mu^+,\mu^-)$ in the fiber over $\mathbf x=(\mathbf x^+,\mathbf x^-)\in \F^+\trtimes\F^-$ consists of maps
\[ \mu^+: \prod_{\alpha\in\Theta} \mathcal B_{T_{\mathbf x^+_\alpha}\F^\alpha} \to \R~,\quad \mu^-: \prod_{\alpha\in\Theta} \mathcal B_{T_{\mathbf x^-_\alpha}\F_{\iota_o(\alpha)}} \to \R~ \]
obeying a given transformation rule under grading preserving linear transformations, where $\mathcal B_E$ stands for the set of ordered bases of a real vector space $E$, and we use the notations $\mathbf x^+_\alpha=\Pi^{\F^+}_\alpha(\mathbf x^+)\in\F_\alpha$, $\mathbf x^-_\alpha=\Pi^{\F^-}_{\iota_o(\alpha)}(\mathbf x^-)\in\F_{\iota_o(\alpha)}$ for the projections to simple flag manifolds.

An ordered basis $\mathbf e_\alpha^+=(e_1^+,\dots,e_n^+)$ of the tangent space $T_{\mathbf x^+_\alpha}\F_\alpha$ together with an ordered basis $\mathbf e_\alpha^-=(e_1^-,\dots,e_n^-)$ of the tangent space $T_{\mathbf x^-_\alpha}\F_{\iota_o(\alpha)}$ can be concatenated in order to form an ordered basis $(\mathbf e^+_\alpha,\mathbf e^-_\alpha):=\big( (e_1^+,0),\dots,(e^+_n,0),(0,e^-_1),\dots,(0,e_n^-)\big)$ of $T_{\mathbf x^+_\alpha}\F_\alpha\oplus T_{\mathbf x^-_\alpha}\F_{\iota_o(\alpha)}= T_{(\mathbf x^+_\alpha,\mathbf x^-_\alpha)}\F_\alpha\trtimes \F_{\iota_o(\alpha)}$. So by using the pseudo-Riemannian metric on the transverse flag manifold $\F_\alpha\trtimes \F_{\iota_o(\alpha)}$ defined in Section \ref{sec:pseudoriemannianstructure} and the construction of an associated $s_\alpha$-density $\mathrm{vol}^{s_\alpha}_{\F_\alpha^\pitchfork}$ from Example \ref{example pseudo-riemannian density} we can define the number
\[ \mathrm{vol}^{s_\alpha}_{\F_\alpha^\pitchfork} (\mathbf e^+_\alpha,\mathbf e^-_\alpha)>0~.\]
We now see from the transformation rule in the definition of a multidensity that the ratio
\begin{equation} \mu^+\cdot\mu^- := \frac{\mu^+\big( (\mathbf e^+_\alpha)_{\alpha\in \Theta} \big) \mu^-\big( (\mathbf e^-_\alpha)_{\alpha\in \Theta} \big)}{\prod_{\alpha\in \Theta}\mathrm{vol}^{s_\alpha}_{\F_\alpha^\pitchfork} (\mathbf e^+_\alpha,\mathbf e^-_\alpha)} \label{eq pairing multidensities}
\end{equation}
does not depend on the choice of a family of ordered bases $(\mathbf e^+_\alpha)_{\alpha\in \Theta}\in \prod_{\alpha\in\Theta} \mathcal B_{T_{\mathbf x^+_\alpha}\F^\alpha}$ and $(\mathbf e^-_\alpha)_{\alpha\in \Theta}\in \prod_{\alpha\in\Theta} \mathcal B_{T_{\mathbf x^-_\alpha}\F_{\iota_o(\alpha)}}$, thus defining an analytic $G$-invariant function
\begin{align*}\begin{split}
\mathscr D^{\mathbf s}\cF^+_\bullet \boxplus \mathscr D^{\mathbf s}\cF^-_\bullet&\rightarrow \R \\
(\mu^+,\mu^-) &\mapsto \mu^+\cdot\mu^-\end{split}
\end{align*}
that satisfies the homogeneity relation
\begin{equation} \big( t^+\mu^+\big)\cdot\big(t^-\mu^-\big)=t^+t^-\mu^+\cdot\mu^-\qquad \forall\; (t^+,t^-)\in\R^2~,\label{eq scaling of pairing}
\end{equation}
and that is positive on the subset $\mathscr D^{\mathbf s}_{>0}\cF^+_\bullet \boxplus \mathscr D^{\mathbf s}_{>0}\cF^-_\bullet\subset \mathscr D^{\mathbf s}\cF^+_\bullet \boxplus \mathscr D^{\mathbf s}\cF^-_\bullet$.

\subsection{The multidensity flow}
\begin{definition}\label{def:LO}Consider a pair of opposite flag manifolds $\F^+,\F^-$ with  corresponding subsets $\Theta\subset\Delta$ and $\iota_o(\Theta)$ respectively, and a weight vector $\mathbf s\in\R^\Theta$.
  
The \emph{flow space} $\bbL^{\mathbf s}_{\Theta}$ of $\Fpf=\F^+\trtimes\F^-$ with parameter $\mathbf s$ is the $G$-equivariant affine line bundle over $\Fpf$ given by
\[ 
\L^{\mathbf s}_{\Theta}:=\set{\left(\mu^+,\mu^-\right)\in \mathscr D^{\mathbf s}_{>0}\cF^+_\bullet \boxplus \mathscr D^{\mathbf s}_{>0}\cF^-_\bullet }{\mu^+\cdot \mu^-= 1},
\]
on which we let $t\in \R$ act by the flow $\phi^t$ defined as 
\[  
\phi^t\left(\mu^+,\mu^-\right) := \left(e^{-t}\mu^+,e^{t}\mu^-\right).
\]
\end{definition}
Note that this flow is well-defined because of the homogeneity of the pairing \eqref{eq scaling of pairing}.

More important than the flow spaces themselves are their isomorphism classes, where, as before,  ``isomorphism'' means ``isomorphism of $G$-equivariant affine line bundles'',  (Definition \ref{def:affinelinebundles}). Projecting onto the first and second factors of the flow space $\L^{\mathbf s}_{\Theta}\subset \mathscr D^{\mathbf s}_{>0}\cF^+_\bullet \boxplus \mathscr D^{\mathbf s}_{>0}\cF^-_\bullet$ reveals that $\L^{\mathbf s}_{\Theta}$ is nothing but a particularly useful  (or ``symmetric'') representative of the isomorphism class 
\bq
[\L^{\mathbf s}_{\Theta}]=\big[p_{\F^-}^\ast\big(\mathscr D^{\mathbf s}_{>0}\cF^-_\bullet\big)\big]=\big[p_{\F^+}^\ast\big(\overline{\mathscr D^{\mathbf s}_{>0}\cF^+_\bullet}\big)\big].\label{eq:isosLs}
\eq
From \eqref{eq:isosLs} and Lemma \ref{lem:lambdasbf}  we learn that the period function of $\L^{\mathbf s}_\Theta$ is given by
\bq
 \ell_{\L^{\mathbf s}_\Theta}=\sum_{\alpha\in\Theta} s_\alpha\mathbf J_{\Fpf}^{\alpha}\,.\label{eq:lLs}
\eq

\begin{cor}\label{cor:classificationLs}Every $G$-equivariant affine line bundle $\L\to\Fpf$ is isomorphic to $\L^{\mathbf{s}}_{\Fpf}$ for a unique $\mathbf s\in\R^\Theta$.
\end{cor}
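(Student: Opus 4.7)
The plan is to combine the three classification tools that have been assembled in the paper: period functions classify $G$-equivariant affine line bundles (Corollary \ref{cor:isocor}), the Jacobian period functions $\mathbf J_{\Fpf}^{\alpha}$, $\alpha \in \Theta$, form a basis of the space $\mathcal P(\Fpf)$ of period functions (Proposition \ref{prop basis of Jacobian class functions}), and the period function of the candidate bundle $\L^{\mathbf s}_\Theta$ has already been computed to be $\sum_{\alpha \in \Theta} s_\alpha \mathbf J_{\Fpf}^{\alpha}$ in equation \eqref{eq:lLs}.

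More concretely, given an arbitrary $G$-equivariant affine line bundle $\L \to \Fpf$, I would first invoke Corollary \ref{cor:isocor} to encode $\L$ by its period function $\ell_\L \in \mathcal P(\Fpf)$. Next, expanding $\ell_\L$ in the basis supplied by Proposition \ref{prop basis of Jacobian class functions} yields a unique tuple $\mathbf s = (s_\alpha)_{\alpha \in \Theta} \in \R^\Theta$ such that
\[
\ell_\L \;=\; \sum_{\alpha \in \Theta} s_\alpha\, \mathbf J_{\Fpf}^{\alpha}.
\]
By \eqref{eq:lLs} the right-hand side coincides with $\ell_{\L^{\mathbf s}_\Theta}$, and a final application of Lemma \ref{lem:isolambda} (equality of period functions implies isomorphism of bundles) gives $\L \simeq \L^{\mathbf s}_\Theta$. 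Uniqueness of $\mathbf s$ is immediate from the uniqueness of the expansion in the basis $(\mathbf J_{\Fpf}^{\alpha})_{\alpha \in \Theta}$.

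There is essentially no obstacle here; the corollary is a formal consequence of the machinery already in place. The only step one might wish to double-check is that \eqref{eq:lLs} really identifies the period function of $\L^{\mathbf s}_\Theta$ with the stated combination of Jacobian characters; this in turn rests on \eqref{eq:isosLs} together with Lemma \ref{lem:lambdasbf}, both of which are already established, so no additional work is required.
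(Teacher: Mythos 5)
Your proposal is correct and follows exactly the paper's argument: the paper proves this corollary by citing \eqref{eq:lLs}, Corollary \ref{cor:isocor} and Proposition \ref{prop basis of Jacobian class functions}, which are precisely the three ingredients you combine. No gaps.
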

\begin{proof}It follows from \eqref{eq:lLs}, Corollary \ref{cor:isocor} and Proposition \ref{prop basis of Jacobian class functions}.
\end{proof}

\section{Anosov subgroups}\label{sec:Anosovsubgroups}

Let $\F^+,\F^-$ be a pair of opposite flag manifolds associated to subsets $\Theta,\iota_o(\Theta)\subset\Delta$ respectively, and  let $\Fpf=\F^+\trtimes \F^-$ be their transverse flag space. Recall that if $\Gamma$ is a hyperbolic group, then $\partial_\infty\Gamma$ denotes its Gromov boundary.

\begin{definition} \label{def:ThetaAnosov}
A hyperbolic discrete subgroup $\Gamma<G$ is called $\Theta$-\emph{Anosov} if
\begin{enumerate}[label=(A\arabic*)]
\item There are continuous $\Gamma$-equivariant maps $\xi^\pm:\partial_\infty\Gamma\to \cF^\pm$, called \emph{boundary maps}.\label{property:P2}

\item The boundary maps are \emph{transverse} in the sense that\label{property:P3}
\[ 
\forall (x,y)\in\partial_\infty\Gamma^{(2)}\quad  (\xi^+(x),\xi^-(y))\in \Fpf.
\] 
\item The boundary maps preserve the convergence properties of the $\Gamma$-action on $\partial_\infty \Gamma$: For any unbounded sequence $\gamma_k\in \Gamma$ with boundary limit points $\gamma_+=\lim_{k\to +\infty}\gamma_k\in\partial_\infty\Gamma$ and  $\gamma_-=\lim_{k\to +\infty}\gamma_k^{-1}\in\partial_\infty\Gamma$,  the $\Gamma$-actions on $\F^+$ and $\F^-$ obey the following dynamics as $k\to +\infty$:
\begin{enumerate}
\item $\gamma_{k}\cdot \mathbf x^+\rightarrow \xi^+(\gamma_{+})$ for all $\mathbf x^+\in\F^+$ with $\mathbf x^+\pitchfork \xi^-(\gamma_{-})$;
\item $\gamma_{k}^{-1}\cdot \mathbf x^+\rightarrow \xi^+(\gamma_{-})$ for all $\mathbf x^+\in\F^+$ with $\mathbf x^+\pitchfork \xi^-(\gamma_{+})$;
\item $\gamma_{k}\cdot \mathbf x^-\rightarrow \xi^-(\gamma_{+})$ for all $\mathbf x^-\in\F^-$ with $\xi^+(\gamma_{-})\pitchfork \mathbf x^-$;
\item $\gamma_{k}^{-1}\cdot \mathbf x^-\rightarrow \xi^-(\gamma_{-})$ for all $\mathbf x^-\in\F^-$ with $\xi^+(\gamma_{+})\pitchfork \mathbf x^-$;
\end{enumerate}
and all these convergences are uniform on compact subsets. \label{property:P4}
\end{enumerate}
\end{definition}

\begin{rem} This is not the original definition of $\Theta$-Anosov subgroups as given in \cite{labourie} or \cite{GW12}, but an equivalent characterisation proved in \cite[Theorem 1.1]{KLP17}, where it is called \emph{asymptotically embedded}. Limit maps satisfying \ref{property:P2}, \ref{property:P3} and \ref{property:P4} are unique.
\end{rem}

\begin{lem} \label{lem Anosov implies growth Jordan projections} Let $\Gamma<G$ be $\Theta$-Anosov, and consider a sequence $(\gamma_k)$ in $\Gamma$ possessing  boundary limits $\gamma_+=\lim_{k\to\infty}\gamma_k, \gamma_-=\lim_{k\to\infty}\gamma_k^{-1}\in\partial_\infty\Gamma$. If $\gamma_+\neq\gamma_-$, then
\[\lim_{k\to\infty} \mathcal P_\alpha(\gamma_k)=+\infty~, \quad \forall \alpha\in\Theta\,.\]
\end{lem}
\begin{proof}
By Lemma \ref{lem diverging sequence hyperbolic group}, we have that $|\gamma_k|_\infty\to+\infty$, and \cite[Theorem 1.7]{GGKW} guarantees that $\mathcal P_\alpha(\gamma)\to+\infty$ as $|\gamma|_\infty\to+\infty$.
\end{proof}

For a hyperbolic group $\Gamma$, we write 
\[
\partial_\infty\Gamma^{(2)}:=\{(z^+,z^-)\in \partial_\infty\Gamma\times \partial_\infty\Gamma\,|\,z^+\neq z^-\}.
\] 
Let $\Gamma< G$ be a $\Theta$-Anosov subgroup, and combine the boundary maps $\xi^+,\xi^-$ to the map
\bq
\xi^\pitchfork:\map{\partial_\infty\Gamma^{(2)}}{\Fpf}{(z^+,z^-)}{(\xi^+(z^+),\xi^-(z^-)).}\label{eq:sigmamapnew}
\eq
The images of $\xi^\pm$ and $\xi^\pitchfork$ are the \emph{limit sets} in $\F^\pm$ and $\Fpf$, respectively, denoted by
\[
\Lambda^\pm_\Gamma:=\xi^\pm(\partial_\infty\Gamma)\subset\F^\pm,\qquad\Lambda^\transverse_\Gamma:=\xi^\pitchfork(\partial_\infty\Gamma^{(2)})\subset\Fpf.  
\] 
For a given word-hyperbolic subgroup $\Gamma< G$ one has (see \cite[Lemma 3.18]{GW12})
\bq
\Gamma\text{ is }\Theta\text{-Anosov}\iff \Gamma\text{ is }\iota_o(\Theta)\text{-Anosov}. \label{eq:FpfAnosovbarFpfAnosov}
\eq

If $\Theta= \emptyset$, then only finite subgroups of $G$ are $\Theta$-Anosov and the claims are trivially true. Thus, in the following we assume that $\Theta\neq \emptyset$.

\subsection{Invariant subsets of the flow space}

As before, let $\F^+,\F^-$ be a pair of opposite flag manifolds associated to subsets $\Theta,\iota_o(\Theta)\subset\Delta$ respectively, with transverse flag space  $\Fpf=\F^+\trtimes \F^-$, and let $\Gamma<G$ be $\Theta$-Anosov. Consider a flow space $\L^{\mathbf s}_\Theta$ as defined in Section \ref{sec:flowspaces}. Then it turns out that $\Gamma$ possesses a non-empty flow-invariant domain of proper discontinuity in $\L^{\mathbf s}_\Theta$ for suitable parameters $\mathbf s\in\R^\Theta$. The heart of the construction is the following
\begin{definition} \label{def prop disc domain}
Consider the following subset of $\Fpf$:
\bqn
\Omega_{\Gamma}:=\set{(\mathbf x^+,\mathbf x^-)\in \Fpf}{\forall\, z\in \partial_\infty\Gamma: \mathbf x^+\transverse \xi^-(z) \textrm{ or } \xi^+(z)\transverse \mathbf x^-},
\eqn
as well as the subsets of $\bbL^{\mathbf s}_\Theta$
\begin{align*}
\tilde{\cM}_{\Gamma,\Theta}^{\mathbf s}&:= p^{-1}(\Omega_{\Gamma})=\set{\mu\in \bbL^{\mathbf s}_\Theta}{p(\mu)\in \Omega_{\Gamma}},\\
\tilde{\cK}_{\Gamma,\Theta}^{\mathbf s}&:=p^{-1}(\Lambda_{\Gamma}^\pitchfork)= \set{\mu\in \bbL^{\mathbf s}_\Theta}{p(\mu)\in\Lambda^\pitchfork_\Gamma},
\end{align*}
where $p:\L_\Theta^{\mathbf s} \to\Fpf$ is the bundle projection.
\end{definition}
Note that $\tilde{\M}_{\Gamma,\Theta}^{\mathbf s}$ (resp.\ $\tilde{\cK}_{\Gamma,\Theta}^{\mathbf s}$)  is the restriction of the bundle $\bbL^{\mathbf s}_\Theta$ to the subset $\Omega_{\Gamma}$ (resp.\ $\Lambda^\pitchfork_\Gamma$) of its base space $\Fpf$. In particular, since the flow $\phi^t$ preserves the fibers of $\bbL^{\mathbf s}_\Theta$, we see that $\tilde{\M}_{\Gamma,\Theta}^{\mathbf s}$ and $\tilde{\cK}_{\Gamma,\Theta}^{\mathbf s}$ are $\phi^t$-invariant subsets of $\bbL^{\mathbf s}_\Theta$. Further elementary properties can be shown exactly as in the projective Anosov case, see \cite[Lemmas~3.4,~3.7]{VolI}:
\begin{lem}
 The sets $\Omega_{\Gamma}\subset  \Fpf$ and $\tilde{\M}_{\Gamma,\Theta}^{\mathbf s}\subset \bbL^{\mathbf s}_\Theta$ are open and $\Gamma$-invariant.  The set $\tilde{\cK}_{\Gamma,\Theta}^{\mathbf s}\subset \bbL^{\mathbf s}_\Theta$ is closed, $\Gamma$-invariant, and contained in $\tilde{\M}_{\Gamma,\Theta}^{\mathbf s}$. 
\end{lem}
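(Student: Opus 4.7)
The plan is to reduce every claim about the affine line bundle $\bbL^{\mathbf s}_\Theta$ to the corresponding claim about its base $\Fpf$, using that the projection $p\colon \bbL^{\mathbf s}_\Theta\to\Fpf$ is continuous, $G$-equivariant, and a surjective open map. Thus it suffices to prove that $\Omega_\Gamma\subset\Fpf$ is open and $\Gamma$-invariant, that $\Lambda^\pitchfork_\Gamma$ is closed and $\Gamma$-invariant in $\Fpf$, and that $\Lambda^\pitchfork_\Gamma\subset \Omega_\Gamma$.

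For the $\Gamma$-invariance of $\Omega_\Gamma$, I would simply reindex: given $(\mathbf x^+,\mathbf x^-)\in\Omega_\Gamma$, $\gamma\in\Gamma$, and $z\in\partial_\infty\Gamma$, apply the defining dichotomy at $\gamma^{-1}z$ and then push forward by $\gamma$, using $G$-invariance of transversality and the $\Gamma$-equivariance of $\xi^\pm$ from \ref{property:P2}. For openness of $\Omega_\Gamma$, the key is that the complement of $\Fpf$ in $\F^+\times\F^-$ (i.e.\ the non-transverse pairs) is closed, so
\[
C:=\set{\big((\mathbf x^+,\mathbf x^-),z\big)\in\Fpf\times\partial_\infty\Gamma}{\mathbf x^+\not\pitchfork\xi^-(z)\ \text{and}\ \xi^+(z)\not\pitchfork\mathbf x^-}
\]
is closed by continuity of $\xi^\pm$. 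Since $\partial_\infty\Gamma$ is compact, the projection $\Fpf\times\partial_\infty\Gamma\to\Fpf$ is closed, and the image of $C$ is exactly $\Fpf\setminus\Omega_\Gamma$. The $\Gamma$-invariance of $\Lambda^\pitchfork_\Gamma$ is immediate from the $\Gamma$-equivariance of $\xi^\pitchfork$.

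The step I expect to be the main obstacle is showing $\Lambda^\pitchfork_\Gamma$ is closed in $\Fpf$. Consider a sequence $(\xi^+(z_n^+),\xi^-(z_n^-))\to(\mathbf x^+,\mathbf x^-)\in\Fpf$ with $z_n^\pm\neq z_n^\mp$. By compactness of $\partial_\infty\Gamma$, pass to a subsequence and obtain limits $z^\pm\in\partial_\infty\Gamma$ with $\mathbf x^\pm=\xi^\pm(z^\pm)$ by continuity. The only way this could fail to land in $\Lambda^\pitchfork_\Gamma$ is if $z^+=z^-$; but then one needs to know that $(\xi^+(z),\xi^-(z))$ is never a transverse pair, i.e.\ that the boundary maps are \emph{antipodal} in the sense that $\xi^+(z)$ and $\xi^-(z)$ always share a common Levi pair. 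This is a standard companion of the transversality property \ref{property:P3}, provable by applying \ref{property:P4} to a sequence $\gamma_n$ with $\gamma_n\to z$ and $\gamma_n^{-1}\to z'\neq z$: the attracting flags $\xi^+(z)=\lim\gamma_n\mathbf y^+$ and $\xi^-(z)=\lim\gamma_n\mathbf y^-$ obtained from transverse $(\mathbf y^+,\mathbf y^-)$ are related by $\xi^+(z)=P^+$, $\xi^-(z)=P^-$ for the Levi $L=P^+\cap P^-$ defined by the Jordan projections of $\gamma_n$, hence non-transverse.

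Finally, for the inclusion $\Lambda^\pitchfork_\Gamma\subset \Omega_\Gamma$, a clean dichotomy works: given $(\xi^+(x),\xi^-(y))\in\Lambda^\pitchfork_\Gamma$ (so $x\neq y$) and any $z\in\partial_\infty\Gamma$, at least one of $z\neq x$ or $z\neq y$ holds, and \ref{property:P3} then supplies the required transversality for the defining ``or'' of $\Omega_\Gamma$. Pulling everything back along $p$ yields the remaining claims about $\tilde{\M}_{\Gamma,\Theta}^{\mathbf s}$ and $\tilde{\cK}_{\Gamma,\Theta}^{\mathbf s}$.
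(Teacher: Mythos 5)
Your reduction to the base, the compactness argument for openness of $\Omega_\Gamma$, the invariance statements, and the dichotomy giving $\Lambda^\pitchfork_\Gamma\subset\Omega_\Gamma$ are all correct, and this is exactly the route the paper intends (it gives no proof itself, deferring to \cite[Lemmas 3.4, 3.7]{VolI}). You also correctly isolate the only non-formal ingredient: closedness of $\Lambda^\pitchfork_\Gamma$ \emph{inside} $\Fpf$ requires knowing that $(\xi^+(z),\xi^-(z))$ is never a transverse pair.

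That fact is true and standard, but your justification of it is the one place the argument breaks: you conclude that $\mathrm{Stab}_G(\xi^+(z))=P^+$ and $\mathrm{Stab}_G(\xi^-(z))=P^-$ with $L=P^+\cap P^-$ a Levi, ``hence non-transverse'' --- whereas the intersection being a Levi is precisely this paper's \emph{definition} of transversality. The correct statement is that $\xi^+(z)$ and $\xi^-(z)$ are \emph{compatible}: their stabilizers contain a common minimal parabolic, so their intersection strictly contains a Levi and, by \eqref{eq:transverseg}, transversality fails because $\fn_{\p^-}\cap\p^+\neq 0$. To prove it, take first a pole $z=\gamma^+$ of an infinite-order $\gamma\in\Gamma$; property \ref{property:P4} identifies $\xi^+(\gamma^+)$ and $\xi^-(\gamma^+)$ with the attracting fixed points of $\gamma$ in $\F^+$ and $\F^-$. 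After conjugating the hyperbolic part of $\gamma$ into $\exp(\aL^+)$, these attracting fixed points are the standard points $\p_\Theta\in\F^+$ and $\p_{\iota_o(\Theta)}\in\F^-$, both of which contain $\p_\Delta$; since $\fn_{\p_{\iota_o(\Theta)}}\subset\nL\subset\p_\Theta$, the pair is not transverse (here $\Theta\neq\emptyset$ is used). Density of poles in $\partial_\infty\Gamma$ together with closedness of the non-transverse locus then gives the claim for every $z$ (the elementary cases being trivial). With this step repaired, your proof is complete.
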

In particular, since $\Lambda^\pitchfork_\Gamma$, and consequently $\tilde{\cK}_{\Gamma,\Theta}^{\mathbf s}$, is non-empty unless $\Gamma$ is finite, in which case $\Omega_{\Gamma}$ is all of $\Fpf$, we conclude that the sets $\Omega_{\Gamma}$ and $\tilde{\M}_{\Gamma,\Theta}^{\mathbf s}$ are always non-empty.

\subsection{Admissible parameters}\label{sec:admissibility} The dynamics of the action of $\Gamma$ on $\bbL_\Theta^{\mathbf s}$ depends on the parameter $\mathbf s\in\Theta$, in order to understand which parameters give a sufficiently well behaved action we need to associate them to linear forms on the Benoist limit cone. Consider a restricted root decomposition of $\g$, and recall the Jordan projection $\lambda:G\to\aL^+$ of $G$ defined in Section \ref{sec:defjordanclassfcn}. 

\begin{definition}
The \emph{Benoist limit cone} $\mathcal L(\Gamma)\subset \aL^+$ is the smallest closed cone in $\aL^+$ containing all Jordan projections $\lambda(\gamma)$, where $\gamma\in \Gamma$.
\end{definition}

For $\mathbf s\in \R^{\Theta}$, consider the element
\bq
w^{\mathbf s}_\Theta:=\sum_{\alpha \in \Theta}s_\alpha m_\alpha w_\alpha \in \aL^\ast,\label{eq:wmathbfs}
\eq
where the $w_\alpha$ are the fundamental weights defined in \eqref{eq:def_fundamentalweights} and the positive integers $m_\alpha\in \N$ were defined in \eqref{eq:defmalpha}.
\begin{definition}\label{def:Benoistpositive}An element $\mathbf s\in \R^{\Theta}$ is \emph{$(\Gamma,\Theta)$-admissible} if $w^{\mathbf s}_\Theta(X)>0\;\forall\, X\in \mathcal L(\Gamma)\setminus \{0\}$. 
\end{definition}

Even though this definition involves a root space decomposition, we can also consider the associated class function
\begin{equation}
\mathcal J_{\Theta}^{\mathbf s}:=w^{\mathbf s}_\Theta\circ \lambda=\sum_{\alpha\in\Theta}s_\alpha\mathcal J_{\F_\alpha},\label{eq:JTheta}
\end{equation}
with $\mathcal J_{\F_\alpha}$  the Jacobian class function (Definition \ref{def:JacbobF}) of the simple flag manifold $\F_\alpha$.

\begin{rem} \label{rem:admissible parameters involution}
Let $\iota:\R^{\Theta}\to \R^{\Theta}$ be the involution defined by $\iota(\mathbf{s})=(s_{\iota_o(\alpha)})_{\alpha\in \Theta}$. Then it is clear from the definitions that for every $\mathbf s\in \R^{\Theta}$ we have: 
\bq
\mathbf s \text{ is } (\Gamma,\Theta)\text{-admissible} \iff \iota(\mathbf s) \text{ is } (\Gamma,\iota_o(\Theta))\text{-admissible}.\label{eq:regularbarregular}
\eq
\end{rem}

\begin{rem} \label{rem:admissible parameters dual limit cone}
Admissible parameters can be interpreted as linear forms in the interior of some cone. Given a restricted root space decomposition, we consider $\aL_\Theta=\bigcap_{\alpha\in\Delta\setminus\Theta}\ker\alpha$ and   the orthogonal projection $p_\Theta:\aL\to\aL_\Theta$ with respect to the Killing form, and define the $\Theta$-limit cone $\mathcal L_\Theta(\Gamma)=p_\Theta\big(\mathcal L(\Gamma)\big)\subset\aL_\Theta$ as well as its dual cone
\[ \mathcal L_\Theta(\Gamma)^\ast:=\set{\varphi\in\aL_\Theta^*}{\varphi(X)\geq 0~\forall X\in \mathcal L_\Theta(\Gamma)}\,.\]
Then $\mathbf s\in\R^\Theta$ is admissible if and only if the restriction $w_\Theta^{\mathbf s}\vert_{\aL_\Theta}$ lies in the interior of $\mathcal L_\Theta(\Gamma)^\ast$.
\end{rem}

\begin{rem} \label{rem:existence regular parameters}
Note that the set of $(\Gamma,\Theta)$-admissible parameters $\mathbf s\in\R^{\Theta}$ is always non-empty: it contains $\R_{>0}^{\Theta}$.  
\end{rem}

\subsection{Refraction flows and cocompactness}\label{sec:refractionflows} Recall from Definition \ref{def cocycle from a section} that fixing a real analytic section $\sigma: \F^{\pitchfork}\rightarrow \mathbb{L}_\Theta^{\mathbf s}$  induces a real analytic cocycle
\[
\mathcal{C}_{\sigma}: \F^{\pitchfork}\times G\rightarrow \R\,.
\]
It can be pre-composed with the transverse limit map $\xi^{\pitchfork}: \partial_{\infty}\Gamma^{(2)}\rightarrow \F^{\pitchfork}$ of the $\Theta$-Anosov subgroup $\Gamma<G$, resulting in the Hölder cocycle
\[ \mathcal C_{\Gamma,\sigma}:\map{\partial_{\infty}\Gamma^{(2)}\times\Gamma}{\R}{(\mathbf x,\gamma)}{\mathcal C_\sigma(\xi^\pitchfork(\mathbf x),\gamma)}\,,\]
which induces a $\Gamma$-action on $\partial_{\infty}\Gamma^{(2)}\times\R$ via the formula
\begin{equation} \gamma\cdot \big(\mathbf x,s\big):=\big(\gamma\cdot\mathbf x,s-\mathcal C_{\Gamma,\sigma}(\mathbf x,\gamma)\big)\,. \label{eq:refraction action}
\end{equation}

Sambarino proved \cite[Corollary 5.3.3]{SAM24} that if the parameter $\mathbf s\in\R^{\Theta}$ is $(\Gamma,\Theta)$-admissible, then this action  is properly discontinuous and cocompact.

\begin{definition}
The \emph{refraction flow space} $\mathfrak{R}_{\sigma}$ associated to the $(\Gamma,\Theta)$-admissible parameter $\mathbf s\in\R^{\Theta}$ and the section  $\sigma: \F^{\pitchfork}\rightarrow \L_\Theta^{\mathbf s}$ is the Hausdorff quotient of $\partial_{\infty}\Gamma^{(2)}\times \R$ by the proper $\Gamma$-action described above.  The flow
\[
\psi_\sigma^{t}: \mathfrak{R}_{\sigma}\rightarrow \mathfrak{R}_{\sigma}
\]
induced by the affine translation action of $\R$ is called the \emph{refraction flow}.  
\end{definition}
The name refraction flow is due to Sambarino \cite{SAM24}. Although our use of an arbitrary section of $\sigma: \F^{\pitchfork}\rightarrow \L_\Theta^{\mathbf s}$ to define cocycles and $G$-actions may look more general than the explicit Hopf-Busemann-Iwasawa cocycles used in \cite{SAM24}, the computation of the period function \eqref{eq:lLs} shows that \cite[Lemma 5.3.2]{SAM24}, and therefore \cite[Corollary 5.3.3]{SAM24} or \cite[Theorem 3.2]{SAM14}, still applies (Remark \ref{rem:admissible parameters dual limit cone} offers a translation between our notations and those used in  \cite{SAM24}).  The refraction flow does depends on the section $\sigma,$ but for any two sections $\sigma$ and $\sigma^{\prime},$ the resulting refraction flows $\mathfrak{R}_{\sigma}$ and $\mathfrak{R}_{\sigma^{\prime}}$ have the same periods, and are therefore homeomorphically conjugate to each other \cite[Rem. 3.1]{SAM14}.

\begin{prop} \label{prop:cocompactness}
If $\Gamma<G$ is $\Theta$-Anosov and $\mathbf s\in\R^{\Theta}$ is $(\Gamma,\Theta)$-admissible, then the action of $\Gamma$ on $\widetilde{\mathcal K}_{\Gamma,\Theta}^{\mathbf s}$ is properly discontinuous and cocompact.
\end{prop}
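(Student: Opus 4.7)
The plan is to identify $\widetilde{\mathcal K}_{\Gamma,\Theta}^{\mathbf s}$ $\Gamma$-equivariantly with $\partial_\infty\Gamma^{(2)}\times\R$ equipped with the refraction-flow action \eqref{eq:refraction action}, and then invoke Sambarino's cocompactness/proper-discontinuity result \cite[Cor.\ 5.3.3]{SAM24} already recalled in Section \ref{sec:refractionflows}.

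First I would choose a real analytic section $\sigma:\F^\pitchfork\to\L_\Theta^{\mathbf s}$ of the principal $\R$-bundle $\L_\Theta^{\mathbf s}\to\F^\pitchfork$; such a section exists since principal $\R$-bundles over paracompact manifolds are trivializable. By Definition \ref{def cocycle from a section} together with the commutativity of the $G$- and $\R$-actions on $\L_\Theta^{\mathbf s}$, the associated trivialization
\[
\Phi_\sigma:\F^\pitchfork\times\R\longrightarrow\L_\Theta^{\mathbf s},\qquad (\mathbf x,t)\longmapsto\sigma(\mathbf x)\cdot t,
\]
is a $G$-equivariant homeomorphism when the left-hand side carries the skew-product action $g\cdot(\mathbf x,t)=(g\cdot\mathbf x,\,t+\mathcal C_\sigma(\mathbf x,g))$. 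In particular $\Phi_\sigma$ restricts to a $\Gamma$-equivariant homeomorphism $\Lambda_\Gamma^\pitchfork\times\R\to\widetilde{\mathcal K}_{\Gamma,\Theta}^{\mathbf s}$.

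Next I would identify $\Lambda_\Gamma^\pitchfork$ with $\partial_\infty\Gamma^{(2)}$. By property \ref{property:P2} each boundary map $\xi^\pm:\partial_\infty\Gamma\to\F^\pm$ is a continuous injective $\Gamma$-equivariant map out of the compact space $\partial_\infty\Gamma$, hence a topological embedding; the product $\xi^+\times\xi^-$ therefore embeds $\partial_\infty\Gamma\times\partial_\infty\Gamma$ into $\F^+\times\F^-$, and by transversality \ref{property:P3} its restriction to the open subset $\partial_\infty\Gamma^{(2)}$ lands in $\F^\pitchfork$, giving a $\Gamma$-equivariant homeomorphism $\xi^\pitchfork:\partial_\infty\Gamma^{(2)}\to\Lambda_\Gamma^\pitchfork$. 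Composing with $\Phi_\sigma$ yields a $\Gamma$-equivariant homeomorphism
\[
\Psi:\partial_\infty\Gamma^{(2)}\times\R\longrightarrow\widetilde{\mathcal K}_{\Gamma,\Theta}^{\mathbf s},\qquad (z,t)\longmapsto\sigma(\xi^\pitchfork(z))\cdot t,
\]
under which the $\Gamma$-action pulls back to $\gamma\cdot(z,t)=(\gamma\cdot z,\,t+\mathcal C_{\Gamma,\sigma}(z,\gamma))$, matching the refraction action \eqref{eq:refraction action} up to a harmless sign convention. Applying \cite[Cor.\ 5.3.3]{SAM24} to the admissible parameter $\mathbf s$ would then close the argument.

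The only delicate point I anticipate is that Sambarino's statement is originally phrased for a specific Busemann-Iwasawa cocycle rather than for the cocycle $\mathcal C_{\Gamma,\sigma}$ coming from our arbitrary section. As the discussion preceding the statement indicates, the period-function formula \eqref{eq:lLs} combined with Proposition \ref{prop basis of Jacobian class functions} shows that any two cocycles sharing the same periods on primitive loops are cohomologous, so changing $\sigma$ merely conjugates the skew-product action by a fiberwise translation that preserves properness and cocompactness; hence \cite[Cor.\ 5.3.3]{SAM24} does apply to $\mathcal C_{\Gamma,\sigma}$ in the form we need.
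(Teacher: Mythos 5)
Your argument is correct and is essentially the paper's own proof: both construct the $\Gamma$-equivariant identification $(z,t)\mapsto \phi^t(\sigma(\xi^\pitchfork(z)))$ of $\partial_\infty\Gamma^{(2)}\times\R$ with $\widetilde{\mathcal K}_{\Gamma,\Theta}^{\mathbf s}$ via a section $\sigma$ and then invoke \cite[Cor.~5.3.3]{SAM24}, with the applicability of Sambarino's result to the cocycle $\mathcal C_{\Gamma,\sigma}$ justified exactly as in the discussion preceding the proposition via the period computation \eqref{eq:lLs}. You simply spell out in more detail why the map is a homeomorphism onto its image.
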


\begin{proof}
Consider any section $\sigma: \F^{\pitchfork}\rightarrow \mathbb{L}_\Theta^{\mathbf s}$. The map
\begin{equation} \widetilde\Phi_\sigma:\map{\partial_{\infty}\Gamma^{(2)}\times\R}{\L_\Theta^{\mathbf s}}{(\mathbf x,s)}{\phi^s\circ\sigma\circ\xi^\pitchfork(\mathbf x)} \label{eq:embedding of refraction flow}
\end{equation}
is a $\Gamma$-equivariant Hölder-continuous topological embedding with image $\widetilde{\mathcal K}_{\Gamma,\Theta}^{\mathbf s}$, so the result follows from \cite[Corollary 5.3.3]{SAM24}.
\end{proof}
Note that the map $\widetilde{\Phi}_{\sigma}$ of \eqref{eq:embedding of refraction flow} satisfies $\widetilde{\Phi}_{\sigma}(\mathbf{x}, s+t)=\phi^{t}\circ \widetilde{\Phi}_{\sigma}(\mathbf{x}, s)$, so it induces a bi-Hölder homeomorphism
\begin{equation}
\Phi_{\sigma}:\mathfrak{R}_{\sigma}\to \mathcal K_{\Gamma,\Theta}^{\mathbf s} \label{eq:isomorphism refraction flow}
\end{equation}
conjugating the refraction flow with the quotient flow of $\phi^t$ on  $\mathcal K_{\Gamma,\Theta}^{\mathbf s}:=\Gamma\backslash\widetilde{\mathcal K}_{\Gamma,\Theta}^{\mathbf s}$.

\subsection{Proof of proper discontinuity} \label{sec:proofofThm3}

With these preparations we can finally formulate the following theorem, which represents the main result of this section.

\begin{thm} \label{thm: prop disc general case}
Let  $\Gamma< G$ be $\Theta$-Anosov and consider a $(\Gamma,\Theta)$-admissible parameter $\mathbf s\in\R^{\Theta}$. Then  $\Gamma$ acts properly discontinuously on $\tilde{\M}_{\Gamma,\Theta}^{\mathbf s}$ and cocompactly on  $\tilde{\cK}_{\Gamma,\Theta}^{\mathbf s}$. If $\Gamma$ is torsion-free, then the $\Gamma$-action on $\tilde{\M}_{\Gamma,\Theta}^{\mathbf s}$ is free.
\end{thm}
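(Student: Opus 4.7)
The cocompactness claim is essentially Proposition \ref{prop:cocompactness}, itself a consequence of Sambarino's cocompactness theorem for refraction flows \cite[Corollary 5.3.3]{SAM24} transported via the embedding \eqref{eq:embedding of refraction flow}. What remains is the proper discontinuity on $\tilde{\M}_{\Gamma,\Theta}^{\mathbf s}$ and, under the torsion-free hypothesis, the freeness of the $\Gamma$-action on $\tilde{\M}_{\Gamma,\Theta}^{\mathbf s}$. I will prove both by contradiction, using the hyperbolic dynamics of $\Gamma$ on $\partial_\infty\Gamma$ together with the asymptotic behavior of the period function $\ell_{\bbL^{\mathbf s}_\Theta}$ from \eqref{eq:lLs}.

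For proper discontinuity, suppose some compact $K\subset\tilde{\M}_{\Gamma,\Theta}^{\mathbf s}$ is met by infinitely many of its $\Gamma$-translates. After passing to subsequences, extract distinct $\gamma_n\in\Gamma$ and $\mu_n\in K$ with $\mu_n\to\mu$ and $\gamma_n\mu_n\to\mu^\ast$ in $K$; by Lemma \ref{lem diverging sequence hyperbolic group} one may also assume $\gamma_n\to\gamma_+$ and $\gamma_n^{-1}\to\gamma_-$ in $\partial_\infty\Gamma$, with $\gamma_+\neq\gamma_-$. Writing $p(\mu)=(\xbf^+,\xbf^-)$, the condition $p(\mu)\in\Omega_\Gamma$ applied at $z=\gamma_-$ splits into two cases; applying the Anosov convergence property \ref{property:P4} to $\gamma_n$ (and dually to $\gamma_n^{-1}$ for $\mu^\ast$) forces at least one coordinate of $p(\mu)$ to equal $\xi^\mp(\gamma_-)$. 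Fix now a real analytic section $\sigma:\F^\pitchfork\to\bbL_\Theta^{\mathbf s}$, so that the compactness of $K$ implies that the cocycle values $\mathcal C_\sigma(p(\mu_n),\gamma_n)$ remain bounded. On the other hand, the case analysis above places the sequence $p(\mu_n)$ into the asymptotic regime where the refraction cocycle dominates, so $(\Gamma,\Theta)$-admissibility of $\mathbf s$ combined with Sambarino's uniform growth estimate \cite[Lemma 5.3.2]{SAM24} (already invoked in Proposition \ref{prop:cocompactness}) forces $|\mathcal C_\sigma(p(\mu_n),\gamma_n)|\to\infty$, a contradiction.

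For freeness, let $\gamma\in\Gamma\setminus\{e\}$. Since $\Gamma$ is torsion-free, $\gamma$ has infinite order with distinct attracting and repelling fixed points $\gamma^\pm\in\partial_\infty\Gamma$. If $\gamma\cdot\mu=\mu$ for some $\mu\in\tilde{\M}_{\Gamma,\Theta}^{\mathbf s}$, then $p(\mu)=(\xbf^+,\xbf^-)\in\Omega_\Gamma$ is $\gamma$-fixed. Applying \ref{property:P4} to the iterates $\gamma^n$ and $\gamma^{-n}$ at $z=\gamma^\pm$ in the definition of $\Omega_\Gamma$, and using injectivity of $\xi^\pm$ together with $\gamma^+\neq\gamma^-$, a short case analysis forces $p(\mu)$ to be one of the two transverse limit pairs $(\xi^+(\gamma^+),\xi^-(\gamma^-))$ or $(\xi^+(\gamma^-),\xi^-(\gamma^+))$. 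In either case, since $\gamma\in\mathrm{Stab}_G(p(\mu))$ its action on the fiber $\bbL_\Theta^{\mathbf s}|_{p(\mu)}$ is a translation by $\ell_{\bbL_\Theta^{\mathbf s}}(p(\mu),\gamma)$; by \eqref{eq:lLs} together with Lemmas \ref{lem: determinant proximal element on a  flag manifold} and \ref{lem:wmalpha}, this translation equals $\pm w^{\mathbf s}_\Theta(\lambda(\gamma^{\pm 1}))$, which is nonzero by $(\Gamma,\Theta)$-admissibility of $\mathbf s$ since $\lambda(\gamma^{\pm 1})\in\mathcal L(\Gamma)\setminus\{0\}$. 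Hence $\gamma$ cannot fix $\mu$.

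The main obstacle is the base-dynamics step in the proper discontinuity argument: off the limit set $\Lambda_\Gamma^\pitchfork$ the Anosov convergence property alone is insufficient, so one must carefully chain \ref{property:P4} for $\gamma_n$ and $\gamma_n^{-1}$ to pin down $p(\mu)$ enough for Sambarino's sharp cocycle estimate to apply; this is the step where the projective Anosov argument from \cite{VolI} has to be properly adapted to the parameter $\mathbf s$ and the structure of the flow space $\bbL_\Theta^{\mathbf s}$.
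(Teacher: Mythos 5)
Your overall architecture matches the paper's: argue by contradiction, extract a subsequence with distinct boundary limits $\gamma_+\neq\gamma_-$, use the defining dichotomy of $\Omega_\Gamma$ at $z=\gamma_-$, and derive a contradiction between boundedness of a cocycle (forced by compactness) and its divergence (forced by admissibility). The cocompactness part is exactly the paper's Proposition \ref{prop:cocompactness}, and your freeness argument, while correct in outline, is unnecessary: proper discontinuity plus torsion-freeness already gives freeness, which is all the paper says.

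However, the central analytic step is not actually carried out, and the way you try to fill it does not work. First, the claim that \ref{property:P4} ``forces at least one coordinate of $p(\mu)$ to equal $\xi^\mp(\gamma_-)$'' is false: $\mu$ is an arbitrary point of a compact subset of $\tilde{\M}_{\Gamma,\Theta}^{\mathbf s}$, and failure of transversality to a limit flag is not equality with a limit flag; for a generic point of $\Omega_\Gamma$ neither coordinate lies in $\Lambda^\pm_\Gamma$. The correct use of the dichotomy is the opposite: at least one of $\xi^+(\gamma_-)\pitchfork\xbf^-$ or $\xbf^+\pitchfork\xi^-(\gamma_-)$ \emph{does} hold, and in whichever case holds the corresponding (multi)density component of $\gamma_n\cdot\mu_n$ is crushed to $0$, contradicting convergence in $\tilde{\M}_{\Gamma,\Theta}^{\mathbf s}$. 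Second, the divergence $|\mathcal C_\sigma(p(\mu_n),\gamma_n)|\to\infty$ cannot be obtained from Sambarino's \cite[Lemma 5.3.2]{SAM24}: that estimate controls the cocycle only along the limit set, i.e.\ after precomposition with $\xi^\pitchfork$, whereas here $p(\mu_n)$ ranges over $\Omega_\Gamma$, which is strictly larger than $\Lambda^\pitchfork_\Gamma$. Extending the growth estimate off the limit set is precisely the technical heart of the theorem; in the paper this is Lemma \ref{lem: properness criterion preparation} and Corollary \ref{cor: properness criterion multi-preparation} (showing $\mathcal C_\sigma(\mathbf x_k,\gamma_k)+\mathcal J^{\mathbf s}_\Theta(\gamma_k)$ stays bounded whenever $\mathbf x_k\to\mathbf x$ with $\mathbf x$ transverse to $\xi^+(\gamma_-)$, via a limiting argument with fundamental vector fields of the nilpotent radicals) combined with Proposition \ref{prop:Anosovescape} ($\mathcal J^{\mathbf s}_\Theta(\gamma_k)\to+\infty$, which uses admissibility and the compactness of the normalized limit cone). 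Neither ingredient appears in your write-up; you yourself flag this as ``the main obstacle'' without resolving it. A minor additional point: arranging $\gamma_+\neq\gamma_-$ is not given by Lemma \ref{lem diverging sequence hyperbolic group} (which \emph{assumes} distinct limits); one must first replace $\delta_k$ by $f\delta_k$ for a suitable $f\in\Gamma$ as in \cite[Lemma 2.11]{VolI} to separate the fixed points uniformly.
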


\begin{rem}\label{rem:lemmatimeinv} Theorem \ref{thm: prop disc general case} is a statement about $\tilde{\M}_{\Gamma,\Theta}^{\mathbf s}$ and $\tilde{\cK}_{\Gamma,\Theta}^{\mathbf s}$ as $\Gamma$-spaces, it does not involve the flow.  In particular, \eqref{eq:isosLs} implies that 
\bq
\tilde{\M}_{\Gamma,\Theta}^{\mathbf s}\simeq \tilde{\M}_{\Gamma,\iota_o(\Theta)}^{\iota(\mathbf s)}\quad \text{as $\Gamma$-manifolds.}\label{eq:MMbarasGspaces}
\eq
But, they are isomorphic as $\Gamma$-flow spaces if and only if $\iota(\mathbf{s})=\mathbf{s}$ and $\iota_{0}(\Theta)=\Theta,$ and therefore the flows are typically irreversible.   
\end{rem}
Our proof will crucially rely on the following property of Anosov subgroups, where we use the terminology from \eqref{eq:JTheta}  and Definition \ref{def:Benoistpositive}.
\begin{prop}\label{prop:Anosovescape}
Let $\Gamma< G$ be $\Theta$-Anosov, and $\mathbf s\in\R^{\Theta}\setminus\{0\}$ be $(\Gamma,\Theta)$-admissible. Then any sequence $(\gamma_k)$ in $\Gamma$ possessing distinct boundary limits $\gamma_+=\lim_{k\to\infty}\gamma_k\neq\gamma_-=\lim_{k\to\infty}\gamma_k^{-1}\in\partial_\infty\Gamma$ satisfies 
\[ \lim_{k\to\infty} \mathcal J_\Theta^\mathbf{s}(\gamma_k)=+\infty \quad \textrm{ and } \quad \lim_{k\to\infty} \mathcal J_{\iota_o(\Theta)}^\mathbf{s}(\gamma_k)=+\infty ~.\]
\end{prop}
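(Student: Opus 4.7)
The plan is to reduce the problem to two ingredients: first, the Anosov property forces $|\lambda(\gamma_k)| \to +\infty$; second, $(\Gamma,\Theta)$-admissibility of $\mathbf{s}$ gives a strictly positive lower bound for $w_\Theta^{\mathbf{s}}$ on the unit sphere of the limit cone, so that the values $\mathcal{J}_\Theta^{\mathbf s}(\gamma_k) = w_\Theta^{\mathbf s}(\lambda(\gamma_k))$ must grow linearly in $|\lambda(\gamma_k)|$.

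More concretely, I would first invoke Lemma \ref{lem diverging sequence hyperbolic group}(2) to conclude $|\gamma_k|_\infty \to +\infty$. Since $\mathbf s \neq 0$ the set $\Theta$ is non-empty, so picking any $\alpha \in \Theta$ the $\Theta$-Anosov hypothesis (Definition \ref{def:ThetaAnosov}) yields $\alpha(\lambda(\gamma_k)) = \mathcal{P}_\alpha(\gamma_k) \geq c|\gamma_k|_\infty - C \to +\infty$. As $\alpha$ is a fixed linear form on $\aL$, this forces $|\lambda(\gamma_k)| \to +\infty$ with respect to any norm on $\aL$. Set $\widehat X_k := \lambda(\gamma_k)/|\lambda(\gamma_k)|$. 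Because the Benoist limit cone $\mathcal L(\Gamma) \subset \aL^+$ is a closed cone containing each $\lambda(\gamma_k)$, the unit vectors $\widehat X_k$ lie in $\mathcal L(\Gamma)$.

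To conclude the first statement, I argue by subsequences. Given any subsequence of $(\gamma_k)$, compactness of the unit sphere in $\aL^+$ yields a further subsequence along which $\widehat X_k \to X_\infty$, a unit vector in the closed cone $\mathcal L(\Gamma)$. By $(\Gamma,\Theta)$-admissibility of $\mathbf s$ (Definition \ref{def:Benoistpositive}) one has $w_\Theta^{\mathbf s}(X_\infty) > 0$, and therefore by continuity of the linear form $w_\Theta^{\mathbf s}$,
\begin{equation*}
\mathcal J_\Theta^{\mathbf s}(\gamma_k) \;=\; w_\Theta^{\mathbf s}(\lambda(\gamma_k)) \;=\; |\lambda(\gamma_k)|\, w_\Theta^{\mathbf s}(\widehat X_k) \;\longrightarrow\; +\infty
\end{equation*}
along that subsequence. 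Since every subsequence admits such a further subsequence, the full sequence $\mathcal J_\Theta^{\mathbf s}(\gamma_k)$ tends to $+\infty$.

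For the second statement, I would apply the first conclusion to the sequence $(\gamma_k^{-1})$, which also satisfies the hypothesis with limits $\gamma_-$ and $\gamma_+$ interchanged. By \eqref{eq:FpfAnosovbarFpfAnosov} the subgroup $\Gamma$ is also $\iota_o(\Theta)$-Anosov, and by Remark \ref{rem:admissible parameters involution} the reindexed parameter $\iota(\mathbf s)$ is $(\Gamma,\iota_o(\Theta))$-admissible. The identity $\lambda(\gamma^{-1}) = \iota_o(\lambda(\gamma))$ from \eqref{eq:Jordanginverse}, together with Lemma \ref{lem:Jordanfcnopp}, gives the symmetry $\mathcal J_{\iota_o(\Theta)}^{\mathbf s}(\gamma_k) = \mathcal J_\Theta^{\iota(\mathbf s)}(\gamma_k^{-1})$ (equivalently, $\mathcal L(\Gamma)$ is $\iota_o$-invariant since inversion acts on Jordan projections by $\iota_o$). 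Applying the first conclusion to the Anosov sequence $(\gamma_k^{-1})$ with the admissible parameter $\iota(\mathbf s)$ then yields $\mathcal J_{\iota_o(\Theta)}^{\mathbf s}(\gamma_k) \to +\infty$. The only mild subtlety in the whole argument is the bookkeeping of this reindexing between $\R^\Theta$ and $\R^{\iota_o(\Theta)}$; no genuine obstacle arises.
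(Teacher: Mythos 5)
Your argument is correct and follows essentially the same route as the paper's proof: reduce the second limit to the first via Lemma \ref{lem:Jordanfcnopp}, \eqref{eq:FpfAnosovbarFpfAnosov} and Remark \ref{rem:admissible parameters involution}, then combine Lemma \ref{lem diverging sequence hyperbolic group}(2) with the Anosov lower bound to get $\lambda(\gamma_k)\to\infty$, and use admissibility together with compactness of the unit sphere of $\mathcal L(\Gamma)$ to conclude. Your subsequence-of-subsequences phrasing is just a repackaging of the paper's uniform positive lower bound for $w_\Theta^{\mathbf s}$ on that compact set.
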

\begin{proof}By Lemma \ref{lem:Jordanfcnopp}, \eqref{eq:FpfAnosovbarFpfAnosov},   and \eqref{eq:regularbarregular} it suffices to prove the statement on $\mathcal J_\Theta^\mathbf{s}$. Consider a restricted root decomposition and set $\beta=w_\Theta^\mathbf{s}$. Writing
$$
\beta(\lambda(\gamma))=\norm{\lambda(\gamma)}\beta\bigg(\frac{\lambda(\gamma)}{\norm{\lambda(\gamma)}}\bigg),
$$ 
for any $\gamma\in\Gamma$ with $\lambda(\gamma)\neq 0$ and any norm on $\aL$, the compactness of $\set{X\in\mathcal L(\Gamma)}{\norm{X}=1}$ and the $(\Gamma,\Theta)$-admissibility of $\mathbf s$  mean that it suffices to show that $\norm{\lambda(\gamma_k)}\to \infty$, i.e.,  $\lambda(\gamma_k)\to \infty$ in $\aL$, for any sequence $(\gamma_k)$ such as in the statement. This follows from Lemma \ref{lem Anosov implies growth Jordan projections}.
\end{proof}

The following lemma uses cocycles associated to a section (Definition \ref{def cocycle from a section}) and Jacobian class functions (Definition \ref{def:JacbobF}).

\begin{lem} \label{lem: properness criterion preparation}Suppose that $\Gamma$  is $\Theta$-Anosov. Let $\gamma_k\in\Gamma$ be a sequence which possesses distinct boundary limits $\gamma_+=\lim \gamma_k\neq\lim \gamma_k^{-1}=\gamma_-\in \partial_\infty\Gamma$. Let $s\in \R$ and consider a section $\sigma$ of $\mathscr D^s_{>0}\F^-_\bullet$. If $(\mathbf x_k)_{k\geq 0}$ is a sequence in $\F^-$  converging to an element $\mathbf x\in\cF^-$  transverse to $\xi^+(\gamma_-)\in\F^+$, then the sequence $\mathcal C_\sigma(\mathbf x_k,\gamma_k)+s\mathcal J_{\F^+}(\gamma_k)$ is bounded. 
\end{lem}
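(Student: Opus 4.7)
The idea is to recognize the quantity in the statement as the right-hand side of the identity from Corollary~\ref{cor:cocycle density bundle}, applied to a carefully chosen pair $(g,\mathbf{y})$ where all required transversalities are guaranteed to hold in the limit by (A2) and (A3) together with the hypothesis.

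First, by Lemma~\ref{lem diverging sequence hyperbolic group}(1) pass to a subsequence so that each $\gamma_k$ has infinite order and $\gamma_k^{\pm}\to\gamma_{\pm}$. The $\Theta$-Anosov property then implies that each $\gamma_k^{-1}$ is $\F^{\pitchfork}$-proximal with attracting fixed point on $\F^+$ given by $\xi^+(\gamma_k^-)\to \xi^+(\gamma_-)$. Fix a continuously varying family of bases $(Y_j^{(k)})$ of the nilpotent radical of $\mathrm{Lie}(\mathrm{Stab}_G(\xi^+(\gamma_k^-)))$ converging to a basis at the limit point $\xi^+(\gamma_-)$. The hypothesis $\mathbf{x}\pitchfork\xi^+(\gamma_-)$ combined with property (A3)(c) yields $\gamma_k\mathbf{x}_k\to\xi^-(\gamma_+)$; property (A2) and $\gamma_+\neq\gamma_-$ yield $\xi^-(\gamma_+)\pitchfork\xi^+(\gamma_-)$.

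Second, apply Corollary~\ref{cor:cocycle density bundle} to $g=\gamma_k^{-1}$ evaluated at the point $\gamma_k\mathbf{x}_k$. The required transversalities hold for $k$ large: namely $\gamma_k\mathbf{x}_k\pitchfork\xi^+(\gamma_k^-)$ in the limit from $\xi^-(\gamma_+)\pitchfork\xi^+(\gamma_-)$, and $\gamma_k^{-1}(\gamma_k\mathbf{x}_k)=\mathbf{x}_k\pitchfork\xi^+(\gamma_k^-)$ in the limit from the hypothesis. The Corollary then gives
\[
\mathcal{C}_\sigma(\gamma_k\mathbf{x}_k,\gamma_k^{-1})+s\mathcal{J}_{\F^+}(\gamma_k^{-1})
=\log\frac{\sigma_{\gamma_k\mathbf{x}_k}\!\bigl(\overline{Y_j^{(k)}}(\gamma_k\mathbf{x}_k)\bigr)}{\sigma_{\mathbf{x}_k}\!\bigl(\overline{Y_j^{(k)}}(\mathbf{x}_k)\bigr)},
\]
and the right-hand side converges to a finite positive number by continuity of $\sigma$, of the bases, and of the fundamental vector fields (both numerator and denominator have strictly positive limits thanks to the two limit transversalities). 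Hence the left-hand side is bounded.

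Third, combine the cocycle relation $\mathcal{C}_\sigma(\gamma_k\mathbf{x}_k,\gamma_k^{-1})=-\mathcal{C}_\sigma(\mathbf{x}_k,\gamma_k)$ with the opposition symmetry $\mathcal{J}_{\F^+}(\gamma_k^{-1})=\mathcal{J}_{\F^-}(\gamma_k)$ from Lemma~\ref{lem:Jordanfcnopp} to rewrite the bounded quantity as a bound on $\mathcal{C}_\sigma(\mathbf{x}_k,\gamma_k)$ modulo a multiple of the Jacobian class function of $\gamma_k$, and then use the identities relating $\mathcal{J}_{\F^+}$ and $\mathcal{J}_{\F^-}$ through the opposition involution between $\Theta$ and $\iota_o(\Theta)$ to obtain the form stated in the Lemma.

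The main technical obstacle I anticipate is the asymmetry of the hypothesis, which directly controls only the $\gamma_k^{-1}$-direction via $\xi^+(\gamma_-)$: coordinating the opposition-involution conventions (swapping $\F^+\leftrightarrow\F^-$ and inverting $\gamma_k$) carefully enough to land on $\mathcal{J}_{\F^+}(\gamma_k)$ with the correct sign, rather than on $\mathcal{J}_{\F^-}(\gamma_k)$, is the delicate bookkeeping step.
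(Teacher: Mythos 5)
Your steps (1)--(5) are sound: passing to infinite-order elements via Lemma~\ref{lem diverging sequence hyperbolic group}, applying Corollary~\ref{cor:cocycle density bundle} to $g=\gamma_k^{-1}$ (whose attracting fixed point in $\F^+$ is indeed $\xi^+(\gamma_k^-)$) at the point $\gamma_k\mathbf x_k$, checking the two limit transversalities, and then using the cocycle relation together with Lemma~\ref{lem:Jordanfcnopp} correctly yields that
\[
\mathcal C_\sigma(\mathbf x_k,\gamma_k)-s\,\mathcal J_{\F^-}(\gamma_k)
\]
is bounded. The problem is your step (6). The quantity you have bounded differs from the quantity in the statement by
\[
s\bigl(\mathcal J_{\F^+}(\gamma_k)+\mathcal J_{\F^-}(\gamma_k)\bigr),
\]
and there is no ``opposition-involution identity'' that makes this bounded: both class functions are nonnegative, $\mathcal J_{\F^\pm}$ are positive combinations of the fundamental weights $w_\alpha$ (resp.\ $w_{\iota_o(\alpha)}$) evaluated on $\lambda(\gamma_k)$, and by the $\Theta$-Anosov property each $w_\alpha(\lambda(\gamma_k))\to+\infty$ along your sequence (compare the proof of Proposition~\ref{prop:Anosovescape}). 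So for $s\neq 0$ the two quantities cannot both be bounded, and the conversion you defer to ``delicate bookkeeping'' is in fact impossible. This is a genuine gap, not a sign-chasing exercise.

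The paper avoids the inversion entirely: it applies Corollary~\ref{cor:cocycle density bundle} directly with $g=\gamma_k$ at the point $\mathbf x_k$, taking the fundamental vector fields of a convergent family of bases of $\fn_{\xi^+(\gamma_k^-)}$ (the same Lie-algebra data you use, but paired with $\gamma_k$ rather than $\gamma_k^{-1}$). The left-hand side of the corollary's identity is then literally $\mathcal C_\sigma(\mathbf x_k,\gamma_k)+s\mathcal J_{\F^+}(\gamma_k)$, and the right-hand side converges because $\mathbf x\pitchfork\xi^+(\gamma_-)$ (hypothesis) and $\gamma_k\mathbf x_k\to\xi^-(\gamma_+)\pitchfork\xi^+(\gamma_-)$ (property (A3)). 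If you want to salvage your route, you must redo the inverted application so that the Jacobian term produced is $\mathcal J_{\F^+}(\gamma_k)$ itself rather than $\mathcal J_{\F^-}(\gamma_k)$; as it stands, your argument proves a different statement.
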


\begin{proof}   Choose a basis $(X_{1,k},\dots,X_{n,k})$ of each nilpotent radical $\fn_{\xi^+(\gamma_k^-)}$ converging to a basis $(X_1,\dots,X_n)$ of $\fn_{\xi^+(\gamma_-)}$ as $k\to +\infty$. This is possible because we have
\bq
(\xi^+(\gamma_k^-),\xi^-(\gamma_k^+))\stackrel{k\to \infty}{\longrightarrow}(\xi^+(\gamma_-),\xi^-(\gamma_+))\label{eq:convxi}
\eq 
by Lemma \ref{lem diverging sequence hyperbolic group} and the continuity of the  boundary maps demanded in \ref{property:P2}, and the fact that the spaces $\fn_{\mathbf x^+}$ over $(\mathbf x^+,\mathbf x^-)\in\cF^\pitchfork$ form a continuous (even analytic) vector bundle. Since $\mathbf x_k\to\mathbf x$ as $k\to \infty$ and transversality is an open condition, the fundamental vector fields of $X_{1,k},\dots,X_{n,k}$ at $\mathbf x_k$ form a basis of $T_{\mathbf x_k}\F^-$ when $k$ is large enough.

It follows from Corollary \ref{cor:cocycle density bundle} that
\begin{equation} \mathcal C_\sigma(\mathbf x_k,\gamma_k)+s\mathcal J_{\F^+}(\gamma_k) = \log \frac{\sigma_{\mathbf x_k}\left(\overline{X_{1,k}}(\mathbf x_k),\dots,\overline{X_{n,k}}(\mathbf x_k) \right)}{\sigma_{\gamma_k\cdot\mathbf x_k}\left(\overline{X_{1,k}}(\gamma_k\cdot \mathbf x_k),\dots,\overline{X_{n,k}}(\gamma_k\cdot \mathbf x_k) \right)}~. \label{eq cocycle vs Jordan class function} 
\end{equation}
Note that by condition \ref{property:P4}(d), we have that $\gamma_k\cdot \mathbf x_k\to \xi^-(\gamma_+)$. Since both $\mathbf x$ and $\xi^-(\gamma_+)$ are transverse to $\xi^+(\gamma_-)$, the right hand side of \eqref{eq cocycle vs Jordan class function} converges to
\[  \log \frac{\sigma_{\mathbf x}\left(\overline{X_{1}}(\mathbf x),\dots,\overline{X_{n}}(\mathbf x) \right)}{\sigma_{\xi^-(\gamma_+)}\left(\overline{X_{1}}(\xi^-(\gamma_+)),\dots,\overline{X_{n}}(\xi^-(\gamma_+)) \right)}\in\R~, \] in particular it is bounded.
\end{proof}

\begin{cor} \label{cor: properness criterion multi-preparation}Suppose that $\Gamma$  is $\Theta$-Anosov. Let $\gamma_k\in\Gamma$ be a sequence which possesses distinct boundary limits $\gamma_+=\lim \gamma_k\neq\lim \gamma_k^{-1}=\gamma_-\in \partial_\infty\Gamma$. Let $\mathbf s\in \R^\Theta$ and consider a section $\sigma$ of $\mathscr D^{\mathbf s}_{>0}\F^-_\bullet$. If $(\mathbf x_k)_{k\geq 0}$ is a sequence in $\F^-$  converging to an element $\mathbf x\in\cF^-$  transverse to $\xi^+(\gamma_-)\in\F^+$, then the sequence $\mathcal C_\sigma(\mathbf x_k,\gamma_k)+\mathcal J^{\mathbf s}_\Theta(\gamma_k)$  is bounded.
\end{cor}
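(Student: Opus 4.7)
The plan is to reduce the multidensity case to the single-density case of Lemma \ref{lem: properness criterion preparation} applied separately on each simple flag manifold $\F_{\iota_o(\alpha)}$ with $\alpha\in\Theta$, using that both the Jacobian class function $\mathcal J^{\mathbf s}_\Theta$ and the multidensity bundle $\mathscr D^{\mathbf s}_{>0}\F^-_\bullet$ decompose along the grading $E^{\F^-}_\bullet=\bigoplus_\alpha\pi_\alpha^\ast T\F_{\iota_o(\alpha)}$, where $\pi_\alpha:=\Pi^{\F^-}_{\iota_o(\alpha)}:\F^-\to\F_{\iota_o(\alpha)}$ denotes the canonical $G$-equivariant projection.

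First I would pick, for each $\alpha\in\Theta$, an arbitrary smooth section $\sigma_\alpha$ of the positive density bundle $\mathscr D^{s_\alpha}_{>0}\F_{\iota_o(\alpha)}$, and form the product section
\[ \sigma^{\mathrm{prod}}(\mathbf x):=\prod_{\alpha\in\Theta}\sigma_\alpha\big(\pi_\alpha(\mathbf x)\big),\qquad \mathbf x\in\F^-,\]
of $\mathscr D^{\mathbf s}_{>0}\F^-_\bullet$ via the construction of Example \ref{example product of densities gives a multidensity}. Using that each $\pi_\alpha$ is $G$-equivariant and that the $\R$-action on the multidensity bundle corresponds to multiplication of values in each factor, a direct computation (writing $g\cdot\sigma_\alpha(\pi_\alpha(\mathbf x))=e^{\mathcal C_{\sigma_\alpha}(\pi_\alpha(\mathbf x),g)}\sigma_\alpha(\pi_\alpha(g\cdot\mathbf x))$ and taking products) gives the additive decomposition
\[ \mathcal C_{\sigma^{\mathrm{prod}}}(\mathbf x,g)=\sum_{\alpha\in\Theta}\mathcal C_{\sigma_\alpha}\big(\pi_\alpha(\mathbf x),g\big)\qquad \forall\,(\mathbf x,g)\in\F^-\times G.\]

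Next I would apply Lemma \ref{lem: properness criterion preparation} to each triple $\big(\F_\alpha,\F_{\iota_o(\alpha)},\sigma_\alpha\big)$. Since $\Gamma$ is $\Theta$-Anosov, it is in particular $\{\alpha\}$-Anosov with boundary maps $\Pi^{\F^+}_\alpha\circ\xi^+$ and $\pi_\alpha\circ\xi^-$; continuity of $\pi_\alpha$ yields $\pi_\alpha(\mathbf x_k)\to \pi_\alpha(\mathbf x)$, and Corollary \ref{cor domination and opposition} transports the transversality $\mathbf x\pitchfork\xi^+(\gamma_-)$ in $\Fpf$ to the transversality $\pi_\alpha(\mathbf x)\pitchfork(\Pi^{\F^+}_\alpha\xi^+)(\gamma_-)$ in $\F_\alpha^\pitchfork$. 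The lemma then gives, for each $\alpha\in\Theta$, that $\mathcal C_{\sigma_\alpha}(\pi_\alpha(\mathbf x_k),\gamma_k)+s_\alpha\mathcal J_{\F_\alpha}(\gamma_k)$ is a bounded sequence. Summing over $\alpha$ and invoking the decomposition $\mathcal J^{\mathbf s}_\Theta=\sum_{\alpha\in\Theta}s_\alpha\mathcal J_{\F_\alpha}$ from \eqref{eq:JTheta}, I conclude that
\[ \mathcal C_{\sigma^{\mathrm{prod}}}(\mathbf x_k,\gamma_k)+\mathcal J^{\mathbf s}_\Theta(\gamma_k)\]
is bounded.

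Finally, the passage from $\sigma^{\mathrm{prod}}$ to the given section $\sigma$ is handled by Lemma \ref{lem bound difference cocycle over compact space}: both are sections of the same $G$-equivariant principal $\R$-bundle $\mathscr D^{\mathbf s}_{>0}\F^-_\bullet$ over the compact base $\F^-$, so their cocycles differ by a uniformly bounded function on $\F^-\times G$. This upgrades the bound above to the desired bound on $\mathcal C_\sigma(\mathbf x_k,\gamma_k)+\mathcal J^{\mathbf s}_\Theta(\gamma_k)$. The only substantive step is the cocycle decomposition in the first paragraph; everything else is a direct assembly of already-established componentwise estimates with compactness arguments.
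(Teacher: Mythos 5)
Your proof is correct and follows essentially the same route as the paper's: form a product section from single-density sections on the simple flag manifolds, use the additive decomposition of the cocycle together with $\mathcal J^{\mathbf s}_\Theta=\sum_\alpha s_\alpha\mathcal J_{\F_\alpha}$ to apply Lemma \ref{lem: properness criterion preparation} componentwise (transversality and convergence being preserved by the equivariant projections), and then pass to the given section $\sigma$ via Lemma \ref{lem bound difference cocycle over compact space}. The only difference is that you spell out the intermediate steps that the paper leaves implicit.
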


\begin{proof}
Consider a section $\sigma^\alpha$ of  $\mathscr D^{s_\alpha}_{>0}\F^-_\alpha$ for each $\alpha\in \Theta$, and let $\sigma'=\prod_{\alpha\in\Theta} (\Pi^{\F^-}_\alpha)^\ast \sigma^\alpha\in \mathscr D^{\mathbf s}_{>0}\F^-_\bullet$. According to Lemma \ref{lem bound difference cocycle over compact space}, it is sufficient to show that the sequence $\mathcal C_{\sigma'}(\mathbf x_k,\gamma_k)+\mathcal J^{\mathbf s}_\Theta(\gamma_k)$ is bounded. The claim follows from Lemma \ref{lem: properness criterion preparation}, the fact that projections onto simple flag manifolds are continuous and preserve transversality, and the equality 
\[ \mathcal C_{\sigma'}(\mathbf y,g)=\sum_{\alpha\in\Theta}\mathcal C_{\sigma^\alpha}(\Pi^{\F^-}_\alpha(\mathbf y),g) \quad \forall\, (\mathbf y,g)\in \F^-\times G.\vspace*{-1em}\]
\end{proof}

The following Lemma is the second crucial ingredient to our proof of Theorem \ref{thm: prop disc general case} besides Proposition \ref{prop:Anosovescape}.

\begin{lem} \label{lem: properness criterion general case}Suppose that $\Gamma$  is $\Theta$-Anosov. Let $\gamma_k\in\Gamma$ be a sequence which possesses distinct boundary limits $\gamma_+=\lim \gamma_k\neq\lim \gamma_k^{-1}=\gamma_-\in \partial_\infty\Gamma$. Let $\mathbf s\in\R^{\Theta}$ and consider a  converging sequence $\mathbf x_k\to\mathbf x\in\F^-$ with $\xi^+(\gamma_-)\transverse\mathbf x$ as well as a converging sequence of lifts $\nu_k\to\nu\in {\mathscr D}^{\mathbf s}_{>0}\cF^-_\bullet$ in the corresponding fibers.

 Then, if $\lim_{k\to\infty} \mathcal J_\Theta^\mathbf{s}(\gamma_k)=+\infty$, we have $\gamma_k\cdot \nu_k\to 0$ in ${\mathscr D}^{\mathbf s}\cF^-_\bullet$ as $k\to \infty$.
\end{lem}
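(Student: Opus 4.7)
The plan is to reduce the statement to the scalar estimate of Corollary \ref{cor: properness criterion multi-preparation} by trivializing everything against a fixed section of the positive multidensity bundle.

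First, fix any continuous (e.g.\ real analytic) section $\sigma$ of $\mathscr D^{\mathbf s}_{>0}\F^-_\bullet$, and use the correspondence between the principal $\R$-bundle $\mathscr D^{\mathbf s}_{>0}\F^-_\bullet$ and the oriented vector line bundle $\mathscr D^{\mathbf s}\F^-_\bullet$ from Section \ref{sec:constraff} to write, for each $k$,
\[ \nu_k = e^{t_k}\,\sigma(\mathbf x_k) \]
for a uniquely determined $t_k\in\R$. Since $\sigma(\mathbf x_k)\to \sigma(\mathbf x)$ is a nonzero element of the fiber over $\mathbf x$, the hypothesis $\nu_k\to\nu$ forces $t_k\to t_\infty$ for some $t_\infty\in\R$.

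Next, apply the cocycle relation of Definition \ref{def cocycle from a section}: in the vector line bundle we have $\gamma_k\cdot\sigma(\mathbf x_k)=e^{\mathcal C_\sigma(\mathbf x_k,\gamma_k)}\sigma(\gamma_k\cdot\mathbf x_k)$, so that the $G$-equivariance and fiberwise linearity give
\[ \gamma_k\cdot\nu_k=e^{\,t_k+\mathcal C_\sigma(\mathbf x_k,\gamma_k)}\,\sigma(\gamma_k\cdot\mathbf x_k). \]
It suffices therefore to show that (i) the scalar in the exponent tends to $-\infty$, and (ii) the ambient vector $\sigma(\gamma_k\cdot\mathbf x_k)$ stays bounded.

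For (ii), after passing to a subsequence, Lemma \ref{lem diverging sequence hyperbolic group} combined with property \ref{property:P4}(c) (applied using the transversality hypothesis $\xi^+(\gamma_-)\transverse\mathbf x$ and continuity) yields $\gamma_k\cdot\mathbf x_k\to \xi^-(\gamma_+)$, hence $\sigma(\gamma_k\cdot\mathbf x_k)\to \sigma(\xi^-(\gamma_+))$, a bounded quantity. For (i), write
\[ t_k+\mathcal C_\sigma(\mathbf x_k,\gamma_k)= \bigl(t_k\bigr)\;+\;\bigl(\mathcal C_\sigma(\mathbf x_k,\gamma_k)+\mathcal J_\Theta^{\mathbf s}(\gamma_k)\bigr)\;-\;\mathcal J_\Theta^{\mathbf s}(\gamma_k). \]
The first summand converges to $t_\infty\in\R$, the middle summand is bounded by Corollary \ref{cor: properness criterion multi-preparation} (whose transversality hypothesis matches ours), and the last summand tends to $+\infty$ by assumption. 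Thus the exponent tends to $-\infty$, so $\gamma_k\cdot\nu_k\to 0$ along the chosen subsequence. Since the same argument applies to any subsequence of $(\gamma_k)$, the full sequence $\gamma_k\cdot\nu_k$ converges to $0$ in $\mathscr D^{\mathbf s}\F^-_\bullet$.

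The only non-routine step is making sure the cocycle/Jacobian combination stays controlled while the Jacobian itself diverges, but this is precisely what Corollary \ref{cor: properness criterion multi-preparation} provides; everything else is bookkeeping in the chosen trivialization.
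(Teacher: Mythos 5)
Your proposal is correct and follows essentially the same route as the paper's proof: trivialize against a fixed section, use the cocycle relation to isolate the scalar $t_k+\mathcal C_\sigma(\mathbf x_k,\gamma_k)$, control the base point via \ref{property:P4} and the exponent via Corollary \ref{cor: properness criterion multi-preparation} together with the divergence of $\mathcal J_\Theta^{\mathbf s}(\gamma_k)$. The only (harmless) additions are your explicit three-term decomposition of the exponent and the subsequence bookkeeping, which the paper leaves implicit.
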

\begin{proof}
Let us fix a section $\sigma$ of ${\mathscr D}^{\mathbf s}_{>0}\cF^-_\bullet$, and write $\nu_k=e^{t_k}\sigma_{\mathbf x_k}$ and $\nu=e^t\sigma_{\mathbf x}$, so that the convergence $\nu_k\to \nu\in{\mathscr D}^{\mathbf s}_{>0}\cF^-_\bullet$ is encoded in $\mathbf x_k\to\mathbf x\in\F^-$ and $t_k\to t\in\R$. We compute
\begin{align*}
\gamma_k\cdot \nu_k &= e^{t_k} \gamma_k\cdot \sigma_{\mathbf x_k}\\
&= e^{t_k+\mathcal C_{\sigma}(\mathbf x_k,\gamma_k)}\sigma_{\gamma_k\cdot\mathbf x_k}~,
\end{align*}
but $\sigma_{\gamma_k\cdot\mathbf x_k}\to \sigma_{\xi^-(\gamma_+)}$ because $\xi^+(\gamma_-)\pitchfork \mathbf x$, and $t_k+\mathcal C_{\sigma}(\mathbf x_k,\gamma_k)\to -\infty$ because of Corollary \ref{cor: properness criterion multi-preparation}, so $\gamma_k\cdot\nu_k\to 0\in{\mathscr D}^{\mathbf s}\cF^-_\bullet$.
\end{proof}
\begin{rem}The conclusion of Lemma \ref{lem: properness criterion general case} implies that  $\gamma_k\cdot \nu_k\to \infty$ in ${\mathscr D}^{\mathbf s}_{>0}\cF^-_\bullet$ as $k\to \infty$, in the sense that it leaves all compact subsets of the positive multidensity bundle ${\mathscr D}^{\mathbf s}_{>0}\cF^-_\bullet$.
\end{rem}

Finally we are in the position to prove Theorem \ref{thm: prop disc general case}.

\begin{proof}[Proof of Theorem \ref{thm: prop disc general case}]
The cocompactness of the action on $\widetilde{\mathcal K}_{\Gamma,\Theta}^{\mathbf s}$ comes from Proposition \ref{prop:cocompactness}, and any properly discontinuous action of a torsion-free group is free, so all that is left to prove is the proper discontinuity of the $\Gamma$-action on $\tilde{\M}_{\Gamma,\Theta}^{\mathbf s}$.

By way of contradiction, suppose there exists  $(\nu^+_k,\nu^-_k)\in \tilde{\M}_{\Gamma,\Theta}^{\mathbf s}$ such that
$(\nu^+_k,\nu^-_k)\to (\nu^+,\nu^-) \in \tilde{\M}_{\Gamma,\Theta}^{\mathbf s}$ and $\delta_{k}\rightarrow \infty$ in $\Gamma$ with $\delta_{k}\cdot \nu_k\to (m^+,m^-)\in\tilde{\M}_{\Gamma,\Theta}^{\mathbf s}$.  By \cite[Lemma 2.11]{VolI}, there exists $r>0$,  $f\in \Gamma$ such that by passing to a   subsequence we get $d_\infty((f\delta_k)^+,(f\delta_k)^-)\geq r$. Set $\gamma_k:=f\delta_k$ and $\mu^\pm:=f\cdot m^\pm$, so that  $d_\infty(\gamma_k^+,\gamma_k^-)\geq r$ and $\gamma_k\cdot(\nu^+_k,\nu^-_k)\to (\mu^+,\mu^-)\in\tilde{\M}_{\Gamma,\Theta}^{\mathbf s}$. 

Now, extract a further subsequence so that $\gamma_{k}$  converges to $\gamma_+\in \partial_{\infty}\Gamma$ and $\gamma_{k}^{-1}$  converges to $\gamma_-\in \partial_{\infty}\Gamma$ (note that $\gamma_+\neq \gamma_-$), and denote by $(\mathbf x^+,\mathbf x^-)\in\Fpf$ the base point of $ (\nu^+,\nu^-)$.  By virtue of $ (\nu^+,\nu^-) \in \tilde{\M}_{\Gamma,\Theta}^{\mathbf s}$, either $ \xi^+(\gamma_-)\pitchfork \mathbf x^-$ or $ \mathbf x^+\pitchfork\xi^-(\gamma_{-})$.  The first case $ \xi^+(\gamma_-)\pitchfork \mathbf x^-$ implies via Lemma \ref{lem: properness criterion general case} and Proposition \ref{prop:Anosovescape} that $\gamma_{k}\cdot \nu^-_{k}\rightarrow \infty$  in ${\mathscr D}^{\mathbf s}_{>0}\F^-_\bullet$, which is absurd because of the assumption $\gamma_{k}\cdot(\nu^+_k,\nu^-_k)\rightarrow (\mu^+,\mu^-)\in \tilde{\M}_{\Gamma,\Theta}^{\mathbf s}$.
In the second case $ \mathbf x^+\pitchfork\xi^-(\gamma_{-})$ we can repeat the same argument with $(\Theta,\mathbf s)$ replaced by $(\iota_o(\Theta),\iota(\mathbf s))$, as is justified by \eqref{eq:FpfAnosovbarFpfAnosov}, \eqref{eq:regularbarregular}, and \eqref{eq:MMbarasGspaces}. This leads to the analogous contradiction, and therefore the $\Gamma$-action on $\tilde{\M}_{\Gamma,\Theta}^{\mathbf s}$ is properly discontinuous. 
\end{proof}
\begin{rem}\label{rem:beyondAnosov}
We used properties of $\Theta$-Anosov subgroups in two different ways when proving Theorem \ref{thm: prop disc general case}: to give a definition of the proper discontinuity domain (Definition \ref{def prop disc domain} involves the limit maps) and to prove proper discontinuity. We  expect that Theorem \ref{thm: prop disc general case} is still valid for a larger class of discrete subgroups, provided they have a good notion of limit set in $\cF^\pm$ and growth properties of Jordan projections such as in Lemma \ref{lem Anosov implies growth Jordan projections}, as is the case for $\Theta$-transverse subgroups discussed in the introduction.
\end{rem}

\section{Dynamics on the quotient manifold}\label{sec:dynamics}

Consider opposite flag manifolds $\F^+,\F^-$ associated to subsets $\Theta,\iota_o(\Theta)\subset\Delta$ respectively, $\Fpf=\F^+\trtimes\F^-$ the transverse flag space, $\Gamma< G$ a torsion-free $\Theta$-Anosov subgroup, and  $\mathbf s\in\R^{\Theta}$ a $(\Gamma,\Theta)$-admissible parameter. Then we know from Theorem \ref{thm: prop disc general case} that $\Gamma$ acts properly discontinuously and freely on the $\phi^t$-invariant open set $\tilde{\M}_{\Gamma,\Theta}^{\mathbf s}\subset \bbL^{\mathbf s}_\Theta$ and cocompactly on the $\phi^t$-invariant closed set $\tilde{\cK}_{\Gamma,\Theta}^{\mathbf s}\subset \tilde{\M}_{\Gamma,\Theta}^{\mathbf s}$. Since the $\Gamma$-actions on these sets commute with the flow $\phi^t$, the quotient manifold
\[
\M_{\Gamma,\Theta}^{\mathbf s}:=\Gamma\backslash \tilde{\M}_{\Gamma,\Theta}^{\mathbf s}
\]
inherits a complete analytic flow, which we shall again denote by $\phi^t$. It preserves the compact set
\[
\cK_{\Gamma,\Theta}^{\mathbf s}:=\Gamma\backslash\tilde{\cK}_{\Gamma,\Theta}^{\mathbf s}\subset \M_{\Gamma,\Theta}^{\mathbf s},
\]
which we call the \emph{basic set}. This terminology will be justified shortly.

\subsection{Simplified notations}

As before, elements of the flow space $\bbL^{\mathbf s}_\Theta$ will be denoted by
\[ (\mu^+,\mu^-)\in \bbL^{\mathbf s}_\Theta\subset \mathscr D^{\mathbf s}_{>0}\cF_\bullet^+ \boxplus \mathscr D^{\mathbf s}_{>0}\cF_\bullet^-\,,\]
and their projections on the base space $\Fpf$ will be written as
\[ \big([\mu^+],[\mu^-]\big)\in \Fpf=\F^+\trtimes\F^-\subset \F^+\times\F^-\,.\]
 To further ease the notation, we write from now on in this section
\begin{align}\begin{split}
\L:=\L_{\Theta}^{\mathbf s},\qquad 
\tilde{\M}:=\tilde{\M}_{\Gamma,\Theta}^{\mathbf s},\qquad 
\tilde{\cK}:=\tilde{\cK}_{\Gamma,\Theta}^{\mathbf s},\qquad
\M:=\M_{\Gamma,\Theta}^{\mathbf s},\qquad
\cK:=\cK_{\Gamma,\Theta}^{\mathbf s}.\label{eq:simplifiednotation1}\end{split}
\end{align}

\subsection{Topological dynamics}
Let us now describe the topological dynamics of the flow $\phi^t$ on $\M$. The following result and its proof are analogous to the projective Anosov case considered in \cite[Sec.~3.5]{VolI}. There is some difference in the notation, so we repeat the arguments here for convenience. 

\begin{thm}[{C.f.\ \cite[Thm.~2]{VolI}}] \label{thm - trapped is non wandering is closure of periodic points is K general case}
The following subsets of $\M$ coincide:
\begin{itemize}
\item The  basic set $\cK$,
\item The non-wandering set $\mathcal{NW}(\phi^t)$ of the flow $\phi^t$,
\item The closure of the set $\mathrm{Per}(\phi^t)$ of periodic points of the flow $\phi^t$,
\item The trapped set ${\mathcal T}(\phi^t)$ of the flow $\phi^t$,
\item The set $\overline{\bigcap_{t\in\R}\phi^t(U)}$ for any relatively compact open subset $U\subset \M$ containing $\cK$.
\end{itemize}
Moreover, the restriction of the flow $\phi^t$ to $\mathcal{K}$ is topologically transitive.
\end{thm}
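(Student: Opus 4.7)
My plan is to close the cycle of inclusions
$\cK\subseteq\overline{\mathrm{Per}(\phi^t)}\subseteq\mathcal{NW}(\phi^t)\subseteq\mathcal T(\phi^t)\subseteq\cK$, and then deduce the characterization via a relatively compact neighborhood. First (the easy direction), I invoke the bi-H\"older conjugacy $\Phi_\sigma$ of \eqref{eq:isomorphism refraction flow} between $\phi^t|_\cK$ and Sambarino's refraction flow $\psi_\sigma^t$. Primitive periodic orbits of $\psi_\sigma^t$ are in bijection with primitive infinite-order conjugacy classes in $\Gamma$, and the endpoint-pair map $\gamma\mapsto(\gamma^-,\gamma^+)$ has dense image in $\partial_\infty\Gamma^{(2)}$ by standard hyperbolic-group theory. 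This yields simultaneously $\cK\subseteq\overline{\mathrm{Per}(\phi^t)}$ and the topological transitivity of $\phi^t|_\cK$. The containments $\overline{\mathrm{Per}(\phi^t)}\subseteq\mathcal{NW}(\phi^t)$ and $\cK\subseteq\mathcal T(\phi^t)$ are standard (the second because $\cK$ is compact and flow-invariant).

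The heart of the argument is the reverse inclusion $\mathcal{NW}(\phi^t)\cup\mathcal T(\phi^t)\subseteq\cK$. Let $[\tilde x]\in\M$ lie in either set, with lift $\tilde x\in\widetilde\M$. Using respectively the non-wandering definition or the relative compactness of the orbit, I extract sequences $\tilde x_n\to\tilde x$, $t_n\to+\infty$, and $\gamma_n\in\Gamma$ with $\gamma_n\phi^{t_n}(\tilde x_n)\to\tilde y\in\widetilde\M$ (where $\tilde y=\tilde x$ in the non-wandering case, and $\tilde x_n=\tilde x$ in the trapped case). Because $\phi^{t_n}$ translates the $\R$-fiber over $p(\tilde x_n)$ while $t_n\to+\infty$, the sequence $\gamma_n$ must be unbounded in $\Gamma$; after passing to a subsequence and premultiplying by a fixed element as in \cite[Lemma 2.11]{VolI}, we may assume $\gamma_n\to\gamma^+_\infty$ and $\gamma_n^{-1}\to\gamma^-_\infty$ are distinct in $\partial_\infty\Gamma$. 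Writing $p(\tilde x)=(\mathbf x^+,\mathbf x^-)\in\Omega_\Gamma$, the defining condition of $\Omega_\Gamma$ at $z=\gamma^-_\infty$ combined with the dynamics \ref{property:P4} forces at least one component of the limit $p(\tilde y)$ to equal $\xi^\pm(\gamma^+_\infty)\in\Lambda^\pm_\Gamma$. Symmetry of the non-wandering set in time, together with the duality $\L^{\mathbf s}_\Theta\simeq\overline{\L^{\iota(\mathbf s)}_{\iota_o(\Theta)}}$ from Remark \ref{rem:lemmatimeinv} and \eqref{eq:FpfAnosovbarFpfAnosov}--\eqref{eq:MMbarasGspaces}, lets me repeat the analysis with $t_n\to-\infty$ (in the trapped case) or with the opposite transversality clause of $\Omega_\Gamma$ (in the non-wandering case), pinning down the remaining coordinate and hence $p(\tilde x)\in\Lambda^\pitchfork_\Gamma$. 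Thus $x\in\cK$, and in particular $\mathcal{NW}(\phi^t)\subseteq\mathcal T(\phi^t)$ follows a posteriori.

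Finally, for any relatively compact open $U\supset\cK$: flow-invariance of $\cK$ gives $\cK\subseteq\bigcap_t\phi^t(U)$, and any point of $\bigcap_t\phi^t(U)$ has orbit contained in the compact set $\overline U$, hence lies in $\mathcal T(\phi^t)=\cK$; passing to closures yields the desired equality. The main obstacle throughout is the trapped case of the central inclusion: naively the forward and backward accumulation sequences only give information about $p(\tilde y^\pm)$ rather than $p(\tilde x)$ itself. To transfer that information to $p(\tilde x)$, I must exchange the roles of $\gamma_n$ and $\gamma_n^{-1}$ by means of the \emph{uniform} convergence on compact sets in \ref{property:P4}, and combine a diagonal extraction with the transversality clause of $\Omega_\Gamma$ to rule out the degenerate case where only a single coordinate of $p(\tilde x)$ is determined.
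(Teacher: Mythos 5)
Your overall architecture matches the paper's: lift to $\widetilde\M$, extract a divergent sequence $\gamma_n\in\Gamma$ with distinct boundary limits via \cite[Lemma 2.11]{VolI}, and use the contraction statement of Lemma \ref{lem: properness criterion general case} together with the ``or'' clause defining $\Omega_\Gamma$ and the convergence dynamics \ref{property:P4} to force base points into the limit sets; the treatment of $\overline{\bigcap_t\phi^t(U)}$ and of the trapped set by running the escape argument in both time directions is also the paper's route. Two points need repair. First, topological transitivity does \emph{not} follow from density of periodic orbits (two disjoint closed orbits have dense periodic points in their union without transitivity): you need the separate fact that some pair $(a,b)\in\partial_\infty\Gamma^{(2)}$ has a dense $\Gamma$-orbit (\cite[8.2.H]{Gr}), which combined with openness of the bundle projection $p$ produces a dense $\phi^t$-orbit in $\cK$.

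Second, the central step is under-specified exactly where the work lies. The sharp statement (the analogue of \cite[Lemma 3.10]{VolI}) is asymmetric: if $x_k\to x$, $t_k\to+\infty$ and $\phi^{t_k}(x_k)\to y$, then the $+$-component of the \emph{source} $x$ lies in $\Lambda^+_\Gamma$ \emph{and} the $-$-component of the \emph{target} $y$ lies in $\Lambda^-_\Gamma$. One does not get to choose which clause of the definition of $\Omega_\Gamma$ to invoke; rather, the clause $\xi^+(\gamma_-)\pitchfork[\mu^-]$ is \emph{excluded} because Lemma \ref{lem: properness criterion general case} would force $\gamma_k\mu_k^-\to 0$ while $\gamma_k\mu_k^-=e^{t_k}f\nu_k^-$ escapes to infinity; membership of $(\mu^+,\mu^-)$ in $\widetilde\M$ then forces $[\mu^+]\pitchfork\xi^-(\gamma_-)$, whence $f[\nu^+]=\xi^+(\gamma_+)$ by \ref{property:P4}, and this new transversality in turn feeds \ref{property:P4}(d) to give $[\mu^-]=\xi^-(\gamma_-)$. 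With this two-sided conclusion the non-wandering case ($y=x$) closes in a single application, and the trapped case follows from one $\omega$-limit and one $\alpha$-limit point, with no appeal to time-reversal duality or to symmetry of $\mathcal{NW}$. Your ``at least one component of $p(\tilde y)$'' version is too weak to close either case without making this bootstrap explicit.
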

The proof of Theorem \ref{thm - trapped is non wandering is closure of periodic points is K general case} is given below after preparing some auxiliary lemmas closely resembling those in \cite[Sec.~3.5]{VolI}.

\begin{lem}[{C.f.\ \cite[Lemma 3.10]{VolI}}] \label{lem topological dynamics general case} Consider sequences $x_k\in \M$ and $t_k\to+\infty$ and points $x,y\in\M$ such that $x_k\to x\in \M$ and $\phi^{t_k}(x_k)\to y\in \M$. Let $(\nu^+,\nu^-)\in\tilde{\cM}$ (resp. $(\mu^+,\mu^-)\in\tilde{\cM}$) be a lift of $x$ (resp.\ of $y\in\M$). Then $[\nu^+]\in \Lambda^+_\Gamma$ and $[\mu^-]\in \Lambda^-_\Gamma$.
\end{lem}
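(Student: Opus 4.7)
Lift $x_k$ to a sequence $(\nu^+_k,\nu^-_k)\in\tilde{\mathcal M}$ converging to the chosen lift $(\nu^+,\nu^-)$ of $x$. Since $\phi^{t_k}(x_k)\to y$ and $\Gamma$ acts properly discontinuously on $\tilde{\mathcal M}$, there exist $\gamma_k\in\Gamma$ with
\[
\gamma_k\cdot\phi^{t_k}(\nu^+_k,\nu^-_k)=\bigl(e^{-t_k}\gamma_k\cdot\nu^+_k,\,e^{t_k}\gamma_k\cdot\nu^-_k\bigr)\longrightarrow(\mu^+,\mu^-).
\]
I first show that $\gamma_k\to\infty$ in $\Gamma$: otherwise a convergent subsequence $\gamma_k\to\gamma\in\Gamma$ would force $\gamma\cdot(e^{-t_k}\nu^+_k,e^{t_k}\nu^-_k)$ to converge to a positive multidensity, which is impossible because $e^{t_k}\gamma\cdot\nu^-_k$ escapes every compact subset of $\mathscr D^{\mathbf s}_{>0}\F^-_\bullet$.

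Next, extract a subsequence along which $\gamma_k\to\gamma_+$ and $\gamma_k^{-1}\to\gamma_-$ in the Gromov compactification $\Gamma\cup\partial_\infty\Gamma$. After replacing $\gamma_k$ by $f\gamma_k$ for a suitable $f\in\Gamma$ as in \cite[Lemma~2.11]{VolI} (which does not alter the hypothesis or conclusion, since $(\mu^+,\mu^-)$ is replaced by another lift $f\cdot(\mu^+,\mu^-)$ of $f\cdot y$ whose base components lie in the same $\Gamma$-orbits as the original ones), I may assume $\gamma_+\neq\gamma_-$ in $\partial_\infty\Gamma$. By Lemma~\ref{lem diverging sequence hyperbolic group}(1), the $\gamma_k$ are eventually of infinite order with $\gamma_k^\pm\to\gamma_\pm$.

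At the base of the bundle, the convergence becomes $\gamma_k\cdot([\nu^+_k],[\nu^-_k])\to([\mu^+],[\mu^-])$ in $\F^\pitchfork$, with both endpoints lying in $\Omega_\Gamma$. I now do case analysis using \ref{property:P4}. The definition of $\Omega_\Gamma$ applied to $z=\gamma_-$ at $([\nu^+],[\nu^-])$ and to $z=\gamma_+$ at $([\mu^+],[\mu^-])$ yields transversality alternatives. Combining \ref{property:P4}(a)--(d) with the convergences $\gamma_k^{\pm 1}\cdot[\cdot]$ lets me deduce, in the ``forward'' branch, that $[\mu^-]=\xi^-(\gamma_+)\in\Lambda^-_\Gamma$ (from $\xi^+(\gamma_-)\pitchfork[\nu^-]$ via \ref{property:P4}(c)) and that $[\nu^+]=\xi^+(\gamma_-)\in\Lambda^+_\Gamma$ (from $[\mu^+]\pitchfork\xi^-(\gamma_+)$ via \ref{property:P4}(b), applied to $\gamma_k^{-1}\cdot[\mu^+_k]=[\nu^+_k]\to[\nu^+]$).

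The main obstacle is ruling out the ``backward'' branch of the alternative, in which the conclusions would instead read $[\mu^+]=\xi^+(\gamma_+)$ and $[\nu^-]=\xi^-(\gamma_-)$. Here I invoke the flow direction $t_k\to+\infty$ together with the multidensity cocycle analysis. Writing $\nu^-_k=e^{s_k}\sigma^-_{[\nu^-_k]}$ for a fixed section $\sigma^-$ of $\mathscr D^{\mathbf s}_{>0}\F^-_\bullet$, the convergence $e^{t_k}\gamma_k\cdot\nu^-_k\to\mu^-$ with $\mu^->0$ forces
\[
\mathcal C_{\sigma^-}([\nu^-_k],\gamma_k)=-t_k+O(1)\longrightarrow-\infty.
\]
If the ``backward'' branch held, Corollary~\ref{cor: properness criterion multi-preparation} (applied on the appropriate side) would relate this cocycle to a Jacobian class function evaluated at $\gamma_k^{-1}$, whereas Proposition~\ref{prop:Anosovescape} forces the corresponding Jacobian of $\gamma_k$ to blow up to $+\infty$; comparing these two asymptotics contradicts $t_k\to+\infty$. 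Hence the ``forward'' branch must hold, and the previous paragraph yields both $[\nu^+]\in\Lambda^+_\Gamma$ and $[\mu^-]\in\Lambda^-_\Gamma$. Since the argument is subsequence-free (every subsequence has a further subsequence with these properties), the conclusion holds unconditionally.
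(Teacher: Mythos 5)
Your proposal is correct and follows essentially the same strategy as the paper's proof: lift, pass to an unbounded sequence $\gamma_k$ normalized by some $f\in\Gamma$ so that its forward and backward boundary limits are distinct, and play the growth of $e^{t_k}$ against the contraction that the relevant transversality would force. Two comments. First, where the paper works with $\gamma_k=f\delta_k^{-1}$ and obtains the contradiction in one line from Lemma~\ref{lem: properness criterion general case} ($\gamma_k\cdot\mu_k^-\to 0$ versus $\gamma_k\cdot\mu_k^-=e^{t_k}f\cdot\nu_k^-$), you work with $\gamma_k=f\delta_k$ and re-derive the same contradiction by hand from Corollary~\ref{cor: properness criterion multi-preparation}, the cocycle identity $\mathcal C_{\sigma^-}([\mu^-_k],\gamma_k^{-1})=-\mathcal C_{\sigma^-}([\nu^-_k],\gamma_k)$, and Proposition~\ref{prop:Anosovescape}; this is fine, since that is exactly how Lemma~\ref{lem: properness criterion general case} is proved. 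Second, your ``forward/backward branch'' dichotomy is looser than it looks: the two applications of the definition of $\Omega_\Gamma$ (at $([\nu^+],[\nu^-])$ with $z=\gamma_-$, and at $([\mu^+],[\mu^-])$ with $z=\gamma_+$) are independent alternatives, so a priori there are four cases, not two. The repair is short and implicit in what you wrote: your cocycle contradiction uses only the hypothesis $\xi^+(\gamma_+)\pitchfork[\mu^-]$, so it excludes that alternative outright, forcing $[\mu^+]\pitchfork\xi^-(\gamma_+)$ and hence $[\nu^+]=\xi^+(\gamma_-)\in\Lambda^+_\Gamma$ via \ref{property:P4}(b); and the remaining bad alternative $[\nu^+]\pitchfork\xi^-(\gamma_-)$ is also excluded, because by \ref{property:P4}(a) it would give $[\mu^+]=\xi^+(\gamma_+)$ and then, by transversality of the pair $([\mu^+],[\mu^-])$, exactly the alternative you just ruled out. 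This forces $\xi^+(\gamma_-)\pitchfork[\nu^-]$ and hence $[\mu^-]=\xi^-(\gamma_+)\in\Lambda^-_\Gamma$ via \ref{property:P4}(c). With that bookkeeping made explicit your argument is complete; the paper sidesteps the four-way split by invoking $\Omega_\Gamma$ only once, at $([\mu^+],[\mu^-])$, and deducing the second conclusion from the first through the transversality of $([\nu^+],[\nu^-])$ and \ref{property:P4}(d).
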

\begin{proof}
Consider lifts $(\nu^+_k,\nu^-_k)\in\tilde{\cM}$ of $x_k$  such that $(\nu^+_k,\nu^-_k)\to (\nu^+,\nu^-)$. There is a sequence $\delta_k\in \Gamma$ such that $\delta_k\cdot(e^{-t_k} \nu^+_k,e^{t_k}\nu^-_k)\to (\mu^+,\mu^-)$, and up to a subsequence, consider (applying \cite[Lemma 2.11]{VolI} to the sequence $(\delta_k^{-1})$) some $f\in \Gamma$ such that the sequence $\gamma_k=f\delta_k^{-1}$ satisfies $\gamma_k\to\gamma_+\in\partial_\infty \Gamma$ and $\gamma_k^{-1}\to\gamma_-\in\partial_\infty \Gamma$ with $\gamma_+\neq \gamma_-$. Write $(\mu_k^+,\mu^-_k)=\delta_k\cdot(e^{-t_k} \nu^+_k,e^{t_k}\nu^-_k)$.

Assuming that $\xi^+(\gamma_-)$ is transverse to $[\mu^-]$ leads to a contradiction: since $\mu_k^-\to\mu^-\neq 0$, Lemma \ref{lem: properness criterion general case} would imply that $\gamma_k\mu_k^-\to 0$,  yet $\gamma_k\mu_k^-=e^{t_k}f\nu_k^-$ and $\nu_k^-\to \nu^-\neq 0$. Since $(\mu^+,\mu^-)\in\tilde\M$, we must have $[\mu^+]\pitchfork \xi^-(\gamma_-)$, therefore $\gamma_k[\mu^+_k]\to \xi^+(\gamma_+)$ by \ref{property:P4}. This means that $f[\nu^+]=\xi^+(\gamma_+)$, and in particular $[\nu^+]\in\Lambda^+_\Gamma$.

This implies that $\xi^+(\gamma_+)\pitchfork f[\nu^-]$, so $\gamma_k^{-1}f[\nu^-_k]\to \xi^-(\gamma_-)$, i.e. $[\mu^-]=\xi^-(\gamma_-)\in\Lambda^-_\Gamma$.
\end{proof}

\begin{lem}[{C.f.\ \cite[Lemma 3.5]{VolI}}] \label{lem correspondence periodic orbits conjugacy classes general case} Let $x\in \M$ and consider a lift $(\nu^+,\nu^-)\in \tilde\M$. Then $x\in \mathrm{Per}(\phi^t)$ if and only if there is $\gamma\in\Gamma\setminus\{e\}$ such that $[\nu^+]=\xi^+(\gamma^+)$ and $[\nu^-]=\xi^-(\gamma^-)$.\footnote{Recall that we assume $\Gamma$ to be torsion-free here, so that any $\gamma\in\Gamma\setminus\{e\}$ has infinite order.} In this case, if $\gamma$ is primitive, then the period of $x$ is $\mathcal J^{\mathbf s}_\Theta(\gamma)$.
\end{lem}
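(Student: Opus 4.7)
The plan is to prove the backward direction and the period formula via one algebraic identity, and to reduce the forward direction to a case analysis based on the dichotomy defining $\Omega_\Gamma$.

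\textbf{Key identity.} Whenever $\gamma\in\Gamma$ fixes the base point $([\nu^+],[\nu^-])\in\Fpf$, the commutativity of the $G$- and $\R$-actions on the affine line bundle $\L^{\mathbf s}_\Theta$ forces
\[
\gamma\cdot(\nu^+,\nu^-)\,=\,\phi^{\,\ell_{\L^{\mathbf s}_\Theta}(([\nu^+],[\nu^-]),\gamma)}(\nu^+,\nu^-),
\]
with $\ell_{\L^{\mathbf s}_\Theta}$ the period function of Definition \ref{def:periods}. When additionally $[\nu^\pm]=\xi^\pm(\gamma^\pm)$, so that the base point is the attracting/repelling transverse pair of the $\Fpf$-proximal element $\gamma$, formula \eqref{eq:lLs} combined with Lemmas \ref{lem:LeviJacobianClassFunctionIsDeterminant}, \ref{lem: determinant proximal element on a  flag manifold} and \ref{lem:proxdomination} (projecting fiberwise to each simple flag manifold) identifies this translation as $\ell_{\L^{\mathbf s}_\Theta}(([\nu^+],[\nu^-]),\gamma)=\mathcal J^{\mathbf s}_\Theta(\gamma)=w^{\mathbf s}_\Theta(\lambda(\gamma))$, which is strictly positive because $\mathbf s$ is $(\Gamma,\Theta)$-admissible and $\lambda(\gamma)\in\mathcal L(\Gamma)\setminus\{0\}$, the non-vanishing coming from $\mathcal P_\alpha(\gamma)>0$ for $\alpha\in\Theta$. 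Descending to $\M$ this immediately yields the backward direction together with the period $\mathcal J^{\mathbf s}_\Theta(\gamma)$.

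\textbf{Forward direction.} A periodic point $\phi^T(x)=x$ with $T>0$ lifts to some $\delta\in\Gamma\setminus\{e\}$ with $\phi^T(\nu^+,\nu^-)=\delta\cdot(\nu^+,\nu^-)$, and in particular $\delta$ fixes $([\nu^+],[\nu^-])\in\Omega_\Gamma$. The defining dichotomy of $\Omega_\Gamma$ applied at $z=\delta^-\in\partial_\infty\Gamma$ yields two cases. In the first, $[\nu^+]\pitchfork\xi^-(\delta^-)$, iterating $\delta^k\cdot[\nu^+]=[\nu^+]$ and invoking \ref{property:P4}(a) forces $[\nu^+]=\xi^+(\delta^+)$; the transversality $([\nu^+],[\nu^-])\in\Fpf$ combined with \ref{property:P4}(d) then gives $[\nu^-]=\xi^-(\delta^-)$, so we set $\gamma:=\delta$. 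In the alternative case $\xi^+(\delta^-)\pitchfork[\nu^-]$, the symmetric applications of \ref{property:P4}(c) and then \ref{property:P4}(b) yield $[\nu^-]=\xi^-(\delta^+)$ and $[\nu^+]=\xi^+(\delta^-)$, and we set $\gamma:=\delta^{-1}$, whose boundary fixed points are $\delta^\mp$; in fact this case is excluded a posteriori because the key identity applied to $\delta^{-1}$ would enforce $T=-\mathcal J^{\mathbf s}_\Theta(\gamma)<0$.

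\textbf{Primitivity and main obstacle.} Assume $\gamma$ primitive, let $T>0$ be the minimal period of $x$, and lift it as above to $\delta\in\Gamma$ fixing $(\xi^+(\gamma^+),\xi^-(\gamma^-))$. The injectivity of the Anosov boundary maps $\xi^\pm$ transfers $\delta\cdot\xi^\pm(\gamma^\pm)=\xi^\pm(\gamma^\pm)$ into $\delta\cdot\gamma^\pm=\gamma^\pm$ in $\partial_\infty\Gamma$; in a torsion-free word-hyperbolic group, the pointwise stabilizer of $\{\gamma^+,\gamma^-\}$ is the unique maximal infinite cyclic subgroup containing $\gamma$, which by primitivity of $\gamma$ is exactly $\langle\gamma\rangle$. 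Hence $\delta=\gamma^m$ for some $m\in\Z$, and the key identity gives $T=m\,\mathcal J^{\mathbf s}_\Theta(\gamma)$; positivity of both $T$ and $\mathcal J^{\mathbf s}_\Theta(\gamma)$ together with the minimality of $T$ force $m=1$, so $T=\mathcal J^{\mathbf s}_\Theta(\gamma)$. The only delicate step is the orientation issue in the case analysis of the forward direction, since the lift $\delta$ might a priori correspond to the time-reversed orientation of $\gamma$; the $\Omega_\Gamma$-dichotomy together with the convergence properties \ref{property:P4} singles out precisely which sign works, and replacing $\delta$ by $\delta^{-1}$ in the second case produces the desired $\gamma$.
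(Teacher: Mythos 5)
Your proof is correct, and its overall architecture (the $\Omega_\Gamma$-dichotomy plus the convergence properties \ref{property:P4} for the forward direction, and the period-function computation \eqref{eq:lLs} for the backward direction and the period formula) matches the paper's. Two points differ. First, to decide the dichotomy the paper rules out the branch $\xi^+(\delta^-)\pitchfork[\nu^-]$ directly by invoking Lemma \ref{lem: properness criterion general case}: transversality would force $\delta^k\cdot\nu^-\to 0$, contradicting $\delta^k\cdot\nu^-=e^{kT}\nu^-$. You instead carry that branch through to produce $\gamma=\delta^{-1}$ and then exclude it via the sign $T=-\mathcal J^{\mathbf s}_\Theta(\delta^{-1})<0$; this is equally valid and avoids the contraction lemma, at the cost of needing the key identity (hence proximality of $\delta^{-1}$ at the relevant pair) in both branches. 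Second, your final paragraph proving that $\mathcal J^{\mathbf s}_\Theta(\gamma)$ is the \emph{minimal} period for primitive $\gamma$ -- via injectivity of the boundary maps and the fact that the pointwise stabilizer of $\{\gamma^+,\gamma^-\}$ in a torsion-free hyperbolic group is exactly $\langle\gamma\rangle$ -- is a genuine addition: the paper's proof only exhibits $\mathcal J^{\mathbf s}_\Theta(\gamma)$ as \emph{a} period and leaves minimality implicit. Your argument closes that gap cleanly.
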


\begin{proof} 
Assume that $x\in \mathrm{Per}(\phi^t)$, and let $T>0$ be such that $\phi^T(x)=x$. There is $\gamma\in\Gamma\setminus\{e\}$ such that $(e^{-T}\nu^+,e^{T}\nu^-)=\gamma\cdot(\nu^+,\nu^-)$. If $[\nu^-]$ were to be transverse to $\xi^+(\gamma^-)$, then Lemma \ref{lem: properness criterion general case} would imply that $\gamma^k\cdot\nu^-\to 0$ as $k\to +\infty$, which is a contradiction with $\gamma^k\cdot\nu^-=e^{kT}\nu^-$. As $(\nu^+,\nu^-)\in\tilde{\cM}$, we must have $[\nu^+]\transverse \xi^-(\gamma^-)$, so $\gamma^k[\nu^+]\to \xi^+(\gamma^+)$ as $k\to +\infty$, i.e. $[\nu^+]=\xi^+(\gamma^+)$. Since $[\nu^-]\pitchfork [\nu^+]=\xi^+(\gamma^+)$, we also find that $\gamma^k[\nu^-]\to \xi^-(\gamma^-)$ as $k\to -\infty$, therefore $[\nu^-]=\xi^-(\gamma^-)$.

If the assumption is now that $[\nu^+]=\xi^+(\gamma^+)$ and $[\nu^-]=\xi^-(\gamma^-)$ for some element $\gamma\in\Gamma\setminus\{e\}$, then combining  \eqref{eq:lLs}, \eqref{eq:defmalpha} and \eqref{eq:lambdaF} yields $\phi^{w_{{\mathbf s},\Theta}(\lambda(\gamma))}(x)=x$.
\end{proof}

\begin{proof}[Proof of Theorem \ref{thm - trapped is non wandering is closure of periodic points is K general case}]\label{proof:thm2} 
Any flow satisfies $\overline{\mathrm{Per}(\phi^t)}\subset \mathcal{NW}(\phi^t)$.  Let $x\in \mathcal{NW}(\phi^t)$, and consider sequences $x_k\to x$ in $\M$ and $t_k\to +\infty$ in $\R$ such that $\phi^{t_k}(x_k)\to x$. If $(\nu^+,\nu^-)\in \tilde{\cM}$ is a lift of $x$, then Lemma \ref{lem topological dynamics general case} shows that $[\nu^+]\in\Lambda^+_\Gamma$ and $[\nu^-]\in\Lambda^-_\Gamma$,  hence $x\in \cK$, i.e. $\mathcal{NW}(\phi^t)\subset\cK$.

The set of poles of $\Gamma$, i.e. pairs $(\gamma^+,\gamma^-)$ of fixed points of elements $\gamma\in\Gamma\setminus\{e\}$, is dense in the space $\partial_\infty\Gamma^{(2)}$ of pairs of distinct points \cite[Corollary 8.2.G]{Gr}, hence $\cK\subset\overline{\mathrm{Per}(\phi^t)}$ thanks to Lemma \ref{lem correspondence periodic orbits conjugacy classes general case}.

Let $U\subset \M$ be a relatively compact subset containing $\cK$, and consider $y\in \overline{\bigcap_{t\in\R}\phi^t(U)}$ and a lift $(\mu^+,\mu^-)\in\tilde{\cM}$. Consider first sequences $t_k\to +\infty$ and $x_k\in U$ such that $\phi^{t_k}(x_k)\to y$,  and assume without loss of generality that $x_k$ has a limit $x\in \M$.  Lemma \ref{lem topological dynamics general case} shows that $[\mu^-]\in\Lambda^-_\Gamma$. Consider now sequences $s_k\to -\infty$ and $z_k\in U$ such that $y=\lim\phi^{s_k}(z_k)$, and assume that $z_k\to z\in \M$. Applying Lemma \ref{lem topological dynamics general case} to  $\phi^{-s_k}\left( \phi^{s_k}(y)\right)$  shows that $[\mu^+]\in\Lambda^+_\Gamma$, hence $y\in \cK$. 

The fact that $\cK$ is the closure $\overline{\mathrm{Per}(\phi^t)}$ of periodic points shows that $\cK\subset \bigcap_{t\in\R}\phi^t(\cK)$, so finally $\overline{\bigcap_{t\in\R}\phi^t(U)}=\cK$.

Since $\cK$ is compact and $\phi^t$-invariant, we automatically get $\cK\subset {\mathcal T}(\phi^t)$. Let $x\in {\mathcal T}(\phi^t)$ and consider a lift $(\nu^+,\nu^-)\in\tilde{\cM}$. Let $y\in \M$ be an $\omega$-limit point of $x$, i.e. $y=\lim_{k\to +\infty}\phi^{t_k}(x)$ for some sequence $t_k\to +\infty$ (it exists because $x$ is trapped). Then Lemma \ref{lem topological dynamics general case} shows that $[\nu^+]\in \Lambda^+_\Gamma$. The same reasoning applied to an $\alpha$-limit point of $x$ (i.e. some $\lim\phi^{s_k}(x)$ with $s_k\to -\infty$) shows that $[\nu^-]\in\Lambda^-_\Gamma$, hence $x\in \cK$.

It remains to prove that the restriction of the flow $\phi^t$ to $\cK$ is topologically transitive. This is again done by complete analogy to the projective case \cite[Lemma 3.12]{VolI}: By \cite[8.2.H]{Gr}, we may consider $(a,b)\in \partial_\infty\Gamma^{(2)}$ whose $\Gamma$-orbit is dense in $\partial_\infty\Gamma^{(2)}$.  Let $x\in \cK$ be an element admitting a lift $(\nu^+,\nu^-)\in\tilde\cK$ with $[\nu^+]=\xi(a)$ and $[\nu^-]=\xi^*(b)$. Let $y\in \cK$ and consider a lift $(\mu^+,\mu^-)\in\tilde\cK$. Let $U\subset \cK$ be an open subset containing $y$, and $\tilde U\subset\tilde\cK$ its preimage. As the bundle projection $p:\L\to \Fpf$ is an open map, there is $\gamma\in\Gamma$ such that $\gamma\cdot ([\mu^+],[\mu^-])\in p(\tilde U)$, i.e., there is $t\in \R$ such that $\gamma\cdot (e^{-t} \mu^+,e^{t}\mu^-) \in\tilde U$, hence $\phi^t(x)\in U$.
\end{proof}

\subsection{Hyperbolicity of the non-wandering set} \label{sec:hyperbolicity}
We now turn to the differentiable dynamics of the flow $\phi^t$ on the manifold $\mathcal M$, and establish the last missing piece in the definition of Smale's Axiom A.
\begin{lem} \label{lem:hyperbolicset} The compact $\phi^t$-invariant subset $\mathcal K\subset\mathcal M$ is a hyperbolic set.
\end{lem}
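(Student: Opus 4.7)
The plan is to read off the hyperbolic structure from a $G$-equivariant model of the derivative of $\phi^t$ and then transfer uniform contraction/expansion to the compact set $\mathcal K$ via norm equivalence.

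\textbf{Step 1 (Descending the splitting).} First I will verify that the splitting $T\mathbb L = E^- \oplus E^0 \oplus E^+$ recalled in Section~\ref{sec:tangent bundle flow space} restricts to $\tilde{\mathcal M}$ and descends to $\mathcal M$. It is $G$-equivariant by construction, hence $\Gamma$-invariant, and its three summands are preserved by the right $A_\mathbf b$-action because $Z_\mathbf b \in \mathfrak{z}(\mathfrak l_\Theta)$ makes $\mathrm{Ad}(\exp(tZ_\mathbf b))$ respect the decomposition $\mathfrak g = \mathfrak n_{\mathfrak p_\Theta^-} \oplus \mathfrak l_\Theta \oplus \mathfrak n_{\mathfrak p_\Theta^+}$. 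The free and properly discontinuous $\Gamma$-action on $\tilde{\mathcal M}$ guaranteed by Theorem~\ref{thm: prop disc general case} then yields a real-analytic, $\phi^t$-invariant splitting $T\mathcal M = E^-_\Gamma \oplus E^0_\Gamma \oplus E^+_\Gamma$ with $E^0_\Gamma$ tangent to the flow, and this is the candidate hyperbolic splitting once restricted to $\mathcal K$.

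\textbf{Step 2 (Computing $d\phi^t$ fiberwise).} Next I will compute the derivative in the natural left-trivialization $T_{gH_\mathbf b}\tilde{\mathcal M} \cong \mathfrak g / \mathfrak h_\mathbf b$. The identity
\[ \phi^t\bigl(g\exp(sX)H_\mathbf b\bigr) = g\exp(tZ_\mathbf b)\exp\!\bigl(s\,\mathrm{Ad}(\exp(-tZ_\mathbf b))X\bigr)H_\mathbf b, \]
which is immediate from the centrality of $Z_\mathbf b$ in $\mathfrak l_\Theta$, shows that in these trivializing frames $d\phi^t$ acts as $e^{-t\,\mathrm{ad}(Z_\mathbf b)}$. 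Under this identification $E^+$ and $E^-$ correspond to $\mathfrak n_{\mathfrak p_\Theta^-}$ and $\mathfrak n_{\mathfrak p_\Theta^+}$, so on a root-space summand $\mathfrak g_\alpha \subset \mathfrak n_{\mathfrak p_\Theta^+}$ (with $\alpha \in \Sigma^+_\Theta$) the derivative has eigenvalue $e^{-t\alpha(Z_\mathbf b)}$, with the symmetric formula $e^{+t\alpha(Z_\mathbf b)}$ on $\mathfrak g_{-\alpha} \subset \mathfrak n_{\mathfrak p_\Theta^-}$.

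\textbf{Step 3 (Uniformizing over $\mathcal K$).} The admissibility of $\mathbf s$, combined with the Killing-form characterization of $Z_\mathbf b$ as the normalized dual of $d_e\mathbf b$ in $\mathfrak a_\Theta$, will be used to produce a constant $c := \min_{\alpha \in \Sigma^+_\Theta}\alpha(Z_\mathbf b) > 0$. Fix any continuous bundle norm on $T\mathcal M$; on the compact set $\mathcal K$ it is uniformly equivalent to the norm obtained by transporting a fixed Euclidean structure on $\mathfrak g/\mathfrak h_\mathbf b$ via the left-trivialization (the transition cocycle is $\Gamma$-valued and hence bounded on $\mathcal K$). The spectral estimates from Step~2 then give
\[ \|d\phi^t v\|_{\phi^t(x)} \leq C\,e^{-ct}\|v\|_x, \qquad v \in E^-_\Gamma|_{\mathcal K},\ t \geq 0, \]
together with the symmetric bound on $E^+_\Gamma$ for $t \leq 0$, matching Definition~\ref{def:hyperbolicset}.

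\textbf{Main obstacle.} The delicate point is establishing $\alpha(Z_\mathbf b) > 0$ for every $\alpha \in \Sigma^+_\Theta$. When $\mathbf s \in \mathbb R^\Theta_{>0}$ this is immediate, because the Killing-duals of the fundamental weights $w_\alpha$ ($\alpha \in \Theta$) all lie in the open positive Weyl chamber $\mathfrak a^{++}$, and any strictly positive combination is dominant. For general admissible $\mathbf s$ this requires combining the inclusion $\mathcal L_\Theta(\Gamma) \subset \mathfrak a_\Theta^+$, the Anosov description of the Jordan projections in $\mathcal L_\Theta(\Gamma)$, and the behavior of the Killing form between $\mathcal L_\Theta(\Gamma)^*$ and the restricted root system. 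Once this positivity is in hand, the remainder of the argument is a routine compactness/equivalence-of-norms step.
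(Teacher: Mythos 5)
There is a genuine gap, and it sits at the heart of your Step 3. The left-trivialization $T_{gH_{\mathbf b}}\widetilde{\mathcal M}\cong\fg/\mathfrak h_{\mathbf b}$ depends on the choice of coset representative $g$: two choices differ by right multiplication by $H_{\mathbf b}$, so the ambiguity in your frames is $\mathrm{Ad}(H_{\mathbf b})$-valued, \emph{not} $\Gamma$-valued. Since $H_{\mathbf b}=\ker\mathbf b$ is a non-compact reductive group in higher rank, $\mathrm{Ad}(H_{\mathbf b})$ acts unboundedly on $\fg/\mathfrak h_{\mathbf b}$ and there is no $\mathrm{Ad}(H_{\mathbf b})$-invariant Euclidean structure; consequently the ``transported'' norm is not a continuous bundle norm on $T\mathcal M$ and is not uniformly equivalent to one over $\mathcal K$ (over a compact piece of $\mathcal K$ the set of admissible representatives is of the form $\Gamma\cdot(\text{compact})\cdot H_{\mathbf b}$). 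So the spectral computation of Step 2 does not transfer. A sanity check that something must be wrong: your argument never uses the Anosov property of $\Gamma$ beyond producing the constant $c$, so it would show that the non-wandering set is hyperbolic for essentially any discrete group acting properly on a piece of $\L_{\mathbf b}$ — but hyperbolicity here is precisely equivalent to the dynamical definition of Anosov subgroups and cannot be read off from $e^{-t\,\mathrm{ad}(Z_{\mathbf b})}$ alone. In addition, the positivity $\alpha(Z_{\mathbf b})>0$ for all $\alpha\in\Sigma^+_\Theta$, which you flag as the remaining obstacle, is genuinely false for general admissible $\mathbf s$: admissibility only requires $w^{\mathbf s}_\Theta>0$ on the limit cone $\mathcal L(\Gamma)\setminus\{0\}$, which may be a thin cone, and $Z_{\mathbf b}$ need not be dominant; some $s_\alpha$ may be negative. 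So even the algebraic model of $d\phi^t$ need not contract every root space.

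The paper's proof runs along a different track that you should compare with. Because $\pi_\pm\circ\phi^t=\pi_\pm$, the isomorphisms $d\pi_\pm:E^\pm\to\pi_\pm^*T\F^\pm$ conjugate $d\phi^t|_{E^\pm}$ to the flows $\varphi^t_\pm$ that move the base point and act \emph{trivially} on the fibers $T\F^\pm$. Over $\mathcal K$ these are further conjugated, via the section $\sigma$ and the limit maps, to the flows $\psi^t_\pm$ on the bundles $V^\pm=\Gamma\backslash f_\pm^*T\F^\pm$ over Sambarino's refraction flow space. Uniform contraction/dilation of $\psi^t_\pm$ is exactly property \ref{property:P5}, i.e.\ the Guichard--Wienhard dynamical characterization of the $\Theta$-Anosov condition; uniform contraction on a compact base is independent of the choice of continuous bundle norm, so the hyperbolicity of $\mathcal K$ follows. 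In other words, the contraction is a property of the $\Gamma$-equivariant norms along the limit set, not of $\mathrm{ad}(Z_{\mathbf b})$; to repair your argument you would have to control the $H_{\mathbf b}$-components of the return maps $\gamma g_x\exp(tZ_{\mathbf b})$ to a compact fundamental domain, which is precisely the content of \ref{property:P5}.
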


\subsubsection{The original definition of Anosov subgroups} The proof of Lemma \ref{lem:hyperbolicset} is essentially a rephrasing of Labourie's original definition just as in \cite[Lem. 3.13]{VolI}. While Labourie worked with an Anosov flow on a closed manifold with a $\Gamma$-cover  \cite[Sec. 3.1]{labourie}, Guichard and Wienhard used less regular flows \cite[Def. 2.10]{GW12}, allowing us to work with Sambarino's refraction flow $\psi^t_\sigma:\mathfrak{R}_\sigma\to\mathfrak{R}_\sigma$ \cite[Def. 5.3.4]{SAM24} (already discussed in Section \ref{sec:refractionflows}). Consider the $\Gamma$-invariant maps
\begin{equation}
 f_+:\map{\partial_\infty\Gamma^{(2)}\times\R}{\F^+}{(z^+,z^-,\tau)}{\xi^+(z^+)} , \quad f_-:\map{\partial_\infty\Gamma^{(2)}\times\R}{\F^-}{(z^+,z^-,\tau)}{\xi^-(z^-)}.
\label{eqn f pm} 
\end{equation}
The vector bundles $V^\pm:=\Gamma\backslash f_\pm^*T\F^\pm$  over the refraction flow space $\mathfrak{R}_\sigma$ each come equipped with a flow by vector bundle isomorphisms
\begin{equation} \psi^t_\pm:\map{V^\pm}{V^\pm}{\Gamma\cdot((z^+,z^-,\tau),\mathbf v)}{\Gamma\cdot((z^+,z^-,t+\tau),\mathbf v)} \label{eqn:Labourie flow}
 \end{equation}
lifting the refraction flow $\psi_\sigma^t$, i.e. such that the following diagrams commute:
\begin{center}
\begin{tikzcd}
V^+ \arrow[r, "\psi_{+}^t"] \arrow[d] 
& V^+  \arrow[d] & V^- \arrow[r, "\psi_{-}^t"] \arrow[d] 
& V^-  \arrow[d] \\
\mathfrak{R}_{\sigma} \arrow[r,"\psi_{\sigma}^t" ]
& \mathfrak{R}_{\sigma} & \mathfrak{R}_{\sigma} \arrow[r,"\psi_{\sigma}^t" ]
& \mathfrak{R}_{\sigma}
\end{tikzcd}
\end{center}

The equivalence of Definition \ref{def:ThetaAnosov} with \cite[Def. 2.10, Rem. 2.11]{GW12}, i.e. \cite[Theorem 1.1]{KLP17}, gives the following property of $\Theta$-Anosov subgroups:
\begin{enumerate}[label=(A\arabic*)] \setcounter{enumi}{3}
\item The flow $\psi^t_+$ (resp. $\psi^t_-$) is uniformly dilating (resp. contracting).\label{property:P5}
\end{enumerate}

\subsubsection{The tangent bundle of the flow space}\label{sec:tangent bundle flow space} Given a point $\mu=(\mu^+,\mu^-)\in \L$, denote by $([\mu^+],[\mu^-])\in\Fpf$ its projection, and by $\fn^+\subset \fg$ (resp. $\fn^-\subset\fg$) the nilpotent radical of the parabolic Lie algebra of the stabilizer of $[\mu^+]\in\F^+$ (resp. $[\mu^-]\in\F^-$). Using once again the notation $\overline X(\mu)=\left.\frac{d\,}{dt}\right\vert_{t=0}\exp(tX)\cdot\mu$ for the fundamental vector field of $X\in\fg$, we get a grading of the tangent space $T_\mu\L$ as
\begin{equation}
T_\mu\L=E^+_\mu\oplus E^0_\mu\oplus E^-_\mu \label{tangent grading}
\end{equation}
where the vector subspaces $E^+_\mu,E^0_\mu,E^-_\mu\subset T_\mu\L$ 
\[
E^+_\mu = \set{\overline X(\mu)}{X\in \fn^-} ~,\qquad E^0_\mu = \R\cdot \left.\frac{d\,}{dt}\right\vert_{t=0}\phi^t(\mu)~,\qquad E^-_\mu = \set{\overline X(\mu)}{X\in \fn^+} ~,
\]
define $G\times\{d\phi^t\}$-equivariant subbundles $E^+,E^0,E^-\subset T\L$.

Restricting this structure to $\widetilde{\mathcal{M}}\subset \mathbb{L}$ and taking the quotient by the $\Gamma$-action, the grading \eqref{tangent grading} descends yielding a flow invariant splitting
\[
T\mathcal{M}= E^{+}\oplus E^{0}\oplus E^{-}\,.
\]
The sign exchange between $E^\pm$ and $\fn^\mp$ is chosen so that the projections $$\pi_\pm:=p_{\F^\pm}\circ p:\mathbb{L}\to \F^\pm$$ induce $G$-equivariant isomorphisms
\begin{equation}
d\pi_\pm:E^{\pm}\to \pi_{\pm}^{*}T\F^{\pm} \label{eq:derivative projection bundle isomorphism}
\end{equation}
as $G$-equivariant vector bundles over $\mathbb{L}.$

\subsubsection{Proof of hyperbolicity} We now have all the ingredients to prove Lemma \ref{lem:hyperbolicset}.
\begin{proof}[Proof of Lemma \ref{lem:hyperbolicset}] Both sides in \eqref{eq:derivative projection bundle isomorphism} carry a 1-parameter group of vector bundle isomorphisms: the restriction of $d\phi^t$ on $E^{\mp}$, and the the flow $\varphi_\pm^t:\pi_{\pm}^{*}T\F^{\pm}\to \pi_{\pm}^{*}T\F^{\pm}$ defined by 
\[ \varphi_\pm^t:\map{\pi_{\pm}^{*}T\F^{\pm}}{\pi_{\pm}^{*}T\F^{\pm}}{(x,\mathbf v)}{(\phi^t(x),\mathbf v)}
\]
The flow invariance $\pi_\pm\circ\phi^t=\pi_\pm$, when differentiated, shows that the isomorphism \eqref{eq:derivative projection bundle isomorphism} intertwines the flows. The $G$-equivariant vector bundles $ \pi_{\pm}^{*}T\F^{\pm}\to \L$ can be restricted to $\Gamma$-invariant bundles over $\widetilde{\mathcal M}\subset\L$ and descended to vector bundles $\mathcal T^\pm$ over $\mathcal M$ each equipped with a 1-parameter group of vector bundle isomorphisms still denoted by $\varphi^t_\pm$. Upon restriction to $\mathcal K\subset\mathcal M$, we obtain a commutative diagram 
\begin{equation}
\begin{tikzcd}
E^{\pm}\vert_{\mathcal K} \arrow[r] \arrow[d] 
& \mathcal T^\pm\vert_{\mathcal K}  \arrow[d] \\
\mathcal K \arrow[r]
& \mathcal{K} 
\end{tikzcd} \label{diagram tangent over K}
\end{equation}
of vector bundle isomorphisms intertwining the flows $d\phi^t$ and $\varphi^t_\pm$ (because of the invariance $\pi_\pm\circ\phi^t=\pi_\pm$) given by descending $d\pi_\pm$ for the top horizontal arrow and the identity on the bottom.

The discussion of the refraction flow in Section \ref{sec:refractionflows} can be summarized in the  commutative diagram
\begin{center}
\begin{tikzcd}
\partial_{\infty}\Gamma^{(2)}\times \R \arrow[r, "\widetilde{\Phi}_{\sigma}"] \arrow[d] 
& \widetilde{\mathcal{K}} \arrow[r] \arrow[d]
& \widetilde{\mathcal{M}} \arrow[d] \\
\mathfrak{R}_{\sigma} \arrow[r,"\Phi_{\sigma}" ]
& \mathcal{K} \arrow[r]
& \mathcal{M}
\end{tikzcd}
\end{center}
where the top row consists of $\Gamma$-spaces,
vertical arrows are $\Gamma$-covers, unlabeled horizontal arrows are inclusions, and $\Phi_{\sigma}$ is a bi-Hölder conjugacy between the refraction flow on $\mathfrak{R}_{\sigma}$ and the restriction of the flow $
\phi^{t}: \mathcal{M}\rightarrow \mathcal{M}$ to the compact invariant set $\mathcal{K}\subset \mathcal{M}.$  

Now the $\Gamma$-equivariant map
\[\map{f^\ast_\pm T\F^\pm}{\pi_\pm^\ast T\F^\pm\vert_{\widetilde{\mathcal K}}}{\big( (z^+,z^-,\tau),\mathbf v\big)}{\Big( \sigma\big( \xi^\pitchfork(z^+,z^-) \big), \mathbf v\Big) }
\]
descends to a vector bundle isomorphism $V^\pm\to \mathcal T^\pm\vert_{\mathcal K}$ that conjugates $\psi^t_\pm$ to $\varphi^t_\pm$, so combining with \eqref{diagram tangent over K} we get the following commutative diagram
\begin{center}
\begin{tikzcd}
V^\pm \arrow[r] \arrow[d] 
& \mathcal T^\pm\vert_{\mathcal K}  \arrow[d]
& E^{\pm}\vert_{\mathcal K} \arrow[l] \arrow[d] \\
\mathfrak{R}_{\sigma} \arrow[r,"\Phi_{\sigma}" ]
& \mathcal{K} 
& \mathcal{K} \arrow[l,swap,"\mathrm{Id}"]
\end{tikzcd}
\end{center}
of vector bundle isomorphisms conjugating the flows $\psi^t_\pm$, $\varphi^t_\pm$ and $d\phi^t$, so the hyperbolicity of $\mathcal K$ follows from \ref{property:P5}.
\end{proof}

\section{Geometric structures on homogeneous spaces} \label{sec:geometric_structures}

In this section, we will explain the existence of various symplectic and contact structures on the homogeneous transverse flag and flow spaces considered in this article.

\subsection{Geometry of transverse flag spaces}\label{sec:geometry transverse flag spaces}
\subsubsection{The universal para-Kähler structure} \label{sec:para kahler}

A para-K\"{a}hler structure on a smooth manifold $M$ of dimension $2k$ is a tuple  $(\langle - , - \rangle, I, \omega)$ 
where $\langle - , - \rangle$ is a neutral signature $(k,k)$ pseudo-Riemannian metric on $M$, the bundle homomorphism $I: TM\rightarrow TM$ is parallel for the Levi-Civita connection of $\langle -, - \rangle$ and idempotent (i.e.\ $I^{2}=\mathrm{Id}$), and $\langle I(-), -\rangle=\omega$ is a symplectic form.  It is the analog of a K\"{a}hler structure where the complex numbers are replaced by the paracomplex numbers $a+b\varepsilon$ where $a,b\in \R$ and $\varepsilon^{2}=1.$  The $\pm 1$-eigenspaces of $I: TM\rightarrow TM$ constitute a pair of transverse, integrable Lagrangian distributions for the symplectic form $\omega$, and the para-K\"{a}hler structure is equivalent to a symplectic manifold with a pair of transverse Lagrangian foliations.

 We now show this is precisely the structure canonically present on the transverse flag spaces $\F^{\pitchfork}\simeq G/L.$   Given a point $\mathbf x=(\mathbf x^+,\mathbf x^-)\in \Fpf=\F^+\trtimes\F^-$, denote by $\fn^+\subset \fg$ (resp. $\fn^-\subset\fg$) the nilpotent radical of the parabolic Lie algebra of the stabilizer of $\mathbf x^+\in\F^+$ (resp. $\mathbf x^-\in\F^-$). Using once again the notation $\overline X(\mathbf x)=\left.\frac{d\,}{dt}\right\vert_{t=0}\exp(tX)\cdot\mathbf x$ for the fundamental vector field of $X\in\fg$, we get a splitting of the tangent space $T_{\mathbf x}\Fpf$ as
\begin{equation}
T_{\mathbf x}\Fpf=\mathcal L^+_{\mathbf x}\oplus \mathcal L^-_{\mathbf x} \label{tangent grading transverse flag space}
\end{equation}
where the vector subspaces $\mathcal L^\pm_{\mathbf x} = \{\overline X(\mathbf x)\,|\,X\in \fn^\mp\}$ define $G$-equivariant distributions $\mathcal L^+,\mathcal L^-\subset T\Fpf$ tangent to the fibers of the projections $p_{\F^\pm}:\Fpf\to\F^\pm$. 
The $\mathrm{Stab}_G(\mathbf x)$-equivariant isomorphism
\begin{align}\label{iso}
\map{\fn^-\oplus\fn^+}{T_{\mathbf x}\Fpf}{X}{\overline{X}(\mathbf x)}
\end{align}
can be used to push forward the Killing form $\mathrm{B}_\g$ to a $G$-invariant pseudo-Riemannian metric $\langle -,-  \rangle_{\Fpf}$ of signature $(n,n)$ on $\F^{\pitchfork}$ (where $n=\dim\F^+=\dim\F^-$)\footnote{This is the same structure as defined in Section \ref{sec:pseudoriemannianstructure}.}
\begin{equation}
\langle \overline X_1(\mathbf x),\overline X_2(\mathbf x)\rangle_\Fpf := \mathrm{B}_\g(X_1,X_2)\,,\quad X_1,X_2\in \fn^-\oplus\fn^+ \label{eq:pseudo Riemannian metric transverse flag space}
\end{equation} for which the subbundles $\mathcal L^\pm\subset T\Fpf$ are totally isotropic. Note that since the Lie subalgebras $\fn^+,\fn^-$ satisfy $[\fn^+,\fn^-]\subset\fl$, where $\fl$ denotes the Lie algebra of the Levi stabilizer of $\mathbf x$, we have that
\begin{equation}
\langle \overline{[X_1,X_2]}(\mathbf x),\overline X_3(\mathbf x)\rangle_\Fpf = \mathrm{B}_\g([X_1,X_2],X_3),\qquad \forall\, X_1,X_2,X_3\in \fn^-\oplus\fn^+. \label{eq:pseudo Riemannian product Lie bracket}
\end{equation}
 Using the splitting \eqref{tangent grading transverse flag space}, define the endomorphism of the tangent bundle $T\Fpf$
\[ I_\Fpf:\map{\mathcal L^+\oplus \mathcal L^-}{\mathcal L^+\oplus \mathcal L^-}{\mathbf v^++\mathbf v^-}{\mathbf v^+-\mathbf v^-}\,.
\]
Defining $\omega_{\F^{\pitchfork}}:=\langle I_\Fpf (-),\mathbf -\rangle_\Fpf$, we compute
\[
\omega_{\F^{\pitchfork}}\left(\mathbf v^+_1+\mathbf v^-_1, \mathbf v^+_2+\mathbf v^-_2\right)=\langle \mathbf v^+_1,\mathbf v^-_2\rangle_\Fpf - \langle \mathbf v^-_1, \mathbf v^+_2\rangle_\Fpf\,,\quad \mathbf v^+_1, \mathbf v^+_2\in\mathcal L^+\,,~\mathbf v^-_1,\mathbf v^-_2\in\mathcal L^-\,,
\]
and find that $\omega_\Fpf$ is skew-symmetric and non-degenerate (because the Killing form induces a non-degenerate pairing $\mathrm{B}_\g:\fn^+\times\fn^-\to\R$). Cartan's magic formula applied to \eqref{eq:pseudo Riemannian product Lie bracket} shows that $d\omega_\Fpf=0$, i.e. that $\omega_\Fpf$ is a symplectic form, so we have defined a $G$-invariant  para-K\"{a}hler manifold $(\F^{\pitchfork}, \langle - , -\rangle_{\F^{\pitchfork}}, I_{\F^{\pitchfork}}, \omega_{\F^{\pitchfork}})$.

\subsubsection{Kostant-Kirillov-Souriau symplectic structures}\label{sec:KKS}

Given smooth functions $f,g: \fg^{*}\rightarrow \R,$ the differentials at $\eta\in \fg^{*},$
\bqn
df_{\eta},dg_{\eta}: \fg^{*}\rightarrow \R 
\eqn
may be understood as elements $df_{\eta}, dg_{\eta}\in \fg$  through the canonical  parallelism $T\fg^{*}\simeq \fg^{*}\times \fg^{*}$ and isomorphism $\left(\fg^{*}\right)^{*}\simeq \fg.$   The Poisson bracket
\[
\{f,g\}(\eta)=-\eta([df_{\eta}, dg_{\eta}])\in \R
\]
defines the Kostant-Kirillov-Souriau (KKS) Poisson structure on $\fg^{*}$ which is well-defined for an arbitrary finite dimensional real Lie algebra $\fg.$   The coadjoint orbits $G\cdot \eta\subset \fg^{*}$ of elements $\eta\in \g^\ast$ are equal to the symplectic leaves of this Poisson structure \cite{kirillov}, and we denote by $\omega_{\eta}^{\mathrm{KKS}}$ the corresponding symplectic form on the coadjoint orbit $\mathcal{O}_{\eta}=G\cdot \eta\subset \fg^{*}.$   Evaluating the KKS symplectic form at $T_\eta\mathcal O_\eta\simeq \g/\mathfrak l_\eta$, where $\mathfrak l_\eta$ is the Lie algebra of $\mathrm{Stab}_G(\eta)$, it is given by
\begin{equation} 
\omega^{\mathrm{KKS}}_\eta(\overline{X_1}(\eta),\overline{X_2}(\eta))=-\eta([X_1,X_2]), \label{eq:KKS symplectic form}
\end{equation}
where on the left-hand side we consider fundamental vector fields for the coadjoint action. 

Now, consider  $X\in \g_{\mathrm{hyp}}$ associated to  the transverse flag space $\F_X^\pitchfork:=\F_X\trtimes\F_{-X}$, and let $\eta:=\mathrm{B}_\g(X,-)\in\g^\ast$ be its Killing dual. Non-degeneracy of $\mathrm{B}_\g$ means that  $\mathrm{Stab}_G(X)=\mathrm{Stab}_G(\eta)=L_X$, resulting in a $G$-equivariant diffeomorphism
\[ \map{\mathcal O_\eta=G\cdot \eta}{\F_X^\pitchfork}{g\cdot\eta}{g\cdot (\fp_X,\fp_{-X})}\,.\] Apart from some very specific cases, the push-forward of   $\omega^{\mathrm{KKS}}$ on $\F_X$ is  different from the symplectic form $\omega_{\F_X^\pitchfork}$ defined in the previous section, and choosing a different $X'\in \g_{\mathrm{hyp}}$ with $\F_{X'}^\pitchfork=\F_{X}^\pitchfork$ results in different KKS symplectic forms. They however share the property that the distributions $\mathcal L^\pm$ of $T\Fpf$ are Lagrangian.

\subsubsection{Period functions and closed 2-forms}
Consider a transverse flag manifold $\Fpf=\F^+\trtimes\F^-$, and recall  from Definition \ref{def:space of period functions}  its space $\mathcal P(\Fpf)$ of period functions of $\Fpf$, i.e. real valued $G$-invariant functions on   $\Stab_\Fpf=\set{(\xbf,g)\in\Fpf\times G}{g\cdot\xbf=\xbf}$ that restrict to a group homomorphism on each fiber.
\begin{definition} \label{def:infinitesimal period function}
Let $\ell\in\mathcal P(\Fpf)$. The \emph{infinitesimal period function} of $\ell$ is the function $d\ell:\Fpf\to\g^*$ defined in the following way: for $\xbf\in\Fpf$, denote by $\fl$ the Lie algebra of $\Stab_G(\xbf)$, and let $d\ell(\xbf)\in\g^*$ be defined by
\[ d\ell(\xbf)(X)=\left\lbrace\begin{array}{cl}d_e\ell(\xbf,-)(X) & \textrm{if } X\in\fl \\ 0 & \textrm{if } X\in\fl^\perp \end{array}\right.
\]
where the orthogonal is considered with respect to the Killing form.
\end{definition}

The $G$-invariance of a period function $\ell\in\mathcal P(\Fpf)$ makes the infinitesimal period function $G$-equivariant, thus the range of $d\ell:\Fpf\to\g^*$ is a coadjoint orbit
\[ \mathcal O_\ell:= \set{d\ell(\xbf)}{\xbf\in\Fpf}\subset\g^*\,.\]
\begin{prop} \label{prop:2forms from periods}
The assignment \[\begin{array}{ccc}{\mathcal P(\Fpf)}& \to & {\Omega^2(\Fpf)}\\ {\ell}& \mapsto & {\omega^\ell:=d\ell^*\omega^{\mathrm{KKS}}}\end{array}\]
 is a a linear  injective  map into closed $G$-invariant 2-forms.
\end{prop}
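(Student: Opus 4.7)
The plan is to first extract an intrinsic pointwise formula for $\omega^\ell$ and then read off all four desired properties from it. Concretely, for $\xbf \in \Fpf$ and $X_1, X_2 \in \g$ I expect
\[
\omega^\ell_{\xbf}\bigl(\overline{X_1}(\xbf), \overline{X_2}(\xbf)\bigr) = -d\ell(\xbf)\bigl([X_1, X_2]\bigr).
\]
To derive this, I would first observe that the $G$-invariance of $\ell$ upgrades to $G$-equivariance of the infinitesimal period function $d\ell \colon \Fpf \to \g^*$ with respect to the coadjoint action, using $\Ad$-invariance of the Killing form to handle the $\fl^\perp$ part of Definition \ref{def:infinitesimal period function}. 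Differentiating this equivariance sends $\overline{X_i}(\xbf) \in T_\xbf \Fpf$ to the coadjoint fundamental vector field $\overline{X_i}(d\ell(\xbf)) \in T_{d\ell(\xbf)}\mathcal{O}_\ell$, and the KKS identity \eqref{eq:KKS symplectic form} then yields the displayed formula. Independence of the right-hand side on the choice of lifts $X_i$ follows from the inclusion $\fl \subset \fl_{d\ell(\xbf)} := \Stab_\g(d\ell(\xbf))$ together with the tautological vanishing of any $\eta \in \g^*$ on $[\fl_\eta, \g]$.

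With this formula at hand, three of the four claims follow at once. $G$-invariance of $\omega^\ell$ is immediate from the $G$-equivariance of $d\ell$ combined with $G$-invariance of $\omega^{\mathrm{KKS}}$. Closedness follows because $\omega^{\mathrm{KKS}}$ is closed on each coadjoint orbit and exterior differentiation commutes with pullback. Linearity of $\ell \mapsto \omega^\ell$ reduces to the pointwise linearity of $\ell \mapsto d\ell(\xbf)$, which is manifest from Definition \ref{def:infinitesimal period function}: both the restriction $d_e\ell(\xbf, -)$ on $\fl$ and the extension by zero on $\fl^\perp$ depend linearly on $\ell$.

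For injectivity, suppose $\omega^\ell = 0$. The formula then gives $d\ell(\xbf)\bigl([X_1, X_2]\bigr) = 0$ for every $\xbf \in \Fpf$ and all $X_1, X_2 \in \g$. Since $\g$ is semisimple we have $[\g, \g] = \g$, and hence $d\ell(\xbf) = 0$ in $\g^*$ for every $\xbf$. In particular the differential $d_e\ell(\xbf, -)$ of the character $\ell(\xbf, -) \colon \Stab_G(\xbf) \to \R$ vanishes; invoking the isomorphism \eqref{eq:isomorphism additive characters dual split center}, which identifies characters of a Levi subgroup with linear forms on $\fz_{\mathrm{split}}(\fl)$ and in particular shows that such characters are determined by their differentials, forces $\ell(\xbf, -) \equiv 0$, whence $\ell = 0$ as a period function by Lemma \ref{lem:isolambda}. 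The only step requiring real care is the well-definedness/equivariance check in the first paragraph; once that is in place, everything falls out cleanly from the single formula.
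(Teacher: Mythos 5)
Your proposal is correct and follows essentially the same route as the paper: both hinge on the pointwise identity $\omega^\ell_{\xbf}(\overline{X_1}(\xbf),\overline{X_2}(\xbf))=-d\ell(\xbf)([X_1,X_2])$, from which closedness, invariance and linearity are immediate, and injectivity follows because $\omega^\ell=0$ forces $d\ell(\xbf)$ to vanish on $[\g,\g]=\g$ (the paper phrases this as $\mathcal O_\ell$ collapsing to a point). You simply supply more detail than the paper, in particular the equivariance check for $d\ell$ and the final step from $d\ell\equiv 0$ to $\ell=0$ via \eqref{eq:isomorphism additive characters dual split center}, both of which are sound.
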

\begin{proof}
Closedness is a consequence of the closedness of $\omega^{\mathrm{KKS}}$, and $G$-invariance comes from the invariance of both $d\ell$ and $\omega^{\mathrm{KKS}}$. Linearity comes from \eqref{eq:KKS symplectic form}:
\[ \omega^\ell_\xbf(\overline{X_1}(\xbf),\overline{X_2}(\xbf))=- d\ell(\xbf)\big( [X_1,X_2]\big)\,,\quad\forall X_1,X_2\in\g\,.\] 
 If $\omega^\ell=0$, then $\mathcal O_\ell$ consists of a single point, i.e. $\ell=0$.
\end{proof}

\begin{definition} \label{def:regular period function}
A period function $\ell\in\mathcal P(\Fpf)$ is \emph{regular} if $\omega^\ell$ is a symplectic form.
\end{definition}

Note that a period function $\ell\in\mathcal P(\Fpf)$ is regular if and only if $d\ell:\Fpf\to\mathcal O_\ell$ is a diffeomorphism.

The coadjoint orbit $\mathcal O_\ell$ is always  hyperbolic: the Killing dual of the linear form $d\ell(\xbf)$ at any $\xbf\in\Fpf$ must lie in $\fz_{\mathrm{split}}(\fl)$, because $d\ell(\xbf)$ vanishes on the Lie algebra of the maximal compact torus of $\fl$ as well as on the derived algebra $[\fl,\fl]$. In particular, $\mathcal O_\ell$ is always $G$-equivariantly diffeomorphic to some transverse flag space.

\begin{rem} If $\ell\in\mathcal P(\Fpf)$ is regular, then $\langle-,-\rangle_\ell:=\omega^\ell(I_\Fpf(-),-)$ is a pseudo-Riemannian metric of neutral signature, defining a (generally) different para-Kähler structure on $\Fpf$.
\end{rem}

\subsection{Geometry of flow spaces}

We now consider a $G$-equivariant affine line bundle $p:\L\to\Fpf$, denote by $\ell\in\mathcal P(\Fpf)$ its period function, and consider the vertical vector field $\mathcal X_\L$ on $\L$ defined by
\[ \mathcal X_\L(\hat\xbf)=\left.\frac{d\,}{dt}\right\vert_{t=0}\hat\xbf\cdot t\,,\quad \forall \hat\xbf\in\L\,.\]
\subsubsection{Pseudo-Riemannian structure}
Recall the $G\times\{d\phi^t\}$-equivariant splitting
\begin{equation*}
T\L=E^+\oplus E^0\oplus E^-
\end{equation*}
from Section \ref{sec:tangent bundle flow space}, where $E^0$ is spanned by the derivative of the flow, and $E^\pm$ are lifts of the Lagrangian distributions $\mathcal L^\pm$ of $T\Fpf$.
There is a unique $G$-invariant pseudo-Riemannian metric $\langle-,-\rangle_\L$ of signature $(n+1,n)$ on $\L$ with the following properties:
\begin{enumerate}
\item  $\langle\mathcal X_\L,\mathcal X_\L\rangle_\L\equiv 1\,,$
\item $E^+\oplus E^-\perp E^0\,,$
\item The projection $p:\L\to\Fpf$ is a pseudo-Riemannian submersion, i.e. its differential  restricts to an isometry from $(E^+\oplus E^-,\langle-,-\rangle_\L)$ to $(T\Fpf,\langle-,-\rangle_\Fpf)$.
\end{enumerate}
 
\subsubsection{The canonical 1-form} 
A similar procedure constructs a 1-form on $\L$.
\begin{definition}\label{def:canonical 1-form}
The \emph{canonical 1-form} $\tau\in\Omega^1(\L)$ is defined at $\hat\xbf\in \L$ by 
\[\tau_{\hat\xbf}\big(\mathcal X_\L(\hat\xbf)\big)=1  \textrm{ and } \tau_{\hat\xbf}(v)=0  \textrm{ if } v\in E^+_{\hat\xbf}\oplus E^-_{\hat\xbf}\,. \]
\end{definition}

Note that $\tau=\langle\mathcal X_\L,-\rangle_\L$ is invariant under the flow $\phi^t$, and it is non-zero on each fiber, so it can be seen as a  principal connection on the principal $\R$-bundle $p:\L\to\Fpf$. Since $\R$ is abelian, the curvature of this connection is simply $d\tau$.

\begin{prop} \label{prop:derivative canonical 1-form}
The canonical 1-form $\tau$ on $\L$ satisfies
\[ d\tau = p^*\omega^\ell\,.\]
It is a contact form if and only if $\ell$ is regular.
\end{prop}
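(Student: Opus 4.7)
My plan is to verify $d\tau = p^*\omega^\ell$ by evaluating both sides on fundamental vector fields, using that the $G$-action on $\L$ is transitive so that these (together with the vertical generator $\mathcal{X}_\L$) span $T\L$ pointwise, and then to read off the contact criterion from this identity.

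The first step will be to compute $\tau_{\hat\xbf}(\overline{X}_\L)$ explicitly for $X \in \g$. Decomposing $\g = \fl \oplus (\fn^+ \oplus \fn^-)$ orthogonally for the Killing form, I would use that for $X \in \fn^\mp$ the vector $\overline{X}_\L(\hat\xbf)$ lies in $E^\pm \subset \ker\tau$ and contributes nothing, while for $X \in \fl$ the principal-$\R$-bundle structure together with Definition \ref{def:periods} give $\overline{X}_\L(\hat\xbf) = d\ell(\xbf)(X)\,\mathcal{X}_\L(\hat\xbf)$. Combined with the vanishing of $d\ell(\xbf)$ on $\fl^\perp = \fn^+ \oplus \fn^-$ (Definition \ref{def:infinitesimal period function}), this should yield the clean formula $\tau_{\hat\xbf}(\overline{X}_\L) = d\ell(\xbf)(X)$ on all of $\g$.

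Next I would apply Cartan's magic formula to compute $d\tau$ on two fundamental vector fields. Since $\tau(\overline{X}_\L)$ factors through $p$, its derivative along $\overline{Y}_\L$ reduces to a derivative on $\Fpf$, and the $G$-equivariance of $d\ell:\Fpf \to \g^\ast$ for the coadjoint action evaluates it in terms of a Lie bracket of $X,Y$ paired with $d\ell(\xbf)$. Combining with the bracket term via the anti-homomorphism identity $[\overline{X}_\L, \overline{Y}_\L] = -\overline{[X,Y]}_\L$, the three contributions should collapse to $-d\ell(\xbf)([X,Y])$, which is precisely $(p^*\omega^\ell)(\overline{X}_\L, \overline{Y}_\L)$ by the KKS formula \eqref{eq:KKS symplectic form} applied at $\eta = d\ell(\xbf)$. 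The mixed case $d\tau(\mathcal{X}_\L,\overline{X}_\L) = 0 = (p^*\omega^\ell)(\mathcal{X}_\L,\overline{X}_\L)$ is automatic from $\tau(\mathcal{X}_\L) = 1$ being constant, $\tau(\overline{X}_\L)$ being fiber-constant, the commuting $\R$- and $G$-actions forcing $[\mathcal{X}_\L, \overline{X}_\L] = 0$, and $p_*\mathcal{X}_\L = 0$. The main technical care will be consistent sign bookkeeping: the anti-homomorphism convention and the coadjoint identity must combine to reproduce exactly the sign of the KKS bracket.

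Finally, for the contact criterion, setting $n = \dim\F^+$ so that $\dim\L = 2n+1$, I would evaluate $\tau\wedge(d\tau)^n = \tau\wedge p^*(\omega^\ell)^n$ at $\hat\xbf$ on the basis $(\mathcal{X}_\L(\hat\xbf), v_1, \ldots, v_{2n})$ with $v_i \in E^+_{\hat\xbf} \oplus E^-_{\hat\xbf} \subset \ker\tau$. Only the $\tau(\mathcal{X}_\L) = 1$ term survives the wedge expansion, leaving $(\omega^\ell)^n_\xbf(p_*v_1,\ldots,p_*v_{2n})$, which is nonzero at every point precisely when $\omega^\ell$ is everywhere non-degenerate, i.e.\ regular in the sense of Definition \ref{def:regular period function}.
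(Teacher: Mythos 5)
Your proof is correct; it reaches the paper's identity by an equivalent but differently packaged computation. The paper pulls $\tau$ back to $G$ along the orbit map $\varphi_{\hat\xbf}$: the $G$-invariance of $\tau$ makes $\varphi_{\hat\xbf}^*\tau$ the left-invariant form $\eta\circ\mathrm{m}$ with $\eta=d\ell(\hat\xbf)$ and $\mathrm{m}$ the Maurer--Cartan form, so $d\tau$ is computed via the Maurer--Cartan equation, matched against $\varphi_\eta^*\omega^{\mathrm{KKS}}$, and descended along the surjective submersion $\varphi_{\hat\xbf}$. Your pointwise identity $\tau_{\hat\xbf}(\overline X)=d\ell(\xbf)(X)$ is exactly the paper's evaluation of $(\varphi_{\hat\xbf}^*\tau)_e$, and your Cartan-formula computation --- coadjoint equivariance of $d\ell$ supplying the two derivative terms, the anti-homomorphism property supplying the bracket term, all collapsing to $-d\ell(\xbf)([X,Y])$ --- is the unpacked form of the Maurer--Cartan step; the sign does match \eqref{eq:KKS symplectic form}. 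What the paper's packaging buys is that upstairs on $G$ the pulled-back form is constant, so only the bracket term survives and no differentiation of $d\ell$ along the orbit is needed; what yours buys is that the vertical/mixed case and the spanning of $T\L$ are made explicit. One small imprecision: the $G$-action on $\L$ is transitive only when $\ell\neq 0$ (the trivial bundle is not homogeneous), but your argument never actually needs transitivity on $\L$ --- only that the fundamental fields surject onto $T\Fpf$ under $dp$, which always holds --- so the identity is established in all cases. The concluding $\tau\wedge(d\tau)^n$ evaluation for the contact criterion is the standard computation that the paper leaves implicit.
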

\begin{proof}
Fix a base point $\hat\xbf\in\L$, and consider the orbit map \[\varphi_{\hat\xbf}:\map{G}{\L}{g}{g\cdot\hat\xbf.}\]
The  value of $\varphi_{\hat\xbf}^*\tau\in\Omega^1(G)$ at $e$ is
\[ \big(\varphi_{\hat\xbf}^*\tau\big)_e(X)=d\ell(\hat\xbf)(X)\,,\qquad \forall X\in\fg\,.\]
To simplify notations, write $\eta:=d\ell(\hat\xbf)\in\g^*$. Since $\varphi_{\hat\xbf}^*\tau$ is a left-invariant 1-form on $G$, we have that 
\[ \varphi_{\hat\xbf}^*\tau = \eta\circ \mathrm{m}\]
where $\mathrm{m}\in\Omega^1(G,\g)$ is the left-invariant Maurer-Cartan form (i.e. $\mathrm{m}_g(\mathbf v)=(d_eL_g)^{-1}(\mathbf v)\in \g$ for $\mathbf v\in T_gG$). The Maurer-Cartan equation tells us that for any $\mathbf v_1,\mathbf v_2\in T_gG$
\[
\big(\varphi_{\hat\xbf}^*d\tau\big)_g(\mathbf v_1,\mathbf v_2)=d\big(\varphi_{\hat\xbf}^*\tau\big)_g(\mathbf v_1,\mathbf v_2)= \big(\eta\circ d\mathrm{m}\big)_g(\mathbf v_1,\mathbf v_2)=-\eta\big( [\mathrm{m}_g(\mathbf v_1),\mathrm{m}_g(\mathbf v_2)]\big)\,.
\]
If we now consider the orbit map
\[ \varphi_\eta:\map{G}{\mathcal O_\eta}{g}{\eta\circ \Ad(g^{-1}),}\]
then we find that 
\[
 \big(\varphi_{\hat\xbf}^*p^*\omega^\ell\big)_g(\mathbf v_1,\mathbf v_2) = \big(\varphi_\eta^*\omega^{\mathrm{KKS}}\big)_g(\mathbf v_1,\mathbf v_2)= -\eta\big( [\mathrm{m}_g(\mathbf v_1),\mathrm{m}_g(\mathbf v_2)]\big)\,.
\]
By virtue of $\varphi_{\hat\xbf}$ being a surjective submersion, we find that $d\tau=p^*\omega^\ell$. In particular, $\tau$ is a contact form iff $\omega^\ell$ is a symplectic form, which is equivalent to $\ell$ being regular.
\end{proof}

\subsection{Geometry of quotient flows} \label{sec:geometry quotient}
\subsubsection{Regularity and multi-densities}
Considering opposite flag manifolds $\F^+,\F^-$ corresponding to subsets $\Theta,\iota_o(\Theta)\subset\Delta$ and $\mathbf s\in\R^\Theta$, it is particularly easy to tell when the flow space $\L_\Theta^\mathbf{s}$ over $\Fpf=\F^+\trtimes\F^-$ is regular, i.e. when its period function is regular.
\begin{prop} \label{prop:regular flow space}
The period function of the flow space $\L_\Theta^\mathbf{s}\to\Fpf$ is regular if and only if $\mathbf s_\alpha\neq 0$ for all $\alpha\in\Theta$.
\end{prop}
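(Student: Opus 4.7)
My plan is to translate regularity of the period function $\ell=\ell_{\L_\Theta^{\mathbf s}}$ into an algebraic condition on a Killing dual element of $\aL_\Theta$, and then relate it to the vanishing of the coordinates $s_\alpha$.  By Proposition \ref{prop:2forms from periods} and the non-degeneracy of $\omega^{\mathrm{KKS}}$ on each coadjoint orbit, $\omega^\ell=d\ell^*\omega^{\mathrm{KKS}}$ is symplectic if and only if the $G$-equivariant map $d\ell:\F^\pitchfork\to \mathcal O_\ell$ is a local diffeomorphism.  Picking a base point $\xbf_0\in\F^\pitchfork$ with stabilizer $L_\Theta$, $G$-equivariance of $d\ell$ reduces this to the coadjoint stabilizer of $d\ell(\xbf_0)$ coinciding with $L_\Theta$ (it always contains $L_\Theta$).

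The next step is to identify $d\ell(\xbf_0)$ explicitly.  Combining equation \eqref{eq:lLs}, Lemma \ref{lem:wmalpha}, and the computation $d_e\mathbf J_{L_{\{\alpha\}}}|_\aL = m_\alpha w_\alpha$ obtained from Section \ref{sec Jordan decomposition within Levi} together with Lemma \ref{lem:walphaaTheta}, one finds that the character $d_e\ell(\xbf_0,-)$ restricts to $\sum_\alpha s_\alpha m_\alpha w_\alpha$ on $\aL$ and vanishes on the $\mathrm{B}_\g$-complement of $\aL_\Theta$ inside $\fl_\Theta$; by Definition \ref{def:infinitesimal period function} it extends by zero on $\fl_\Theta^\perp=\fn^-\oplus\fn^+$.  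Thus the Killing dual $X^{\mathbf s}\in\fg$ of $d\ell(\xbf_0)$ lies in $\aL_\Theta$ and equals
\[
X^{\mathbf s}=\sum_{\alpha\in\Theta} s_\alpha m_\alpha X_{w_\alpha},
\]
where $X_{w_\alpha}\in\aL_{\{\alpha\}}\subset\aL_\Theta$ is the $\mathrm{B}_\g$-dual of $w_\alpha$.  Since the Killing form is $\mathrm{Ad}$-invariant, the coadjoint stabilizer of $d\ell(\xbf_0)$ coincides with $L_{X^{\mathbf s}}$, so regularity is equivalent to $L_{X^{\mathbf s}}=L_\Theta$.

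The necessary direction is immediate.  If $s_{\alpha_0}=0$ for some $\alpha_0\in\Theta$, then $X^{\mathbf s}\in\mathrm{span}\{X_{w_\alpha}:\alpha\in\Theta\setminus\{\alpha_0\}\}=\aL_{\Theta\setminus\{\alpha_0\}}\subsetneq\aL_\Theta$.  Since $L_X=L_{\Theta_X}$ depends only on the set of simple restricted roots non-vanishing at $X\in\aL^+$, one has $L_{X^{\mathbf s}}\supseteq L_{\Theta\setminus\{\alpha_0\}}\supsetneq L_\Theta$ by \eqref{eq:LPincl}, so the period function is not regular.

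For the sufficient direction, assume $s_\alpha\neq 0$ for every $\alpha\in\Theta$; I must show $L_{X^{\mathbf s}}=L_\Theta$.  The orthogonality relations \eqref{eq:def_fundamentalweights} yield $\alpha(X_{w_\beta})=\mathrm{B}_\g(\alpha,w_\beta)=\tfrac12\delta_{\alpha\beta}\mathrm{B}_\g(\alpha,\alpha)$, so $\alpha(X^{\mathbf s})=s_\alpha m_\alpha\cdot\tfrac12\mathrm{B}_\g(\alpha,\alpha)\neq 0$ for every $\alpha\in\Theta$.  This rules out $L_{X^{\mathbf s}}$ being a \emph{standard} Levi subgroup strictly larger than $L_\Theta$.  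The main obstacle will be to upgrade this to $L_{X^{\mathbf s}}=L_\Theta$ by ruling out the possibility that a non-simple restricted root $\beta\in\Sigma$ with $\beta|_{\aL_\Theta}\neq 0$ could still vanish on $X^{\mathbf s}$; I expect to handle this by conjugating $X^{\mathbf s}$ into the closed Weyl chamber $\aL^+$ via a suitable Weyl group element and then applying the correspondence $X\mapsto \Theta_X$ from \eqref{eq:ThetaX} together with the fact that each $X_{w_\alpha}$ already lies in $\aL^+$, so the Weyl-chamber structure of the sum is controlled by the signs of the $s_\alpha m_\alpha$.
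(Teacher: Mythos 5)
Your reduction of regularity to the equality $L_{X^{\mathbf s}}=L_\Theta$, the identification of the Killing dual $X^{\mathbf s}=\sum_{\alpha\in\Theta}s_\alpha m_\alpha X_\alpha\in\aL_\Theta$ of $d\ell(\xbf_0)$, and the necessary direction all agree with the paper's argument and are correct. The problem is the sufficient direction, which you leave open, and the route you sketch does not close it. Knowing that $\alpha(X^{\mathbf s})\neq 0$ for every \emph{simple} root $\alpha\in\Theta$ does not prevent a non-simple root $\beta\in\Sigma^+_\Theta$ from vanishing on $X^{\mathbf s}$ once the $s_\alpha$ are allowed to have mixed signs, and conjugating $X^{\mathbf s}$ into $\aL^+$ cannot repair this: the conjugate then sits in a wall of the Weyl chamber, so the correspondence $X\mapsto\Theta_X$ of \eqref{eq:ThetaX} returns a strictly smaller set and hence, by \eqref{eq:LPincl}, a strictly larger Levi. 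Concretely, take $G=\SL(3,\R)$, $\Theta=\Delta=\{\alpha_1,\alpha_2\}$ and $\mathbf s=(1,-1)$. Since $m_{\alpha_1}=m_{\alpha_2}=3$, one computes $X^{\mathbf s}=3(X_{\alpha_1}-X_{\alpha_2})\in\R_{>0}\cdot\mathrm{Diag}(1,-2,1)$, so $(\alpha_1+\alpha_2)(X^{\mathbf s})=0$ and $\fl_{X^{\mathbf s}}$ contains $\g_{\pm(\alpha_1+\alpha_2)}$; thus $L_{X^{\mathbf s}}\supsetneq L_\Delta$ and the period function is \emph{not} regular even though $s_1,s_2\neq 0$. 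So the implication you are missing is not merely unproved, it fails in general, and the obstacle you isolated is precisely the step that the paper's own proof disposes of with the unargued assertion $L_{X^{\mathbf s}}=L_{\Theta'}$, $\Theta'=\set{\alpha\in\Theta}{s_\alpha\neq 0}$.

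What your computation does establish is the correct characterization: regularity is equivalent to $\beta(X^{\mathbf s})\neq 0$, i.e.\ $\Bg(w^{\mathbf s}_\Theta,\beta)\neq 0$, for \emph{every} $\beta\in\Sigma^+_\Theta$ (injectivity of $\ad_{X^{\mathbf s}}$ on $\fn_{\fp^+}\oplus\fn_{\fp^-}$), which is strictly stronger than $s_\alpha\neq 0$ for all $\alpha$ as soon as some restricted root involves two elements of $\Theta$. Your argument does go through on the cone $\R_{>0}^\Theta$ (and on $\R_{<0}^\Theta$): writing $\beta=\sum_{\alpha\in\Delta}n_\alpha\alpha$ with all $n_\alpha\geq 0$, one gets $\beta(X^{\mathbf s})=\sum_{\alpha\in\Theta}n_\alpha s_\alpha m_\alpha\tfrac12\Bg(\alpha,\alpha)>0$ whenever $\beta\vert_{\aL_\Theta}\neq 0$. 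To complete the proof you would either need to restrict to parameters of constant sign, or replace the condition ``$s_\alpha\neq 0$ for all $\alpha\in\Theta$'' by ``$w^{\mathbf s}_\Theta$ lies on no root hyperplane $\ker\Bg(\,\cdot\,,\beta)$, $\beta\in\Sigma^+_\Theta$''.
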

\begin{proof}
Let us fix a restricted root space decomposition, so that we are in the setting of Section \ref{sec:classification}. Fix some $\xbf\in\Fpf$ such that $\Stab_G(\xbf)=L_\Theta$. By \eqref{eq:lLs} and  Lemma \ref{lem:wmalpha} we have that
\[ \ell_{\L_\Theta^{\mathbf s}}(\xbf,\exp(X))=\sum_{\alpha\in\Theta}s_\alpha m_\alpha \mathrm{B}_\g(X_\alpha,X)\,,\quad \forall X\in\fl_\Theta\,,\]
where $X_\alpha$ is the co-root of $\alpha\in\Delta$ and $m_\alpha>0$. So $X:=\sum_{\alpha\in\Theta}s_\alpha m_\alpha X_\alpha$ is the Killing dual of $d\ell_{\L^{\mathbf s}_\Theta}(\xbf)$, and the map $d\ell_{\L^{\mathbf s}_\Theta}:\Fpf\to\mathcal O_{\ell_{\L^{\mathbf s}_\Theta}}$ is simply the projection $G/L_\Theta\to G/L_X$, so the flow space $\L_\Theta^{\mathbf s}$ is regular if and only if $L_X=L_\Theta$. The result follows from $L_X=L_{\Theta'}$ where $\Theta'=\set{\alpha\in\Theta}{s_\alpha\neq 0}$. 
\end{proof}

\subsubsection{Contact realization of refraction flows} \label{sec:contact refraction flows}

\begin{thm} \label{thm:contact flow}
Consider some transverse flag space $\Fpf$ and the associated subset $\Theta\subset\Delta$. Let $\Gamma<G$ be a torsion-free $\Theta$-Anosov subgroup, and $\mathbf s\in \R^\Theta$ a $(\Gamma,\Theta)$-admissible parameter. If $s_\alpha\neq 0$ for all $\alpha\in\Theta$, then the flow $\phi^t$ on the quotient manifold $\mathcal M_{\Gamma,\Theta}^{\mathbf s}$ is the Reeb flow of a contact form.

If it is not the case, there is a subset $\Theta'\subset\Theta$ and a $(\Gamma,\Theta')$-admissible parameter $\mathbf s'\in\R^{\Theta'}$ such that the flow on $\mathcal M_{\Gamma,\Theta'}^{\mathbf s'}$ is the Reeb flow of a contact form and there is a flow-equivariant surjective submersion $\mathcal M_{\Gamma,\Theta}^{\mathbf s}\to\mathcal M_{\Gamma,\Theta'}^{\mathbf s'}$ that restricts to  a homeomorphism $\mathcal K_{\Gamma,\Theta}^{\mathbf s}\to\mathcal K_{\Gamma,\Theta'}^{\mathbf s'}$.
\end{thm}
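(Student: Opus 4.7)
The statement splits into a contact case and a reduction.

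\emph{Contact case.} If $s_\alpha\neq 0$ for every $\alpha\in\Theta$, Proposition \ref{prop:regular flow space} ensures regularity of $\ell_{\L^{\mathbf s}_\Theta}$, and Proposition \ref{prop:derivative canonical 1-form} then shows the canonical $1$-form $\tau\in\Omega^1(\L^{\mathbf s}_\Theta)$ from Definition \ref{def:canonical 1-form} is a contact form satisfying $d\tau=p^\ast\omega^\ell$. Being $G$-invariant, $\tau$ descends to $\mathcal M^{\mathbf s}_{\Gamma,\Theta}$ and remains a contact form. The flow generator $\mathcal X_\L$ satisfies $\tau(\mathcal X_\L)=1$ by definition and $\iota_{\mathcal X_\L}d\tau=\iota_{\mathcal X_\L}p^\ast\omega^\ell=0$ since $\mathcal X_\L$ is $p$-vertical, so $\mathcal X_\L$ is the Reeb field of $\tau$.

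\emph{Reduction.} In the non-regular case, I set $\Theta':=\{\alpha\in\Theta:s_\alpha\neq 0\}$ and let $\mathbf s'\in\R^{\Theta'}$ be the restriction of $\mathbf s$. Then $\Gamma$ is automatically $\Theta'$-Anosov (the Anosov inequality over $\Theta$ passes to $\Theta'\subset\Theta$), and because $w^{\mathbf s'}_{\Theta'}=w^{\mathbf s}_\Theta$ as linear forms on $\aL$ (the zero terms drop out), admissibility transfers; moreover $\Theta'\neq\emptyset$ since $\mathcal L(\Gamma)\neq\{0\}$. The contact case applied to $(\Theta',\mathbf s')$ produces a Reeb contact flow on $\mathcal M^{\mathbf s'}_{\Gamma,\Theta'}$. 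Proposition \ref{prop domination standard flag manifolds} yields a $G$-equivariant projection $\pi:\Fpf_\Theta\to\Fpf_{\Theta'}$; naturality of the Jacobian period functions $\mathbf J^\alpha$ under projections to simple flag manifolds gives $\ell_{\pi^\ast\L^{\mathbf s'}_{\Theta'}}=\ell_{\L^{\mathbf s}_\Theta}$, so $\L^{\mathbf s}_\Theta\cong\pi^\ast\L^{\mathbf s'}_{\Theta'}$ by Corollary \ref{cor:isocor}, providing a canonical $G$- and $\R$-equivariant (therefore flow-equivariant) submersion $\pi_\L:\L^{\mathbf s}_\Theta\to\L^{\mathbf s'}_{\Theta'}$.

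\emph{Descent to basic sets.} By Corollary \ref{cor domination and opposition}, $\pi$ restricts to a bijection between the limit sets $\xi^\pitchfork_\Theta(\partial_\infty\Gamma^{(2)})$ and $\xi^\pitchfork_{\Theta'}(\partial_\infty\Gamma^{(2)})$, so $\pi_\L$ restricts to a $\Gamma$-equivariant $\R$-principal bundle isomorphism between the lifts of these, descending to the required homeomorphism $\mathcal K^{\mathbf s}_{\Gamma,\Theta}\to\mathcal K^{\mathbf s'}_{\Gamma,\Theta'}$ on basic sets.

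\emph{Main obstacle.} The delicate point is surjectivity of $\mathcal M^{\mathbf s}_{\Gamma,\Theta}\to\mathcal M^{\mathbf s'}_{\Gamma,\Theta'}$ on the full quotients: a lift of a pair transverse in $\Fpf_{\Theta'}$ need not be transverse in $\Fpf_\Theta$, so $\pi$ need not carry $\Omega_\Gamma^\Theta$ onto $\Omega_\Gamma^{\Theta'}$. My plan is to exploit the non-uniqueness left by Theorem \ref{introthm:(G,X)-structure} and enlarge $\widetilde{\mathcal M}^{\mathbf s}_{\Gamma,\Theta}$ to $\pi_\L^{-1}(\widetilde{\mathcal M}^{\mathbf s'}_{\Gamma,\Theta'})$; proper discontinuity of the $\Gamma$-action on this enlargement follows by pulling back neighborhoods witnessing proper discontinuity in the target via the $\Gamma$-equivariant map $\pi_\L$. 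With this enlarged domain the induced quotient map is surjective by construction, while the basic-set homeomorphism of the previous paragraph is unaffected.
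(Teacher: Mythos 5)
Your treatment of the regular case and most of the reduction coincide with the paper's proof: the paper likewise combines Propositions \ref{prop:regular flow space} and \ref{prop:derivative canonical 1-form}, takes $\Theta'=\set{\alpha\in\Theta}{s_\alpha\neq 0}$ with $\mathbf s'$ the restriction of $\mathbf s$, and uses the projection $\pi:\Fpf\to\F^{\pitchfork}_{\Theta'}$ together with the compatibility $\xi^{\pitchfork}_{\Theta'}=\pi\circ\xi^{\pitchfork}$ to produce the bundle map (written there explicitly on multidensities rather than via period functions, an immaterial difference) and the homeomorphism of basic sets. Your explicit verification that the generator is the Reeb field of $\tau$ is a welcome addition.

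The divergence is at the surjectivity step, and there your proposal does not prove the theorem as stated. The paper claims that the bundle map carries $\tilde\M^{\mathbf s}_{\Gamma,\Theta}=p^{-1}(\Omega_\Gamma)$ \emph{onto} $\tilde\M^{\mathbf s'}_{\Gamma,\Theta'}$, deducing this from the relation $\xi^{\pitchfork}_{\Theta'}=\pi\circ\xi^{\pitchfork}$; equivalently, writing $\Omega'_\Gamma\subset\F^{\pitchfork}_{\Theta'}$ for the analogous domain, it claims $\pi(\Omega_\Gamma)\supset\Omega'_\Gamma$. You correctly identify this as the delicate point (transversality in $\F^{\pitchfork}_{\Theta'}$ does not lift for free to $\Fpf$), but your fix — replacing $\tilde\M^{\mathbf s}_{\Gamma,\Theta}$ by the a priori larger set $\pi_{\L}^{-1}\big(\tilde\M^{\mathbf s'}_{\Gamma,\Theta'}\big)$ — changes the object the theorem is about. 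The manifold $\M^{\mathbf s}_{\Gamma,\Theta}$ is fixed in Section \ref{sec:dynamics} as $\Gamma\backslash p^{-1}(\Omega_\Gamma)$ and is the same manifold appearing in Theorems \ref{introthm:A} and \ref{introthm:B}; exhibiting a surjection from a different (larger) quotient does not yield the claimed surjective submersion $\M^{\mathbf s}_{\Gamma,\Theta}\to\M^{\mathbf s'}_{\Gamma,\Theta'}$. (Your pullback argument does establish proper discontinuity on the enlargement, exactly as in Corollary \ref{cor:domain of proper discontinuity multiflow space}, but that proves a variant of the statement rather than the statement.) To close the gap you must show that every $(\mathbf y^+,\mathbf y^-)\in\Omega'_\Gamma$ admits a lift $(\mathbf x^+,\mathbf x^-)\in\Omega_\Gamma$; concretely, using that the closed sets $A=\set{z}{\mathbf y^+\not\pitchfork\xi^-_{\Theta'}(z)}$ and $B=\set{z}{\xi^+_{\Theta'}(z)\not\pitchfork\mathbf y^-}$ are disjoint, one needs lifts $\mathbf x^+$, $\mathbf x^-$ transverse to $\xi^-(z)$, resp.\ $\xi^+(z)$, for all $z$ in complementary closed sets containing $B$, resp.\ $A$. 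This lifting statement is the actual content of the paper's ``onto'' assertion and cannot be bypassed by redefining the domain.
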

\begin{rem}\label{rem:refraction flows are contact} Note that a $\Theta$-Anosov subgroup is also $\Theta'$-Anosov for any $\Theta'\subset\Theta$. In particular, Theorem \ref{thm:contact flow} tells us that every refraction flow can be realized as the restriction of a contact Axiom A flow to its non-wandering set. 
\end{rem}
\begin{proof}
If $s_\alpha\neq 0$ for all $\alpha\in\Theta$, the result follows from Propositions \ref{prop:derivative canonical 1-form} and \ref{prop:regular flow space}. Now assume that $\Theta'=\set{\alpha\in\Theta}{s_\alpha\neq 0}\neq \Theta$, set $\mathbf s'=(s_\alpha)_{\alpha\in\Theta'}$ and consider the transverse flag space $\F_{\Theta'}^{\pitchfork}=\F_{\Theta'}\trtimes\F_{\iota_o(\Theta')}$. Then $\Gamma$ is $\Theta'$-Anosov, $\mathbf s'$ is $(\Gamma,\Theta')$-regular and by the first case the flow on the quotient manifold $\mathcal M^{\mathbf s'}_{\Gamma,\Theta'}$ is the Reeb flow of a contact form. The projection $\pi:\Fpf\to \F_{\Theta'}^{\pitchfork}$ induces a $G\times\R$-invariant map 
\[ \tilde f: \map{\L^{\mathbf s}_\Theta}{\L^{\mathbf s'}_{\Theta'}}{\big( (\mu^+_\alpha)_{\alpha\in\Theta},(\mu^-_\alpha)_{\alpha\in\Theta}\big)}{\big( (\mu^+_\alpha)_{\alpha\in\Theta'},(\mu^-_\alpha)_{\alpha\in\Theta'}\big)}\]
making the following diagram commutative
\begin{center}
\begin{tikzcd}
\L^{\mathbf s}_\Theta \arrow[d,swap, "p"]  \arrow[r, "\tilde f"] & \L_{\Theta'}^{\mathbf s'} \arrow[d,"p'"] \\
  \Fpf \arrow[r,"\pi"] & \F_{\Theta'}^{\mathbf s'}
\end{tikzcd}
\end{center}
The transverse limit maps $\xi^\pitchfork:\partial_\infty\Gamma^{(2)}\to \Fpf$ and $\xi_{\Theta'}^\pitchfork:\partial_\infty\Gamma^{(2)}\to \F_{\Theta'}^{\pitchfork}$  satify $\xi_{\Theta'}^\pitchfork=\pi\circ \xi^\pitchfork$ (because of Lemma \ref{lem:proxdomination}). This implies that $\tilde f$ maps $\widetilde{\mathcal M}^{\mathbf s}_{\Gamma,\Theta}$ onto  $\widetilde{\mathcal M}^{\mathbf s'}_{\Gamma,\Theta'}$ and $\widetilde{\mathcal K}^{\mathbf s}_{\Gamma,\Theta}$ onto $\widetilde{\mathcal K}^{\mathbf s'}_{\Gamma,\Theta'}$, inducing a flow-equivariant surjective submersion $f:\mathcal M_{\Gamma,\Theta}^{\mathbf s}\to\mathcal M_{\Gamma,\Theta'}^{\mathbf s'}$ that maps $\mathcal K^{\mathbf s}_{\Gamma,\Theta}$ onto $\mathcal K^{\mathbf s'}_{\Gamma,\Theta'}$. This restriction is also injective because of the flow-equivariance, so it must be a homeomorphism by compactness of $\mathcal K^{\mathbf s}_{\Gamma,\Theta}$.
\end{proof}

\section{Towers of homogeneous spaces}\label{sec:towers}

Given a $\Theta$-Anosov subgroup $\Gamma<G$, the space of $(\Gamma,\Theta)$-admissible parameters $\sbf\in\R^\Theta$ forms an open convex cone parameterizing a family of real analytic Axiom A flows. It is possible to assemble them all into a single dynamical system consisting of an action of the additive group $\R^\Theta$, also called a multiflow.  If $\lvert \Theta \rvert\geq 2,$ the natural invariant set will be a non-compact principal $\R^{|\Theta|-1}$-bundle over the basic hyperbolic set $\mathcal{K}_{\Gamma, \Theta}^{\mathbf{s}}.$

\subsection{The density multiflow space}

Consider opposite flag manifolds $\F^+,\F^-$ associated to subsets $\Theta,\iota_o(\Theta)\subset\Delta$ respectively, and let $\Fpf=\F^+\trtimes\F^-$ be the transverse flag space. Instead of considering multidensities on $\F^+$ and $\F^-$ as we did for the flow spaces in Section \ref{sec:flowspaces}, we will keep track of individual densities on each simple flag manifold dominated by $\F^+$ and $\F^-$. Recall from Corollary \ref{cor domination and opposition} that for every $\alpha\in \Theta$ we have a $G$-equivariant projection $\pi_\alpha:\Fpf\to \F^\pitchfork_\alpha=\F_\alpha\trtimes\F_{\iota_o(\alpha)}$.

\begin{definition} \label{def:multiflow space}
The multiflow space over $\Fpf$ is  the $\R^\Theta$-principal bundle
\[ \Wbb_\Theta:=\prod_{\alpha\in\Theta}\pi_\alpha^\ast \L^{1}_{\{\alpha\}}\,,\]
that is the bundle $\hat p: \Wbb_\Theta\to \Fpf$  with fiber
\[ \big(\Wbb_\Theta\big)_\xbf=\set{ (\nu_\alpha)_{\alpha\in\Theta}}{\nu_\alpha\in \big(\L_{\{\alpha\}}^{1}\big)_{\pi_\alpha(\xbf)}}\]
over $\xbf\in \Fpf$, and $\R^\Theta$-action defined by
\[ \big(\nu_\alpha\big)_{\alpha\in\Theta} \cdot \mathbf t = \big(\phi^{t_\alpha}(\nu_\alpha)\big)_{\alpha\in\Theta}\,,\qquad \forall\, \mathbf t=(t_\alpha)_{\alpha\in \Theta}\in\R^\Theta\,.\]
\end{definition}

\begin{lem}\label{lem:projection multiflow to flow}
For every $\mathbf s\in\R^\Theta$, there is a commutative diagram
\begin{center}
\begin{tikzcd}[row sep=tiny, column sep = tiny]
\Wbb_\Theta \arrow[drr,bend left," \Pi_\Theta^{\mathbf s}"] \arrow[ddr,bend right,swap, "\hat p"] & & \\& & \L_\Theta^{\mathbf s} \arrow[dl,bend left,"p"] \\
 & \Fpf &
\end{tikzcd}
\end{center}
of $G$-equivariant fiber bundles, where $\hat p$ and $p$ denote the projections, and $\Pi_\Theta^{\mathbf s}$ satisfies the equivariance property:
\[ \Pi_\Theta^{\mathbf s}(\nu)\cdot \mathbf t= \phi^{\mathbf t\cdot \mathbf s}\left( \Pi_\Theta^{\mathbf s}(\nu)\right)\,,\quad \forall\nu\in\Wbb_{\Theta}\, \forall\,\mathbf t\in\R^\Theta\,,\]
where $\mathbf t\cdot \mathbf s=\sum_{\alpha\in\Theta}t_\alpha s_\alpha$.
\end{lem}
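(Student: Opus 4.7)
The plan is to construct $\Pi_\Theta^{\mathbf s}$ explicitly using the identification from Example \ref{example product of densities gives a multidensity} between tensor products of single-factor density spaces and multidensity spaces. Given $\nu=(\nu_\alpha)_{\alpha\in\Theta}\in\Wbb_\Theta$ over $\xbf\in\Fpf$, each factor $\nu_\alpha=(\mu^+_\alpha,\mu^-_\alpha)\in\L^1_{\{\alpha\}}$ consists of a pair of positive $1$-densities on the tangent spaces $T_{\Pi^{\F^\pm}_\alpha(\mathbf x^\pm)}\F^\pm_\alpha$ satisfying $\mu^+_\alpha\cdot\mu^-_\alpha=1$. Since positive densities admit canonical real powers (the transformation law $|\det g|^{s}\cdot|\det g|^{s'}=|\det g|^{s+s'}$ is compatible with exponentiation), one can form the positive $s_\alpha$-densities $(\mu^\pm_\alpha)^{s_\alpha}$, and then set
\[
\Pi_\Theta^{\mathbf s}(\nu):=\left(\prod_{\alpha\in\Theta}(\mu^+_\alpha)^{s_\alpha},\;\prod_{\alpha\in\Theta}(\mu^-_\alpha)^{s_\alpha}\right)\in\mathscr D^{\mathbf s}_{>0}\F^+_\bullet\boxplus\mathscr D^{\mathbf s}_{>0}\F^-_\bullet,
\]
where the products are the $\mathbf s$-multidensities on $E^{\F^\pm}_\bullet$ defined via \eqref{eq: product of density gives a multi-density}. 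This manifestly covers $\hat p=p\circ\Pi_\Theta^{\mathbf s}$ and is $G$-equivariant, since the projections $\pi_\alpha$, the operation of raising to a scalar power, and the tensor product of densities are all $G$-equivariant.

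Next I would verify that $\Pi_\Theta^{\mathbf s}(\nu)$ actually lies in $\L_\Theta^{\mathbf s}$, i.e.\ that the two factors pair to $1$. Choosing any bases $(\mathbf e^+_\alpha)_{\alpha\in\Theta}$ and $(\mathbf e^-_\alpha)_{\alpha\in\Theta}$ of the graded tangent spaces over $\mathbf x^\pm$, one applies the definition \eqref{eq pairing multidensities} and uses the scalar identity $\mathrm{vol}^{s_\alpha}_{\F^\pitchfork_\alpha}=\bigl(\mathrm{vol}^1_{\F^\pitchfork_\alpha}\bigr)^{s_\alpha}$ (Example \ref{example pseudo-riemannian density}) to compute
\[
\prod_\alpha(\mu^+_\alpha)^{s_\alpha}\cdot\prod_\alpha(\mu^-_\alpha)^{s_\alpha}=\prod_\alpha\left(\frac{\mu^+_\alpha(\mathbf e^+_\alpha)\,\mu^-_\alpha(\mathbf e^-_\alpha)}{\mathrm{vol}^1_{\F^\pitchfork_\alpha}(\mathbf e^+_\alpha,\mathbf e^-_\alpha)}\right)^{s_\alpha}=\prod_\alpha(\mu^+_\alpha\cdot\mu^-_\alpha)^{s_\alpha}=1,
\]
where the last equality uses the constraint defining $\L^1_{\{\alpha\}}$.

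Finally, I would check the equivariance property. The $\R^\Theta$-action on $\Wbb_\Theta$ sends $\nu_\alpha$ to $(e^{-t_\alpha}\mu^+_\alpha,e^{t_\alpha}\mu^-_\alpha)$, so raising each modified factor to the $s_\alpha$-th power scales $(\mu^\pm_\alpha)^{s_\alpha}$ by $e^{\mp s_\alpha t_\alpha}$. Taking the product over $\alpha$ collects all these scalar factors into $e^{\mp\mathbf t\cdot\mathbf s}$, which by Definition \ref{def:LO} is exactly the action of $\phi^{\mathbf t\cdot\mathbf s}$ on $\L_\Theta^{\mathbf s}$, giving $\Pi_\Theta^{\mathbf s}(\nu\cdot\mathbf t)=\phi^{\mathbf t\cdot\mathbf s}(\Pi_\Theta^{\mathbf s}(\nu))$.

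There is no serious obstacle: the proof is a direct unwinding of the definitions of multidensity pairings and the construction of the multiflow space. The only subtlety worth noting explicitly is the elementary fact that raising a positive density to a real power respects the multidensity transformation law, which is what allows the passage between the single-density description encoded in $\Wbb_\Theta$ and the $\mathbf s$-multidensity description of $\L_\Theta^{\mathbf s}$.
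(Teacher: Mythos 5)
Your proof is correct and follows essentially the same route as the paper: define $\Pi_\Theta^{\mathbf s}$ by raising each $1$-density factor to the power $s_\alpha$ and taking the product multidensity, then verify the pairing condition and the $\R^\Theta$-equivariance by direct computation. Your explicit check that the pairing equals $1$ (via $\mathrm{vol}^{s_\alpha}_{\F^\pitchfork_\alpha}=(\mathrm{vol}^{1}_{\F^\pitchfork_\alpha})^{s_\alpha}$) is a small but welcome elaboration of a step the paper merely asserts.
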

\begin{proof}The construction that we present here uses (multi)densities and is an exercise in associativity of various product expansions; an algebraic version on the level of homogeneous coset spaces can be inferred from the description of stabilizers in the next section.

Consider $\nu\in \Wbb_\Theta$, let $\xbf=\hat p(\nu)\in\Fpf$ and write $\pi_\alpha(\xbf)=(\xbf_\alpha^+,\xbf_\alpha^-)$ for $\alpha\in\Theta$. By definition, we have that $\nu=(\nu_\alpha)_{\alpha\in\Theta}$ where $\nu_\alpha\in\L^1_{\{\alpha\}}$, that is $\nu_\alpha=(\nu_\alpha^+,\nu_\alpha^-)$ where $\nu_\alpha^+\in \mathscr D^1_{>0}\cF_{\alpha}$ and $\nu_\alpha^-\in \mathscr D^1_{>0}\cF_{\iota_o(\alpha)}$ are positive 1-densities with $\nu_\alpha^+\cdot \nu_\alpha^-=1$. In summary:
\[ \nu =(\nu_\alpha)_{\alpha\in\Theta}= (  \nu_\alpha^+,\nu_\alpha^-)_{\alpha\in\Theta}\,. \]
From  the $s_\alpha$-densities $(\nu_\alpha^+)^{s_\alpha}\in\mathscr D^{s_\alpha}_{>0}\F_\alpha$ and $(\nu_\alpha^-)^{s_\alpha}\in\mathscr D^{s_\alpha}_{>0}\F_{\iota_o(\alpha)}$ we assemble  the multidensities $\nu^{\pm\mathbf{s}}:=\prod_{\alpha\in\Theta}(\nu_\alpha^\pm)^{s_\alpha}\in\mathscr D^{\sbf}_{>0}\F^\pm_\bullet$, which satisfy $\nu^{+\mathbf{s}}\cdot \nu^{-\mathbf{s}}=1$, and set
\[ \Pi_\Theta^{\mathbf s}(\nu) := (\nu^{+\mathbf{s}},\nu^{-\mathbf{s}})\in \L_\Theta^{\mathbf s}\,. \]
For $\mathbf t\in\R^\Theta$, expressing $\phi^{t_\alpha}(\nu_\alpha)=(e^{-t_\alpha}\nu^+_\alpha,e^{t_\alpha}\nu^-_\alpha)$  we find
\begin{align*}
\Pi_\Theta^{\mathbf s}(\nu\cdot \mathbf t) &= \Big( \prod_{\alpha\in\Theta}(e^{-t_\alpha}\nu_\alpha^+)^{s_\alpha},\prod_{\alpha\in\Theta}(e^{t_\alpha}\nu_\alpha^-)^{s_\alpha}\Big)\\
&= \Big( \prod_{\alpha\in\Theta}e^{-t_\alpha s_\alpha}(\nu_\alpha^+)^{s_\alpha},\prod_{\alpha\in\Theta}e^{t_\alpha s_\alpha}(\nu_\alpha^-)^{s_\alpha}\Big)\\
&=\Big(e^{-\mathbf t\cdot\mathbf s} \prod_{\alpha\in\Theta}(\nu_\alpha^+)^{s_\alpha},e^{\mathbf t\cdot\mathbf s}\prod_{\alpha\in\Theta}(\nu_\alpha^-)^{s_\alpha}\Big)\\
&= \phi^{\mathbf t\cdot \mathbf s}\left( \Pi_\Theta^{\mathbf s}(\nu)\right)\,.\vspace*{-1em}
\end{align*}
\end{proof}

\begin{cor} \label{cor:domain of proper discontinuity multiflow space}
Suppose $\Gamma<G$ is $\Theta$-Anosov. Then $\Gamma$ acts properly discontinuously on the open subset $\hat p^{-1}(\Omega_\Gamma)$.
\end{cor}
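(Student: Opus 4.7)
The plan is to reduce to Theorem \ref{thm: prop disc general case} via the $G$-equivariant projection $\Pi_\Theta^{\mathbf{s}}:\Wbb_\Theta\to\L_\Theta^{\mathbf{s}}$ constructed in Lemma \ref{lem:projection multiflow to flow}. First I would invoke Remark \ref{rem:existence regular parameters} to pick any $(\Gamma,\Theta)$-admissible parameter $\mathbf{s}\in\R^\Theta$, for instance $\mathbf{s}=(1,\dots,1)\in\R_{>0}^\Theta$. The commutative diagram of Lemma \ref{lem:projection multiflow to flow} yields $\Pi_\Theta^{\mathbf{s}}(\hat p^{-1}(\Omega_\Gamma))\subset p^{-1}(\Omega_\Gamma)=\widetilde{\M}_{\Gamma,\Theta}^{\mathbf{s}}$, so $\Pi_\Theta^{\mathbf{s}}$ restricts to a continuous $\Gamma$-equivariant map $\hat p^{-1}(\Omega_\Gamma)\to\widetilde{\M}_{\Gamma,\Theta}^{\mathbf{s}}$.

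The core of the argument is then a general soft topological principle: a continuous $\Gamma$-equivariant map $f:X\to Y$ pulls back proper discontinuity on $Y$ to proper discontinuity on $X$. Indeed, given $x\in X$, by proper discontinuity on $Y$ there is a neighbourhood $V\ni f(x)$ with $\{\gamma\in\Gamma\mid \gamma V\cap V\neq\emptyset\}$ finite; setting $U:=f^{-1}(V)$, equivariance of $f$ yields the inclusion $\{\gamma\in\Gamma\mid \gamma U\cap U\neq\emptyset\}\subset\{\gamma\in\Gamma\mid\gamma V\cap V\neq\emptyset\}$, which is therefore also finite. Applying this with $f=\Pi_\Theta^{\mathbf{s}}$, and using Theorem \ref{thm: prop disc general case} for the proper discontinuity of $\Gamma$ on $\widetilde{\M}_{\Gamma,\Theta}^{\mathbf{s}}$, delivers the desired proper discontinuity of $\Gamma$ on $\hat p^{-1}(\Omega_\Gamma)$.

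Since all of the genuinely dynamical content is absorbed by Theorem \ref{thm: prop disc general case}, there is no essential obstacle here: the proof is a purely formal pull-back, and the only care needed is to ensure that some admissible $\mathbf{s}$ exists (provided by $\R_{>0}^\Theta$) so that Theorem \ref{thm: prop disc general case} is actually applicable.
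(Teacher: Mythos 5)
Your proposal is correct and follows the same route as the paper: choose an admissible parameter such as $\mathbf{s}=(1,\dots,1)$, observe that $\Pi_\Theta^{\mathbf{s}}$ restricts to a continuous $\Gamma$-equivariant map $\hat p^{-1}(\Omega_\Gamma)\to\widetilde{\M}_{\Gamma,\Theta}^{\mathbf{s}}$, and pull back proper discontinuity from Theorem \ref{thm: prop disc general case}. The only cosmetic remark is that if one works with the compact-set formulation of proper discontinuity, the same one-line inclusion $\{\gamma \mid \gamma K\cap K\neq\emptyset\}\subset\{\gamma \mid \gamma f(K)\cap f(K)\neq\emptyset\}$ for $K$ compact does the job directly, avoiding any concern about the pointwise-neighbourhood variant.
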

\begin{proof}
Consider some $(\Gamma,\Theta)$-admissible parameter $\mathbf s$ (e.g. $s_\alpha=1$ for every $\alpha$). Then $\Pi_\Theta^{\mathbf s}$ maps $\hat p^{-1}(\Omega_\Gamma)$ onto $\widetilde{\mathcal M}^{\mathbf s}_{\Gamma,\Theta}$ in a $\Gamma$-equivariant way, so the proper discontinuity on $\widetilde{\mathcal M}^{\mathbf s}_{\Gamma,\Theta}$ implies the same property on $\hat p^{-1}(\Omega_\Gamma)$.
\end{proof}
\begin{rem} As we will see in Section \ref{sec:symmetric space shit}, the $G$-action on $\Wbb_\Delta$ is proper, so any discrete subgroup $\Gamma<G$ acts properly discontinuously on $\Wbb_\Delta$. This is not the case when $\Theta\neq \Delta$ and $G$ has rank at least two.
\end{rem}

We can therefore consider the quotient manifold $\mathcal{N}_{\Gamma,\Theta}:=\Gamma\backslash \hat p^{-1}(\Omega_\Gamma)$. The equivariance formula in Lemma \ref{lem:projection multiflow to flow} shows that $\mathcal{N}_{\Gamma,\Theta}$ (resp. $\widehat{\mathcal K}_{\Gamma,\Theta}:=\Gamma\backslash\hat p^{-1}(\Lambda^\pitchfork_\Gamma)$) is a principal $\mathbf s^\perp\simeq \R^{\lvert \Theta\rvert -1}$-bundle over $\mathcal M^{\mathbf s}_{\Gamma,\Theta}$ (resp. $\mathcal K^{\mathbf s}_{\Gamma,\Theta}$). In particular, $\widehat{\mathcal K}_{\Gamma,\Theta}$ is never compact when $\lvert\Theta\rvert \geq 2$.

\subsection{Algebraic description of homogeneous spaces}
The multiflow space $\Wbb_{\Theta}$ is a $G$-homogeneous space: given $\xbf\in\F_{\Theta}^{\pitchfork}$ with $L_\Theta=\Stab_G(\xbf)$, the action of $g\in L_\Theta$ on the fiber over $\xbf$ is given by
\[ g\cdot (\nu_\alpha)_{\alpha\in\Theta}= \big(e^{\mathbf J_{\Fpf}^\alpha(\xbf,g)}\nu_\alpha  \big)\,,\quad \forall (\nu_\alpha)_{\alpha\in\Theta}\in\big(\Wbb_{\Theta}\big)_\xbf\,,  \]
thus showing that the split center $Z_{\mathrm{split}}(L_{\Theta})$ of $L_\Theta=\Stab_G(\xbf)$ acts transitively on $\big(\Wbb_{\Theta}\big)_\xbf$. Recall the split central projection $\lambda_{L_\Theta}^Z\in\Hom(L_{\Theta},\fz_{\mathrm{split}}(\fl_{\Theta}))$ and its reductive kernel $M_{\Theta}=\ker\lambda_{L_\Theta}^Z$. We can identify the projection $\hat p$ with the natural projection
\[ \Wbb_{\Theta}\simeq G/M_{\Theta} \longrightarrow G/L_{\Theta}\simeq \F_{\Theta}^{\pitchfork}\,.\]
The intermediate homogeneous space $\L_\Theta^{\mathbf s}$ corresponds to $G/H_\bbf$ where $H_\bbf=\ker\mathbf{b}$ for some $\mathbf{b}\in\Hom(L_{\Theta},\R)$, and the map $\Pi_\Theta^{\mathbf s}:\Wbb_{\Theta}\to \L_\Theta^{\mathbf s}$ also corresponds to the natural map given by the chain of inclusions $M_{\Theta}<H_\bbf <L_{\Theta}$.

\subsection{Connection with the Riemannian symmetric space} \label{sec:symmetric space shit}
When $\Theta=\Delta$, i.e. when the flag manifolds $\F^+\simeq\F^-$ are isomorphic to quotients $G/P$ by a minimal parabolic subgroup $P<G$, the reductive subgroup $M=M_L<L$ in the Langlands decomposition $P=MAN$ is compact. In this setting, the action of $A\simeq \R^\Delta$ on $G/M$ is known as the \emph{Weyl chamber flow}, and corresponds to the action on the space of parametrized flats of the Riemannian symmetric space $\mathbb X$ of $G$.

For a general $\Theta\subset \Delta$, consider a maximal compact subgroup $K_{\Theta}<L_{\Theta}$, and a maximal compact subgroup $K<G$ containing $K_{\Theta}$. Note that $K_{\Theta}<M_{\Theta}$, thus adding another homogeneous space in the chain of projections
\[ G/K_{\Theta}\to \Wbb_\Theta\simeq G/M_{\Theta}\to \L_\Theta^{\mathbf s}\simeq G/H_\bbf\to \F_{\Theta}^{\pitchfork}\simeq G/L_{\Theta}\,.\]
It is notable that only the initial and terminal spaces in this list have a known geometric interpretation within the Riemannian symmetric space $\mathbb X$. Indeed, the subgroup $K<G$ is the stabilizer of a point $x\in\mathbb X$, and the subgroup $K_{\Theta}<K$ is the stabilizer of a unit tangent vector $v\in T_x^1\mathbb X$. The geodesic $x_v(t)$ starting at $x$ with initial velocity $v$ has distinct endpoints $v^\pm=\lim_{t\to\pm\infty} x_v(t)$ in the visual boundary $\partial_\infty\mathbb X$. Their stabilizers $P_{\Theta}^\pm=\Stab_G(v^\pm)$ are opposite parabolic subgroups with intersection $L_{\Theta}=\Stab_G(v^+,v^-)$.

\begin{center}
\begin{tikzcd}
T^{1}\mathbb{X} \supset G\cdot v\simeq G/K_{\Theta} \arrow[r] \arrow[d] 
& \Wbb_{\Theta}\simeq G/M_{\Theta} \arrow[d] \\
\mathbb{X}\simeq G/K
& \L_{\Theta}^{\mathbf s} \simeq G/H_\bbf \arrow[d] \\
&\F_{\Theta}^{\pitchfork}\simeq G/L_{\Theta} \arrow[dl] \arrow[dr] \\
\partial_\infty\mathbb X\supset G\cdot v^-\simeq \F_{\Theta}^{-}
&
&\F_{\Theta}^{+}\simeq G\cdot v^+\subset \partial_\infty\mathbb X.
\end{tikzcd}
\end{center}
\begin{rem}
This diagram exposes that the families $\mathbb{L}_{\Theta}^{\mathbf{s}}$ may be viewed as equivariant, inhomogeneous affine line bundles supported on the locally closed strata consisting of  endpoints $(v^{+}, v^{-})\in \partial_{\infty}\mathbb{X}\times \partial_{\infty}\mathbb{X}$ of geodesics of type $\Theta.$ Once $\Theta\subset \Delta$ is not fixed, one must face the stratified geometry of $\partial_{\infty}\mathbb{X}\times \partial_{\infty}\mathbb{X}$ which is a product of spherical buildings.    
 \end{rem}

\section{Examples}\label{sec:examples}

As is usual in constructions which operate within the language of roots systems and semisimple Lie algebras, consideration of explicit examples can dramatically simplify the nature of the objects under consideration.  Therefore, the examples below will not mirror the general construction in Section \ref{sec:flowspaces} identically, but will produce isomorphic models of the flow spaces $\mathbb{L}^{\mathbf{s}}_\Theta$ constructed therein.  We also recommend the first paper \cite{VolI} in our series which treats the real projective case by an independent method and construction, and also focuses in detail on the family of examples given by $\mathbb H^{p,q}$-convex cocompact subgroups.  We explain our results in several other examples below.

\subsection{Special linear groups over \texorpdfstring{$\mathrm{k}=\R,\C$}{real and complex numbers}}\label{sec:SLN}
Consider a  vector $\mathbf{d}=(d_{1},...,d_{r})\in\N^r$ with $d_1+\cdots+d_r=d$. Recall from Example \ref{example flag manifolds} the space of gradings of type $\mathbf d$
\[ \F_{\mathbf d}^\pitchfork(\mathrm{k}^d) = \set{ E_\bullet=(E_i)_{1\leq i\leq r}\in \prod_{i=1}^r\cG_{d_i}(\mathrm{k}^d)}{ E_1\oplus\cdots\oplus E_{r}=\mathrm{k}^d}\,,\]
which corresponds to the coset space $\SL(d,\mathrm{k})/L_{\mathbf d}(\mathrm k)$ where
\[L_{\mathbf d}(\mathrm k)=\set{\begin{pmatrix}g_1 &&0\\ &\ddots & \\ 0&& g_r  \end{pmatrix}}{\det(g_1)\cdots\det(g_r)=1}\subset \SL(d,\mathrm{k})\,.\]
Consider the trivial $\mathrm{k}$-vector bundle 
\[
\mathbb{E}_{\mathbf{d}}=\F_{\mathbf{d}}^{\pitchfork}(\mathrm{k}^d)\times \mathrm{k}^d\rightarrow \F_{\mathbf{d}}^{\pitchfork}(\mathrm{k}^d).
\]
The tautological grading 
\[
\mathbb{E}_{\mathbf{d},\bullet}=\bigoplus_{i=1}^{r} \mathbb{E}_{\mathbf{d}, i}
\]
is given at a basepoint $E_{\bullet}\in \F_{\mathbf{d}}^{\pitchfork}(\mathrm{k}^d)$ by the grading $E_{\bullet}$ itself.\footnote{This is the graded version of the usual tautological bundle construction in projective geometry.}  This construction yields an $\mathrm{SL}(d,\mathrm{k})$-equivariant graded vector bundle
\[
\mathbb{E}_{\mathbf{d},\bullet}\rightarrow \F_{\mathbf{d}}^{\pitchfork}(\mathrm{k}^d).
\]
Next, fix a weight vector $\mathbf{q}=(q_{1},...,q_{r})\in \R^{r}$ with $d_1q_1+\cdots+d_rq_r=0$ to produce the $\mathbf{q}$-multidensity bundle
\[
\mathscr{D}^{\mathbf{q}}\left(\mathbb{E}_{\mathbf{d},\bullet}\right)\rightarrow \F_{\mathbf{d}}^{\pitchfork}(\mathrm{k}^d).
\]  
Regardless of whether $\mathrm{k}=\R$ or $\mathrm{k}=\C,$ the multidensity construction treats the input vector bundle as an $\R$-vector bundle, and therefore the $\mathbf{q}$-multidensity bundle is an $\mathrm{SL}(d,\mathrm{k})$-equivariant, orientable real line bundle over $\F_{\mathbf{d}}^{\pitchfork}(\mathrm{k}^d)$, that can also be described as $\SL(d,\mathrm{k})/H_{\mathbf{q}}$ where  $H_{\mathbf{q}}<L_{\mathbf d}(\mathrm k)$ is the kernel of $ b_{\mathbf q} \in\Hom(L_{\mathbf d}(\mathrm k),\R)$ given by
\[ b_{\mathbf q}\begin{pmatrix}g_1 &&0\\ &\ddots & \\ 0&& g_r  \end{pmatrix}=\sum_{i=1}^r q_i \log  \vert \det(g_i)\vert\,. \]
The flow $\phi^t$ on $\mathscr{D}_{>0}^{\mathbf{q}}\left(\mathbb{E}_{\mathbf{d},\bullet}\right)$ consists in multiplication by $e^t$. On the level of the coset space $\mathrm{SL}(d,\mathrm k)/H_{\mathbf{q}}$, it is given by right multiplication by the element
\[ \begin{pmatrix}e^{t q_1} \mathbf{1}_{d_1} &&0\\ &\ddots & \\ 0&& e^{tq_r}\mathbf{1}_{d_r}  \end{pmatrix}\,.\]
\begin{rem} \label{rem:special linear example translation}
In terms of the general construction from Section \ref{sec:flowspaces}, we find
\[
\mathscr{D}_{>0}^{\mathbf{q}}\left(\mathbb{E}_{\mathbf{d},\bullet}\right)\simeq \mathrm{SL}(d,\mathrm k)/H_{\mathbf{q}}\simeq  \mathbb{L}_{\Theta_{\mathbf d}}^{\mathbf{s}(\mathbf{q})}.
\]
where $\mathbf{s}(\mathbf{q})=\frac{1}{d\dim_\R(\mathrm{k})}(q_1-q_2,\ldots,q_{r-1}-q_r)$, and  $\Theta_{\mathbf d}$ was introduced in Example \ref{example simple flag manifolds}.
\end{rem}

\subsubsection{Multiflows}

Finally, we discuss the multiflow spaces from Section \ref{sec:towers}.   Consider the sum of line bundles
\[
\bigoplus_{i=1}^{r} \mathscr{D}^{1}(\mathbb{E}_{i, \mathbf{d}})\rightarrow \F_{\mathbf{d}}^{\pitchfork}(\mathrm{k}^d).
\]
In its total space, the open subset of positive $1$-densities $
\bigoplus_{i=1}^{r} \mathscr{D}_{>0}^{1}(\mathbb{E}_{i, \mathbf{d}})$  is invariant by the fiber-wise transitive $\R^{r}$-action via
\[
(\tau_{i})_{1\leq i\leq r}\cdot (\nu_{i})_{1\leq i\leq r}=(e^{\tau_{i}}\nu_{i})_{1\leq i\leq r}.
\]
The trivial bundle $\mathbb{E}_{\mathbf{d}}=\F_{\mathbf{d}}^{\pitchfork}(\mathrm{k}^d)\times \mathrm{k}^d$ carries an $\mathrm{SL}(d,\mathrm{k})$-invariant fiberwise volume form $\omega.$ It can be used to define the product $\nu_1\cdots\nu_r\in\R$ of a collection $(\nu_i)$ of 1-densities on the spaces $E_i$ of $E_\bullet\in\F_{\mathbf d}^\pitchfork(\mathrm{k}^d)$ by letting
\[ \nu_1\cdots\nu_r:= \frac{\nu_1(\mathbf e_1)\cdots\nu_r(\mathbf e_r)}{|\omega(\mathbf e_1,\dots,\mathbf e_r)|}\,\]
where each basis $\mathbf e_i=(e_{i,1},\dots,e_{i,d_i})\in\mathcal B_{E_i}$ is concatenated into $(\mathbf e_1,\dots,\mathbf e_r)\in\mathcal B_E$. This quantity is independent from the chosen bases, and we can now consider the codimension one submanifold
\[
\Wbb_{\mathbf{d}}:=\set{(\nu_i)_{1\leq i\leq r}\in \bigoplus_{i=1}^{r} \mathscr{D}_{>0}^{1}(\mathbb{E}_{i, \mathbf{d}})}{\nu_1\cdots\nu_r=1}\subset \bigoplus_{i=1}^{r} \mathscr{D}_{>0}^{1}(\mathbb{E}_{i, \mathbf{d}}).
\]
This manifold is invariant under the subgroup $\set{\tau_\bullet\in\R^r}{d_1\tau_1+\cdots+d_r\tau_r=0}\simeq\R^{r-1}$, and homogeneous under the $\mathrm{SL}(d,\mathrm{k})$-action, with stabilizer conjugate to
\[M_{\mathbf d}(\mathrm k)=\set{\begin{pmatrix}g_1 &&0\\ &\ddots & \\ 0&& g_r  \end{pmatrix}\in L_{\mathbf d}(\mathrm k)}{|\det(g_i)|=1}\subset L_{\mathbf d}(\mathrm k)\,.\] 
 Concerning the $\R^{r-1}$-action, it corresponds to multiplication on the right by the subgroup
\[A_{\mathbf{d}}=\set{ \begin{pmatrix}e^{\tau_1} \mathbf{1}_{d_1} &&0\\ &\ddots & \\ 0&& e^{\tau_d}\mathbf{1}_{d_r}  \end{pmatrix}\in\SL(d,\mathrm{k})}{\tau_i\in\R}\,.\] 
 
Given a tuple $\mathbf{q}=(q_{1},...,q_{r}),$  the fiberwise monomial map
\begin{align*}\begin{split}
\label{homo projo} \bigoplus_{i=1}^{r} \mathscr{D}^{1}_{>0}(\mathbb{E}_{i, \mathbf{d}})&\rightarrow \mathscr{D}^{\mathbf{q}}_{>0}(\mathbb{E}_{\bullet, \mathbf{d}}) \\
(\nu_{i})_{1\leq i \leq r} &\mapsto  \prod_{1\leq i\leq r} \ \nu_{i}^{q_{i}}\end{split}
\end{align*}
corresponds to the projection  $\SL(d,\mathrm{k})/M_{\mathbf d}(\mathrm k) \to \SL(d,\mathrm{k})/H_{\mathbf q}$.

\subsection{Lagrangians in \texorpdfstring{$(E, \omega)$}{a symplectic vector space}}  

We next study the case of transverse Lagrangian subspaces in a symplectic real vector space.  Let $E$ be a finite dimensional real vector space of dimension $2d\geq 2$ and let $\omega\in \Lambda^{2}E^{*}$ be a symplectic 2-form. The space of Lagrangian subspaces
\[ \Lag=\set{L\in\cG_d(E)}{\omega\vert_{L\times L}=0}\]
is a simple flag manifold for the symplectic group $\Sp(E,\omega)$. It is opposite to itself, and the corresponding transverse flag space is
\[ \Lag^\pitchfork=\set{(L_1,L_2)\in\Lag\times \Lag}{L_1\oplus L_2=E}\,.\]
Consider $(L_1,L_2)\in\Lag^\pitchfork$, as well as positive 1-densities $\mu_i\in\mathscr D^1_{>0}(L_i)$ (that is $\mu_i=|\omega_i|$ for some non-zero $\omega^i\in \Lambda^dL_i^\ast$). Using the symplectic volume instead of the pseudo-Riemannian volume used in Section \ref{sec:flowspaces}, we can define the pairing $\mu_1\cdot\mu_2\in\R_{>0}$  as  the quantity
\[ \mu_1\cdot\mu_2= \frac{\mu_1(\mathbf e_1)\mu_2(\mathbf e_2)}{\left\vert\mathrm{vol}_\omega(\mathbf e_1,\mathbf e_2)\right\vert} \]
independent of the ordered bases $\mathbf e_i=(e_{i,1},\cdots,e_{i,d})\in\mathcal B_{L_i}$, where \[(\mathbf e_1,\mathbf e_2)=(e_{1,1},\cdots,e_{1,d},e_{2,1},\cdots,e_{2,d})\in\mathcal B_E\] stands for the concatenation and $\mathrm{vol}_\omega=\frac{1}{d!}\omega^d$. From there we can define the flow space $\L\to \Lag^\pitchfork$ with fiber
\[ \L_{(L_1,L_2)}=\set{(\mu_1,\mu_2)\in\mathscr D^1_{>0}(L_1)\times \mathscr D^1_{>0}(L_2)}{ \mu_1\cdot\mu_2=1}\]
and endowed with the flow
\[ \phi^t(\mu_1,\mu_2)=(e^{-t}\mu_1,e^t\mu_2)\,.\]

Now let $\Gamma<\Sp(E,\omega)$ be Anosov with respect to the simple restricted root defining the flag manifold $\Lag$ and torsion-free. As $\Lag$ is opposite to itself, we have a single limit map $\xi:\partial_\infty\Gamma\to\Lag$ and the corresponding invariant subsets
\begin{align*}
\Lambda_\Gamma^\pitchfork&= \set{(\xi(z^+),\xi(z^-))}{z^+\neq z^-}\subset\Lag^\pitchfork\,,\\
\Omega_\Gamma&= \set{(L_1,L_2)\in\Lag^\pitchfork}{\forall z\in\partial_\infty\Gamma~L_1\pitchfork \xi(z)\textrm{ or } \xi(z)\pitchfork L_2}
\end{align*}
lift respectively to invariant subsets $\widetilde{\mathcal K}_\Gamma\subset \widetilde{\mathcal M}_\Gamma\subset\L$ where the action of $\Gamma$ on $\widetilde{\mathcal M}_\Gamma$ is properly discontinuous and the restriction to $\widetilde{\mathcal K}_\Gamma$ is cocompact, leading to a real analytic contact Axiom A flow on the quotient $\mathcal M_\Gamma=\Gamma\backslash\widetilde{\mathcal M}_\Gamma$ with non-wandering set $\mathcal K_\Gamma=\Gamma\backslash\widetilde{\mathcal K}_\Gamma$.

\begin{rem} It is possible to use non-zero volume forms $\omega_i\in \Lambda^dL_i^*$ instead of densities, but in the absence of a natural orientation of a Lagrangrian subspace, one has to consider them up to multiplication by $-1$. This leads either to this density construction, or to considering the Plücker embedding $\Lag\subset\cG_d(E)\hookrightarrow\mathbb P(\Lambda^dE)$ then using the flow considered in \cite{VolI}.
\end{rem}

There is an injective homomorphism $h: \mathrm{SL}(2, \R)\rightarrow \mathrm{Sp}(E, \omega)$ defined by
\[
\begin{pmatrix}a & b\\c&d\end{pmatrix}\mapsto \begin{pmatrix}a\mathbf{1}_d & b\mathbf{1}_d\\c\mathbf{1}_d&d\mathbf{1}_d\end{pmatrix}
\]
preserving a standard splitting $E\simeq L\oplus L^{*},$ and centralized by the compact subgroup 
\[
\begin{pmatrix}
A & 0 \\
0 & A
\end{pmatrix}
\]
where $A\in \mathrm{O}(d).$  If $\Gamma_{0}<\mathrm{SL}(2, \R)$ is a cocompact Fuchsian group and $\rho: \Gamma\rightarrow \mathrm{O}(d)$ is a homomorphism, the product $\gamma\mapsto \rho(\gamma)h(\gamma)$ defines a $\mathrm{Lag}^{\pitchfork}$-Anosov subgroup $\Gamma_{\rho}<\mathrm{Sp}(E, \omega)$ with the property that all of the associated Axiom A systems $\mathcal{M}_{\rho}^{s}$ have the same periods for fixed $s\in \R.$  Since the parameter space $\mathrm{Hom}(\Gamma, \mathrm{O}(d))$ is positive dimensional once $d>1,$ this shows that a general Anosov subgroup is not determined (up to conjugacy) by the periods of its associated Axiom A flows.  This is in contrast to geodesic flows in negative curvature where length spectrum rigidity is a common phenomenon.  The corresponding refraction flows are all conjugate, and therefore indistinguishable using classical Ergodic theory.  In this case, microlocal objects (e.g. Ruelle resonant states and invariant distributions) should be helpful to distinguish the corresponding Axiom A systems. 

\providecommand{\bysame}{\leavevmode\hbox to3em{\hrulefill}\thinspace}

\bigskip


\begin{thebibliography}{GGKW17}


\bibitem[BFL92]{BFL}
Y. Benoist, P.  Foulon,  and F. Labourie, \emph{Flots d'Anosov \`a distributions stable et instable               diff\'erentiables}, J. Amer. Math. Soc.,   \textbf{5} (1992), no. 1, 33--74.
     


\bibitem[BCZZ24]{BCZZ24}
P-L. Blayac, R.~Canary, F.~Zhu, and A.~Zimmer, \emph{{Patterson-Sullivan theory
  for coarse cocycles}}, 2024, arXiv:2404.09713.

\bibitem[BLW25]{BonthonneauLefeuvreWeich}
Y.~Guedes Bonthonneau, T.~Lefeuvre, and T.~Weich, \emph{{Spectral Theory of
  Anosov Representations}}, 2025, in preparation.

\bibitem[BPS19]{BPS}
J.~Bochi, R.~Potrie, and A.~Sambarino, \emph{Anosov representations and
  dominated splittings}, J. Eur. Math. Soc. (JEMS) \textbf{21} (2019), no.~11,
  3343--3414.

\bibitem[BWS21]{BWS21}
Y.~Borns-Weil and S.~Shen, \emph{Dynamical zeta functions in the nonorientable
  case}, Nonlinearity \textbf{34} (2021), no.~10, 7322--7334.

\bibitem[CDP90]{CDP}
M.~Coornaert, T.~Delzant, and A.~Papadopoulos, \emph{G\'eom\'etrie et th\'eorie
  des groupes}, Lecture Notes in Mathematics, vol. 1441, Springer-Verlag,
  Berlin, 1990, Les groupes hyperboliques de Gromov. [Gromov hyperbolic
  groups], With an English summary.

\bibitem[CS24a]{CS23}
L.~Carvajales and F.~Stecker, \emph{Anosov representations acting on
  homogeneous spaces: Domains of discontinuity}, Advances in Mathematics
  \textbf{459} (2024), 110022.

\bibitem[CS24b]{ChowSarkar2}
M.~Chow and P.~Sarkar, \emph{{Exponential mixing and essential spectral gaps
  for Anosov subgroups}}, 2024, arXiv:2408.11274.

\bibitem[CT20]{CanaryTsouvalas}
R.~Canary and K.~Tsouvalas, \emph{Topological restrictions on {Anosov}
  representations}, J. Topol. \textbf{13} (2020), no.~4, 1497--1520.

\bibitem[CZZ24]{CZZ24}
R.~Canary, T.~Zhang, and A.~Zimmer, \emph{Patterson-{S}ullivan measures for
  transverse subgroups}, J. Mod. Dyn. \textbf{20} (2024), 319--377.

\bibitem[CZZ25]{CZZ25}
\bysame, \emph{Geometry and dynamics of transverse groups}, 2025,
  arXiv:2502.07271.

\bibitem[Dey25]{Dey}
S.~Dey, \emph{On {B}orel {A}nosov subgroups of {${\rm SL}(d, \mathbb{R})$}},
  Geom. Topol. \textbf{29} (2025), no.~1, 171--192.

\bibitem[DG18a]{DG18}
S.~Dyatlov and C.~Guillarmou, \emph{{Afterword: Dynamical zeta functions for
  Axiom A flows}}, Bull. Amer. Math. Soc. (2018), no.~55, 337--342.

\bibitem[DG18b]{DyGuSmale}
\bysame, \emph{{Dynamical Zeta Functions For Axiom A Flows}}, Bull. AMS
  \textbf{55} (2018), 337--342.

\bibitem[DGR24]{DeyGreenbergRiestenberg}
S.~Dey, Z.~Greenberg, and J.~M. Riestenberg, \emph{Restrictions on {Anosov}
  subgroups of {{\(\operatorname{Sp}(2n,\mathbb{R})\)}}}, Trans. Am. Math. Soc.
  \textbf{377} (2024), no.~10, 6863--6882.

\bibitem[DMS24]{VolI}
B.~Delarue, D.~Monclair, and A.~Sanders, \emph{{Locally homogeneous Axiom A
  flows I: projective Anosov subgroups and exponential mixing}}, 2024,
  arXiv:2403.14257.

\bibitem[Ebe96]{eberlein}
P.~B. Eberlein, \emph{Geometry of nonpositively curved manifolds}, Chicago, IL:
  The University of Chicago Press, 1996.

\bibitem[Fri95]{DF95}
D.~Fried, \emph{Meromorphic zeta functions for analytic flows}, Comm. Math.
  Phys. \textbf{174} (1995), no.~1, 161--190.

\bibitem[Gd90]{Gdh90}
E.~Ghys and P.~{de la Harpe} (eds.), \emph{Sur les groupes hyperboliques
  d'apr{\`e}s {Mikhael} {Gromov}. ({On} the hyperbolic groups {\`a} la {M}.
  {Gromov})}, Prog. Math., vol.~83, Boston, MA: Birkh{\"a}user, 1990 (French).

\bibitem[GGKW17]{GGKW}
F.~{Gu\'eritaud}, O.~{Guichard}, F.~{Kassel}, and A.~{Wienhard}, \emph{{Anosov
  representations and proper actions}}, {Geom. Topol.} \textbf{21} (2017),
  no.~1, 485--584.

\bibitem[GMT21]{Guillarmou-Mazzucchelli-Tzou}
C.~Guillarmou, M.~Mazzucchelli, and L.~Tzou, \emph{Boundary and lens rigidity
  for non-convex manifolds}, American J. Math. \textbf{143} (2021), no.~2,
  533--575.

\bibitem[GMT23]{GMT}
O.~Glorieux, D.~Monclair, and N.~Tholozan, \emph{Hausdorff dimension of limit
  sets for projective {Anosov} representations}, J. {\'E}c. Polytech., Math.
  \textbf{10} (2023), 1157--1193.

\bibitem[Gro87]{Gr}
M.~Gromov, \emph{Hyperbolic groups}, Essays in Group Theory (S.~M. Gersten,
  ed.), Springer New York, New York, NY, 1987, pp.~75--263.

\bibitem[GW08]{GW08}
O.~Guichard and A.~Wienhard, \emph{Convex foliated projective structures and
  the {H}itchin component for {${\rm PSL}_4({\bf R})$}}, Duke Math. J.
  \textbf{144} (2008), no.~3, 381--445.

\bibitem[GW12]{GW12}
\bysame, \emph{{Anosov representations: domains of discontinuity and
  applications.}}, {Invent. Math.} \textbf{190} (2012), no.~2, 357--438.

\bibitem[Kas18]{FannyICM}
F.~Kassel, \emph{Geometric structures and representations of discrete groups},
  Proceedings of the international congress of mathematicians 2018, ICM 2018,
  Rio de Janeiro, Brazil, August 1--9, 2018. Volume II. Invited lectures,
  Hackensack, NJ: World Scientific; Rio de Janeiro: Sociedade Brasileira de
  Matem{\'a}tica (SBM), 2018, pp.~1115--1151.

\bibitem[KB02]{kapovichbenakli}
I.~Kapovich and N.~Benakli, \emph{Boundaries of hyperbolic groups.},
  Combinatorial and geometric group theory. Proceedings of the AMS special
  session on combinatorial group theory, New York, NY, USA, November 4--5, 2000
  and the AMS special session on computational group theory, Hoboken, NJ, USA,
  April 28--29, 2001, Providence, RI: American Mathematical Society (AMS),
  2002, pp.~39--93.

\bibitem[Kir04]{kirillov}
A.~A. Kirillov, \emph{Lectures on the orbit method.}, Grad. Stud. Math.,
  vol.~64, Providence, RI: American Mathematical Society (AMS), 2004.

\bibitem[KLP17]{KLP17}
M.~Kapovich, B.~Leeb, and J.~Porti, \emph{Anosov subgroups: dynamical and
  geometric characterizations}, Eur. J. Math. \textbf{3} (2017), no.~4,
  808--898.

\bibitem[KLP18a]{KLP18}
\bysame, \emph{Dynamics on flag manifolds: domains of proper discontinuity and
  cocompactness}, Geom. Topol. \textbf{22} (2018), no.~1, 157--234.

\bibitem[KLP18b]{KLP2}
\bysame, \emph{A {Morse} lemma for quasigeodesics in symmetric spaces and
  {Euclidean} buildings}, Geom. Topol. \textbf{22} (2018), no.~7, 3827--3923.

\bibitem[Kna02]{knapp}
A.~W. Knapp, \emph{Lie groups beyond an introduction}, Progress in Mathematics,
  Birkh{\"a}user Boston, 2002.

\bibitem[Kos73]{kostant}
B.~Kostant, \emph{On convexity, the {Weyl} group and the {Iwasawa}
  decomposition}, Annales scientifiques de l'\'Ecole Normale Sup\'erieure
  \textbf{Ser. 4, 6} (1973), no.~4, 413--455.

\bibitem[KOW24]{KOW24}
D.~Kim, H.~Oh, and Y.~Wang, \emph{Properly discontinuous actions, growth
  indicators and conformal measures for transverse subgroups}, 2024,
  arXiv:2306.06846.

\bibitem[KOW25]{KOW25}
\bysame, \emph{Ergodic dichotomy for subspace flows in higher rank}, Commun.
  Am. Math. Soc. \textbf{5} (2025), 1--47.

\bibitem[KP22]{KasselPotrie}
F.~Kassel and R.~Potrie, \emph{Eigenvalue gaps for hyperbolic groups and
  semigroups}, J. Mod. Dyn. \textbf{18} (2022), 161--208.

\bibitem[KT24]{KT24}
F.~Kassel and N.~Tholozan, \emph{Sharpness of proper and cocompact actions on
  reductive homogeneous spaces}, 2024, arXiv:2410.08179.

\bibitem[KT25]{KT25}
C.~Kineider and R.~Troubat, \emph{{Connected components of the space of triples
  of transverse partial flags in $\mathrm{SO}_0(p,q)$ and Anosov
  representations}}, 2025, arXiv:2411.08679.

\bibitem[{Lab}06]{labourie}
F.~{Labourie}, \emph{{Anosov flows, surface groups and curves in projective
  space.}}, {Invent. Math.} \textbf{165} (2006), no.~1, 51--114.

\bibitem[Liv04]{Li04}
C.~Liverani, \emph{On contact {Anosov} flows}, Ann. Math. (2) \textbf{159}
  (2004), no.~3, 1275--1312.

\bibitem[Mil17]{Milne2017}
J.~S. Milne, \emph{{Algebraic Groups: The Theory of Group Schemes of Finite
  Type over a Field}}, Cambridge University Press, September 2017.

\bibitem[NR24]{NR24}
A.~Nolte and J.~M. Riestenberg, \emph{{Concave Foliated Flag Structures and the
  $\text{SL}_3(\mathbb{R})$ Hitchin Component}}, 2024, arXiv:2407.06361.

\bibitem[PS24]{PS24}
M.~Pollicott and R.~Sharp, \emph{Zeta functions in higher {T}eichm\"{u}ller
  theory}, Math. Z. \textbf{306} (2024), no.~37.

\bibitem[Rug96]{RU96}
H.~H. Rugh, \emph{Generalized {F}redholm determinants and {S}elberg zeta
  functions for {A}xiom {A} dynamical systems}, Ergodic Theory Dynam. Systems
  \textbf{16} (1996), no.~4, 805--819.

\bibitem[Sam14]{SAM14}
A.~Sambarino, \emph{Quantitative properties of convex representations},
  Comment. Math. Helv. \textbf{89} (2014), no.~2, 443--488.

\bibitem[Sam24]{SAM24}
\bysame, \emph{A report on an ergodic dichotomy}, Ergodic Theory Dynam. Systems
  \textbf{44} (2024), no.~1, 236--289.

\bibitem[Sma67]{smale67}
S.~Smale, \emph{Differentiable dynamical systems. {With} an appendix to the
  first part of the paper: ``{Anosov} diffeomorphisms'' by {John} {Mather}},
  Bull. Am. Math. Soc. \textbf{73} (1967), 747--817.
  
  \bibitem[Sto11]{St11}
L.~Stoyanov, \emph{{Spectra of Ruelle transfer operators for Axiom A flows}},
  {Nonlinearity} \textbf{24} (2011), no.~4, 1089--1120.

\bibitem[Sto25]{ST25}
\bysame, \emph{{Gibbs measures for contact Anosov flows are all
  exponentially mixing}}, 2025, arXiv:2501.02537.

\bibitem[Tso20]{Tsouvalas}
K.~Tsouvalas, \emph{On {B}orel {A}nosov representations in even dimensions},
  Comment. Math. Helv. \textbf{95} (2020), no.~4, 749--763.

\bibitem[Vog97]{voganorbit2}
D.~A. Vogan, \emph{Chapter 5 - the orbit method and unitary representations for
  reductive lie groups}, Algebraic and Analytic Methods in Representation
  Theory (B.~{\O}rsted and H.~Schlichtkrull, eds.), Perspectives in
  Mathematics, Academic Press, San Diego, 1997, pp.~243--339.

\bibitem[Vog00]{voganorbit}
\bysame, \emph{The method of coadjoint orbits for real reductive groups},
  Representation theory of Lie groups. Lecture notes from the Graduate summer
  school program, Park City, UT, USA, July 13--31, 1998, Providence, RI:
  American Mathematical Society (AMS), 2000, pp.~177--238.

\bibitem[Wal88]{wallach}
N.~R. Wallach, \emph{Real reductive groups}, vol.~I, Academic Press, Inc.,
  1988.

\bibitem[War72]{warner72}
G.~Warner, \emph{Harmonic analysis on semi-simple {Lie} groups}, vol.~I,
  Springer--Verlag, Berlin, Heidelberg, New York, 1972.

\bibitem[Wie18]{AnnaICM}
A.~Wienhard, \emph{An invitation to higher {Teichm{\"u}ller} theory},
  Proceedings of the international congress of mathematicians 2018, ICM 2018,
  Rio de Janeiro, Brazil, August 1--9, 2018. Volume II. Invited lectures,
  Hackensack, NJ: World Scientific; Rio de Janeiro: Sociedade Brasileira de
  Matem{\'a}tica (SBM), 2018, pp.~1013--1039.

\end{thebibliography}
\end{document}